\numberwithin{equation}{section}
\theoremstyle{plain}
\newcommand{\A}{\ensuremath{{\mathbb{A}}}}
\newcommand{\Z}{\ensuremath{{\mathbb{Z}}}}
\newcommand{\Q}{\ensuremath{{\mathbb{Q}}}}
\newcommand{\F}{\ensuremath{{F}}}
\newcommand{\E}{\ensuremath{{\mathbb{E}}}}
\newcommand{\OF}{O_F}
\newcommand{\Vol}{\text{Vol}}
\newcommand{\GL}{\ensuremath{{\text{GL}}}}
\newcommand{\Char}{\ensuremath{{\text{char}}}}
\newtheorem{theo}{Theorem}[section]
\newtheorem{lem}[theo]{Lemma}
\newtheorem{prop}[theo]{Proposition}
\newtheorem{cor}[theo]{Corollary}
\newtheorem{conj}[theo]{Conjecture}
\theoremstyle{remark}
\newtheorem{rem}[theo]{Remark}
\newtheorem{example}[theo]{Example}
\theoremstyle{definition}
\newtheorem{defn}[theo]{Definition}
\newtheorem*{cor*}{Corollary}
\newtheorem*{choiceoftest}{Choice of test vectors}
\newcommand{\zxz}[4]{\begin{pmatrix} #1 & #2 \\ #3 & #4 \end{pmatrix}}
\renewcommand{\Re}{\operatorname{Re}}
\newcommand{\Hom}{\operatorname{Hom}}
\newcommand{\Tr}{\text{Tr}}
\newcommand{\lb}{\left(}
\newcommand{\rb}{\right)}
\newcommand{\T}{\text{T}}
\newcommand{\RS}{\text{RS}}
\newcommand{\yh}[1]
{{\color{cyan} \sf YH:[#1]}}
\newcommand{\Ph}[1]
{{\color{green} \sf Ph:[#1]}}
\newcommand{\paul}[1]
{{\color{violet} \sf Paul:[#1]}}
\newcommand{\yhn}[1]
{#1}
\newcommand{\todo}[1]
{{\color{red} \sf Todo:[#1]}}
\newtheorem{assumption}[theo]{Assumption}
\newtheorem*{thm*}{Theorem}
\newtheorem*{lem*}{Lemma}
\theoremstyle{definition}
\newtheorem*{example*}{Example}
\newtheorem*{conj*}{Conjecture}
\newtheorem{conv}{Convention}
\theoremstyle{definition}
\newtheorem*{rem*}{Remark}
\newcommand{\resprod}{\mathop{{\prod}^{\mathbf{'}}}\limits}
\newcommand{\Eis}{\mathrm{Eis}}
\newcommand{\ram}{\mathrm{ram}}
\newcommand{\unram}{\mathrm{unram}}
\newcommand{\supp}{\mathrm{supp}}
\newcommand{\equid}{\mathrm{e}}
\newcommand{\Equid}{\mathrm{E}}
\newcommand{\Pro}{\mathrm{Pr}}
\newcommand{\Ad}{\mathrm{Ad}}
\newcommand{\Std}{\mathrm{Std}}
\newcommand{\GLd}{{\mathrm{GL}_2}}
\newcommand{\rmG}{{\mathrm{G}}}
\newcommand{\rmN}{{\mathrm{N}}}
\newcommand{\rmB}{{\mathrm{B}}}
\newcommand{\GA}{{\rmG(\Aa)}}
\newcommand{\GF}{{\rmG(F)}}
\newcommand{\Gv}{{\rmG_v}}
\newcommand{\PGL}{{\mathrm{PGL}}}
\newcommand{\ov}{{\rm nr}}
\newcommand{\tr}{{\rm tr}}
\newcommand{\mcA}{\mathcal{A}}
\newcommand{\mcB}{\mathcal{B}}
\newcommand{\mcV}{\mathcal{V}}
\newcommand{\mcE}{{\mathcal{E}}}
\newcommand{\mcS}{{\mathcal S}}
\newcommand{\mcC}{{\mathcal C}}
\newcommand{\mcW}{{\mathcal W}}
\newcommand{\Bt}{{\rmB^{\times}}}
\newcommand{\BtA}{{\rmB^{\times}(\Aa)}}
\newcommand{\eps}{\varepsilon}
\newcommand{\vphi}{\varphi}
\newcommand{\vPhi}{\varPhi}
 \newcommand{\ra}{\rightarrow}
\def\peter#1{\langle #1\rangle}
\def\ov#1{\overline{#1}}
 \DeclareFontFamily{OT1}{rsfs}{}
\DeclareFontShape{OT1}{rsfs}{n}{it}{<-> rsfs10}{}
\DeclareMathAlphabet{\mathscr}{OT1}{rsfs}{n}{it}
 \newcommand{\beq}{\begin{displaymath}}
\newcommand{\eeq}{\end{displaymath}}
\newcommand{\be}{\begin{equation}}
\newcommand{\ee}{\end{equation}}
\newcommand{\order}{\mathscr{O}}
\newcommand{\Oo}{O}   
\newcommand{\Cc}{{\mathbb{C}}}
\newcommand{\Nn}{{\bf{N}}}
\newcommand{\Zz}{{\mathbb{Z}}}
\newcommand{\Rr}{{\mathbb{R}}}
\newcommand{\Qq}{{\mathbb{Q}}}
\newcommand{\Qp}{\Qq_{p}}
\newcommand{\Aa}{{\mathbb{A}}}
\newcommand{\At}{{\mathbb{A}}^{\times}}
\newcommand{\Ct}{\Cc^{\times}}
\newcommand{\bash}{\backslash}
\newcommand{\Feas}{\mathrm{Feas}}
\newcommand{\fin}{f}
\newcommand{\ootimes}{\times}
\newcommand{\lleq}{\leq}
\newcommand{\ggeq}{\geq}
\title[Subconvexity for Rankin--Selberg and triple product]{The subconvexity problem for Rankin--Selberg and triple product L-functions}
\begin{document}
\author{Yueke Hu}

\address{YMSC, Tsinghua University}
\email{yhumath@tsinghua.edu.cn}

\author{Philippe Michel}
\address{EPFL}
\email{philippe.michel@epfl.ch}
\author{Paul Nelson}
\address{AArhus University}
\email{nelson.paul.david@gmail.com}
\begin{abstract}
In this paper we study the subconvexity problem for the Rankin-Selberg L-function and triple product L-function, allowing joint ramifications and conductor dropping range. We first extend the method of Michel-Venkatesh to reduce the bounds for L-functions to local conjectures on test vectors, then verify these local conjectures under certain conditions, giving new subconvexity bounds as long as the representations are not completely related. 
\end{abstract}
\maketitle
\tableofcontents

 
%


\section{Introduction}
Let $F$ be a number field with ring of adeles $\Aa$, and let $\Pi$ be an automorphic representation of a reductive group $G$. Let $L(\Pi,s)$ be the L-function associated to $\Pi$. The {subconvexity problem} for the central value $L(\Pi,1/2)$ is to obtain   
a (non-trivial) bound of the shape
$$L(\Pi,1/2)\ll_F C(\Pi)^{1/4-\delta}$$
where $\delta>0$ is some positive absolute constant (independent of $\Pi$) and $C(\Pi)$ is the analytic conductor of $\Pi$ which is a product over all places of the local analytic conductors $C_v(\Pi)$.

In the case $G=\GL_1$, $\GL_2$, the subconvexity problem has now been solved completely through the efforts of many people over the years, starting with the work of Weyl \cite{Weyl} and ending with \cite{MVIHES} with important intermediary results along the way such as \cite{Bu,HB,Good,DFI1,DFI2,DFI3, BHM, Ven}. 

Beyond $\GL_2$ this problem is far from well-understood, even for the Rankin--Selberg case $\GL_2\times \GL_2$. Here we briefly summarize the progress for Rankin--Selberg L-functions.

The problem is solved if one assumes that the conductor of one of the representations, $C(\pi_1)$ say, is fixed, or more generally, is bounded by a small enough but fixed positive power of the conductor of the other (see \cite{KMV,Sarnak,HaM,Ven,MVIHES,hu_triple_2017}.
 Some sporadic cases are known when both conductors vary simultaneously and  have comparable sizes, as in \cite{HoM,HolTem,ZYe,LMY}. 
These works usually  assume disjoint ramifications.

In contrast the problem remains open when $\pi_{2,v}\simeq \tilde{\pi}_{1,v}$ at places with ramifications, as in the QUE case. 
We shall refer to this case as the QUE-like case later on.

It is natural to ask if one can bridge the gap between the known cases and the QUE-like cases, allowing joint ramifications and some conductor dropping away from QUE-like cases for $\pi_i$. 

Alternatively one can understand this problem using the Langlands functoriality and view, for example, the Rankin--Selberg L-function as a special case of standard L-functions on $\GL_4$. There  has been some  important progresses in the subconvexity problem for L-functions on higher rank groups as in  \cite{Li11, Mu15}, and most recently \cite{Ne21}. But the available results are specific to the ``uniform growth'' cases where the conductor does not drop.  In practice, the techniques employed in these papers depend polynomially upon the extent to which the conductor drops, so they can be extended to the range where the conductor drops a little.
However, it is generally much more difficult to let the conductor drop substantially.

%

In this paper we obtain new cases of subconvexity bounds for the Rankin--Selberg and triple product L-functions of $\GL_2$, allowing general number fields, joint ramifications and conductor dropping range away from the QUE-like cases.
 In the non-archimedean aspect a special case of our main results states the following. 
\begin{theo}
Let $\pi_1$ and $\pi_2$ be cuspidal automorphic representations of $\GL_{2,F}$ both with trivial central character whose archimedean components are bounded.  Then
\begin{equation}\label{Eq:SubconvRScase}
  L(\pi_1 \times \pi_2, 1/2) \ll C(\pi_1 \times \pi_2)^{1/4+\eps}
  \left( \frac{C(\pi_1 \times \pi_2)}{ C (\pi_2 \times \tilde{\pi}_2)} \right)^{-\delta}.
\end{equation}
\end{theo}

By re-indexing if necessary, the above upper bound  provides a non-trivial power saving whenever 
there exists fixed $\gamma>0$ such that
\begin{equation}
\label{eq:norelated}
	\min_{i=1,2}
\{
C(\pi_i\times\tilde \pi_i) 
\}
\leq C(\pi_1\times\pi_2)^{1-\gamma} .
\end{equation}
Thus we obtain a subconvexity bound for the Rankin--Selberg L-function on $\PGL_2(F)$ even when the conductor drops all the way down except the QUE-like cases.

Note that when $C_v(\pi_1\times \pi_2)$ is large enough, we have by \cite[Corollary 3.1]{BH17}
$$C_v(\pi_1\times \pi_2)\geq \max_{i=1,2}\{ C_v(\pi_i\times \tilde\pi_i)\},$$
with equality in the QUE-like case. 

\subsection{Conductor dropping range}
Here we specify the meaning of conductor dropping range locally in terms of the Langlands correspondence. We fix a local field for the discussion.

In general 
if $\sigma$ is an n-dimensional local Weil--Deligne representation which decomposes into irreducible subrepresentations as $\oplus \sigma_i$, then we say $\sigma$ is in the conductor dropping range if any one of the normalized conductors $C(\sigma_i)^{\frac{1}{\dim \sigma_i}}$ has significantly different size  compared with others. In the Rankin--Selberg/triple product case, let $\sigma_{i,v}$ be associated to $\pi_{i,v}$ via the local Langlands correspondence, and we say $\{\pi_{i,v}\}$ are in the conductor dropping range if $\sigma=\bigotimes\limits_{i}\sigma_{i,v} $ is so.
As we shall also consider horizontal aspect uniformly, any fixed power of $p$ will also be considered as a significant difference.
\begin{example}\label{Ex:conductordrop}
Consider the  case where $\nu=p$ is a finite prime, and for $i=1,2$, $$\pi_{i,p}\simeq \pi(\chi_i,\chi_i^{-1})$$  are  principal series representations.
Suppose for simplicity that $p>2$ and $\chi_i$ are not quadratic.
The corresponding Weil--Deligne representation $\otimes_{i=1,2} \sigma_{i,p}$ decomposes into a sum of four characters (of the Galois group of $\Qp$), 
corresponding to the characters $\chi_1^{\pm 1}\chi_{2}^{\pm 1}$ via Class Field Theory.
Given a character $\chi$ of $\Qq_p^\times$, let $c(\chi)$ be its conductor exponent (that is the smallest integer such that $\chi$ is trivial on $1+p^{c(\chi)}\Z_p$, and $c(\chi)=0$ when $\chi$ is unramified).
 
The conductor dropping range occurs when $$c(\chi_1)=c(\chi_2)>c(\chi^{\pm 1}_1\chi^{\pm 1}_2) $$
for some $(\pm 1,\pm 1)$; for example it occurs when $\chi_2=\chi_1^{\pm 1}$. This is also the range where
\begin{equation}\label{Eq:defnConductordrop}
C(\pi_1\times \pi_2)< \max\limits_{i}\{C(\pi_i)^2\}.
\end{equation}
In comparison, when $c(\chi_1)\neq c(\chi_2)$ or $c(\chi_1)=c(\chi_2)=c(\chi^{\pm 1}_1\chi^{\pm 1}_2) $, we have
$$C(\pi_1\times \pi_2)= \max\limits_{i}\{C(\pi_i)^2\}.$$

There are similar examples when both $\pi_{1,p},\ \pi_{2,p}$ are supercuspidal representations constructed from related datum.

Consider now the case where the ramification of $\pi$ comes from essentially a single place, that is, $$C(\pi_i)\asymp C(\pi_{i,p})$$ for a  large enough prime $p$, and that $\pi_{1,p},\ \pi_{2,p}$ are principal series as above, then condition \eqref{eq:norelated} is automatic if $C(\pi_{1,p})\neq C(\pi_{2,p})$. On the other hand if $c(\chi_1)=c(\chi_2)\geq 2$, \eqref{eq:norelated} is satisfied if and only if
\begin{equation}\label{Eq1-2-1}
\min\{ c(\chi_1\chi_2),\ c(\chi_1\chi_2^{-1})\}\geq \gamma' c(\chi_i^2)
\end{equation}
for some constant $\gamma'>0$. (Actually $ \gamma'=\frac{\gamma}{1-\gamma}$ with $\gamma<1$ in the current case.)
This is because we have $\max\{c(\chi_1\chi_2),\ c(\chi_1\chi_2^{-1})\}= c(\chi_i^2)=\frac{1}{2}c(\pi_{i,p}\times \pi_{i,p})$, and $c(\pi_{1,p}\times \pi_{2,p})=2(c(\chi_1\chi_2)+ c(\chi_1\chi_2^{-1}))$. Here $c(\Pi_p)$ is the exponent of $C(\Pi_p)$.

On the other hand $\min\{ c(\chi_1\chi_2),\ c(\chi_1\chi_2^{-1})\}=0$ corresponds to the QUE-like case.

We leave it to readers to check that 
\eqref{eq:norelated} and  \eqref{Eq1-2-1} remain equivalent when $c(\chi_1)=c(\chi_2)=1$, in which case it could happen that $c(\chi_i^2)=0$ for quadratic characters.
\end{example}
In practice, one can also use \eqref{Eq:defnConductordrop} as a working definition for the conductor dropping range when we work with representations of trivial central characters in this paper. 

\subsection{Main result and strategy of proof}
To state our results for both the Rankin--Selberg and the triple product L-functions, we denote
$$Q=Q_\infty Q_\fin=\prod_\nu Q_\nu=\prod_\nu C_\nu(\pi_1\ootimes\pi_2\ootimes \pi_3),$$
and
$$P=P_\infty P_\fin=\prod_v P_v=
\prod_v \frac{Q_v^{1/2}
}{\max\limits_{i=2,3}\{ C_v(\pi_i\times \tilde{\pi}_i) \}}.$$
Here in the Rankin--Selberg case, we take $\pi_3=\pi(1,1)$.
Note that $Q_v=P_v=1$ for almost every $\nu$, and $P_v\geq 1$ at least when $Q_v$ is large enough.
 The main result we shall prove in this paper is the following theorem:
\begin{theo}\label{Theo:introthm}
Suppose that $\pi_1,\ \pi_2$ are cuspidal automorphic representations with trivial central characters. Suppose further that $(C_\fin(\pi_2), C_\fin(\pi_3))=1$ (this is automatic in the Rankin--Selberg case since $C_\fin(\pi_3)=1$). Then there exists an absolute constant $\delta>0$ such that for any $\eps>0$, one has
\begin{align*}
L(\pi_1\ootimes\pi_2\ootimes\pi_3,1/2)&\ll C(\pi_1\ootimes\pi_2\ootimes\pi_3)^{1/4+\eps} \frac{1}{P^\delta},\\
L(\pi_1\ootimes \pi_2, 1/2)&\ll C(\pi_1\ootimes \pi_2)^{1/4+\eps}\frac{1}{P^\delta}.
\end{align*}
Here the implicit constants depend on $\eps, F$ and (continuously) on the archimedean conductors $C_\infty(\pi_i)$ for $ i=1,2,3$.

\end{theo}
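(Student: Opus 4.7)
The plan is to follow the two-step strategy outlined in the abstract: first, derive a spectral identity expressing the central value in terms of an automorphic period on a quaternion algebra, then amplify and bound the resulting moment by reducing to a local problem about well-chosen test vectors.

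Concretely, I would start from Ichino's formula (in the triple product case) or the Watson--Ichino formula (in the Rankin--Selberg case, obtained by degenerating $\pi_3 \to 1 \boxplus \chi_3$). These express $L(\Pi,1/2)$ as a product of local integrals times the square of a Gan--Gross--Prasad period $|\ell(\varphi)|^2/\langle \varphi,\varphi\rangle$ of a test vector $\varphi = \otimes_v \varphi_v$ on the automorphic quotient of $\rmB^\times \times \rmB^\times \times \rmB^\times$ for a suitable quaternion algebra $\rmB/F$. The problem therefore becomes a problem of bounding such a period. Following Michel--Venkatesh, I would choose a primary test vector $\vphi_1 \in \pi_1$ on a compact Bruhat--Tits stratum at the ramified places, form an amplifier $\mathcal{A} = \sum_\ell x_\ell T_\ell$ out of Hecke operators at a carefully chosen set of split primes, and apply Cauchy--Schwarz to replace $|\ell(\mathcal{A}\vphi_1 \otimes \vphi_2 \otimes \vphi_3)|^2$ by a spectral second moment $\|\mathcal{A}\vphi_1\|^2 \cdot \sum_{\pi'} |\ell(\vphi_{\pi'} \otimes \vphi_2 \otimes \vphi_3)|^2$, where $\pi'$ runs over a family containing $\pi_1$.

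The second factor is then expanded via the relative trace formula / pre-trace formula for $\rmG \times \rmH$ with $\rmH$ the diagonal, giving a geometric sum of local orbital integrals weighted by matrix coefficients of $\pi_2$ and $\pi_3$. The diagonal term reproduces the convexity bound; the key gain must come from the off-diagonal terms. Following the Michel--Venkatesh philosophy, bounding these off-diagonal terms reduces, place-by-place, to estimating $L^2$-norms of projections of translated matrix coefficients to the representations $\pi_{i,v}$, which is precisely the local problem isolated in the abstract. The coprimality hypothesis $(C_\fin(\pi_2), C_\fin(\pi_3))=1$ is used here: it guarantees that at any finite place at most two of the three representations are jointly ramified, so that the local test vector problem splits into the two binary joint-ramification problems (between $\pi_1,\pi_2$ and between $\pi_1,\pi_3$) that can be attacked individually.

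The main obstacle is the verification of the local test vector bounds in the conductor dropping range, i.e.\ when $C_v(\pi_1 \otimes \pi_2 \otimes \pi_3)$ is strictly smaller than the product $\prod_i C_v(\pi_i)^2$ as in Example~\ref{Ex:conductordrop}. The naive newvector choice is wasteful there, because the representations $\pi_{i,v}$ share ramification data. I would instead select $\vphi_{1,v}$ (and the companions $\vphi_{2,v},\vphi_{3,v}$) to be microlocalized on a smaller open compact subgroup adapted to the common ramification structure of the $\sigma_{i,v}$, so that the corresponding matrix coefficient is concentrated on a smaller set and the local $L^2$-mass estimate improves by a factor polynomial in $P_v$. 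Summing the local gains reproduces the global power saving $P^{-\delta}$. Inputting this back into the amplified moment, optimizing the length of the amplifier, and invoking the main technical theorem (Theorem~5.3) gives the stated bounds for $L(\pi_1 \ootimes \pi_2 \ootimes \pi_3, 1/2)$, with the Rankin--Selberg case following by setting $\pi_3 = 1 \boxplus \chi_3$ and using the factorization $L(\pi_1 \ootimes \pi_2 \ootimes \pi_3, s) = L(\pi_1 \ootimes \pi_2, s) L(\tilde\pi_1 \ootimes \tilde\pi_2, s)$.
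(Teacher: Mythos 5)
Your proposal matches the paper's opening moves (Ichino/Watson period formula, amplification, Cauchy--Schwarz, reduction to a local problem), and the observation about the coprimality hypothesis -- that it forces at most two of the three representations to ramify at any finite place -- is exactly the paper's use of it. But the argument diverges in the middle, and the divergence is where the work actually is.

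After Cauchy--Schwarz the paper does \emph{not} pass to a relative trace formula or any geometric expansion in orbital integrals. The inequality is applied to $\int_{[\rmG]}(\vphi_1\star\sigma)\vPhi$ against the unit vector $\vphi_1$ (with $\vPhi=\vphi_2\vphi_3$), producing $\|\vPhi\star\check\sigma\|^2$; this is then decomposed by the (regularized, in the Eisenstein case) Plancherel formula into a $\pi'$-integral of $\peter{\Pro^{\pi'}_{\Pi_2}(\Phi_{2,u}),\Pro^{\pi'}_{\Pi_3}(\Phi_{3,u})}$, and each spectral term is bounded \emph{individually} by re-applying the period formula to express it as $L(\pi'\ootimes\Pi_2)^{1/2}L(\pi'\ootimes\Pi_3)^{1/2}$ times local integrals and then invoking only the convexity bound for those $L$-functions. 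The paper explicitly renounces cancellation among the $\pi'$-terms. Your step ``expanded via the relative trace formula~\dots\ a geometric sum of local orbital integrals'' is a genuinely different, and in the conductor-dropping range considerably harder, route that the paper never enters. Relatedly, your Cauchy--Schwarz is stated the wrong way round: since $\vphi_1$ is a Hecke eigenform, $\mathcal{A}\vphi_1$ stays in $\pi_1$ and cannot produce a spectral second moment over a family containing $\pi_1$. The paper pushes the entire Cauchy--Schwarz weight onto the $\Pi_2\otimes\Pi_3$ side; that asymmetry is precisely what dictates the shape of $P$.

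The bigger gap is the local step. ``Microlocalize on a smaller open compact adapted to the common ramification structure'' names the desired effect but not a mechanism. The paper's actual choice is concrete and, in the conductor-dropping range, counter-intuitive: take $\vphi_{1,v},\vphi_{2,v}$ to be \emph{newforms} (no microlocalization at all) and $\vphi_{3,v}=a(x)\vphi_{3,v}^\circ$ a single diagonal translate of the unramified newvector with $|x|_v\asymp C_v(\pi_1\otimes\pi_2\otimes\pi_3)^{1/4}$. The engine is the partial-orthogonality result for Whittaker newvectors, Proposition~\ref{Prop:Wipairing} and its Gauss-sum avatar Proposition~\ref{Prop:GaussintRelation}: the partial pairings $P_k(W_{\vphi_1},W_{\vphi_2})$ are essentially $1$ for $k\geq c(\pi_1\times\pi_2)/2$ and exhibit near-total cancellation for $c/2\leq k<c(\pi_1\times\pi_2)/2$. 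The $a(x)$-translate weights the Iwasawa decomposition (eq.~\eqref{Eq:RSintdecomp}) so that only the coherent $k$'s contribute, giving the feasibility lower bound of Proposition~\ref{Prop:lowerbdperiod}; the same orthogonality feeds the upper bounds in Proposition~\ref{prop:upboundforRS&T}, which give the equidistribution-index estimate and hence the $P^{-\delta}$ in Theorem~\ref{Theo:mainSubbound}. Without identifying this lemma (or an equivalent) your local step cannot close: newforms alone give the correct answer only away from the conductor-dropping range, and the $P^{-\delta}$ saving is exactly what is at stake.
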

In the proof below we actually start with more general situations and formulate a  conjecture which potentially allows one to remove/relax the technical assumptions in this result. Currently we stick to the setting of Theorem \ref{Theo:introthm}.


The main global strategy follows and extends that of \cite{MVIHES}. 
Note first that the cases of Rankin--Selberg L-function and triple product L-function can be put into a uniform framework in the sense that they have analogous integral representations: 
for $G=\GL_2$ or $B^\times$ for $B$ some suitable quaternion algebra, depending on the (global and local) root numbers $\epsilon(\pi_1\ootimes \pi_2\ootimes\pi_3, 1/2)$, the Rankin--Selberg theory/triple product formula provides an identity of the shape
\begin{equation}\label{Eq:periodvague}
\left|\int\limits_{[G]}\prod\limits_{i=1}^{3}\varphi_i(g)dg\right|^2=\left|\peter{\varphi_1,\overline{\varphi_2\varphi_3}}\right|^2
\sim L(\pi_1\ootimes \pi_2\ootimes\pi_3,1/2) \cdot \prod_v I_v(\varphi_1,\varphi_2,\varphi_3).
\end{equation}
Here $[G]=Z_G(\A)G(F)\backslash G(\A)$; $\sim$ means equality up to some unimportant factors;
$\varphi_i\in \pi_i^{G}$ are $L^2$-normalized automorphic forms in the image of $\pi_i$ under the Jacquet-Langlands correspondence; $\peter{\cdot,\cdot}$ is the unitary integral pairing for automorphic forms on $G$ with the same central characters; the local integral $I_v(\varphi_1,\varphi_2,\varphi_3)$ is an integral of products of local matrix coefficients in case of triple product L-function, or  the local Rankin--Selberg integrals in case of Rankin--Selberg L-function. Furthermore in the Rankin--Selberg case, $G=\GL_2$, the Jacquet-Langlands correspondence is the identity and $\varphi_3$ is an Eisenstein series constructed out of $\pi(1,1)$.

On the one hand we need to control $I_v(\varphi_1,\varphi_2,\varphi_3)$ from below; this is done by choosing appropriate factorable vectors $\varphi_i$ which also captures the information of $Q=C(\pi_1\ootimes \pi_2\ootimes \pi_3)$ and detects the conductor dropping range.

On the other hand, we also have to control $$|\peter{\varphi_1,\overline{\varphi_2\varphi_3}}|^2$$ from above; we do this by applying the Cauchy--Schwarz inequality, and then the Plancherel formula (the regularized version in the Rankin--Selberg case) to get
\begin{align}\label{Eq:IntroGlobalControl}
|\peter{\varphi_1,\overline{\varphi_2\varphi_3}}|^2\leq \peter{\varphi_2\overline{\varphi_2},\varphi_3\overline{\varphi_3}}=\int\limits_{\pi'}\sum\limits_{\psi\in B(\pi')} \peter{\varphi_2\overline{\varphi_2},\psi}\overline{\peter{\varphi_3\overline{\varphi_3},\psi}}.
\end{align}

Note that the first inequality is the source of asymmetry in the definition of $P$.

When all $\pi_i$ are varying, the length of spectral sum/integral could however become very long. Denote $$M_v:=\min\{C_v(\pi_2), C_v(\pi_3)\}\hbox{,\ \  }M:=\prod_v M_v.$$
 We further assume that  there exist test vectors $\varphi_i$ such that for every place $v$, if $j_v=2$ or $3$ (which may depend on $v$) is
the index $i$ of $\pi_{i,v}$ which obtains the minimum $M_v$, that is, $$C_v(\pi_{j_v})=M_v,$$ then the complexity of $\vphi_{j_v,v}$ is controlled by $M_v$.  See Assumption \ref{Assumption:Main} for the precise meaning.
Then one can restrict the spectral sum/integral in \eqref{Eq:IntroGlobalControl}  to those $\pi'$ and $\psi$ that are also controlled in terms of $M_v$, in which range we control the contributions term-wisely.

To study the individual terms $\peter{\varphi_2\overline{\varphi_2},\psi}\overline{\peter{\varphi_3\overline{\varphi_3},\psi}}$ for non-residual spectrum, we apply  the period relation \eqref{Eq:periodvague} again and the known convexity bound for the resulting $L-$functions $L(\pi_i\times \tilde\pi_i\times \pi',1/2)$. One also needs a reasonable upper bound for $I_v(\varphi_i,\overline{\varphi_i}, \psi)$, by which we expect a power saving in terms of $P$ for non-residual spectrum which, combined with the amplification method, should give a power saving for the initial global period $\peter{\varphi_1,\overline{\varphi_2\varphi_3}}$.

To summarize, we expect
the global consideration above to reduce the subconvexity problem to the following
 test vector problem in our setting:
\begin{conj}\label{Conj:intro}

There exist factorable (normalized) test vectors $\varphi_i\in \pi_i$ satisfying the following three local requirements.
\begin{enumerate}
\item[(0)]The local component $\varphi_{j_v,v}$ should be controlled by $M_v$, to control the length of the spectral sum/integral;
\item[(1)]$I_v(\varphi_1,\varphi_2,\varphi_3)$ is bounded from below by $\frac{1}{Q_v^{1/4}};$
\item[(2)]$I_v(\varphi_i,\overline{\varphi_i}, \psi)$ is controlled from above to provide power saving in terms of $P_v$, while $\psi$ is controlled in terms of $M$.
\end{enumerate}
\end{conj}
See Conjecture \ref{Conj:localresults} below for more precise formulation. Note that scaling the test vectors would change the requirements for (1) and (2) simultaneously. So one can assume without loss of generality that $\varphi_i$ are $L^2$-normalized.

The main challenge of the test vector problem is the lower bound in item (1).
The existence of such test vectors is partially supported by previous know cases away from the conductor dropping range. 
Within the conductor dropping range however, the required lower bound for $I_v(\varphi_1,\varphi_2,\varphi_3)$ becomes substantially larger compared with the non-conductor dropping range.
Whether this is possible is not clear at all from, for example, \cite{MVIHES} or other literature.

There are two main sources of test vectors, coming from translates of classical newforms, or localized vectors (or minimal vectors) as used in \cite{Ne18,HNS,HN18,NV,Ne21}. The newforms are commonly used in the cases with disjoint ramifications, but they have more complicated matrix coefficients and Whittaker functions when involved in the conductor dropping range. On the other hand the localized vectors
have simpler description of matrix coefficients/Whittaker functions, but they do not work well for representations with small levels, requiring more case by case discussions.

To our surprise, in the setting of Theorem \ref{Theo:introthm} (in level aspect, with trivial central character, and $M=1$), we are able to find test vectors uniformly using only diagonal translates of newforms. 
The complicated nature of matrix coefficients/Whittaker functions of newforms from different $\pi_i$ turns out to be reflecting the conductor dropping range in a relatively simple fashion.
The extent of diagonal translates would then have to match  the extent of conductor dropping to achieve item (1) of Conjecture \ref{Conj:intro}.

We also have some sporadic evidences for the conjecture when $M$ is square-free, or in certain archimedean aspects, but we shall skip these cases for the sake of conciseness. In more general situations, the localized vectors may also turn out to be useful.

\subsection{Main local ingredient}
Here we briefly discuss the main local ingredient to verify item (1) of Conjecture \ref{Conj:intro}, in the  conductor dropping range  where $c=c(\pi_{1,v})=c(\pi_{2,v})$ is the exponent of the local conductors at $v$. 

Consider for simplicity the Rankin--Selberg case.
 The local representations $\pi_{i,v}, i=1,2$ of $\PGL_2(F_v)$, in most cases, can be associated to some characters $\theta_i$ over some \'{e}tale quadratic algebra $E_i/F_v$ by the local Langlands correspondence. Then the evaluation of the local period integral $I_v$ can be closely related to the following correlation between twisted Gauss sum/integrals:
$$P_k(\theta_1,\theta_2)=\sum\limits_{c(\chi)=c-k}\int\
\theta_1^{-1}(u)\chi_{E_1}(u) \psi_{E_1}(u)d^\times u \overline{
\int
\theta_2^{-1}(w) \chi_{E_2}(w)\psi_{E_2}(w)d^\times w}.
$$
Here $\psi_{E_i}=\psi\circ \Tr_{E_i/F_v}$ is an additive character on $E_i$,  $\chi$ is a multiplicative character of $F_v^\times$ with specified level, and $\chi_{E_i}=\chi\circ \text{Nm}_{E_i/F_v}$. 

An interesting phenomenon we find is that this quantity  $P_k$ is exactly capturing the conductor dropping range: when $k\geq c(\pi_{1,v}\times\pi_{2,v})/2$, there is no cancellation in the sum over $\chi$. On the other hand when $c/2\leq k<   c(\pi_{1,v}\times\pi_{2,v})/2$, there are heavy cancellations, and $P_k$ becomes much smaller and equals zero in many cases. The choice of test vector is then reduced to maximizing the contributions from those $P_k$ with $k\geq c(\pi_{1,v}\times\pi_{2,v})/2$.
This phenomenon may also have independent interest.
\subsection{The structure of the paper}
\begin{enumerate}
\item[-]After setting up notations in Section \ref{secnotations}, we recall in Section \ref{secgeneralbound} the triple product formula and the amplification method, and prove the main result Theorem \ref{metasubconvexbound}  that reduces the subconvexity problem for $L(\pi_1\ootimes\pi_2\ootimes\pi_3,1/2)$ to Conjecture \ref{Conj:localresults}, which becomes a local problem. 
\item[-] We recall in Section \ref{Sec:localPrep} some more local preparations including the local Whittaker functions. In Section \ref{Sec:OrthgonalWhittaker} we  prove Proposition \ref{Prop:Wipairing} for the partial pairing $P_k$ which detects the conductor dropping range. Finally in Section \ref{Sec:localbounds} we verify Conjecture \ref{Conj:localresults} for the setting in Theorem \ref{Theo:introthm}. 
\end{enumerate}

\subsection{Acknowledgment}
The first author is supported by the National Key Research and Development Program of China (No. 2021YFA1000700).

\section{Notations}\label{secnotations}
 
Let $F$ be a number field.
We denote by $\mcV$ the set of places of $F$, by  $\mcV_{f}$ and $\mcV_{\infty}$ the finite and archimedean places; 
we denote by $F_v$ the local field at $v$.
We denote by $\Aa=\resprod_v F_v$ the ring of ad\`eles of $F$ and for any $S\subset \mcV$, $$\Aa_S=\resprod_{v\in S}F_v,\ \Aa^{(S)}=\resprod_{v\not \in S}F_v.$$

We denote by $\psi:F\bash\Aa\ra\Ct$ the additive character $\psi(x)=\psi_\Qq(\tr_{F/\Qq}(x))$ where $\psi_\Qq$ 
is the additive character on $\Qq\bash\Aa_\Qq$ whose restriction to $\Rr$ is $\exp(2\pi ix_\Rr)$; the character $\psi$ decomposes into a product of  characters of $F_v$: 
$\psi=\prod_{v}\psi_v.$

Let $\rmG$ be a reductive group over $F$. For any place $v\in\mcV$ we denote by
$\Gv=\rmG(F_v)$ the groups of $F_v$ points, by $\GA=\resprod_{v\in\mcV}\Gv$ the group of adelic points, and more generally for any $S\subset \mcV$, we denote
$$\rmG(\Aa_S)=\resprod_{v\in S}G_v,\ \rmG(\Aa^{(S)})=\resprod_{v\not\in S}G_v.$$
We set
$$[\Gv]:=F_v^\times\bash\Gv,\ [\rmG]:=\GF \At\bash\GA.$$

 We choose for each place $v$ a maximal compact subgroup $K_v\subset\Gv$ and for $v$ finite a decreasing family of principal congruence subgroups $ K_v[m]\subset K_v,\ m\in\Nn$ as well as Haar measures on $\Gv$, $\GA$ and $[\rmG]$ as in \cite[\S 2.1]{MVIHES}. For $v$ non-archimedean, $K_v$ has measure $1$. 
 
In this paper $\rmG$ will most of the time be equal to $\Bt$ for $\rmB$ some quaternion algebra over $F$ (or product of such groups) and for any $v$ such that $\Gv=\GL_{2,v}$ we make the same choice of Haar measures as in \cite[\S 3.1.5]{MVIHES}. The choice of a maximal order $\order_\rmB\subset\rmB$ and of a $\Zz$-basis define local norms on $\rmB_v$ for every $v$. For $u=(u_v)\in\BtA$ we denote by 
\begin{equation}\label{Eq:defnOperatornorm}
\|u\|=\prod_v\|u_v\|_v
\end{equation}
 the operator norm of $\Ad u$, the action of $u$ on $\rmB(\Aa)$ by conjugation. The infinite product in the definition of $\| u\|$ is justified by the following straightforward result:
\begin{lem}
Suppose $G_v=\GL_{2,v}$ and $u_v=z k_1 a(x)k_2$ in the local Cartan decomposition. 
Then 
$$\| u_v\|_v\asymp \max\{|x|_v,|x|_v^{-1}\}$$
with absolute implied constant. Furthermore if $v$ is a p-adic place and $x\in \Oo_v^\times$, $$\|u_v\|_v=1.$$
On the other hand if $G_v$ comes from a local division algebra, we have $$\|u_v\|_v\asymp 1$$ by compactness.
\end{lem}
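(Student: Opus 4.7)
The plan is to reduce the computation to $\Ad(a(x))$ acting on a standard basis of $M_2(F_v)$ and then read off the operator norm directly.

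First I would observe that $z \in F_v^\times$ is central, so $\Ad(z) = \mathrm{id}$ and $\Ad(u_v) = \Ad(k_1)\circ \Ad(a(x))\circ \Ad(k_2)$. At almost every place we have $\order_{\rmB,v} = M_2(\Oo_v)$ and $K_v = \GL_2(\Oo_v)$, in which case $\Ad(k_i)$ preserves $\order_{\rmB,v}$ and thus acts as an isometry for the local norm $\|\cdot\|_v$; at the finitely many remaining places, conjugation by $k_i$ distorts the local norm by a factor bounded in terms of the chosen $\Zz$-basis, which contributes an absolute multiplicative constant. Hence $\|u_v\|_v \asymp \|\Ad(a(x))\|_v$.

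Next I would compute $\Ad(a(x))$ explicitly on the standard basis $\{e_{11},e_{22},e_{12},e_{21}\}$ of $M_2(F_v)$: the diagonal matrix units are fixed, while $e_{12}\mapsto x\, e_{12}$ and $e_{21}\mapsto x^{-1} e_{21}$. Since these four vectors span (up to bounded factors) a $\Zz$-basis of the maximal order, I can read off
$$\|\Ad(a(x))\|_v \asymp \max\{|x|_v, |x|_v^{-1}\},$$
establishing the first assertion. For the refined assertion at a $p$-adic place with $x \in \Oo_v^\times$, the matrix $a(x)$ lies in $K_v = \GL_2(\Oo_v)$, so $u_v \in F_v^\times K_v$; conjugation by elements of $F_v^\times K_v$ preserves $M_2(\Oo_v) = \order_{\rmB,v}$ exactly, giving $\|u_v\|_v = 1$ with no implied constant.

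For the division algebra case the argument is even softer: $\Gv/F_v^\times$ is compact, so $\Ad(u_v)$ ranges in a compact subgroup of $\mathrm{GL}(\rmB_v)$, and the operator norm is automatically pinched between two absolute constants. The only mild obstacle across the whole lemma is bookkeeping the finitely many ``bad'' places where $K_v$ does not stabilize $\order_{\rmB,v}$ on the nose, but since the number of such places is finite and each distortion is absolute, they are cleanly absorbed into the $\asymp$-constant and cause no essential difficulty.
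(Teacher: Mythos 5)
Your proof is correct. The paper states this lemma without proof, calling it a ``straightforward result,'' and your argument is precisely the expected one: reduce via the Cartan decomposition to $\Ad(a(x))$ on the matrix units, observing that $\Ad(z)$ is trivial and $\Ad(k_i)$ is (up to an absolute constant, and exactly at good places) an isometry of the local norm, then invoke compactness of $\Gv/F_v^\times$ in the division algebra case. The one point worth flagging is that the exact equality $\|u_v\|_v=1$ for $x\in\Oo_v^\times$ relies on the convention that $K_v$ stabilizes $\order_{\rmB,v}$ on the nose at the place in question; your phrasing quietly uses this, which matches what the paper's statement itself implicitly assumes.
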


 \subsection{Automorphic representations}

Let $\rmG=\Bt$; given $\chi$ a unitary character of $F^\times\bash\At$ we denote by $\mcA(\rmG,\chi)$ the set of (unitary) automorphic representations of $\GA$ with central character $\chi$. Given such a representation we denote by $(\pi_v)_{v\in\mcV}$ the sequence of its local constituents: these are unitary $\Gv$-representation all but finitely many of which are unramified principal series representations (of $\Gv=\GL_{2}(F_v)$ for $v$ finite), and one has $\pi\simeq\bigotimes_v\pi_v$ the restricted tensor product being taken with respect to a sequence of spherical vectors $(\vphi^\circ_v)_{v}$ indexed by the finite places where $\pi$ is unramified (admits a $K_v$-invariant vector). We denote by $\unram(\pi)\subset\mcV_f$ the set of (finite) unramified places of $\pi$ and $\ram(\pi)\subset\mcV$ its complement\footnote{ by convention infinite places are contained in $\ram(\pi)$ even the ones for which $\pi_v$ is spherical}. At any place where $\Gv=\GL_{2,v}$, we denote by $\mcW(\pi_v)$ the Whittaker model of $\pi_v$ relative to the additive character $\psi_v$ and for $v\in\unram(\pi)$, we take 
the spherical vector $W^\circ\in\mcW(\pi_v)$ such that $W^\circ(1)=1$.

We define inner products on $\pi$ and $\pi_v$ as follows:  
For $\varphi_i\in \pi$, $i=1,2$,
$$\peter{\varphi_1,\varphi_2}=\int\limits_{[G]} \varphi_1(g)\overline{\varphi_2(g)}dg;$$
For $\varphi_{i,v}\in \pi_v$ and $W_{i,v}=W_{\varphi_i,v}$ the associated Whittaker functions in the case $B_v^\times=\GL_2(\F_v)$, 
$$\peter{\varphi_{1,v},\varphi_{2,v}}=\int\limits_{F_v^\times} W_{1,v}(a(x)) \overline{W_{2,v}(a(x))}d^\times x .$$
The $L^2$-norms of $\varphi$ and $\varphi_v$ are defined with respect to these pairings. Inner products can also be defined for compactly induced representations when $B_v$ is a division algebra, though we do not need the details in this paper.


We denote by $\tilde\pi\in\mcA(\rmG,\chi^{-1})$ the contragredient representation of $\pi$,
and by $\pi^{JL}\in\mcA(\GLd,\chi)$ the automorphic representation corresponding to $\pi$ under the Jacquet-Langlands correspondence (one has $\pi_v\simeq\pi^{JL}_v$ for $v\not\in\ram(\pi)$). 

\begin{conv}
Unless stated otherwise a local unitary representation of $\Gv$ for some $v$ will always be understood as a local constituent of some global automorphic representation. In particular, the Langlands parameters (when the representation is unramified) satisfy the best known approximation towards the Ramanujan-Petersson conjecture.	
\end{conv}

\begin{conv} To ease notations (and as long as the context is clear) we will  not distinguish between $\pi$ and the Jacquet-Langlands constituent $\pi^{JL}$ at least notationally: suppose we have a quantity, say $L(\pi^{JL})$, which a priori is constructed out of $\pi^{JL}$: we will denote it indifferently $L(\pi^{JL})$ or $L(\pi)$. This convention will be in use especially to denote L-functions and related quantities like conductors.
\end{conv}
\subsection{L-functions}\label{Lfunctions}
Let $\Pi\simeq\bigotimes_{v}\Pi_v$ be an automorphic representation of some reductive group $\rmG$ and $\rho$ a representation of its dual group $\rmG^L$ of dimension $d$; to these data, one associates an L-function given by an Euler product of local $L-$factors of degree $d$ converging in some halfspace $\{s,\ \Re s\gg 1\}$
$$L(\Pi,\rho,s)=\prod_{v\in\mcV_f}L_v(\Pi,\rho,s),$$
\yhn{where $ L_v(\Pi,\rho,s)=\prod_{i=1}^d(1-\frac{\alpha_{v,i}(\Pi,\rho)}{q_v^s})^{-1}$ at unramified places for $\alpha_{v,i}(\Pi,\rho)$ the eigenvalues of $\rho(c(\Pi_v))$ where $c(\Pi_v)$ is the Satake matrix associated to $\Pi_v$. At ramified places, $L_v$ is often defined as the common denominator of the relevant local period integrals, or using the local Langlands correspondence.}

The archimedean factors are defined similarly as
$$L_\infty(\Pi,\rho,s)=\prod_{v\in\mcV_\infty}L_v(\Pi,\rho,s),\ L_v(\Pi,\rho,s)=\prod_{i=1}^d\Gamma_{F_v}(s+\alpha_{v,i}(\Pi,\rho)).$$
In several cases one can prove that $L(\Pi,\rho,s)$ admits analytic continuation to $\Cc$ with at most a finite number of poles located on the line $\Re s=1$ and that it admits a functional equation of the shape
$$\Lambda(\Pi,\rho,s)=\eps(\Pi,\rho)C_f(\Pi,\rho)^{\frac{1}2-s}\ov{\Lambda(\Pi,\rho,1-\ov s)}$$
where
$$ \Lambda(\Pi,\rho,s)=L_\infty(\Pi,\rho,s)L(\Pi,\rho,s),\ |\eps(\Pi,\rho)|=1,\ C_f(\Pi,\rho)\in\Nn_{\geq 1}.$$ 
The factor $\eps(\Pi,\rho)$ is the root number and $C_f(\Pi,\rho)$ is the arithmetic conductor. Both quantities can also be factored into a product of local root numbers (upon choosing a non-trivial additive character of $F\bash\Aa$) and local conductors which are almost everywhere equal to $1$ and arise from local functional equations:
$$\eps(\Pi,\rho)=\prod_v \eps_v(\Pi,\rho),\ C_f(\Pi,\rho)=\prod_{v\in\mcV_F} C_v(\Pi,\rho).$$
One define the {\em analytic conductor} of $(\Pi,\rho)$ by completing the arithmetic conductor with archimedean local conductors:
$$C(\Pi,\rho)=C_f(\Pi,\rho)C_\infty(\Pi,\rho)=C_f(\Pi,\rho)\prod_{v\in\mcV_\infty} C_v(\Pi,\rho)$$
$$C_v(\Pi,\rho)=\prod_{i=1}^d(1+|\alpha_{v,i}(\Pi,\rho)|)^{[F_v:\Rr]},\ v\in\mcV_\infty.$$

\subsubsection{Convexity bound}
The following bound is also sometimes known:
for any $\eps>0$, one has
$$L(\Pi,\rho,1/2)\ll_{F,d,\eps}C(\Pi,\rho)^{1/4+\eps}.$$
The subconvexity problem consist in improving the exponent $1/4$ to one strictly smaller.

All the properties mentionned above are known for the following L-functions
\begin{itemize}
\item[-] Hecke-Godement-Jacquet L-functions: $n\geq 1$, $\rmG=\GL_n$, $\rho=\Std$, $d=n$. The L-function will be noted $L(\Pi,s)$.
\item[-] Rankin--Selberg L-functions: $\rmG=\GLd\times\GLd$, $\rho=\Std\otimes\Std$, $d=4$. If $\Pi=\pi_1\ootimes\pi_2$, L-function will be noted $L(\pi_1\ootimes\pi_2,s)$.
\item[-] Adjoint L-functions: $\rmG=\GLd$, $\rho=\Ad$, $d=3$. The L-function will be noted $L(\Pi,\Ad,s)$.
\item[-] Triple product L-function: $\rmG=\GLd\times\GLd\times\GLd$, $\rho=\Std\otimes\Std\otimes\Std$, $d=8$. If $\Pi=\pi_1\ootimes\pi_2\ootimes\pi_3$, the L-function will be noted $L(\pi_1\ootimes\pi_2\ootimes\pi_3,s)$.
	
\end{itemize}

In the following 
we will write
\begin{equation}\label{Eq:defQv}
C(\pi_1\times\pi_2\times\pi_3)=\prod_{v}C_v(\pi_1\times\pi_2\times\pi_3)
=Q=\prod_v Q_v.
\end{equation}

\subsubsection{Bound for adjoint L-functions}\label{smallthings}
Given $\pi\in\mcA(\GLd,\chi)$ it follows from \cite{GHL} that 
$$L(\pi,\mathrm{Ad},1)=C(\pi)^{o(1)}\hbox{ as }C(\pi)\ra\infty.$$


\subsection{Convention for inequalities}\label{Section:Inequalityconvention}
For two functions $f_1$ $f_2$ of variables $a,b,c$,etc., by
$$f_1(a,b,\cdots)\ll_{a,b} f_2(a,b,\cdots),$$
we mean $f_1(a,b,\cdots)\leq C_{a,b} f_2(a,b,\cdots)$ for some implied constant $C_{a,b}$ depending only on $a$ and $b$, 
independent of other variables. This convention is commonly used among analytic number theorists.

In the following we will omit additional variables from notations without confusion. To deal with infinite products over all places, we also introduce the following notation:

\begin{defn}
Let $f_1, f_2$ be two functions of $v\in \mcV$ and other variables $a,b,c$, etc. By
$$f_1\lleq_{v,a} f_2,$$
we mean $f_1\ll_{v,a} f_2$ with implied constant $C_{v,a}$, and $C_{v,a}=1$ if the cardinality of residue field $q_v\geq N_a$ where $N_a$ depends only on $a$. 

We define $f_1\ggeq_{v,a}f_2$ similarly. By $f_1\asymp_{v,a} f_2$, we mean $f_1\lleq_{v,a} f_2$ and $f_1\ggeq_{v,a} f_2$.
\end{defn}
The following lemma is straightforward:
\begin{lem}
If $f_1(v)=f_2(v)=1$ for almost all $v$, and $f_1\lleq_{v,a} f_2$, then
$$\prod\limits_{v}f_1\ll_a \prod\limits_{v}f_2.$$
\end{lem}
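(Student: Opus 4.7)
The plan is to separate the places $v$ into those with small residue field and those with large residue field, so that the implied constants only matter on a finite set.

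First I would fix the parameter $a$ and set $N_a$ as supplied by the hypothesis $f_1\lleq_{v,a}f_2$, so that $C_{v,a}=1$ whenever $q_v\geq N_a$. Let $S_a=\{v\in\mcV:q_v<N_a\}$. A standard fact about number fields is that $S_a$ is finite: for each rational prime $p$ there are at most $[F:\Qq]$ places of $F$ above $p$, and only finitely many rational primes satisfy $p<N_a$. Hence $S_a$ is a finite set depending only on $a$ and $F$.

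Next I would observe that, by the assumption that $f_1(v)=f_2(v)=1$ for almost every $v$, both infinite products $\prod_v f_1(v)$ and $\prod_v f_2(v)$ reduce to finite products (so there is no issue of convergence). Splitting off the places in $S_a$:
\begin{align*}
\prod_{v}f_1(v)
&=\prod_{v\in S_a}f_1(v)\cdot\prod_{v\notin S_a}f_1(v)\\
&\leq \prod_{v\in S_a}C_{v,a}f_2(v)\cdot\prod_{v\notin S_a}f_2(v)\\
&=\Bigl(\prod_{v\in S_a}C_{v,a}\Bigr)\prod_{v}f_2(v).
\end{align*}
Here the inequality on the inner factors uses $f_1(v)\leq C_{v,a}f_2(v)$ for $v\in S_a$ and $f_1(v)\leq f_2(v)$ (i.e.\ $C_{v,a}=1$) for $v\notin S_a$.

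Finally, since $S_a$ is finite and depends only on $a$, and each $C_{v,a}$ depends only on $v$ and $a$, the product $C_a:=\prod_{v\in S_a}C_{v,a}$ is a constant depending only on $a$. This yields $\prod_v f_1(v)\leq C_a\prod_v f_2(v)$, which is exactly $\prod_v f_1\ll_a\prod_v f_2$. There is no serious obstacle; the only subtlety worth noting explicitly is the finiteness of $S_a$, which is the reason the convention $\lleq_{v,a}$ was designed to multiply well over places in the first place.
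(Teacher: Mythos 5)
Your proof is correct, and it is the natural argument the paper has in mind (the paper simply labels the lemma ``straightforward'' and gives no written proof). The key point — that the implied constant multiplies to a constant depending only on $a$ because only the finitely many places $v$ with $q_v<N_a$ contribute a nontrivial $C_{v,a}$, even though the set of $v$ where $f_i(v)\neq 1$ may depend on other parameters and be much larger than $S_a$ — is exactly the nuance flagged in the paper's remark, and your proof handles it cleanly by using $C_{v,a}=1$ outside $S_a$ rather than trying to control the support of $f_1,f_2$.
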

The nuance here is that the number of local factors $f_i(v)$ which are not $1$ may depend on other variables.

\begin{example}\label{Ex:control}
Let $f(v)=\zeta_v(1)$ for $v| Q_{\fin}$ and $f(v)=1$ otherwise. 
Then 
$$Q_v^{-\epsilon}\lleq_{v,\epsilon}f(v)\lleq_{v,\epsilon} Q_v^\epsilon,$$
$$Q^{-\epsilon}\ll_{\epsilon}\prod\limits_{v}f(v)\ll_{\epsilon}Q^\epsilon.$$
Same conclusions hold if $f(v)=C$ some fixed constant for $v|Q_{\fin}$ and $f(v)=1$ otherwise.
\end{example}

%

 \section{bounds for global periods}\label{secgeneralbound}

Let $\pi_i,\ i=1,2,3$ be three generic automorphic representations of $\GLd(\Aa)$ whose product of central characters is trivial; we will moreover assume that $\pi_1$ and $\pi_2$ are cuspidal.

The basic analytic properties stated in \S \ref{Lfunctions} (including the convexity bound) are known for the associated triple product L-function $L(\pi_1\ootimes\pi_2\ootimes\pi_3,s)$. If $\pi_3$ is a principal series representation this is the Rankin--Selberg theory; when $\pi_3$ is cuspidal these are non-trivial facts which are consequences of the work of several people including Garrett, Piatestsky-Schapiro--Rallis and Ramakrishnan \cite{Garrett,PSR,Ram}. $L(\pi_1\ootimes\pi_2\ootimes\pi_3,s)$ is holomorphic unless $\pi_3$ is a principal series and $\pi_1,\pi_2$ are contragredient to one another up to a twist. Note that the convexity bound follows from Ramakrishnan's proof that $L(\pi_1\ootimes\pi_2,s)$ is $\GL_4$-automorphic and from an argument of Molteni \cite{Mol}.

The condition on the product of central characters being trivial implies that $L(\pi_1\ootimes\pi_2\ootimes\pi_3,s)$ has real coefficients and that the global and local root numbers $$\eps(\pi_1\ootimes\pi_2\ootimes\pi_3)=\prod_{v\in\mcV}\eps_v(\pi_1\ootimes\pi_2\ootimes\pi_3)$$
are all $\pm 1$.

The {\em subconvexity problem} we are studying in this paper amounts to finding an absolute $\delta>0$ such that
$$L(\pi_1\ootimes\pi_2\ootimes\pi_3,1/2)\ll_F C(\pi_1\ootimes\pi_2\ootimes\pi_3)^{1/4-\delta}.$$

\subsection{The triple product formula}
We may assume that 
\begin{equation}\label{plusonesign}
 \eps(\pi_1\ootimes\pi_2\ootimes\pi_3)=+1,
 \end{equation}
 as otherwise $L(\pi_1\ootimes\pi_2\ootimes\pi_3,1/2)=0$ and we are done.
This implies that the set of places of $F$ at which the local root number equals $-1$ has even cardinality; this set determines a unique quaternion algebra $\rmB$ (the one ramified precisely at this even set of places) and three $\Bt$-automorphic representation $(\pi^\rmB_1,\pi^\rmB_2,\pi^\rmB_3)$ whose images under the Jacquet-Langlands correspondence are $(\pi_1,\pi_2,\pi_3)$. More precisely, at any place for which $\rmB$ is unramified, $\rmB^\times_v=\GL_{2,v}$ and $\pi_{i,v}^\rmB\simeq\pi_{i,v}$  while for any place where $\rmB$ is ramified, $\pi_{i,v}^\rmB$ is finite dimensional and $\pi_{i,v}$ is the image of $\pi_{i,v}^\rmB$ under the local Jacquet-Langlands correspondence; note that in the Rankin--selberg case, when $\pi_3$ is a principal series representation, $\Bt=\GLd$ and the correspondence is the identity. 

From now on, the quaternion algebra $\rmB$ will be considered {\em fixed}; we write
$$\pi=\pi^{\rmB}_1\in\mcA(\rmG,\chi_1),\ \Pi=\pi^{\rmB}_2\otimes\pi^{\rmB}_3\in\mcA(\rmG\times\rmG,\chi^{-1}_1).$$ 
The later is an automorphic representation of $\GA\times\GA$ on which $\GA$ acts via the diagonal embedding $\rmG\hookrightarrow\rmG\times\rmG$.

It follows from the work of Prasad \cite{Pra} that for any place $v$ the space 

\noindent $\Hom_\Gv(\Pi_v,\tilde\pi_{v})$ of $\Gv$-invariant intertwiners is one dimensional and a generator is given by the functional, $\Pro^{\pi_v}_{\Pi_v}$ say, well defined up to a scalar of modulus $1$ satisfying for any $\vphi_{v}\otimes\varPhi_{v}\in \pi_v\otimes\Pi_v$ 
$$\left|\Pro^{\pi_v}_{\Pi_v}(\vPhi_v)(\vphi_v)\right|^2=\frac{L_v(\pi,\Ad,1)L_v(\Pi,\Ad,1)}{\zeta_{F_v}(2)^2L_v(\pi\times\Pi,1/2)} \int_{[\Gv]}\peter{\vphi_{v},g.\vphi_{v}}\peter{\vPhi_{v},g.\vPhi_{v}} dg.$$
\begin{rem}Observe that the $\Gv$-integral is absolutely converging due to the known bounds for matrix coefficient \cite{KiSa,BlBr} and the assumption that $\pi_v$ and $\Pi_v$ are local components of global automorphic representations.
\end{rem}

Furthermore by work of Garrett, Piatetsky-Shapiro--Rallis, Harris-Kudla, Watson and Ichino \cite{Garrett,PSR,HK,Watson,Ichino},  the space of global $\GA$-equivariant intertwiners $\Hom_\GA(\Pi,\tilde\pi)$
is non-zero (and therefore one dimensional) if an only if 
$$L(\pi\times\Pi,1/2)\not=0$$ and then is generated by $\Pro^{\pi}_{\Pi}$ given by
\begin{align}\label{Prpidef}
\Pro^{\pi}_{\Pi}(\vPhi)(\vphi)&=\int_{[\rmG]} \vphi(g)\vPhi(g)dg \\
\Pro^{\pi}_{\Pi}(\vPhi)&=\sum\limits_{\vphi\in \mcB(\pi) }\Pro^{\pi}_{\Pi}(\vPhi)(\vphi)\notag
\end{align}
where $\vPhi$ is viewed as a function on $\GA\times\GA$, $g\mapsto \vPhi(g)$ denotes the restriction of $\vPhi$ to diagonally-embedded $\GA$ in $\GA\times\GA$ and $\mcB(\pi)$ denotes an orthonormal basis for $\pi$; note that the integral is converging since $\pi$ is cuspidal.
More precisely the following period formula relating global and local intertwiners holds (\cite
{Ichino}):
\begin{theo}\label{triplethm}
For $\vphi\otimes\vPhi\simeq \bigotimes_v\vphi_v\otimes\vPhi_v$ any non-zero factorable global vector in $\pi\otimes\Pi$, one has
\begin{equation}\label{tripleformula}
\frac{\left|\int_{[\rmG]} \vphi(g)\vPhi(g)dg\right|^2}{\peter{\vphi,\vphi}\peter{\vPhi,\vPhi}}=C_F\frac{L(\pi\times\Pi,1/2)}{L(\pi,\mathrm{Ad},1)L(\Pi,\mathrm{Ad},1)}\prod_v \frac{\left|\Pro^{\pi_v}_{\Pi_v}(\vPhi_v)(\vphi_v)\right|^2}{\peter{\vphi_v,\vphi_v}\peter{\vPhi_v,\vPhi_v}},
\end{equation}
where $C_F>0$ is an absolute constant. 


\end{theo}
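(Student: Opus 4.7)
The proof of Theorem \ref{triplethm} is essentially due to Ichino (building on Garrett, Piatetsky-Shapiro--Rallis, Harris--Kudla, and Watson), and the strategy I would follow is the classical "uniqueness + unramified computation" paradigm.

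First I would show that the left-hand side, viewed as a function of $\vphi\otimes\vPhi\in\pi\otimes\Pi$, defines a sesquilinear $\GA$-invariant form which lies in the diagonal of a product of local forms. More precisely, the global period $J(\vphi,\vPhi)=\int_{[\rmG]}\vphi(g)\vPhi(g)dg$ belongs to $\Hom_\GA(\pi\otimes\Pi,\Cc)$, so by Prasad's theorem (applied locally and tensored) the assignment $\vphi_v\otimes\vPhi_v\mapsto \Pro^{\pi_v}_{\Pi_v}(\vPhi_v)(\vphi_v)$ gives a canonical factorization of a global functional, and there exists a single global constant $\kappa\in\Cc$ such that
\[
J(\vphi,\vPhi)=\kappa\cdot\prod_v \Pro^{\pi_v}_{\Pi_v}(\vPhi_v)(\vphi_v)
\]
for every factorable vector (the product is meaningful since almost all local factors equal $1$ on suitable spherical vectors). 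Taking squared absolute values gives an identity of the stated shape and reduces the problem to evaluating $|\kappa|^2$.

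Second I would compute $|\kappa|^2$ by comparing matrix-coefficient integrals. Expanding $|J|^2=J\overline{J}$ as a double integral and performing the change of variables $g\mapsto gh$ inside $[\rmG]\times[\rmG]$ (legitimate thanks to the cuspidality of $\pi$, which also guarantees absolute convergence), one obtains
\[
|J(\vphi,\vPhi)|^2=\int_{[\rmG]}\peter{\pi(g)\vphi,\vphi}\peter{\Pi(g)\vPhi,\vPhi}\,dg.
\]
Using the definition of $\Pro^{\pi_v}_{\Pi_v}$ via the same local matrix coefficient integral on $[\Gv]$ together with the regularized Euler product factorization, the right-hand side factors as
\[
\frac{L(\pi\otimes\Pi,1/2)}{L(\pi,\Ad,1)L(\Pi,\Ad,1)\,\zeta_F(2)^2}\prod_v\int_{[\Gv]}\peter{\vphi_v,g.\vphi_v}\peter{\vPhi_v,g.\vPhi_v}\,dg\cdot(\text{explicit residue factors}),
\]
where the L-ratio is produced via the unramified calculations of Garrett and Piatetsky-Shapiro--Rallis identifying
\[
\int_{[\Gv]}\peter{\vphi^\circ_v,g.\vphi^\circ_v}\peter{\vPhi^\circ_v,g.\vPhi^\circ_v}\,dg=\frac{\zeta_{F_v}(2)^2L_v(\pi\otimes\Pi,1/2)}{L_v(\pi,\Ad,1)L_v(\Pi,\Ad,1)}
\]
for almost every place. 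The local factors at ramified places are absorbed into the definition of $\Pro^{\pi_v}_{\Pi_v}$.

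Third, to extract the absolute constant $C_F$ I would use the Plancherel measure normalizations from \S\ref{secnotations} and the standard convention that $\zeta_F^\ast(1)=\mathrm{Res}_{s=1}\zeta_F(s)$; the residues and measure normalizations on $[\rmG]$ recombine into a number depending only on $F$ (the volume of $[\rmG]$ with respect to the Tamagawa measure and the residues of $\zeta_F$). The main obstacle in this program is the convergence and regularization in the Rankin--Selberg case ($\pi_3=1\boxplus\chi_3$), where $\vPhi$ is an Eisenstein series and the integral defining $J$ must be regularized via Zagier's truncation or by unfolding against the cusp form: one must check that after regularization the same local intertwiners appear, which is exactly the content of the Harris--Kudla/Watson refinement and accounts for the uniform statement of the theorem.
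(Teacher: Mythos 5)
The paper does not give a proof of Theorem~\ref{triplethm}; it cites the statement directly to Ichino \cite{Ichino} (with antecedents of Garrett, Piatetski-Shapiro--Rallis, Harris--Kudla, and Watson), so there is no in-paper argument against which to compare your proposal step by step.

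As a sketch of how one would prove the result, your outline is plausible in steps~1 and~3 (uniqueness of local trilinear functionals via Prasad, and the spherical local computation pinning down the $L$-ratio), but step~2 is not a valid argument. The claimed identity
\[
|J(\vphi,\vPhi)|^2=\int_{[\rmG]}\peter{\pi(g)\vphi,\vphi}\peter{\Pi(g)\vPhi,\vPhi}\,dg
\]
does not follow from ``expanding $|J|^2=J\bar J$ and changing variables $g\mapsto gh$.'' Writing $|J|^2=\int_{[\rmG]}\int_{[\rmG]}\vphi\vPhi(g_1)\overline{\vphi\vPhi(g_2)}\,dg_1\,dg_2$ and setting $g_1=g_2h$ is not licit: $[\rmG]=\GF\At\bash\GA$ is not a group, and no change of variables turns the double integral into a single integral of global matrix coefficients. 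Moreover, the local forms $\Pro^{\pi_v}_{\Pi_v}$ in the statement are built from integrals over $[\Gv]=F_v^\times\bash\Gv$, so the (formal) Euler product of the local matrix-coefficient integrals lives over $\At\bash\GA$, not over $[\rmG]$; the two integrals do not even have the same domain. In fact, the displayed relation is precisely the Ichino--Ikeda \emph{heuristic}, not a provable identity: it is what the theorem is supposed to \emph{imply} after one expands the local factors, and taking it as given renders the argument circular. Ichino's actual proof proceeds by a quite different route --- theta correspondence and a see-saw dual pair identity, combined with the Siegel--Weil formula and the Garrett/PSR integral representation of the triple product $L$-function --- rather than any formal manipulation of $|J|^2$. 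Your remarks in step~3 about the archimedean/ramified constants and in the final paragraph about the Eisenstein (Rankin--Selberg) degeneration and the need for regularization are essentially correct, but they rest on the faulty step~2.
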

\begin{rem} If $\rmG=\GLd$ and $\pi_3$ is a principal series representation the above formula amounts to the Rankin--Selberg theory, and $\Pro^{\pi_v}_{\Pi_v}(\vPhi_v)(\vphi_v)$ becomes the standard local Rankin--Selberg integral. 
\end{rem}
Assuming that $\vphi,\vPhi$ are both globally and locally $L^2$-normalized and using \S \ref{smallthings} we rewrite this formula in the following less precise form
\begin{equation}\label{tripleformula3}
{\left|\int_{[\rmG]} \vphi(g)\vPhi(g)dg\right|^2}=C(\pi\times\Pi)^{o(1)}{L(\pi\times\Pi,1/2)}\prod_v {\left|\Pro^{\pi_v}_{\Pi_v}(\vPhi_v)(\vphi_v)\right|^2}.
\end{equation}

\subsection{Test vectors} 

Following a method initiated in \cite{Ven}, we use this last formula to get our hands on the size of the central value
$L(\pi\times\Pi,1/2)$.

By  \eqref{Eq:defQv}
we rewrite \eqref{tripleformula3} in the following form
\begin{equation}\label{Lperiod}
\frac{L(\pi\times\Pi,1/2)}{C(\pi\times\Pi)^{1/4+\epsilon}}=\frac{\left|\int_{[\rmG]} \vphi(g)\vPhi(g)dg\right|^2}{\prod_v \left|\Pro^{\pi_v}_{\Pi_v}(\vPhi_v)(\vphi_v)\right|^2Q_v^{1/4}}.	
\end{equation}
From this  we see that in order to solve the subconvexity problem it will be sufficient to show the existence of factorable vectors $\vphi$, $\vPhi$ for which the global period is small, that is,
$$\left|\int_{[\rmG]} \vphi(g)\vPhi(g)dg \right|^2\ll Q^{-\delta}$$ for some $\delta>0$
 and for which the product of local periods 
\begin{equation}\label{Eq:expectedlocalint}
\prod_v \left|\Pro^{\pi_v}_{\Pi_v}(\vPhi_v)(\vphi_v)\right|^2Q_v^{1/4}\gg Q^{-\epsilon}
\end{equation}
is not too small. 
 \begin{rem} Observe that this product is converging: when $v$ is finite, $\pi_v$ and $\Pi_v$ are unramified principal series and  $\vphi^\circ_v,\ \vPhi^\circ_v$ are the spherical vectors, we have
 	$$\left|\Pro^{\pi_v}_{\Pi_v}(\vPhi^\circ_v)(\vphi^\circ_v)\right|^2=Q_v^{1/4}=1.$$
 \end{rem}

We will search for vectors of the following form:
\begin{defn}\label{Defn:testvec1}
 For $i=1,2,3$ a test vector for $\pi^\rmB_i\simeq\bigotimes_v\pi^\rmB_{i,v}$ is a factorable vector $\vphi_i=\otimes_v\vphi_{i,v}$ such that
\begin{enumerate}
\item For all $v$, $\|\vphi_{i,v}\|=1$.
\item For all $d\in\Nn$, $\mcS_d^{\pi_v}(\vphi_{i,v})\ll_d Q_v^{A(d)}$ for some $A$ depending on $d$ only.	
\item For $v\not\in \bigcup_{i=1,2,3}\ram(\pi^\rmB_i)$, $\vphi_{i,v}=\vphi^\circ_{i,v}$ is a spherical element.

\end{enumerate}
Here $\mcS_d^{\pi_v}$ is the Sobolev norm defined as in, for example, \cite[Section 2.3.5]{MVIHES}.
A test vector of $\Pi=\pi_2^\rmB\otimes\pi_3^\rmB$ is a tensor product of test vectors $\vPhi=\vphi_2\otimes\vphi_3$. 
	
\end{defn}

 The control of local periods from below is essentially a local problem (finding for each ramified place $v$ a quantitatively "good"  vector for the intertwiner $\Pro^{\pi_v}_{\Pi_v}$), but the explicit choice of test vectors is also related to the global upper bound. We make the following definition:
 \begin{defn}\label{Defn:feasindex}
 Given $\vphi_i,\ i=1,2,3$ test vectors of $\pi_i^\rmB$, the feasibility index of $\vPhi=\vphi_2\otimes\vphi_3$ with respect to $\vphi=\vphi_1$ at the place $v$ is defined as
 $$\Feas_v(\vPhi|\vphi)=\left|\Pro_{\Pi_v}^{\pi_v}(\vPhi_v)(\vphi_v)\right|^2Q_v^{1/4},$$
measuring if  \eqref{Eq:expectedlocalint} can be achieved for proper $L^2$-normalized local test vectors; 
 Define the total feasibility index to be the product
$$\Feas(\vPhi|\vphi)=\prod_v\Feas_v(\vPhi|\vphi).$$
Finally the {\em feasibility index} of $\vPhi$ with respect to the representation $\pi_1^\rmB$ is
$$\Feas(\vPhi|\pi):=\sup_{\vphi}\Feas(\vPhi|\vphi)$$
where the supremum is taken over test vectors in $\pi_1^\rmB$.
 \end{defn}

\subsection{The amplification method}
To give global upper bound we proceed as in \cite{MVIHES} by first convolving $\vphi$ by some amplifier $\sigma$ with the following properties: 

\begin{lem}\label{amplifier} Let $L\geq 1$ be some parameter (to be chosen for optimization)
, there exist a complex-valued measure $\sigma$ on $\GA$ 
 satisfying
\begin{enumerate}
\item $\sigma$ is compactly supported on $\rmG(\Aa_{S_L})$ where $S_L$ is a set of finite places of norm $\leq L$ at which $\vphi$, $\vPhi$ and $\psi$ are all unramified;
\item  \label{ampli0} For any $u\in\supp(\sigma)$ we have $ \|u\|\leq L$   where  $\|u\|$ is the operator norm as in \eqref{Eq:defnOperatornorm}. 
\item Let $|\sigma|$ denote the  total variation measure. Then
the total mass of $|\sigma|$ is bounded above by $L^{B}$, for some absolute constant $B$. 
Moreover, with $|\sigma|^{(2)}=|\sigma|\star|\check\sigma|$, one has for any 
$\gamma>1/2$
$$\int\|u\|^{-\gamma}d|\sigma|^{(2)}(u)\leq L^{-\eta},\ \hbox{for some }\eta=\eta(\gamma)>0.\label{ampli2}$$\label{ampli3}
\item $\varphi\star\sigma=\lambda \varphi$ with $\lambda \gg_{F,\eps} Q^{-\eps} $ for any $\eps>0$.\label{ampli4} 

\end{enumerate}	
\end{lem}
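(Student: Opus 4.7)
The plan is to construct $\sigma$ as a signed combination of unitarily normalized unramified Hecke operators at small split primes, following the classical amplifier of \cite{DFI1} and its adelic formulation in \cite[\S 4]{MVIHES}. Fix the parameter $L\geq 1$ (to be chosen a small positive power of $Q$) and let $\mathcal{P}_L$ denote the set of finite places $v$ with $L/2 < q_v \leq L$ at which $\rmB$ splits and all of $\pi_i,\vphi_i,\psi$ are unramified. Since the ramified set is of bounded cardinality, the prime number theorem for $F$ yields $|\mathcal{P}_L|\gg L/\log L$. For each $v\in\mathcal{P}_L$ we have $\Gv\simeq\GL_2(F_v)$, and I would let $T_v, T_{v^2}$ denote the standard unitarily normalized Hecke operators at $v$, supported on $K_v a(\varpi_v) K_v$ and $K_v a(\varpi_v^2) K_v$ respectively, whose eigenvalues on the spherical vector $\vphi^{\circ}_{1,v}$ are the Satake-normalized Hecke eigenvalues $\lambda_\pi(v)$ and $\lambda_\pi(v^2)$.

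Set $\sigma = \sum_{v\in\mathcal{P}_L}(x_{v,1} T_v + x_{v,2} T_{v^2})$, with signs $x_{v,j}\in\{0,\pm 1\}$ chosen by Iwaniec's rule: if $|\lambda_\pi(v)|\geq 1/2$ take $x_{v,1}=\overline{\mathrm{sgn}(\lambda_\pi(v))}$ and $x_{v,2}=0$, otherwise take $x_{v,1}=0$ and $x_{v,2}=\overline{\mathrm{sgn}(\lambda_\pi(v^2))}$. Property (1) is immediate with $S_L=\mathcal{P}_L$. Property (2) holds because elements in the support of $T_v$ or $T_{v^2}$ have operator norm at most $q_v^2\leq L^2$ (adjusting $L$ by a fixed constant gives the stated bound). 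The total variation bound $|\sigma|(\GA)\ll L^{3/2}$ gives the mass bound in (3). For the amplification step (4), the Hecke relation $\lambda_\pi(v)^2 = \lambda_\pi(v^2) + \chi_{1,v}(\varpi_v)$ ensures that at each $v$ at least one of $|\lambda_\pi(v)|,\ |\lambda_\pi(v^2)|$ exceeds a fixed positive constant, so the eigenvalue $\lambda = \sum_v x_{v,1}\lambda_\pi(v) + x_{v,2}\lambda_\pi(v^2)$ satisfies $\lambda\gg|\mathcal{P}_L|\gg L/\log L$, vastly stronger than the claimed $\lambda\gg Q^{-\epsilon}$.

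The main obstacle is the second-moment decay $\int\|u\|^{-\gamma}d|\sigma|^{(2)}(u)\leq L^{-\eta(\gamma)}$ for every $\gamma>1/2$. Expanding $|\sigma|^{(2)} = |\sigma|\star|\check\sigma|$ produces a double sum over $(v,v')\in\mathcal{P}_L^2$ of convolutions of Hecke measures. For off-diagonal pairs $v\neq v'$ the convolved support sits in operator norm $\asymp q_v^{a/2} q_{v'}^{b/2}\gtrsim L$, and a direct estimate gives a contribution of size $L^{1-2\gamma+o(1)}$, acceptable once $\gamma>1/2$. Diagonal pairs $v=v'$ split, via the standard Hecke relations for $\GL_2(F_v)$, into a spread-out piece of operator norm $\asymp q_v$ and a small identity contribution (of mass bounded by a negative power of $q_v$ in the unitary normalization) whose integral against $\|1\|^{-\gamma}=1$ sums to $O(1/\log L)$. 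Together these give the required saving with some explicit $\eta(\gamma)>0$. This is precisely the Hecke-ball counting estimate developed in \cite[\S 4]{MVIHES}, and it transfers verbatim to the present setting because $\mathcal{P}_L$ consists only of places at which $\rmB$ is split. Invoking that estimate completes the proof.
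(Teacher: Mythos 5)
Your proposal reconstructs the standard Iwaniec-type amplifier and then delegates the convolution estimate to the Hecke-ball counting in \cite[\S 4]{MVIHES}, which is precisely what the paper does: the lemma is stated without proof, preceded by ``we proceed as in \cite{MVIHES},'' so the intended content is exactly the construction you describe. The logic of items (1), (3), (4) and the invocation of the off-diagonal/diagonal split for the $|\sigma|\star|\check\sigma|$ integral are all correctly identified.

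One small bookkeeping slip: for item (\ref{ampli0}) you note that elements supporting $T_{v^2}$ have operator norm $\asymp q_v^2\leq L^2$ and then say that ``adjusting $L$ by a fixed constant gives the stated bound.'' That is not a constant adjustment but a quadratic one; to literally obtain $\|u\|\leq L$ on the support you should take $\mathcal P_L$ to consist of places with $q_v\leq L^{1/2}$ (equivalently, relabel the parameter as $L^2$ throughout). Since $L$ is a free parameter to be optimized later and only enters through $L^{\pm\eta}$, $L^{\pm\delta}$, this is harmless, but as written it misstates the renormalization. With that fix the proposal is correct and is the same argument the paper implicitly relies upon.
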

Since at least one of $\pi_2$ and $\pi_3$ is cuspidal, $\vPhi$ is rapidly decreasing. As $\vphi$ is $L^2$-normalized, the Cauchy--Schwarz inequality  implies that:
\begin{align}\label{Cauchy}
|\lambda|^2\left|\int_{[\rmG]} \vphi(h)\vPhi(h)dh\right|^2= &\left|\int_{[\rmG]} (\vphi\star\sigma)(g)\vPhi(g)dg\right|^2\\
\leq & \int_{[\rmG]} \left|\vPhi\star\check\sigma(g)\right|^2dg=
\peter{(\vphi_2\vphi_3)\star(\ov\sigma\star\check\sigma),\vphi_2\vphi_3}.	\notag
\end{align}

 The later inner product decomposes into a sum of terms of the shape
 $$\peter{u.\vphi_2\ \ov{\vphi_2},\vphi_3\ u.\ov{\vphi_3}}=\peter{\vPhi_{2,u}|_\rmG,\vPhi_{3,u}|_\rmG}$$
where $u\in\supp(\ov\sigma\star\check\sigma)$,  $\vPhi_{2,u},\vPhi_{3,u}$ are the functions on $\GA\times\GA$ given by
  \begin{equation}
  \label{Phi23udef}
  \vPhi_{2,u}(g_1,g_2)= u.\vphi_2(g_1)\times\ov\vphi_2(g_2),\vPhi_{3,u}(g_1,g_2)=\vphi_3(g_1)\times u.\ov\vphi_3(g_3),	
  \end{equation}
  and $|_\rmG$ denote the restriction to $\GA\hookrightarrow\GA\times\GA$ (diagonally embedded). The function $\vPhi_{2,u}$ and $\vPhi_{3,u}$ are factorable vectors in the $\GA^2$-automorphic representations
  $$\Pi_2=\pi_2\otimes\tilde\pi_2,\ \Pi_3=\pi_3\otimes\tilde\pi_3.$$
  To simplify notations we will omit the restriction $|_\rmG$ of $\vPhi_2$ and $\vPhi_3$ and write $\peter{\vPhi_{2,u},\vPhi_{3,u}}$ in place of $\peter{\vPhi_{2,u}|_\rmG,\vPhi_{3,u}|_\rmG}$. Thus our discussion so far gives 
\begin{align}\label{Cauchy2}
|\lambda|^2\left|\int_{[\rmG]} \vphi(h)\vPhi(h)dh\right|^2\leq  \int\limits_{u}\peter{\vPhi_{2,u},\vPhi_{3,u}} d(\ov\sigma\star\check\sigma)(u)
\end{align}
 which is essentially a finite sum in $u$.

 \subsection{Applying the Plancherel formula}\label{Plancherelsection}
 Suppose first that $\pi_3$ is a  cuspidal automorphic representation, then the restrictions to $\GA$ of $\vPhi_2$ and $\vPhi_3$ are rapidly decreasing modulo $\At\GF$ and by the Plancherel formula the above inner product decomposes as 
 \begin{align*}
\peter{\vPhi_{2,u},\vPhi_{3,u}}=&\int_{\pi'\in\mcA(\rmG,1)}\sum_{\psi\in\mcB(\pi')}
\peter{\vPhi_{2,u},\psi}\peter{\psi,\vPhi_{3,u}}\ d\mu_{\mathrm{Pl}}(\pi')\\
\label{decomp2}
=&\int_{\pi'\in\mcA(\rmG,1)}\sum_{\psi\in\mcB(\pi')}
\Pro^{\pi'}_{\Pi_2}(\vPhi_{2,u})(\psi)\ov{\Pro_{\Pi_3}^{\pi'}(\vPhi_{3,u})(\psi)}\ d\mu_{\mathrm{Pl}}(\pi')
\\=&\int_{\pi'\in\mcA(\rmG,1)}\peter{\Pro^{\pi'}_{\Pi_2}(\vPhi_{2,u}),{\Pro_{\Pi_3}^{\pi'}(\vPhi_{3,u})}} d\mu_{\mathrm{Pl}}(\pi')
 \end{align*}
 where
 $\pi'$ runs over the automorphic representations of $\Bt$ with trivial central character, $\mcB(\pi')$ is an orthonormal basis of the space of $\pi'$, and
$\Pro^{\pi'}_{\Pi_i},\ i=1,2$ is defined as in \eqref{Prpidef}.

Restricting the above integral to the finite and the generic spectrum we write
$$\peter{\vPhi_{2,u},\vPhi_{3,u}}=\peter{\vPhi_{2,u},\vPhi_{3,u}}_{\mathrm{finite}}+\peter{\vPhi_{2,u},\vPhi_{3,u}}_{\mathrm{generic}}.
$$

The contribution of the finite spectrum (characters of order at most $2$)
equals a finite sum of products of matrix coefficients:
$$
\peter{\vPhi_{2,u},\vPhi_{3,u}}_{\mathrm{finite}}=\sum_{\chi^2=1}\peter{u.\vphi_2,\chi\vphi_2}\peter{u.\vphi_3,\chi\vphi_3}
\ll Q^{\epsilon}\|u\|^{-\gamma},	
$$
where $\gamma=1-2\theta$ with $\theta<7/64$ (\cite{KiSa,BlBr}). Indeed the above sum comprises  at most two non-zero terms (the trivial character and the unique quadratic character (if any) such that $\pi_2\simeq\chi\pi_2$ and $\pi_3\simeq\chi\pi_3$); The bound then follows from bounds for spherical matrix coefficients, the support for $u$, and the bound towards Ramanujan-Petersson conjecture \cite{KiSa,BlBr}.

Using this bound together with item (\ref{ampli3}) of Lemma \ref{amplifier} we obtain that
\begin{equation}\label{finitecontribution}
|\ov\sigma|\star|\check\sigma|(u\mapsto|\peter{\vPhi_{2,u},\vPhi_{3,u}}_{\mathrm{finite}}|)\ll Q^{\epsilon}L^{-\delta}	
\end{equation}
for some absolute $\delta>0$.

\subsection{The generic contribution}\label{secgeneric} It remains to bound the generic spectrum contribution
\begin{equation}\label{genericint}
\peter{\vPhi_{2,u},\vPhi_{3,u}}_{\mathrm{generic}}=\int_{\pi'\ \mathrm{ generic}}\peter{\Pro^{\pi'}_{\Pi_2}(\vPhi_{2,u}),{\Pro_{\Pi_3}^{\pi'}(\vPhi_{3,u})}} d\mu_{\mathrm{Pl}}(\pi').
\end{equation}

In this paper we will not try to exploit cancellations between the terms of the $\pi'$-integral and will rather focus on controlling the length of spectral sum and individual terms.

%

\subsubsection{Triple product L-functions and Quantum unique ergodicity}\label{sectripleQUE}
We first look at the decay properties of the inner product
$$\peter{\Pro^{\pi'}_{\Pi_2}(\vPhi_{2,u}),{\Pro_{\Pi_3}^{\pi'}(\vPhi_{3,u})}}=\sum_{\psi\in\mcB(\pi')}
\Pro^{\pi'}_{\Pi_2}(\vPhi_{2,u})(\psi)\ov{\Pro_{\Pi_3}^{\pi'}(\vPhi_{3,u})(\psi)}.$$
Consider the case $u=1$, then
$$\Pro^{\pi'}_{\Pi_i}(\vPhi_{i,u})(\psi)=\int_{[\rmG]}\psi(g)|\vphi_i(g)|^2dg=\mu_{|\vphi_i|^2}(\psi )$$
where $\mu_{|\vphi_i|^2}$ is the probability measure with density $|\vphi_i(g)|^2dg$. The convergence of this measure to the standard measure for varying $\vphi_i$ is interpreted as an equidistribution property of the "mass" of $\vphi_i$ on (quotients) $[\rmG]$: in other terms a form of Quantum Unique Ergodicity (QUE) in the sense of Rudnick-Sarnak \cite{RS}. QUE has now been established in a number of contexts by different methods \cite{LuoSar,Sarnak,Lin,HolLin,ANP,YH18}.
Our present problem is a slight variant of the above. The comment justifies the choice of notations in Definition \ref{Defn:eqindex} below.


By \eqref{Lperiod} and bounding terms individually, we obtain that
\begin{align}\label{Eq:uppergeneric}
&\left|\peter{
\Pro^{\pi'}_{\Pi_2}(\vPhi_{2,u}),
{\Pro_{\Pi_3}^{\pi'}(\vPhi_{3,u})}}\right|\leq \sum\limits_{\psi\in \mcB(\pi')}\left|
\Pro^{\pi'}_{\Pi_2}(\vPhi_{2,u})(\psi)\right|\left|
\Pro^{\pi'}_{\Pi_3}(\vPhi_{3,u})(\psi)\right| \\
=&(C(\pi')Q)^{o(1)}\sum\limits_{\psi\in \mcB(\pi')}\frac{L(\pi'\ootimes\Pi_2,1/2)^{1/2}}{C(\pi'\ootimes\Pi_2)^{1/8}}\prod_v {\left|
\Pro^{\pi'}_{\Pi_2}(\vPhi_{2,u,v})(\psi_v)\right|}{C_v(\pi'\ootimes\Pi_2)^{1/8}} \notag\\
&\times\frac{L(\pi'\ootimes\Pi_3,1/2)^{1/2}}{C(\pi'\ootimes\Pi_3)^{1/8}}\prod_v {\left|
\Pro^{\pi'}_{\Pi_3}(\vPhi_{3,u,v})(\psi_v)\right|}{C_v(\pi'\ootimes\Pi_3)^{1/8}}	\notag\\
=&(C(\pi')Q)^{o(1)}\left(\frac{L(\pi'\ootimes\Pi_2,1/2)L(\pi'\ootimes\Pi_3,1/2)}{C(\pi'\ootimes\Pi_2)^{1/4}C(\pi'\ootimes\Pi_3)^{1/4}}\right)^{1/2} \notag\\
&\times\sum\limits_{\psi\in \mcB(\pi')}\prod_v \left|
\Pro^{\pi'}_{\Pi_2}(\vPhi_{2,u,v})(\psi_v)\right|\left|
\Pro^{\pi'}_{\Pi_3}(\vPhi_{3,u,v})(\psi_v)\right|\left(C_v(\pi'\ootimes\Pi_2)C_v(\pi'\ootimes\Pi_3)\right)^{1/8}. \notag
\end{align}

\begin{defn}\label{Defn:eqindex}
\begin{enumerate}
 \item[-]We call the quantity $$\Equid(\Pi|\pi')=\frac{L(\pi'\ootimes\Pi,s)}{C(\pi'\ootimes\Pi)^{1/4}}$$  the {\em global equidistribution index} of $\Pi$ with respect to $\pi'$.
\item[-]Given  some test vector $\vPhi\in \Pi$ we define the {\em local equidistribution index}
$$\equid_{v}(\vPhi|\psi)=
\left|\Pro^{\pi'_v}_{\Pi_v}(\vPhi_{v})(\psi_v)\right|^2C_v(\pi'\ootimes\Pi)^{1/4}.$$

\item[-]For any $S\subset \mcV$ we define
$$\equid_S(\vPhi|\psi):=\prod_{v\in S}\equid_{v}(\vPhi|\psi).$$
\item[-]For
$\Pi_{23}=\Pi_2\boxplus\Pi_3$, define $L(\pi'\ootimes\Pi_{23},s)=L(\pi'\ootimes\Pi_2,s)L(\pi'\ootimes\Pi_3,s)$ and similarly $\ C(\pi'\ootimes\Pi_{23})$, $\Equid(\Pi_{23}|\pi')$.
For test vector $\vPhi_{23}=\vPhi_2\boxplus\vPhi_3\in \Pi_{23},$ denote $e_S(\vPhi_{23}|\psi)=e_S(\vPhi_{2}|\psi)e_S(\vPhi_{3}|\psi)$.
\end{enumerate}

\end{defn} 

 \begin{rem}

At places $v$ where $\vPhi$ and $\psi$ are spherical, one has
$$\equid_{v}(\vPhi|\psi)=1.$$
Thus the product $\equid_S(\vPhi|\psi)$ is always finite. 
 \end{rem}
  
Now take the vector $$\vPhi_{23,u}:=\vPhi_{2,u}\boxplus\vPhi_{3,u}\in \Pi_{23}$$ (cf. \eqref{Phi23udef}) for $u\in\supp(\ov\sigma\star\check\sigma)$. Using the above notations, we can rewrite \eqref{Eq:uppergeneric} as
\begin{equation}\label{Eq:uppergeneric2}
\left|\peter{\Pro^{\pi'}_{\Pi_2}(\vPhi_{2,u}),{\Pro_{\Pi_3}^{\pi'}(\vPhi_{3,u})}}\right|\leq (C(\pi')Q)^{o(1)}\Equid(\Pi_{23}|\pi')^{1/2}\sum\limits_{\psi\in \mcB(\pi')}\equid_\mcV(\vPhi_{23,u}|\psi)^{1/2},
\end{equation}
and thus
\begin{align}\label{genericupper}
&\left|\peter{\vPhi_{2,u},\vPhi_{3,u}}_{\mathrm{generic}}\right| \\
\leq & \notag (C(\pi')Q)^{o(1)}\int_{\pi'\ \mathrm{ generic}}\Equid(\Pi_{23}|\pi')^{1/2}\sum\limits_{\psi\in \mcB(\pi')}\equid_\mcV(\vPhi_{23,u}|\psi)^{1/2} d\mu_{\mathrm{Pl}}(\pi').
\end{align}

\subsubsection{The global equidistribution index}

By the convexity bound for L-functions, we have
\begin{equation}\label{Eq:globalEqindex}
\Equid(\Pi_{23}|\pi')\ll_\epsilon C(\pi'\ootimes\Pi_{23})^{\epsilon}\ll_\epsilon  Q^\epsilon.
\end{equation} 
 The Generalized Lindel\"of hypothesis predicts that it is much smaller, controlled by $C(\pi'\ootimes\Pi_{23})^{-1/4+o(1)}$. The resolution of the subconvexity problem for $L(\pi'\ootimes\Pi_2,1/2)L(\pi'\ootimes\Pi_3,1/2)$ is equivalent to
$$\Equid(\Pi_{23}|\pi')\ll C(\pi'\ootimes\Pi_{23})^{-\delta}$$
for some absolute constant $\delta>0$. In some special cases
this bound is known (for example when $\pi_2$ or $\pi_3$ is either an Eisenstein series representation or a dihedral representation.)
Our result uses only \eqref{Eq:globalEqindex} and does not rely on this saving, but can be greatly improved if it is known.

\subsubsection{The generic spectral length}

For a factorable automorphic form $\varphi$, let $\ram(\varphi)$ denote the set of finite places $v$ where $\vphi_{v}$ is not  spherical.

For each place $v$, let
$$M_v=\min\{C(\pi_{2,v}), C(\pi_{3,v})\},$$
$$M=\prod\limits_{v}M_v,\  M_{\fin}=\prod\limits_{v\nmid \infty}M_v.$$
 
\begin{assumption}\label{Assumption:Main}
 For each $v|M_\fin\infty$, let $j_v=2$ or $3$ so that $C(\pi_{j_v,v})=M_v$. 
We assume the following 
\begin{enumerate}
\item For $v|M_{\fin}$,  $\varphi_{j_v,v}$   is $K(M'_v)-$invariant where $$M'_v=M_v^A $$ for some absolute constant $A$.
\item For $v|\infty$,  we assume that $\mcS^{\pi_{j_v,v}}_d\left(\varphi_{j_v,v}\right)\ll M_v^{A(d)}$ for some constants $A(d)$ depending only on $d$.
\end{enumerate}
\end{assumption}
Note that this assumption will be incorporated into Conjecture \ref{Conj:localresults} later on.

To simplify the notations, we shall omit subscript $v$ from $j$ from now on, though it can change for different $v$.

\begin{lem}\label{Lem:shortgenericlength} 
 Let $M'=\prod\limits_{v\text{\ finite}}(\|u\|_v M_v)^A $ for some
absolute constant $A$.
With Assumption \ref{Assumption:Main} above, we have
\begin{align}\label{genericupper2}
&\left|\peter{\vPhi_{2,u},\vPhi_{3,u}}_{\mathrm{generic}}\right|\\
\ll_\epsilon &\notag (LQ)^{\epsilon}\int_{\pi'\ \mathrm{ generic}}\Equid(\Pi_{23}|\pi')^{1/2}\sum\limits_{\psi\in \mcB((\pi')^{K(M')})}\equid_\mcV(\vPhi_{23,u}|\psi)^{1/2} d\mu_{\mathrm{Pl}}(\pi')\\
\ll_\epsilon & Q^\epsilon (LM)^A  \sup\limits_{\pi',\psi
}\Equid(\Pi_{23}|\pi')^{1/2}\equid_\mcV(\vPhi_{23,u}|\psi)^{1/2}, \notag
\end{align}
where the integral/supremum in   $\pi'$ is for those $\pi'$ with $C(\pi'_v)$ bounded by $(\|u\|_v  M_v)^A$ for all $v$;
The sum/supremum in $\psi$ is over orthonormal elements in $\pi'$ such that $\psi_v$ at finite places is  $K(M_v')-$invariant, and $\psi_v$
 at archimedean places satisfies $\mcS^{\pi_{v}'}_d\left(\psi_{v}\right)\ll M_v^{A(d)} $.
\end{lem}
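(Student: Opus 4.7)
The plan is to use Assumption \ref{Assumption:Main} to truncate both the Plancherel support and the length of the inner spectral sum in \eqref{genericupper}. For each finite place $v\mid M_\fin$, let $j=j_v\in\{2,3\}$ be the index with $C(\pi_{j,v})=M_v$; Assumption \ref{Assumption:Main} guarantees that $\vphi_{j,v}$ is $K(M_v^A)$-invariant. Since $u\in\supp(\ov\sigma\star\check\sigma)$ has local operator norm $\|u_v\|_v\leq\|u\|\leq L$, the translate $u_v.\vphi_{j,v}$ is invariant under a subgroup containing $K((\|u\|_v M_v)^{A'})$, and hence so is the restriction to the diagonal of $\vPhi_{j,u}$ at $v$, which is the pointwise product of $\vphi_{j,v}$ against $u_v.\ov{\vphi_{j,v}}$ (up to the ordering dictated by $j=2$ or $3$). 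By $\Gv$-equivariance of the local intertwiner $\Pro^{\pi'_v}_{\Pi_{j,v}}$, the scalar $\Pro^{\pi'_v}_{\Pi_{j,v}}(\vPhi_{j,u,v})(\psi_v)$ must then vanish unless $\psi_v$ admits a nontrivial $K(M_v')$-invariant component with $M_v'=(\|u\|_v M_v)^A$; in particular $C(\pi'_v)\leq M_v'$.

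Since the product $\equid_\mcV(\vPhi_{23,u}|\psi)$ appearing in \eqref{Eq:uppergeneric2} contains at each $v$ the factor $\equid_v(\vPhi_{j_v,u}|\psi)$, the full product vanishes as soon as $\psi_v$ fails to be $K(M_v')$-invariant at some finite place. The archimedean counterpart follows the same template with congruence invariance replaced by Sobolev-norm bounds, using the Sobolev-continuity of translations and of the local intertwiners as in \cite[\S 2.3.5]{MVIHES}. Choosing an orthonormal basis of $\pi'$ compatible with this filtration yields the first inequality of the lemma: the Plancherel integral is supported on those $\pi'$ with $C(\pi'_v)\leq(\|u\|_v M_v)^A$ at every $v$, and the inner sum is reduced to $\mcB((\pi')^{K(M')})$ with the stated Sobolev control at infinity.

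For the second inequality I would bound the truncated integral and sum by their supremum times the volume of the truncated parameter space. The Plancherel measure of the local representations of conductor $\leq N$ is polynomial in $N$, $\dim(\pi'_v)^{K(M_v')}$ is polynomial in $M_v'$ for $\pi'_v$ of conductor $\leq M_v'$, and the archimedean analogues (count of $K_v$-types of bounded Sobolev type within a fixed bounded spectral range) are also polynomial. The total volume is therefore at most $\prod_v(\|u\|_v M_v)^A\leq(LM)^A$ using $\|u\|\leq L$ from Lemma \ref{amplifier}; combining with the $(LQ)^\epsilon$ loss, which is absorbed into $Q^\epsilon$ once $L$ is a small power of $Q$, yields the claimed estimate.

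The main subtlety lies in propagating the invariance or smoothness of $\vphi_{j,v}$ cleanly through the four operations applied to it: translation by $u_v$, multiplication against the contragredient factor, restriction to the diagonally-embedded $\Gv$, and the local intertwiner $\Pro^{\pi'_v}_{\Pi_{j,v}}$. Each of these steps loses a polynomial factor in either $\|u\|_v$ or $M_v$ and must be absorbed into the absolute exponent $A$; once this bookkeeping is completed, the two displayed inequalities follow routinely.
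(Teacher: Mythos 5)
Your overall structure matches the paper's: a non-archimedean vanishing argument from $K(M'_v)$-invariance, an archimedean truncation, and a Weyl-law volume count for the second inequality. The one place where the argument is imprecise — and it is worth flagging because it changes the nature of the step — is at the archimedean places. You write that the archimedean case ``follows the same template with congruence invariance replaced by Sobolev-norm bounds,'' but there is no exact vanishing analogue at $\infty$: the local period $\Pro^{\pi'_v}_{\Pi_{j,v}}(\vPhi_{j,u,v})(\psi_v)$ does not vanish when $\mcS^{\pi'_v}_d(\psi_v)$ is large, it merely decays rapidly. The correct input is a quantitative rapid-decay estimate for the local intertwiner in the Sobolev norm of $\psi_v$ (the paper cites \cite[Lemma 6.9, \S 6.11]{Ne19}), together with the observation that this decay controls the tail of the $\psi$-sum and the $\pi'$-integral up to an admissible $Q^\epsilon M^A$ loss, and a dimension count for the truncated $K$-type space (\cite[Lemma 2.6.3]{MVIHES}). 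Mere Sobolev-continuity of translation and of the intertwiner is not by itself enough to justify truncating an infinite spectral sum; you need the decay. Your closing remark that each step ``loses a polynomial factor... absorbed into $A$'' suggests you have the right intuition, but the archimedean truncation should be phrased as a tail estimate, not a vanishing statement. The finite-place argument (translation by $u_v$ with $\|u_v\|_v\le L$ preserving invariance under $K((\|u_v\|_v M_v)^{A})$, then $\Gv$-equivariance forcing the intertwiner to land in $(\tilde\pi'_v)^{K(M'_v)}$) and the second inequality via Plancherel volume/Weyl law are essentially identical to the paper's.
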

\begin{proof}
%

At archimedean places, the local period integral $\Pro^{\pi'}_{\Pi_j}(\vPhi_{j,v})(\psi_v)$ is rapidly decaying by \cite[Lemma 6.9, Section 6.11]{Ne19} with respect to the Sobolev norm of $\psi_v$. Thus
up to a factor of size $Q^\epsilon M^A$, one can restrict the summation in $\psi$ to those with $\mcS^{\pi'_v}_d(\psi_v)\ll M_v^{A(d)}$, and the integral in  $\pi'$ to those with  analytic conductor at $v$ bounded by $M_v^A$ according to \cite[Lemma 6.6]{Ne19}. The dimension of such $\psi\in \pi'$ are controlled by  $\prod\limits_{v|\infty}M_v^A$ according to \cite[Lemma 2.6.3]{MVIHES}.

At non-archimedean places, the restriction on ramification comes from that
$ \Pro^{\pi'}_{\Pi_j}(\vPhi_{j,u,v})(\psi_v)$ would be vanishing when $\vPhi_{j,u,v}$ is $K(M'_v)-$invariant
while $\psi_v$ is in the orthogonal complement of  $(\pi_v')^{K(M'_v)}$, the subspace of $K(M'_v)-$invariant elements.



The  second line of \eqref{genericupper2} follows from the Weyl law with the bounded local conductors for $\pi'$ and $\psi$.
\end{proof}
\begin{example}
Consider the simplest case  where $\varphi_3$ is spherical. Then one only has to consider those $\pi'$/$\psi$ in Lemma \ref{Lem:shortgenericlength} 
whose ramifications at finite places happen only at $v$ where $u_v\neq 1$, and are controlled there by $\|u_v\|^{A}$.
\end{example}

\subsubsection{The local equidistribution index} 

Take $S=\ram(\vPhi_{23,u})$.  For relevant $\pi'$ and $\psi$ as in Lemma \ref{Lem:shortgenericlength}, we have
 $$\equid_{\mcV}(\vPhi_{23,u}|\psi)= \equid_{S}(\vPhi_{23,u}|\psi).$$

 One has the relation
$\ram(\vPhi_{23,u})=\ram(\vPhi_{23})\sqcup \{v,\ u_v\not=1\}$.
The study of $e_v(\vPhi_{23,u}|\psi)$ at $v\in \ram(\vPhi_{23})$ will be left to later sections. Here we control the local equidistribution index at $v$ where $u_v\neq 1$.

\begin{lem} Let $v$ be such that $u_v\not=1$, then for $\psi$ as in Lemma \ref{Lem:shortgenericlength},
$$\equid_{v}(\vPhi_{23,u}
|\psi)\ll C_v(\pi')^2\|u_v\|^{A'}$$
where the implied constant and $A'$ are absolute.
\end{lem}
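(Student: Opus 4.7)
The starting point is the product decomposition
$$\equid_v(\vPhi_{23,u}|\psi)=\equid_v(\vPhi_{2,u}|\psi)\cdot\equid_v(\vPhi_{3,u}|\psi),$$
which follows from $\vPhi_{23,u}=\vPhi_{2,u}\boxplus\vPhi_{3,u}$ and the definition of $\equid_v$ on the product $\Pi_{23}$. It therefore suffices to bound each factor
$$\equid_v(\vPhi_{i,u}|\psi)=|\Pro^{\pi'_v}_{\Pi_{i,v}}(\vPhi_{i,u,v})(\psi_v)|^2\, C_v(\pi'\ootimes\Pi_i)^{1/4}\qquad(i=2,3)$$
by $C_v(\pi')\|u_v\|^{A''}$ for some absolute $A''$; the claim then follows with $A'=2A''$.

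\medskip

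The conductor piece is controlled by exploiting the fact that $v$ lies in the amplifier support $S_L$ from Lemma~\ref{amplifier}: at such $v$ both $\pi_{2,v}$ and $\pi_{3,v}$ are unramified principal series, hence $\Pi_{i,v}=\pi_{i,v}\otimes\tilde\pi_{i,v}$ is an unramified Weil--Deligne representation of dimension $4$. Consequently $C_v(\pi'\ootimes\Pi_i)=C_v(\pi')^{4}$, so
$$\prod_{i=2,3} C_v(\pi'\ootimes\Pi_i)^{1/4}=C_v(\pi')^{2},$$
which accounts for the $C_v(\pi')^2$ factor in the claim.

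\medskip

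For the local periods themselves, we apply Prasad's matrix coefficient formula
$$|\Pro^{\pi'_v}_{\Pi_{i,v}}(\vPhi_{i,u,v})(\psi_v)|^2=\frac{L_v(\pi'_v,\Ad,1)L_v(\Pi_{i,v},\Ad,1)}{\zeta_{F_v}(2)^2 L_v(\pi'_v\otimes\Pi_{i,v},1/2)}\int_{[\Gv]}\peter{\psi_v,g.\psi_v}\peter{\vPhi_{i,u,v},g.\vPhi_{i,u,v}}dg.$$
The ratio of $L$-factors is harmless: $L_v(\Pi_{i,v},\Ad,1)$ is a fixed rational function of $q_v^{-1}$ since $\Pi_{i,v}$ is unramified, and the remaining unramified $L$-values involving $\pi'_v$ are bounded polynomially in $C_v(\pi')$ using standard approximations towards Ramanujan--Petersson. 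For the matrix-coefficient integral, we invoke the Sobolev-type estimate of \cite[\S 3.4]{MVIHES} / \cite[\S 6]{Ne19}, yielding
$$\Bigl|\int_{[\Gv]}\peter{\psi_v,g.\psi_v}\peter{\vPhi_{i,u,v},g.\vPhi_{i,u,v}}dg\Bigr|\ll \mcS^{\pi'_v}_{d}(\psi_v)^2\,\mcS^{\Pi_{i,v}}_{d}(\vPhi_{i,u,v})^2$$
for some absolute $d$. The Sobolev norm of $\psi_v$ is controlled by hypothesis, while the crucial point is that $\vPhi_{i,u,v}$ is the translate of the spherical vector $\vphi^{\circ}_{i,v}\otimes\overline{\vphi^{\circ}_{i,v}}$ by $(u_v,1)$, so standard Sobolev/translation estimates give $\mcS^{\Pi_{i,v}}_{d}(\vPhi_{i,u,v})\ll \|u_v\|^{A(d)}$ with absolute $A(d)$.

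\medskip

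Combining these three pieces yields $\equid_v(\vPhi_{i,u}|\psi)\ll C_v(\pi')\,\|u_v\|^{A''}$, and multiplying the bounds for $i=2,3$ gives the stated inequality. The main technical point is making the Sobolev-type control of the matrix-coefficient integral uniform in the possibly ramified $\pi'_v$; this is the non-archimedean analogue of the Kirillov-model integration-by-parts arguments used in \cite{MVIHES,Ne19}, and essentially already present there, but has to be checked for the present local data $(\pi_i,\Pi_i,\pi')$.
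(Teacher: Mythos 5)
Your proof is correct and follows essentially the same route as the paper. Both handle the conductor factor identically: since $v$ lies in the amplifier support $S_L$, the components $\pi_{2,v},\pi_{3,v}$ are unramified, so $\Pi_{i,v}$ is unramified and $C_v(\pi'\ootimes\Pi_i)=C_v(\pi')^4$, whence $\prod_{i=2,3}C_v(\pi'\ootimes\Pi_i)^{1/4}=C_v(\pi')^2$; and both ultimately rest on the Sobolev machinery of \cite{MVIHES}. The one substantive difference is how the local period is estimated. The paper cites \cite[Lemma 3.5.2]{MVIHES} to bound the $L^2$-norm $\|\Pro^{\pi'_v}_{\Pi_{i,v}}(\vPhi_{i,u,v})\|$ by Sobolev norms of the spherical vectors and their $u_v$-translates (via the distortion property), and then obtains the pointwise bound for $\psi_v$ by dropping all other terms of the orthonormal expansion. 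You instead unpack Prasad's local period formula and estimate the resulting integral of products of matrix coefficients directly, quoting the $L$-factor ratio as harmless. These are two routes to the same estimate: the cited MVIHES lemma is proved by essentially the matrix-coefficient bound you invoke, and your $L$-factor control is justified because the local representations are constituents of global automorphic ones, so Ramanujan--Petersson approximations keep those factors polynomially bounded in $C_v(\pi')\le\|u_v\|^{O(1)}$. Your version is marginally more explicit and gives the pointwise bound directly; the paper's is slightly slicker at the cost of passing through an $L^2$-to-pointwise reduction.
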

\proof 
Since $u_v\not=1$, $\Pi_2$ and $\Pi_3$ are unramified and 
$$C_v(\pi'\ootimes\Pi_2)C_v(\pi'\ootimes\Pi_3)= C_v(\pi')^{8}.$$

 Recall that 
$$ \vPhi_{2,u}(g_1,g_2)= u.\vphi_2(g_1)\times\ov\vphi_2(g_2),\vPhi_{3,u}(g_1,g_2)=\vphi_3(g_1)\times u.\ov\vphi_3(g_2) .$$

It then follows from \cite[Lemma 3.5.2]{MVIHES}  and the distortion property for Sobolev norms \cite[S1b]{MVIHES} that for some absolute constants $A$ and $A'$
$$\|{\Pro^{\pi'}_{\Pi_2}(\vPhi_{2,u,v})\|\times\|\Pro^{\pi'}_{\Pi_3}(\vPhi_{3,u,v})\|}\ll\mcS^{\pi_{2,v}}_A(\vphi_2^\circ)\mcS^{\pi_{2,v}}_A(u_v.\vphi_2^\circ)\mcS^{\pi_{3,v}}_A(\vphi_3^\circ)\mcS^{\pi_{3,v}}_A(u_v.\vphi_3^\circ)\ll\|u_v\|^{A'}.$$
The lemma follows by dropping all but one term.
\qed

From this bound and Lemma \ref{Lem:shortgenericlength}  on the restriction for $\pi'$, we deduce that
\begin{equation}\label{equidlocfirstbound}
	\equid_\mcV(\vPhi_{23,u}|\psi)
\ll \|u\|^{A}\equid_{\ram(\vPhi_{23})}(\vPhi_{23}|\psi)
\end{equation}
for some absolute constant $A$.

\subsection{The main global bound}


By combining \eqref{Lperiod}, \eqref{Cauchy}, Lemma \ref{amplifier}(4), \eqref{finitecontribution}, Lemma \ref{Lem:shortgenericlength} and \eqref{equidlocfirstbound}
we obtain after integration over $u$ that there exist absolute constant $\delta,A>0$ 
 such that
\begin{align*}
&L(\pi\times\pi_2\times\pi_3,1/2)\\
\ll_\epsilon &\frac{C(\pi_1\times\pi_2\times\pi_2)^{1/4+\epsilon}}{\Feas(\vphi_2\otimes\vphi_3|\pi)}\left( L^{-\delta}+(LM)^{A}\sup\limits_{\pi',\psi
}\Equid(\Pi_{23}|\pi')^{1/2}\equid_{\ram(\vPhi_{23})}(\vPhi_{23}|\psi)^{1/2}\right).
\end{align*}
Here the supremum  is taken over $\pi'$ and $\psi$ as in Lemma \ref{Lem:shortgenericlength}. 

Choosing $L>1$ optimally if
\begin{equation}\label{Eq:EE}
\E(\vPhi_{23}):=\sup\limits_{\pi',\psi
}\Equid(\Pi_{23}|\pi')^{1/2}\equid_{\ram(\vPhi_{23})}(\vPhi_{23}|\psi)^{1/2}\leq  1,
\end{equation}
 and $L=1$ otherwise, we obtain (under the assumption that $\pi_1,\pi_2,\pi_3$ are all cuspidal) the following result:
\begin{theo}\label{metasubconvexbound} 
Under Assumption \ref{Assumption:Main}, there exist absolute constants $\delta, A>0$ such that 
$$L(\pi\times\pi_2\times\pi_3,1/2)\ll_\epsilon M^AC(\pi_1\times\pi_2\times\pi_2)^{1/4+\epsilon} \frac{1}{\Feas(\vphi_2\otimes\vphi_3|\pi)}\min\left(1,\E(\vPhi_{23})^\delta\right).$$
\end{theo}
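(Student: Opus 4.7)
The plan is to assemble the pieces developed across Sections 3.1--3.6 into a single chain of inequalities. The starting point is the normalized triple product identity \eqref{Lperiod}, which expresses $L(\pi\otimes\Pi,1/2)/C(\pi\otimes\Pi)^{1/4+\epsilon}$ as the ratio of the squared global period $|\int_{[\rmG]}\vphi\,\vPhi|^2$ over the local feasibility index $\Feas(\vphi_2\otimes\vphi_3|\pi)$. Since this last factor is already in the target bound, the entire task reduces to bounding the global period by $M^A\min(1,\E(\vPhi_{23})^\delta)$ up to $Q^\epsilon$.

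First I would amplify $\vphi=\vphi_1$ via the measure $\sigma$ of Lemma \ref{amplifier}: the eigenvalue property $\vphi\star\sigma=\lambda\vphi$ with $\lambda\gg Q^{-\epsilon}$ lets one replace $\vphi$ by $\vphi\star\sigma$, and Cauchy--Schwarz in $L^2([\rmG])$ yields \eqref{Cauchy2}, bounding the global period against an $(\bar\sigma\star\check\sigma)$-average of $\peter{\Phi_{2,u},\Phi_{3,u}}$. Applying Plancherel on $[\rmG]$ (trivial central character) splits each integrand into a finite-spectrum and a generic-spectrum contribution. The finite part is a finite sum of products of spherical matrix coefficients at at most two characters; combining the pointwise bound on matrix coefficients with the Kim--Sarnak/Blomer--Brumley bound towards Ramanujan--Petersson yields the decay $\ll Q^\epsilon\|u\|^{-\gamma}$, and averaging against the $|\sigma|^{(2)}$-tail property \eqref{ampli2} of the amplifier produces the clean bound $\ll Q^\epsilon L^{-\delta}$ stated in \eqref{finitecontribution}.

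The generic contribution is the heart of the argument. Bounding the Plancherel expansion term by term via Ichino's formula applied to the pair $(\Pi_2,\Pi_3)$ introduces the global equidistribution index $\Equid(\Pi_{23}|\pi')$ and the local indices $\equid_v(\vPhi_{23,u}|\psi)$ of Definition \ref{Defn:eqindex}, yielding \eqref{Eq:uppergeneric2}. The convexity bound gives $\Equid\ll Q^\epsilon$, so the real issue is to control the length of the spectral sum/integral. This is precisely where Assumption \ref{Assumption:Main} is used: by forcing $\vphi_{j,v}$ to be $K(M_v')$-invariant (or to have bounded Sobolev norm at archimedean places) for the index $j=j_v$ minimizing $C(\pi_{j,v})$, one confines $\pi'$ and $\psi$ to a set of effective size $(LM)^A$, the archimedean truncation relying on the rapid decay of local periods of \cite{Ne19} and the discrete step on the Weyl law of \cite[Lemma 2.6.3]{MVIHES}, as assembled in Lemma \ref{Lem:shortgenericlength}. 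Coupled with \eqref{equidlocfirstbound} to absorb the polynomial factor $\|u\|^A$ at places where $u_v\neq 1$, this turns the generic contribution into a bound of shape $(LM)^A\E(\vPhi_{23})$.

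Collecting the two contributions one obtains
$$\frac{L(\pi\otimes\pi_2\otimes\pi_3,1/2)}{C(\pi_1\otimes\pi_2\otimes\pi_3)^{1/4+\epsilon}\Feas(\vphi_2\otimes\vphi_3|\pi)^{-1}}\ll L^{-\delta}+(LM)^A\,\E(\vPhi_{23}),$$
and the theorem follows by choosing $L$ a small positive power of $\E(\vPhi_{23})^{-1}$ when $\E(\vPhi_{23})<1$, and $L=1$ otherwise, in accordance with the dichotomy at \eqref{Eq:EE}. The main obstacle in this scheme is the short-window reduction at archimedean places in Lemma \ref{Lem:shortgenericlength}: unlike the non-archimedean truncation, which is automatic from $K(M_v')$-invariance, the archimedean restriction requires quantitative rapid decay of the local period pairing on Sobolev scales, together with Weyl-type volume bounds; this is also where the factor $M^A$ in the final bound is genuinely accumulated.
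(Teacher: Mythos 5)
Your proposal follows the paper's own argument almost verbatim: it chains \eqref{Lperiod}, the amplified Cauchy--Schwarz step \eqref{Cauchy2}, the finite-spectrum estimate \eqref{finitecontribution}, the spectral-length reduction of Lemma \ref{Lem:shortgenericlength} together with \eqref{equidlocfirstbound}, and then optimizes $L$ exactly as the paper does, so this is not a different route but a faithful reconstruction of it. One small point worth making explicit (the paper elides it too): when $\E(\vPhi_{23})\geq 1$ the displayed bound with $L=1$ gives $1+M^A\E(\vPhi_{23})$, which is not $\ll M^A$; the intended argument there is simply that for $L=1$ no amplification is needed and $|\int_{[\rmG]}\vphi\,\vPhi|^2\leq\|\vphi\|_{L^2}^2\|\vPhi\|_{L^2}^2=1$ by Cauchy--Schwarz directly, which together with \eqref{Lperiod} already yields the bound with $\min(1,\E^\delta)=1$.
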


\subsection{The Rankin--Selberg case}
In this paper we are also interested in the critical value of the Rankin--Selberg L-function $L(\pi_1\times\pi_2,1/2)$, in which case  $\varphi_3$ is an Eisenstein series in the above formulation.
In this section we show that Theorem \ref{metasubconvexbound} continue to hold in this case. 
We use the regularization process as developed in \cite{MVIHES}, and refer to \S 4.3 of that paper for the detailed definitions and notations.

We assume for simplicity that $\pi_3=1\boxplus\chi_3$ (with $\chi_3=(\chi_1\chi_2)^{-1}$, $\chi_i$ being central character of $\pi_i$). We have
$$L(\pi_1\times\pi_2\times\pi_3,1/2)=\left|L(\pi_1\times\pi_2,1/2)\right|^2$$
$$\eps(\pi_1\times\pi_2\times\pi_3)=\left|\eps(\pi_1\times\pi_2)\right|^2=1$$ 
so that $\rmG=\GLd$ and the Jacquet-Langlands correspondence is just the identity.

We take $\vphi_3$ of the shape
$$\vphi_3=\Eis(f_3)$$
for $f_3:\rmN(\Aa)B(F)\bash\GA\ra\Cc$ an element in $\pi(1,\chi_3)$. One can try to proceed exactly as in the beginning of this section up to \S \ref{Plancherelsection} and the evaluation of linear combination of inner products of the shape $\peter{\vPhi_{2,u},\vPhi_{3,u}}$. However since $\vphi_3$ is not rapidly decreasing we have to use the regularized version of Plancherel's formula proven in \cite[\S 4.3]{MVIHES}. In order to apply this formula, we have to deform slightly the vector $\vPhi_{3,u}$:
given $t\in\Cc$ let
$$\pi_3(t)
=|\cdot|^{it}\boxplus|\cdot|^{-it}\chi_3
,\ 
\vphi_3(t)=\Eis(f_3(t)),$$ 
where $f_3(t)$ is the section whose restriction to the maximal compact subgroup $\prod_v K_v$ coincide with $f_3$. 
Set $\Pi_3(t)=\pi_3(t)\otimes\tilde\pi_3$ and define the vector
\begin{align*}
\vPhi_{3,u}(t):(g_1,g_2)\mapsto \vphi_3(t)(g_1)\times u.\ov\vphi_3(g_2).
\end{align*}
As $\varphi_2$ is cuspidal, we observe that the function
$$t\mapsto \peter{\vPhi_{2,u},\vPhi_{3,u}(t)}$$
is anti-holomorphic in the $t$-variable near $t=0$, and to bound the value at $t=0$ it is sufficient to bound it uniformly on some circle $\mcC$ of fixed radius around $t=0$.

The set of exponents of $\vPhi_{3,u}(t)$ is $$T=\{|\cdot|^{1+it}_\Aa,|\cdot|^{1-it}_\Aa\chi_3,|\cdot|^{1+it}_\Aa\ov\chi_3,|\cdot|^{1-it}_\Aa\}$$
and the squares of the above exponents are not $|\cdot|^2_\Aa$ for $t\in \mcC$ for almost all radius of $\mcC$. We fix such $\mcC$ with small enough radius in the following.

Then as in the regularized version of Plancherel's formula \cite[Prop. 4.3.8]{MVIHES}  we have the following:
 \begin{align}
\peter{\vPhi_{2,u},\vPhi_{3,u}(t)}
&=\peter{\vPhi_{2,u},\mcE_3(t)}+\int\limits_{\pi' \ \mathrm{generic}
}
\sum_{\psi\in\mcB(\pi')}
\peter{\vPhi_{2,u},\psi}\peter{\psi,\vPhi_{3,u}(t)}_{reg}\ d\mu_{\mathrm{Pl}}(\pi'),
 \end{align}
 which we rewrite as
 $$\peter{\vPhi_{2,u},\vPhi_{3,u}(t)}:=\peter{\vPhi_{2,u},\vPhi_{3,u}(t)}_{\mathrm{deg}}+\peter{\vPhi_{2,u},\vPhi_{3,u}(t)}_{\mathrm{generic}}.$$
In these expressions,
 $$\mcE_3=\Eis(\vPhi_{3}(t)_N^*)$$
 is the Eisenstein series formed out of the exponents of $\vPhi_{3,u}(t)$, $\peter{\psi,\vPhi_{3,u}(t)}_{reg}$ denote the regularized inner product (which is the regular inner product if $\pi'$ is cuspidal).

 The term
 $$\peter{\vPhi_{2,u},\vPhi_{3,u}(t)}_{\mathrm{deg}}=\peter{\vPhi_{2,u},\mcE_3(t)}$$
 is called the  "degenerate term"; in a sense it replaces the
finite spectrum contribution which vanishes for $t\neq 0$ in this formula \cite[\S 5.2.7]{MVIHES}. 	
Indeed the degenerate term can be bounded by the method of \cite[Lemma 5.2.9]{MVIHES}, satisfying
\begin{equation}\label{degeneratebound}
\peter{\vPhi_{2,u},\vPhi_{3,u}(t)}_{\mathrm{deg}}\ll (C(\pi_2)C(\pi_3))^{o(1)}\|u\|^{-\gamma}	
\end{equation}
for some absolute constant $\gamma>1/2$, similar to the finite spectrum. One can then control its contribution similarly as in \eqref{finitecontribution}. 

To be self-contained we briefly describe the argument for  \eqref{degeneratebound} here. The degenerate term can be unfolded and related to  the Rankin--Selberg L-function for each Eisenstein series formed out of the exponents. For example the exponent $|\cdot|_{\A}^{1+it}\in T$ gives rise to a product of  $L(\pi_2\times \tilde{ \pi}_2, 1+t)$ with local period integrals at finite number of places, and some other unimportant factors. It suffices to control them separately. The $L-$function factorizes as $L(\pi_2,\Ad, 1+t)\zeta(1+t)$, and  $L( \pi_2,\Ad, 1+t)\ll C(\pi_2)^{o(1)}$ near $t=0$, while $\zeta(1+t)$ (despite having a pole at $t=0$) can be uniformly bounded on  fixed circle $\mcC$ round $t=0$, independent of $\pi_2$.

On the other hand the local period integrals can be bounded by $\|u\|^{-\gamma}$ as in  \cite[Section 5.2.10]{MVIHES}. Indeed for $v\in \ram(\vPhi_{23})$, the bound in, for example, \cite[(5.22)]{MVIHES} holds uniformly regardless of the ramifications.

%
%
%
%
%

Using \cite[Lemma 4.4.3]{MVIHES}, the generic contribution can be treated exactly as in \S \ref{secgeneric}, and  the analogues of Lemma \ref{Lem:shortgenericlength} and \eqref{equidlocfirstbound} hold.
Hence Theorem \ref{metasubconvexbound}  also hold in the Rankin--Selberg case.

\subsection{Conjecture on test vectors and subconvexity bound}
To further obtain subconvexity bound from Theorem \ref{metasubconvexbound}, we first formulate a local conjecture on the existence of proper test vectors.

Let $v$ be a place of $F$.
Recall that $\left|\Pro_{\Pi_v}^{\pi_v}\right|^2$ is either a local integral of product of matrix coefficients if all the global representations are cuspidal, or the absolute value squared of local Rankin--Selberg integral if one of the representations is an Eisenstein serie.
Recall that $$Q_v=C_v(\pi_1\times\pi_2\times\pi_3),\ M_v=\min\{C(\pi_{2,v}), C(\pi_{3,v})\}.$$
Recall the notation $\lleq_{v,a}$ and relevant results from Section \ref{Section:Inequalityconvention}.
\begin{defn}
Denote $$P=\prod_v P_v=
\prod_v \frac{C_v(\pi_1\times \pi_2\times \pi_3)^{1/2}}{\max\limits_{i=2,3}\{ C_v(\pi_i\times \pi_i) \}}. 
$$
\end{defn}
\begin{rem}\label{Rem:P1}
One can check case by case that $$\max\limits_{i=2,3}\{ C_v(\pi_i\times \pi_i)
\}
\leq C_v(\pi_1\times \pi_2\times \pi_3)^{1/2}
,$$
so $P_v\geq 1$ is always true.
\end{rem}

%


\yhn{
For $\varphi_{i,v}\in \pi_{i,v}$, where $\pi_{3,v}$ is a parabolically induced representation, denote by
\begin{equation}\label{Eq:LocalRS}
I_v^{\RS}\left(\varphi_{1,v},\varphi_{2,v},\varphi_{3,v}\right)=\int\limits_{{Z(\F_v) N}\backslash\GL_2{(\F_v)}}W_{\varphi_{1,v}}\left(g\right)\overline{W_{\varphi_{2,v}}\left(g\right)}\varphi_{3,v}\left(g\right)dg
\end{equation}
 the local  Rankin--Selberg integral. Here $W_{\varphi_v}$ is the Whittaker function associated to $\varphi_v$ with respect to a fixed additive character $\psi_v$. We also note that $\overline{W_{\varphi_v}}=W^-_{\varphi_v}$ where $W^-_{\varphi_v}$ is the Whittaker function for the additive character $\psi_v^-(x)=\psi_v(-x)$. 

In general for $\varphi_{i,v}\in \pi_{i,v}^B$, denote by
\begin{equation}\label{Eq:LocalT}
I_v^{\T}\left(\varphi_{1,v},\varphi_{2,v},\varphi_{3,v}\right)=\int\limits_{\F_v^\times\backslash \GL_2{(\F_v)}}\prod\limits_{i=1}^{3}\Phi_{\varphi_{i,v}}\left(g\right)dg
\end{equation}
the local integral for the triple product formula, where $\Phi_{\varphi_v}$ is the matrix coefficient associated to $\varphi_v$.

Let $$I_v=I_v^T\hbox{ or }\left|I_v^\RS\right|^2$$ depending on whether $\varphi_3$ or $\psi$ is an Eisenstein series. We formulate the following local conjecture, for which we skip subscript $v$'s.
\begin{conj}\label{Conj:localresults} Let $\nu$ be a place of $F$. We assume that if $\pi_3$ or $\pi'$ is an Eisenstein series (so that $I_v=\left|I_v^\RS\right|^2$) the corresponding local representation $\pi_{3,v}$ or $\pi_v'$ is tempered.
Then there exists normalized test vectors $\varphi_{i,v}\in\pi_{i,v}^B$ satisfying the following properties:
\begin{enumerate}
\item[(0)] If $C_v(\pi_j)=M_v$ for $j=2$ or $3$, then $\varphi_{j,v}$ is $K(M_v')-$invariant if $v$ is non-archimedean, and 
 $\mcS^{\pi_{j,v}}_d\left(\varphi_{j,v}\right)\ll M_v^{A(d)}$ if $v$ is archimedean as in Assumption \ref{Assumption:Main}.
\item  There exists some constant $A$ such that $$I(\varphi_{1,v},\varphi_{2,v},\varphi_{3,v}) \ggeq_{\epsilon}
Q_v^\epsilon M_v^A\frac{1}{Q_v^{1/4}}, 
 $$
\item 
There exists some absolute constant $A$ (independent of $v$) such that for  $\psi_v\in\pi_v'$ controlled by $M_v$ as in Lemma \ref{Lem:shortgenericlength}, and $j'$ the other index in $\{2,3\}$ different from $j$,
$$I_v(\varphi_{j',v},\varphi_{j',v},\psi_v)\lleq_{ \epsilon} Q_v^\epsilon M_v^A \frac{1}{\max\limits_{i=2,3}\{C_v(\pi_i\times\pi_i)\}^{1/2}}\frac{1}{P_v^{1/2-\theta}}.$$
\end{enumerate}
\end{conj}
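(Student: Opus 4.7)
The plan is to reduce to a place-by-place local analysis. At every unramified place the three conditions hold trivially with spherical vectors; under the hypothesis $(C_\fin(\pi_2),C_\fin(\pi_3))=1$ from Theorem~\ref{Theo:introthm}, at each ramified finite place $v$ one of $\pi_{2,v},\pi_{3,v}$ is unramified, so after relabelling I may take $\pi_{3,v}$ unramified and push all subtlety into the pair $(\pi_{1,v},\pi_{2,v})$. The delicate subcase that drives the whole argument is when $C_v(\pi_{1,v})$ and $C_v(\pi_{2,v})$ are comparable and the triple lies in the conductor-dropping regime of Example~\ref{Ex:conductordrop}; all other ramified configurations are either covered by existing newform choices as in \cite{Wood, hu_triple_2017} or follow a fortiori.

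For the test vectors I would take $\varphi_{1,v}=\varphi_{1,v}^\circ$ and $\varphi_{2,v}=\varphi_{2,v}^\circ$ to be newforms, and $\varphi_{3,v}=a(x).\varphi_{3,v}^\circ$ for a diagonal element of size $|x|_v\asymp Q_v^{1/4}$, exactly as proposed in \eqref{Eq:IntroChoiceTestvec}. Item~(0) is then immediate: a newform has the optimal $K_0$-type at finite places, and its Sobolev norms at archimedean places are standard; the diagonal translate does not affect the relevant ramification since it is applied to the unramified factor $\varphi_{3,v}^\circ$. For item~(1), I would rewrite $I_v^{\RS}(\varphi_1,\varphi_2,\varphi_3)$ (and analogously $I_v^{\T}$ in the triple-product case) via the Iwasawa decomposition as a weighted sum $\sum_k a_k P_k(W_{\varphi_1},W_{\varphi_2})$ of the partial Whittaker pairings $P_k$ introduced in the paper. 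The crucial analytic input is the two-sided comparison that $W^{(k)}_{\varphi_1}$ and $W^{(k)}_{\varphi_2}$ are essentially proportional when $k\geq c(\pi_{1,v}\ootimes\pi_{2,v})/2$, so $P_k\asymp 1$ on this ``good'' range, while for smaller $k$ heavy cancellation forces $P_k=0$. The translate $a(x)$ is calibrated precisely so that the weights $a_k$ are concentrated on the good range, yielding the lower bound of advertised size $\asymp Q_v^{-1/4}$.

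The main obstacle is item~(2), which requires an upper bound uniformly over an auxiliary spectrum $\pi'$ and basis element $\psi$ whose conductor is controlled by $M_v$ as in Lemma~\ref{Lem:shortgenericlength}. I would follow the strategy of \cite{YH20}: apply the same partial-pairing decomposition, this time inserting $\psi$ in place of $\varphi_3$ and two copies of $\varphi_{j'}$, and then translate each $P_k(W_\psi,W_\psi)$-type pairing into the Gauss-integral identity $P_k(\theta_1,\theta_2)$ displayed immediately after \eqref{Eq:IntroChoiceTestvec}. Cancellation in that identity, governed by the ratio of $\max_i C_v(\pi_i\ootimes\tilde\pi_i)$ to $C_v(\pi_1\ootimes\pi_2\ootimes\pi_3)^{1/2}$, is what produces the saving $P_v^{-(1/2-\theta)}$; and the ``not completely related'' hypothesis \eqref{eq:norelated} is precisely the condition that guarantees this cancellation is genuine rather than illusory. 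The tempered assumption on the Eisenstein piece prevents the local $L$-factors from inflating the bound. Archimedean places are handled in the same spirit, with microlocal/stationary-phase estimates replacing Gauss sums.

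The hard part, and where I expect the bulk of the technical work to live, is the Gauss-sum analysis of $P_k(\theta_1,\theta_2)$ uniformly across all $(k,\chi)$ with $c/2\leq k<c(\pi_{1,v}\ootimes\pi_{2,v})/2$: one must extract a power-saving bound of shape $P_v^{-\delta}$ whose exponent is both absolute and compatible with the polynomial-in-$M_v$ factors already present. The case where $\pi_{1,v},\pi_{2,v}$ are both supercuspidal attached to characters of a common ramified quadratic extension is the tightest, and it is exactly in this regime that the failure of \eqref{eq:norelated} (i.e.\ $\pi_1\simeq\pi_2$ up to twist) obstructs the method.
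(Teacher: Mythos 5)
Your reading of item (1) and the role of the partial pairings $P_k$ is essentially correct, and you have rightly identified Propositions~\ref{Prop:Wipairing} and~\ref{Prop:GaussintRelation} as the core analytic inputs.  However, there are two genuine gaps.

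First, your test-vector choice breaks item~(0).  You propose to keep $\varphi_{1},\varphi_{2}$ as newforms and set $\varphi_{3}=a(x).\varphi_{3}^\circ$ with $|x|\asymp Q^{1/4}$, and you assert that ``the diagonal translate does not affect the relevant ramification since it is applied to the unramified factor.''  This is false: $a(x).\varphi_{3}^\circ$ is fixed by $a(x)Ka(x)^{-1}$, not by $K$, so it fails to be $K(M')$-invariant once $|x|>1$.  Since under the hypotheses of Theorem~\ref{Theo:conjecturecase} one has $M_\fin=1$ and the index $j$ with $C(\pi_j)=M$ is precisely the unramified one (i.e.\ $\pi_3$), item~(0) forces $\varphi_3$ to be the spherical vector.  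The paper's proof avoids your problem by using the diagonal invariance of the trilinear form in the opposite direction: it translates $\varphi_1$ and $\varphi_2$ by $a(\varpi^n)$ and leaves $\varphi_3=\varphi_3^\circ$ untouched.  The value of the period is the same, but now item~(0) holds.  This is exactly the ``uniform translate'' remark made in the discussion preceding Conjecture~\ref{Conj:localresults}, which you did not account for.

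Second, you misattribute the mechanism behind the saving in item~(2).  You claim the $P^{-(1/2-\theta)}$ bound comes from Gauss-sum cancellation in $P_k(\theta_1,\theta_2)$.  But in item~(2) the integral is $I(\varphi_{j'},\varphi_{j'},\psi)$, so the same Whittaker function occupies both slots: the relevant pairing is $P_k(W_{\varphi_{j'}},W_{\varphi_{j'}})$, i.e.\ $\theta_1=\theta_2$.  There is no cancellation here --- by Propositions~\ref{Prop:Wipairing}--\ref{Prop:GaussintRelation}, $P_k$ is identically~$1$ for $k\ge c/2=c(\pi_{j'}\times\pi_{j'})/2$.  The saving in item~(2) instead comes from Proposition~\ref{prop:upboundforRS&T}: after translating back, one is evaluating $I^\RS(\varphi_{j'}^\circ,\varphi_{j'}^\circ,a(\varpi^{-n}).\psi^\circ)$, which decays in the translate parameter $n$; since $q^{2n}\asymp Q^{1/2}$ while $c(\pi_{j'}\times\pi_{j'})$ is smaller (this is the conductor-dropping input), the gap between $n$ and $c'$ is what produces the $P$-power saving.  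The Gauss-sum cancellation you describe is the engine behind item~(1) (it shows $P_k$ is small or zero for $c/2\le k<c(\pi_1\times\pi_2)/2$, so the nonzero contributions near $k=c(\pi_1\times\pi_2)/2$ survive and furnish the lower bound), not behind item~(2).  Conflating the two means your outline does not actually supply the quantitative content of item~(2).

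A minor point: for $p$ small and low conductor the paper does not choose a single $n$; it compares the two translates $n_0$ and $n_0+1$ and shows at least one gives the required lower bound.  This wrinkle is not visible in your sketch, but it is secondary compared to the two issues above.
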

Here $\theta<1/2$ is any bound towards the global Ramanujan conjecture. }

\begin{rem}\label{Rem:localeqindexj}
Note that for $j$ as above, we have
\yhn{$$\left|I_v(\varphi_{j,v},\varphi_{j,v},\psi_v) \right|\lleq_{v,\epsilon} M_v^AQ_v^\epsilon,$$
}
as $\pi_{j,v}$, $\varphi_{j,v}$, $\pi_v'$, $\psi_v$ are all controlled in terms of $M_v$.
\end{rem}

From  the definitions of feasibility index and equidistribution indices in Definition \ref{Defn:feasindex}, \ref{Defn:eqindex}, we immediately obtain the following:
\begin{theo}\label{Theo:mainSubbound}
Suppose that Conjecture \ref{Conj:localresults} is true. Then there exists  constants $\delta, A>0$  such that
\begin{equation}\label{Eq:Mainbound}
L(\pi_1\times\pi_2\times\pi_3,1/2)\ll_{\epsilon}M^{A} C(\pi_1\times\pi_2\times\pi_3)^{1/4+\epsilon} \frac{1}{P^\delta}.
\end{equation}

\end{theo}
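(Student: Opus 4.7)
The plan is to feed the local test vectors supplied by Conjecture \ref{Conj:localresults} into Theorem \ref{metasubconvexbound} and then translate each item of the conjecture into the corresponding global input. First, I assemble global test vectors $\vphi_1,\vphi_2,\vphi_3$ by taking the local ones from the conjecture at all places $v\mid\infty C_\fin(\pi_1\ootimes\pi_2\ootimes\pi_3)$ and spherical vectors elsewhere; item (0) of the conjecture is precisely Assumption \ref{Assumption:Main}, so Theorem \ref{metasubconvexbound} applies and yields
$$L(\pi_1\ootimes\pi_2\ootimes\pi_3,1/2)\ll_\epsilon M^A Q^{1/4+\epsilon}\,\Feas(\vphi_2\otimes\vphi_3|\vphi_1)^{-1}\min\bigl(1,\E(\vPhi_{23})^\delta\bigr).$$
It remains to show $\Feas(\vphi_2\otimes\vphi_3|\vphi_1)^{-1}\lleq_\epsilon M^A Q^\epsilon$ and $\E(\vPhi_{23})\lleq_\epsilon M^A Q^\epsilon P^{-\delta'}$ for some absolute constant $\delta'>0$.

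Controlling $\Feas$ from below is immediate place-by-place: item (1) of the conjecture directly translates into $\Feas_v(\vPhi|\vphi_1)=|\Pro^{\pi_v}_{\Pi_v}(\vPhi_v)(\vphi_{1,v})|^2 Q_v^{1/4}\ggeq_\epsilon M_v^{-A}Q_v^{-\epsilon}$, whose product over all places gives the desired bound.

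The core of the argument is the upper bound on $\E(\vPhi_{23})^2=\Equid(\Pi_{23}|\pi')\cdot\equid_{\ram(\vPhi_{23})}(\vPhi_{23}|\psi)$. The observation I would exploit is that under the $M$-control on $\pi'$ imposed by Lemma \ref{Lem:shortgenericlength}, any place $v\notin\ram(\vPhi_{23})$ satisfies $M_v=1$, which forces $\pi'_v$ to be unramified and hence $C_v(\pi'\ootimes\Pi_{23})=1$; consequently the conductor factors $C_v(\pi'\ootimes\Pi_{23})^{1/4}$ appearing in $\equid_v$ multiply exactly to the global $C(\pi'\ootimes\Pi_{23})^{1/4}$ sitting in the denominator of $\Equid$, and the two cancel to give the clean identity
$$\E(\vPhi_{23})^2=L(\pi'\ootimes\Pi_{23},1/2)\cdot\prod_{v\in\ram(\vPhi_{23})}|\Pro^{\pi'_v}_{\Pi_{2,v}}(\vPhi_{2,v})(\psi_v)|^2\,|\Pro^{\pi'_v}_{\Pi_{3,v}}(\vPhi_{3,v})(\psi_v)|^2.$$
For the $L$-value I apply the convexity bound together with $C(\pi'\ootimes\Pi_{23})\leq M^A C(\pi_2\times\tilde\pi_2)^2 C(\pi_3\times\tilde\pi_3)^2$ to get $L(\pi'\ootimes\Pi_{23},1/2)\ll_\epsilon M^A Q^\epsilon C(\pi_2\times\tilde\pi_2)^{1/2}C(\pi_3\times\tilde\pi_3)^{1/2}$. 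For the local factors, writing $j=j_v$ and $j'=j'_v$, I combine item (2) of the conjecture (applied to index $j'$) with Remark \ref{Rem:localeqindexj} (applied to $j$) to obtain
$$|\Pro^{\pi'_v}_{\Pi_{2,v}}|^2\,|\Pro^{\pi'_v}_{\Pi_{3,v}}|^2\lleq_\epsilon M_v^A Q_v^\epsilon\frac{1}{\max_iC_v(\pi_i\times\tilde\pi_i)^{1/2}}\cdot\frac{1}{P_v^{1/2-\theta}}.$$

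The closing step is the algebraic simplification
$$\prod_v\frac{C_v(\pi_2\times\tilde\pi_2)^{1/2}C_v(\pi_3\times\tilde\pi_3)^{1/2}}{\max_iC_v(\pi_i\times\tilde\pi_i)^{1/2}}=\prod_v\min_iC_v(\pi_i\times\tilde\pi_i)^{1/2}\leq\prod_v M_v^2=M^2,$$
using the elementary identity $\sqrt{ab}/\sqrt{\max(a,b)}=\sqrt{\min(a,b)}$ and the bound $C_v(\pi_i\times\tilde\pi_i)\leq C_v(\pi_i)^2$. Substituting yields $\E(\vPhi_{23})^2\lleq_\epsilon M^A Q^\epsilon P^{-(1/2-\theta)}$, hence $\E(\vPhi_{23})\lleq_\epsilon M^A Q^\epsilon P^{-(1/4-\theta/2)}$. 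Since $\theta<1/2$ by known approximations toward Ramanujan, this exponent is positive, and plugging the bounds on $\Feas^{-1}$ and $\E(\vPhi_{23})^\delta$ back into Theorem \ref{metasubconvexbound} gives the conclusion with $\delta'=\delta(1/4-\theta/2)>0$. The main step requiring care is the conductor bookkeeping in the third paragraph, namely the verification that $\pi'$ is truly unramified outside $\ram(\vPhi_{23})$ so that the global $C^{1/4}$ factors cancel cleanly; everything else is a routine combination of the local bounds with convexity.
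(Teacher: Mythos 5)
Your proof follows the same route as the paper: item (0) of the conjecture gives Assumption \ref{Assumption:Main}, item (1) controls the feasibility index, and item (2) together with Remark \ref{Rem:localeqindexj} controls $\E(\vPhi_{23})$, after which Theorem \ref{metasubconvexbound} closes the argument. The conductor bookkeeping is also essentially the one the paper intends; your closing observation $\sqrt{ab}/\sqrt{\max(a,b)}=\sqrt{\min(a,b)}$ and the estimate $\min_{i}C_v(\pi_i\times\tilde\pi_i)\leq M_v^2$ are exactly what is needed to absorb the $\max_i C_v(\pi_i\times\tilde\pi_i)^{1/2}$ in the denominator coming from item (2).

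The one place you should be more careful is the ``clean identity'' in your third paragraph. You claim that $M_v=1$ for $v\notin\ram(\vPhi_{23})$ forces $\pi'_v$ to be unramified, hence $C_v(\pi'\ootimes\Pi_{23})=1$, so that $C(\pi'\ootimes\Pi_{23})^{1/4}$ cancels exactly against the local conductor factors in $\equid$. But the $\pi'$ appearing in the supremum defining $\E(\vPhi_{23})$ are only constrained by $C(\pi'_v)\leq(\|u_v\|_vM_v)^A$ (Lemma \ref{Lem:shortgenericlength}), so at the finitely many places in $S_L$ where the amplifier element $u_v$ is nontrivial, $\pi'_v$ may be ramified with conductor up to $\|u_v\|^A$ even though $M_v=1$. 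The cancellation therefore leaves a residual factor bounded by $\|u\|^A\ll L^A$. This is absorbable --- $L$ is ultimately a small power of $Q$, and the $(LM)^A$ prefactor already in the derivation of Theorem \ref{metasubconvexbound} accommodates it --- but your argument as written does not flag it. The paper sidesteps the issue entirely by invoking \eqref{Eq:globalEqindex} to get $\Equid(\Pi_{23}|\pi')\ll Q^\epsilon$ outright, with no cancellation needed. You should either adopt that bound or note explicitly how the $L^A$ loss is recovered.

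Two further points you omit but the paper explicitly addresses in this proof: (a) the quantity $|\Pro^{\pi_v}_{\Pi_v}|^2$ appearing in the feasibility and equidistribution indices differs from the integrals $I_v$ of Conjecture \ref{Conj:localresults} by normalizing local $L$-factors, which must be controlled by $Q^\epsilon$ in the sense of Example \ref{Ex:control}; (b) the hypothesis in Conjecture \ref{Conj:localresults} that the Eisenstein representation be tempered must be verified, which holds because the global Eisenstein series involved are unitary. These are quick remarks, but they are needed for the conjecture to apply as stated.
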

\begin{rem}
In particular this is a subconvexity bound if $M\ll_\epsilon Q^{\epsilon} $ and there exists a constant $\gamma>0$ such that 
$$\prod_v\max\limits_{i=2,3}\{ C_v(\pi_i\times \pi_i)\}\leq M^{A}C(\pi_1\times\pi_2\times \pi_3)^{1/2-\gamma}.$$
\end{rem}

\begin{proof}[Proof of Theorem \ref{Theo:mainSubbound}]
\yhn{We first remark that $\left|\Pr_{\Pi_v}^{\pi_v}\right|^2$ differs from $I_v$ only by some absolute constant and normalizing L-factors, which can be controlled by $Q^\epsilon$ in the sense of Example \ref{Ex:control}. So we shall not distinguish them below.
}

The temperedness condition in  Conjecture \ref{Conj:localresults} is satisfied as when $\varphi_3$/$\psi$ is a unitary Eisenstein series in the Rankin--Selberg case, the associated local component $\pi_{3,v}$/ $\pi'_v$ is indeed tempered.

Let $j,j'$ be as in Conjecture \ref{Conj:localresults}.
For the test vectors $\vphi_i$ which are specified in Definition \ref{Defn:testvec1} and satisfy Conjecture \ref{Conj:localresults}, it follows from Theorem \ref{metasubconvexbound} (which requires item (0) of Conjecture \ref{Conj:localresults}), Definition \ref{Defn:feasindex} and item (1) of Conjecture \ref{Conj:localresults}  that
$$ 
L(\pi\times\pi_2\times\pi_3,1/2)\ll_\epsilon M^AC(\pi_1\times\pi_2\times\pi_2)^{1/4+\epsilon} \min\left(1,\E(\vPhi_{23})^\delta\right).
$$

For $\E(\vPhi_{23})$ as in \eqref{Eq:EE}, using Definition \ref{Defn:eqindex}, bound \eqref{Eq:globalEqindex}, item (2) of Conjecture \ref{Conj:localresults} and Remark \ref{Rem:localeqindexj},
we get
$$\E(\vPhi_{23})\ll_{\epsilon}M^A Q^\epsilon \frac{1}{P^{(1/2-\theta)\delta}}.$$
Then the theorem follows for a different $\delta>0$ as $P\geq 1$ by Remark \ref{Rem:P1}.
\end{proof}

\begin{rem}
Our formulation also implies that if Conjecture \ref{Conj:localresults} is true and the subconvexity bound for $L(\pi_{j'}\times\pi_{j'}\times\pi',1/2)$ holds for controlled $\pi'$, then the subconvexity bound for $L(\pi_1\times\pi_2\times \pi_3,1/2)$ always holds up to a factor $M^A$.
\end{rem}

The remaining  of this paper is devoted to  partially verifying Conjecture \ref{Conj:localresults}. In particular we prove the following result.
\begin{theo}\label{Theo:conjecturecase}
Suppose that 
 $\pi_i$ have trivial central characters with bounded archimedean components, and $M_{\fin}=1$. Then Conjecture \ref{Conj:localresults} 
 is true. 
\end{theo}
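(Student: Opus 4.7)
The plan is to verify items (0), (1), (2) of Conjecture \ref{Conj:localresults} at each place $v$ separately, exploiting the crucial simplification that $M_v = 1$ at every finite place under the hypothesis $M_\fin = 1$, so at least one of $\pi_{2,v}, \pi_{3,v}$ is locally unramified at every finite $v$ (WLOG $\pi_{3,v}$). The archimedean places, being bounded, can be handled using standard compactness arguments with spherical or lowest--$K$-type vectors.

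At a finite place $v$, choose test vectors as in \eqref{Eq:IntroChoiceTestvec}: set $\vphi_{1,v} = \vphi^\circ_{1,v}$ and $\vphi_{2,v} = \vphi^\circ_{2,v}$ to be the local newforms, while $\vphi_{3,v} = a(x_v).\vphi^\circ_{3,v}$ with $|x_v|_v \sim Q_v^{1/4}$ in the conductor dropping range (i.e. $C_v(\pi_1) = C_v(\pi_2) > C_v(\pi_1\ootimes\pi_2)^{1/2}$) and $x_v = 1$ otherwise. Item (0) is then automatic, since $M_v = 1$ requires no extra invariance and $\vphi_{3,v}$ is a diagonal translate of a spherical vector.

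Verification of item (1) proceeds by expanding the local Rankin--Selberg (or triple product) integral in the double coset basis associated to the newforms:
\[I_v \sim \sum_{c/2 \leq k \leq c} a_k(x_v) \, P_k(W_{\vphi_1}, W_{\vphi_2}),\]
where $c = c(\pi_{1,v}) = c(\pi_{2,v})$, $P_k$ is the partial pairing of the shifted Whittaker values $W^{(k)}_{\vphi_i}$, and the coefficients $a_k(x_v)$ arise from evaluating $\vphi_{3,v}$. The key analytic input, to be established in Propositions \ref{Prop:Wipairing} and \ref{Prop:GaussintRelation}, is that $W^{(k)}_{\vphi_1}$ and $W^{(k)}_{\vphi_2}$ agree up to a bounded unitary factor for $k \geq c(\pi_{1,v}\ootimes\pi_{2,v})/2$, giving $P_k \asymp 1$ there, while heavy cancellation forces $P_k = 0$ for $c/2 \leq k < c(\pi_{1,v}\ootimes\pi_{2,v})/2$. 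The normalization $|x_v|_v \sim Q_v^{1/4}$ is calibrated precisely so that $a_k(x_v)$ is supported on the first range, producing $I_v \gg Q_v^{-1/4-\epsilon}$. The comparison of Whittaker values is obtained by parameterizing $\pi_{i,v}$ through characters $\theta_i$ over \'etale quadratic algebras and analyzing the Gauss-sum correlations $P_k(\theta_1,\theta_2)$ from the introduction.

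For item (2), since $M_v = 1$ any $\psi_v$ controlled by $M_v$ is essentially $K_v$-fixed and $\pi'_v$ is essentially unramified. The local integral $I_v(\vphi_{j',v}, \vphi_{j',v}, \psi_v)$ then reduces to the pairing of the matrix coefficient of $\pi_{j',v}$ with a near-spherical vector, and the required decay by $P_v^{1/2-\theta}$ is a consequence of standard bounds on diagonal matrix coefficients of newforms together with the Kim--Sarnak bound \cite{KiSa,BlBr}; the saving reflects the fact that in the conductor dropping range $|\vphi_{j',v}|^2$ is concentrated on a smaller effective region than its conductor alone would predict. The main obstacle is the lower bound for $P_k$ underlying item (1): establishing the sharp vanishing/nonvanishing dichotomy for the Gauss-sum correlations $P_k(\theta_1,\theta_2)$ in the conductor dropping range requires a careful case analysis in the Kirillov model, and the assumption $2 \nmid Q_\fin$ is invoked precisely to avoid wild-ramification pathologies for supercuspidals at dyadic places.
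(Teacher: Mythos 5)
Your choice of test vectors has a genuine problem with item (0) of Conjecture \ref{Conj:localresults}. You set $\vphi_{3,v}=a(x_v).\vphi^\circ_{3,v}$ with $|x_v|_v\sim Q_v^{1/4}$ and then declare item (0) ``automatic'' because $M_v=1$ ``requires no extra invariance.'' That is backwards: with $M_v=1$ one has $M'_v=M_v^A=1$, so $K(M'_v)=K_v$, and item (0) demands that $\vphi_{j,v}$ for the index $j$ with $C_v(\pi_j)=M_v$ (here $j=3$) be fully $K_v$-invariant, i.e.\ spherical. A nontrivial diagonal translate of a spherical vector is invariant only under $a(x_v)K_v a(x_v)^{-1}$, not $K_v$, so in the conductor-dropping range your $\vphi_{3,v}$ violates item (0). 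The paper handles this with exactly the maneuver it flags in \S1.2.2: one may translate the whole triple simultaneously by a single element of $\rmG(F_v)$ without changing the local period, so it instead takes $\vphi_{i,v}=\pi_{i,v}(a(\varpi^n))\vphi^\circ_{i,v}$ for $i=1,2$ and keeps $\vphi_{3,v}=\vphi^\circ_{3,v}$ spherical. That choice makes item (0) hold and, by $\rmG$-invariance, the local integral coincides with the one computed in Proposition \ref{Prop:lowerbdperiod}.

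This conjugation also matters for your item (2). You argue that the decay $P_v^{1/2-\theta}$ follows from ``standard bounds on diagonal matrix coefficients ... together with the Kim--Sarnak bound,'' but that doesn't explain where the power saving in the conductor-dropping parameter comes from. With the paper's (correctly translated) test vectors, $I(\vphi_{2},\vphi_{2},\psi)$ becomes $I^T\bigl(\vphi_2^\circ,\vphi_2^\circ,\pi'(a(\varpi^{-n}))\psi^\circ\bigr)$, and the required bound is then exactly the content of Proposition \ref{prop:upboundforRS&T} (the local integral decays like $q^{-(1/2-\theta-\epsilon)\cdot(\text{translation amount})}$), combined with the calibration $q^{2n}\asymp_\epsilon Q_v^{1/2+\epsilon}$. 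Your sketch gestures at the right source of decay but never exhibits the translated form of the test vector to which Proposition \ref{prop:upboundforRS&T} applies, nor the reduction of $I(\vphi_2,\vphi_2,\psi)$ to an integral of newforms against a translated spherical vector. Items (1) and the role of Propositions \ref{Prop:Wipairing} and \ref{Prop:GaussintRelation} are described correctly and match the paper's strategy; the gap is the treatment of item (0) and the consequent set-up for item (2).
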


\begin{rem}
One can immediately obtain the same result if each $\pi_i$ is further  twisted by some character $\eta_i$ as long as $\prod \eta_i=1$, allowing more general central characters.

In general there are some sporadic cases (for example, in specific archimedean aspect, or for square-free $M_\fin$) where one can also verify the conjecture. For the conciseness of this paper we limit ourselves to the  case  in Theorem \ref{Theo:conjecturecase}.
\end{rem}

\section{Local preparations}\label{Sec:localPrep}
The remaining sections are purely local. For the sake of conciseness we shall omit subscript $v$ from the notations.
\subsection{Basics}
We collect some basic definitions and results here.

Let $F$ be a p-adic local field, with ring of integers $\OF$, uniformizer $\varpi$, valuation $v$,  residue field  $k_F$,  $q=|k_F|$ and $p$ be the characteristic of $k_F$. 
Let $E$ be an \'{e}tale quadratic algebra over $F$. When $E$ is a field, we denote $O_E$ to be the ring of integers, $\varpi_E$ to be a uniformizer, and $e_E$ to be the ramification index of $E$. We normalize the valuation on $E$ such that $v_E(\varpi_E)=1$. Define $U_E(i)=1+\varpi^iO_E$.
 To be  uniform, we also take the following conventions when working with the  case where $E$ splits.
\begin{defn}\label{Def:splitconvention}
Let $E$ be a split quadratic extension over $F$ identified with $F\times F$. Take $e_E=1$ in this case. Define $O_E=\OF\times \OF$, $O_E^\times=\OF^\times\times \OF^\times$, $U_E(i)=1+\varpi^iO_E$ for $i>0$. Also we write $v_E(x)=i$  when $x\in \varpi^iO_E^\times$. (In particular $v_E$ is not defined for all $E$ in this case.)
\end{defn}

For a non-trivial additive character $\psi$ over $F$, denote
$$ c(\psi)=i,$$ if  $i\geq 0$ is the smallest integer such that $\psi|_{\varpi_F^iO_F}=1.
$
This definition also works for additive characters over $E$. 
For a multiplicative character $\chi$ of $F^\times$, define
$$ c(\chi)=\begin{cases}
0, &\text{\ if }\psi(O_F^\times)=1;\\
i, &\text{\ if $i>0$ is smallest integer such that }\chi|_{U_F(i)}=1.
\end{cases}$$

Let $\psi$ now be an additive character over $F$ with $c(\psi)=0$. 
  Denote $\psi_E=\psi\circ \Tr_{E/F}$. Then $c(\psi_E)=-e_E+1$. For a multiplicative character $\chi$ on $F^\times$, we can associate a character $\chi_E=\chi\circ N_{E/F}$ on $E^\times$.

For multiplicative characters over $F^\times$, we introduce an equivalence relation here.
\begin{defn}
For any two characters $\chi_i$ of $F^\times$, $i=1,2$, 
$\chi_1\sim_j \chi_2$ if and only if $c(\chi_1\chi_2^{-1})\leq j$.
\end{defn}
We collect the following straightforward lemmas.
\begin{lem}\label{Lem:differentE}
Suppose that $2\nmid q$.
Let $E,E'$ be two non-isomorphic \'{e}tale quadratic algebra over $F$, $\alpha_1\in E$, $\alpha_2\in E'$ be trace 0 elements with $v(N_{E/F}(\alpha_1))=v(N_{E/F}(\alpha_2))$. Let $\alpha_0\in F$ with $v(N_{E/F}(\alpha_1))\leq v(N_{E/F}(\alpha_0))$. Then we have
$$\frac{N_{E/F}(\alpha_1+\alpha_0)}{N_{E'/F}(\alpha_2+\alpha_0)}\not\equiv 1 \mod \varpi.$$
\end{lem}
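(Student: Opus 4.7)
The plan is to expand each norm and reduce the statement to a single valuation inequality. For any trace-zero $\alpha \in E$, one has
\[
N_{E/F}(\alpha + \alpha_0) = N_{E/F}(\alpha) + \alpha_0 \cdot \mathrm{Tr}_{E/F}(\alpha) + \alpha_0^2 = N_{E/F}(\alpha) + \alpha_0^2
\]
because $\alpha_0 \in F$ and $\mathrm{Tr}_{E/F}(\alpha)=0$. Setting $n_i = N_{E_i/F}(\alpha_i)$, the ratio in the lemma becomes $\frac{n_1 + \alpha_0^2}{n_2 + \alpha_0^2}$, and it is $\equiv 1 \bmod \varpi$ precisely when $v(n_1 - n_2) \geq 1 + v(n_2 + \alpha_0^2)$. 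The hypothesis $v(\alpha_0^2) \geq v(n_1) = v(n_2)$ gives $v(n_2 + \alpha_0^2) \geq v(n_2)$, so the whole argument reduces to establishing the single statement $v(n_1 - n_2) = v(n_2)$, equivalently that the unit $n_1/n_2$ has image $\neq 1$ in $k_F^\times$.

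To exploit $E \not\simeq E'$, I would write $E_i = F[\sqrt{d_i}]$ with $d_i$ representing the discriminant class of $E_i$ in $F^\times/F^{\times 2}$ (the convention $d=1$ covering the split algebra). A trace-zero element takes the form $a\sqrt{d_i}$, so $-n_i = a^2 d_i$ lies in the square class $[d_i]$, and consequently $n_1/n_2$ has square class $[d_1 d_2]$ in $F^\times/F^{\times 2}$. The constraint $v(n_1)=v(n_2)$ forces $v(d_1) \equiv v(d_2) \pmod 2$, and since $E \not\simeq E'$ only two symmetric cases survive: $\{[d_1],[d_2]\} = \{[1],[u]\}$ in the even-valuation case (split vs.\ unramified field) or $\{[d_1],[d_2]\} = \{[\varpi],[u\varpi]\}$ in the odd-valuation case (the two ramified fields), where $u \in \OF^\times$ is a non-square unit. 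A direct check shows that in both cases $[d_1 d_2] = [u]$, so $n_1/n_2$ is a unit whose square class is the nontrivial class of non-square units.

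The final step invokes the odd residue characteristic (the regime where the lemma is applied, cf.\ Theorem~\ref{Theo:conjecturecase}): Hensel's lemma gives $1 + \varpi\OF \subset \OF^{\times 2}$, so any unit with nontrivial square class reduces to a non-square in $k_F^\times$ and in particular is not $\equiv 1 \bmod \varpi$. Applied to $n_1/n_2$, this yields $v(n_1 - n_2) = v(n_2)$ and concludes the argument. The main obstacle is the square-class bookkeeping in the middle step: one must verify that the parity constraint and the non-isomorphism $E \not\simeq E'$ conspire to give $[d_1 d_2] = [u]$ uniformly in both parity cases; once this is in hand, the rest is routine valuation arithmetic.
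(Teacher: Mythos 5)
Your proposal is correct and follows the paper's own route: expand $N(\alpha_i+\alpha_0)=N(\alpha_i)+\alpha_0^2$ using trace zero, use $v(\alpha_0^2)\geq v(n_i)$ to reduce to $n_1/n_2\not\equiv 1\bmod\varpi$, and then conclude from $E\not\simeq E'$. The paper states this last step without justification ("follows directly from that $E,E'$ are not isomorphic"); your square-class bookkeeping (parity of $v(d_i)$ forces matching ramification type, non-isomorphism then forces $[d_1d_2]=[u]$, and Hensel's lemma for odd $p$ finishes) is exactly the detail that was left implicit, so the two arguments are the same in substance.
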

\begin{proof}
Since $\alpha_i$ are trace 0 for $i=1,2$ and $\alpha_0\in F$, we have
$$N(\alpha_i+\alpha_0)=N(\alpha_i)+N(\alpha_0).$$
By the condition $v(N_{E/F}(\alpha_1))\leq v(N_{E/F}(\alpha_0))$, we are reduced to prove $$\frac{N_{E/F}(\alpha_1)}{N_{E'/F}(\alpha_2)}\not\equiv1  \mod \varpi.$$ This follows directly from that $E,E'$ are not isomorphic and $2\nmid q$.
\end{proof}

\begin{lem}\label{lem:Gaussint}
Let $m\in F$ such that $v(m)=-j<0$, and $\mu$ be a character of $\OF^\times$ of level $k>0$. Then
\begin{equation}
  \left|\int\limits_{v(x)=0}\psi(mx)\mu^{-1}(x)d^\times x\right|=\begin{cases}
                                                   \sqrt{\frac{q}{(q-1)^2q^{k-1}}}=\zeta_F(1)q^{-k/2},&\text{\ if\ }j=k;\\
                                                   0,&\text{\ otherwise.}
                                                  \end{cases}
\end{equation}
\end{lem}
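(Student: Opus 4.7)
\textbf{Proof plan for Lemma \ref{lem:Gaussint}.} The plan is a standard Gauss sum computation which I would organize in two stages: first reduce the problem to a finite character sum by exploiting the level of $\mu$, then analyze that sum case-by-case on $j$ versus $k$.

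\emph{Stage 1: reduction to a finite sum.} Since $\mu$ has level $k$, it is trivial on $U_F(k)=1+\varpi^kO_F$, so I partition $O_F^\times=\bigsqcup_{x_0}x_0U_F(k)$ and write
\[
I=\int_{O_F^\times}\psi(mx)\mu^{-1}(x)\,d^\times x=\sum_{x_0\in O_F^\times/U_F(k)}\mu^{-1}(x_0)\int_{U_F(k)}\psi(mx_0u)\,d^\times u.
\]
Parametrizing $u=1+\varpi^ky$ with $y\in O_F$, the inner integral equals $\psi(mx_0)\int_{O_F}\psi(mx_0\varpi^ky)\,dy$ up to the volume factor. Since $c(\psi)=0$, the additive character $y\mapsto\psi(mx_0\varpi^ky)$ is trivial iff $v(m\varpi^k)=k-j\ge0$, i.e.\ iff $j\le k$. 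When $j>k$ this kills the inner integral and $I=0$, giving one case of the lemma.

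\emph{Stage 2: the remaining cases $j\le k$.} Assume $j\le k$, so $I=\mathrm{vol}(U_F(k))\,S$ with $S=\sum_{x_0}\mu^{-1}(x_0)\psi(mx_0)$. When $j<k$, replace $x_0$ by $x_0(1+\varpi^{k-1}z)$ for $z\in O_F/\varpi O_F$: the additive factor is unchanged because $v(m\varpi^{k-1})\ge0$, while $\mu^{-1}$ transforms by the non-trivial character $\mu^{-1}|_{U_F(k-1)/U_F(k)}$, whose sum vanishes. Hence $S=0$ in this range. For the remaining case $j=k$, I would compute $|I|^2$ directly:
\[
|I|^2=\int_{O_F^\times}\mu^{-1}(z)\Bigl(\int_{O_F^\times}\psi(my(z-1))\,d^\times y\Bigr)d^\times z
\]
after the change of variables $x=yz$. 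The inner integral, evaluated from the standard decomposition $O_F^\times=O_F\setminus\varpi O_F$, equals $1-q^{-1}$ if $v(z-1)\ge k$, equals $-q^{-1}$ if $v(z-1)=k-1$, and vanishes otherwise. Orthogonality $\int_{U_F(k-1)}\mu^{-1}\,d^\times z=0$ (again because $\mu$ has exact level $k$) lets me rewrite the $v(z-1)=k-1$ contribution as $q^{-1}\,\mathrm{vol}(U_F(k))$, and the two surviving pieces combine to $|I|^2=\mathrm{vol}(U_F(k))$.

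\emph{Finishing and normalization.} Under the convention $\mathrm{vol}(O_F^\times)=1$, the index $[O_F^\times:U_F(k)]=(q-1)q^{k-1}$ gives $\mathrm{vol}(U_F(k))=1/[(q-1)q^{k-1}]=\zeta_F(1)^2\,q^{-k}$, so $|I|=\zeta_F(1)q^{-k/2}$, matching the stated formula. The main (only real) obstacle is bookkeeping the measure normalization consistently between the additive and multiplicative integrals; once this is fixed by the volume of $O_F^\times$ and of $U_F(k)$, the rest is formal manipulation of characters via the level-$k$ orthogonality of $\mu$ and the conductor-$0$ orthogonality of $\psi$.
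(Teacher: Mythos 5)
The paper states this lemma without proof; it is a classical evaluation of a $p$-adic Gauss integral, and your two-stage plan — reduce to a finite sum via the level-$k$ invariance of $\mu$, kill the cases $j\neq k$ by orthogonality over $U_F(k-1)/U_F(k)$ or over $O_F$, then compute $|I|^2$ directly — is the standard route. Stages 1 and 2 up to and including the vanishing when $j\neq k$ are correct.

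The final step, however, contains a normalization inconsistency that is masked by a compensating arithmetic slip, so the answer comes out right by accident rather than by a correct chain of equalities. When you evaluate $\int_{O_F^\times}\psi(my(z-1))\,d^\times y$ as $1-q^{-1}$, $-q^{-1}$, or $0$, you are implicitly using $d^\times y=dy/|y|$ with $\mathrm{vol}(O_F,dy)=1$, i.e.\ $\mathrm{vol}(O_F^\times,d^\times y)=1-q^{-1}$. In that normalization $\mathrm{vol}(U_F(k))=q^{-k}$, and your conclusion $|I|^2=\mathrm{vol}(U_F(k))$ gives $|I|=q^{-k/2}$, which is \emph{not} the lemma's value $\zeta_F(1)q^{-k/2}$. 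You then switch, in the last paragraph, to $\mathrm{vol}(O_F^\times)=1$ (the paper's convention) and write $\mathrm{vol}(U_F(k))=1/[(q-1)q^{k-1}]=\zeta_F(1)^2q^{-k}$; but in fact $1/[(q-1)q^{k-1}]=\zeta_F(1)\,q^{-k}$, not $\zeta_F(1)^2q^{-k}$. The missing factor of $\zeta_F(1)$ in the vol computation exactly cancels the missing factor of $\zeta_F(1)^2$ from the earlier normalization mismatch, which is why you land on the stated formula. The clean argument under $\mathrm{vol}(O_F^\times,d^\times x)=1$ is: the inner integral equals $1$ (if $v(z-1)\ge k$), $-1/(q-1)$ (if $v(z-1)=k-1$), $0$ (otherwise); then orthogonality of $\mu^{-1}$ on $U_F(k-1)$ gives
\[
|I|^2=\mathrm{vol}(U_F(k))+\tfrac{1}{q-1}\,\mathrm{vol}(U_F(k))=\tfrac{q}{q-1}\cdot\tfrac{1}{(q-1)q^{k-1}}=\tfrac{q}{(q-1)^2q^{k-1}},
\]
i.e.\ the identity to prove is $|I|^2=\zeta_F(1)\,\mathrm{vol}(U_F(k))$, not $|I|^2=\mathrm{vol}(U_F(k))$. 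Since you flagged measure bookkeeping as the only real obstacle, this is worth fixing carefully before relying on the lemma downstream.
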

\begin{lem}\label{Lem:level1cancellation}
Let $\theta$ be a character over an \'{e}tale quadratic algebra  $E/F$, such that $e_E=1$, $c(\theta)=1$ $\theta|_{F^\times}=1$.
Then for any trace 0 element $\alpha$ with $v_E(\alpha)=0$,
$$\left|\sum\limits_{x\in k_F^\times, x+\alpha \in O_E^\times}\theta(x+\alpha)\right|\leq 2.$$
Here we have identified $O_F^\times/U_F(1)$ with $k_F^\times$ without confusion.
\end{lem}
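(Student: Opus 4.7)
The plan is to reduce modulo the maximal ideal of $O_E$ and evaluate the resulting character sum over the residue field $k_E$ in closed form. The hypotheses $e_E=1$, $c(\theta)=1$, and $\theta|_{F^\times}=1$ imply that $\theta$ descends to a nontrivial character $\bar\theta$ of $k_E^\times$ which is trivial on $k_F^\times$, while the trace-zero unit $\alpha$ reduces to some $\bar\alpha\in k_E^\times$ of trace zero over $k_F$. The residue characteristic must be assumed odd, which is ensured in the intended application (Theorem~\ref{Theo:conjecturecase}) by the hypothesis $2\nmid Q_{\fin}$; in residue characteristic $2$, trace zero collapses to $\bar\alpha\in k_F$, forcing $\bar\theta(x+\bar\alpha)=1$ identically, and the bound fails.

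Write $\tau$ for the nontrivial $k_F$-algebra involution of $k_E$, so that $\tau(\bar\alpha)=-\bar\alpha$. The key tool is the Cayley-type map $\phi(x)=(x+\bar\alpha)/(x-\bar\alpha)$, which takes values in the norm-one subgroup $\ker(N_{k_E/k_F})$ whenever defined. In the inert case, $\bar\alpha\notin k_F$ and $\phi$ is defined on all of $k_F$; in the split case $k_E=k_F\oplus k_F$ with $\bar\alpha=(a,-a)$, $\phi$ is defined on $k_F\setminus\{\pm a\}$, which coincides with the set of $x\in k_F$ for which $x+\bar\alpha\in k_E^\times$. A direct Möbius computation shows that in either case $\phi$ restricts to a bijection from its domain onto $\ker(N_{k_E/k_F})\setminus\{1\}$, the missing point being $\phi(\infty)=1$.

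Since $\bar\theta$ is trivial on $k_F^\times$, it factors through the canonical isomorphism $k_E^\times/k_F^\times\cong\ker(N_{k_E/k_F})$, $z\mapsto z/\tau(z)$, as a nontrivial character $\eta$, so that $\bar\theta(x+\bar\alpha)=\eta(\phi(x))$. Orthogonality of $\eta$ then yields
\[
\sum_{\substack{x\in k_F\\ x+\bar\alpha\in k_E^\times}}\bar\theta(x+\bar\alpha)=\sum_{u\in\ker(N_{k_E/k_F})\setminus\{1\}}\eta(u)=-1,
\]
and removing the $x=0$ contribution $\bar\theta(\bar\alpha)$, of modulus $1$, evaluates the sum in the lemma as $-1-\bar\theta(\bar\alpha)$, of modulus at most $2$. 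The only point requiring genuine care is the Möbius bookkeeping identifying $\phi$ as a bijection onto $\ker(N_{k_E/k_F})\setminus\{1\}$, which must be verified separately in the split and inert cases because the cardinalities of $\ker(N_{k_E/k_F})$ (namely $q-1$ versus $q+1$) and of the domain (namely $q-2$ versus $q$) differ; this is routine and the two discrepancies compensate each other.
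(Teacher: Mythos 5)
Your proof is correct and takes a genuinely (if modestly) different route from the paper's, giving a unified treatment of the two cases. The paper also reduces to the residue field but then argues case by case: in the inert case it expands the vanishing full sum $\sum_{(x,y)\ne(0,0)}\bar\theta(x+y\bar\alpha)=0$ in the $k_F$-basis $\{1,\bar\alpha\}$ and uses $\bar\theta|_{k_F^\times}=1$ to average out the $y$-variable, landing on $-1-\bar\theta(\bar\alpha)$; in the split case it makes precisely the fractional-linear substitution $z=(x+a)/(x-a)$ and evaluates $\sum_{z\ne\pm1}\mu(z)=-\mu(1)-\mu(-1)$. You instead transport $\bar\theta$ through the Hilbert-90 isomorphism $k_E^\times/k_F^\times\cong\ker(N_{k_E/k_F})$ to a nontrivial character $\eta$ of the norm-one torus, and recognize $x\mapsto(x+\bar\alpha)/(x-\bar\alpha)$ as a Cayley map carrying the summation domain bijectively onto $\ker(N)\setminus\{1\}$, after which orthogonality gives the same closed form $-1-\bar\theta(\bar\alpha)$ in both cases simultaneously. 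Your version is more conceptual — it explains why the paper's split-case substitution works and dispenses with the separate inert computation — at the small cost of verifying the cardinality bookkeeping for the Cayley bijection (your cardinalities $q+1$ vs.\ $q-1$ for the torus and $q$ vs.\ $q-2$ for the domain are right and do compensate). Your observation that odd residue characteristic is essential (since a trace-zero element is $\tau$-fixed in characteristic $2$) is correct; the paper does not flag it at the lemma level, but it is covered by the standing hypothesis $2\nmid Q_{\fin}$ in the theorem where the lemma is used.
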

\begin{proof}
Consider the case $E$ is a field first. 
As $c(\theta)=1$, we have
$$\sum\limits_{x,y\in k_F, (x,y)\neq (0,0)}\theta(x+y\alpha)=0.$$
 Using that $\theta|_{F^\times}=1$, we get that
$$\sum\limits_{x\in k_F^\times}\theta(x+\alpha)=\frac{1}{q-1}\sum\limits_{x,y\in k_F^\times }\theta(x+y\alpha)=-1-\theta(\alpha). $$
The claim follows.
On the other hand if $E\simeq F\times F$, $\theta=(\mu,\mu^{-1})$, $\alpha=(y,-y)$, then 
$$\sum\limits_{x\in k_F^\times, x+\alpha \in O_E^\times}\theta(x+\alpha)=\sum\limits_{x\in k_F^\times, x\not\equiv y,-y}\mu\lb \frac{x+y}{x-y}\rb=\sum\limits_{z\in k_F^\times, z\not\equiv \pm 1} \mu(z)=-\mu(1)-\mu(-1).$$
Again the claim follows.
\end{proof}
\begin{lem}\label{Lem:DualLiealgForChar}

For a character
  $\mu$ over $\F^\times$ with $c(\mu)\geq 2$, there exists $\alpha_\mu\in \F^\times $ with $v_\F(\alpha_\mu)=-c(\mu)+c(\psi_\F)$ such that:
\begin{equation}
 \mu(1+u)=\psi_\F(\alpha_\mu u) \text{\ \ for any $u\in \varpi_\F^{\lceil c(\mu)/2\rceil} O_\F$}.
\end{equation}

\end{lem}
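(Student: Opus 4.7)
The plan is to prove this by showing that the map $u \mapsto \mu(1+u)$ is an additive character on $\varpi_\F^{\lceil c(\mu)/2\rceil} O_\F$, then invoke local Pontryagin duality (realised via $\psi_\F$) to identify it with $u \mapsto \psi_\F(\alpha_\mu u)$, and finally read off the valuation of $\alpha_\mu$ from the conductor condition.

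First I would verify the additive character property. Write $c = c(\mu)$ and $k = \lceil c/2\rceil$, so $2k \geq c$. For $u_1, u_2 \in \varpi_\F^k O_\F$, the product $u_1 u_2$ lies in $\varpi_\F^{2k}O_\F \subseteq \varpi_\F^{c}O_\F$. The identity $(1+u_1)(1+u_2) = 1 + u_1 + u_2 + u_1 u_2$ combined with $1 + u_1 + u_2 + u_1 u_2 = (1+u_1+u_2)\bigl(1 + \tfrac{u_1 u_2}{1+u_1+u_2}\bigr)$ shows that $(1+u_1)(1+u_2)$ and $1+u_1+u_2$ differ by an element of $1 + \varpi_\F^c O_\F$. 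Since $\mu$ is trivial on $1 + \varpi_\F^c O_\F$ by definition of the conductor, applying $\mu$ yields $\mu(1+u_1)\mu(1+u_2) = \mu(1+u_1+u_2)$. Hence $u \mapsto \mu(1+u)$ is an additive character of the abelian group $\varpi_\F^k O_\F$.

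Next I would invoke the self-duality of $\F$ via $\psi_\F$. Every continuous character of $\varpi_\F^k O_\F$ extends (non-uniquely) to a character of $\F$, and such a character has the form $u \mapsto \psi_\F(\alpha u)$ for some $\alpha \in \F$, determined modulo $\varpi_\F^{c(\psi_\F)-k}O_\F$ after restricting. This produces the desired $\alpha_\mu$ with $\mu(1+u) = \psi_\F(\alpha_\mu u)$ for $u \in \varpi_\F^k O_\F$.

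Finally, to pin down the valuation, I would use both directions of the conductor property. Since $\mu$ is trivial on $1 + \varpi_\F^c O_\F$, the character $u \mapsto \psi_\F(\alpha_\mu u)$ must be trivial on $\varpi_\F^c O_\F$, giving $v_\F(\alpha_\mu) \geq c(\psi_\F) - c$. Conversely, since $c \geq 2$ we have $k \leq c-1$, so $\varpi_\F^{c-1}O_\F \subseteq \varpi_\F^k O_\F$, and the non-triviality of $\mu$ on $1 + \varpi_\F^{c-1}O_\F$ forces $\psi_\F(\alpha_\mu \cdot)$ to be non-trivial on $\varpi_\F^{c-1}O_\F$, i.e.\ $v_\F(\alpha_\mu) \leq c(\psi_\F) - c$. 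Combining gives $v_\F(\alpha_\mu) = -c(\mu) + c(\psi_\F)$, as claimed. The only mildly delicate step is the multiplicativity check at the start; everything after that is a standard application of local duality.
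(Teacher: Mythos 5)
Your proof is correct. The paper states Lemma~\ref{Lem:DualLiealgForChar} without proof, treating it as a standard fact (it is the usual ``dual Lie algebra'' or ``stationary phase'' parametrisation of characters of $1+\varpi_\F^{\lceil c/2\rceil}O_\F$ via additive duality), and your argument is exactly the standard one: check that $(1+u_1)(1+u_2)$ and $1+u_1+u_2$ agree up to an element of $1+\varpi_\F^{c}O_\F$ on which $\mu$ is trivial, so $u\mapsto\mu(1+u)$ is an additive character of $\varpi_\F^{\lceil c/2\rceil}O_\F$; realise it as $u\mapsto\psi_\F(\alpha_\mu u)$ by self-duality of $\F$; and read off $v_\F(\alpha_\mu)=-c(\mu)+c(\psi_\F)$ from the triviality of $\mu$ on $1+\varpi_\F^{c}O_\F$ together with its non-triviality on $1+\varpi_\F^{c-1}O_\F$ (the latter using $\lceil c/2\rceil\leq c-1$, which is where the hypothesis $c\geq 2$ enters). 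All steps are sound.
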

Here we keep track of $c(\psi_F)$ so we can also apply the lemma directly to a character $\theta$ defined over an \'{e}tale quadratic algebra $E$. Also note that if $\theta $ is a character over an \'{e}tale quadratic algebra $E/F$ such that $ \theta|_{F^\times}=1$, then the  associated element $\alpha_\theta$ can be chosen as a trace 0 element in $E$.
\begin{cor}\label{Cor:DualLiealgForBasechangeChar}
Let $E/F$ be an \'{e}tale quadratic algebra. $\chi$ is defined over $F^\times$ with $c(\chi)\geq 2$,  and  is associated to $\alpha$ by $\chi(1+x)=\psi(\alpha x)$ for $v(x)\geq c(\chi)/2$ and $v(\alpha)=-c(\chi)$. 

Then $\alpha$ is also associated to $\chi_E$, in the sense that
$$\chi\circ N_{E/F}(1+y)= \psi_E(\alpha y), \forall y\in \varpi_E^{\lceil \frac{e_Ec(\chi)-e_E+1}{2} \rceil}O_E.$$
\end{cor}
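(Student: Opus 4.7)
The plan is to use the identity $N_{E/F}(1+y) = 1 + \Tr_{E/F}(y) + N_{E/F}(y)$, substitute this into $\chi$, invoke Lemma \ref{Lem:DualLiealgForChar} on the resulting argument, and then factor the exponential $\psi(\alpha z)$ into a trace contribution (which will match the desired $\psi_E(\alpha y)$) and a norm contribution (which I will show is trivial).

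Set $z = \Tr_{E/F}(y) + N_{E/F}(y) \in F$ and $k = \lceil (e_E c(\chi) - e_E + 1)/2 \rceil$. First I would verify that $v_F(z) \geq \lceil c(\chi)/2 \rceil$, so that Lemma \ref{Lem:DualLiealgForChar} applies to give $\chi(1+z) = \psi(\alpha z)$. Using the estimates $v_F(\Tr_{E/F}(y)) \geq v_E(y)/e_E$ (since the trace is a sum of Galois conjugates, each with the same $v_E$) and $v_F(N_{E/F}(y)) = 2 v_E(y)/e_E$, a short case split on $e_E \in \{1,2\}$ (the split case being subsumed by $e_E = 1$ via Definition \ref{Def:splitconvention}) together with the hypothesis $v_E(y) \geq k$ gives $v_F(z) \geq \lceil c(\chi)/2 \rceil$ in every case.

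Next, split $\psi(\alpha z) = \psi(\alpha \Tr_{E/F}(y)) \cdot \psi(\alpha N_{E/F}(y))$. Since $\alpha \in F$, the first factor is $\psi(\Tr_{E/F}(\alpha y)) = \psi_E(\alpha y)$, which is exactly the desired right-hand side. For the second factor, compute
\[
v_F(\alpha N_{E/F}(y)) = -c(\chi) + \tfrac{2 v_E(y)}{e_E} \geq -c(\chi) + c(\chi) = 0
\]
by the same case check on $e_E$; combined with $c(\psi) = 0$, this yields $\psi(\alpha N_{E/F}(y)) = 1$, completing the proof.

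The main (and essentially only) point requiring care is that the value $k = \lceil (e_E c(\chi) - e_E + 1)/2 \rceil$ is chosen tightly: it is the smallest integer making both $v_F(z) \geq c(\chi)/2$ and $v_F(\alpha N_{E/F}(y)) \geq 0$ hold simultaneously, so I would double-check in each of the unramified, ramified, and split cases that the rounding in $k$ leaves no gap. Beyond this bookkeeping, no new idea is needed — the corollary is a direct transport of Lemma \ref{Lem:DualLiealgForChar} across the norm map.
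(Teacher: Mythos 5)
Your proof is correct, and it takes the only natural route: expand $N_{E/F}(1+y) = 1 + \Tr_{E/F}(y) + N_{E/F}(y)$, apply Lemma~\ref{Lem:DualLiealgForChar} over $F$ to $z = \Tr_{E/F}(y) + N_{E/F}(y)$, split $\psi(\alpha z)$ into the trace factor (which equals $\psi_E(\alpha y)$ since $\alpha \in F$) and the norm factor (which is trivial because $v_F(\alpha N_{E/F}(y)) \geq 0$ and $c(\psi)=0$). The paper gives no explicit proof of this corollary, so there is nothing to contrast with; your valuation bookkeeping in the $e_E=1$ and $e_E=2$ cases (including the split case via Definition~\ref{Def:splitconvention}) is exactly what is needed, and the small tightness remark at the end, while slightly informal, does not affect correctness.
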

Note that when $p\neq 2$, $$c(\chi_E)=e_Ec(\chi)-e_E+1.$$
\begin{lem}\label{Lem:GaussintTwist}
Let $\mu$, $\chi$ be multiplicative characters on $F^\times$ and $\psi$ be an additive character on $F$.
Suppose that $c(\mu)\geq 2 c(\chi)$, and $\alpha_\mu$ is associated to $\mu$ by Lemma \ref{Lem:DualLiealgForChar}. Then
\begin{equation}
\int\limits_{v(x)=-c(\mu)+c(\psi)}\mu(x)\chi(x)\psi(x)d^\times x=\chi(-\alpha_\mu)\int\limits_{v(x)=-c(\mu)+c(\psi)}\mu(x)\psi(x)d^\times x.
\end{equation}
\end{lem}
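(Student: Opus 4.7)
The plan is to exploit the stationary phase principle for the oscillatory integral defined by $\mu\cdot\psi$: it concentrates at the critical point $x=-\alpha_\mu$, and since $\mu$ has much larger conductor than $\chi$, the twist by $\chi$ is effectively constant (namely $\chi(-\alpha_\mu)$) on the neighborhood where the integral localizes.

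First I would change variables $x=-\alpha_\mu y$, which sends the domain $v(x)=-c(\mu)+c(\psi)$ bijectively to $v(y)=0$ because $v(\alpha_\mu)=-c(\mu)+c(\psi)$ by Lemma \ref{Lem:DualLiealgForChar}. Stripping off the factor $\mu(-\alpha_\mu)\chi(-\alpha_\mu)$ from the left side and $\chi(-\alpha_\mu)\mu(-\alpha_\mu)$ from the right, the identity reduces to
\begin{equation*}
\int_{v(y)=0}\mu(y)\chi(y)\psi(-\alpha_\mu y)\,d^\times y \;=\;\int_{v(y)=0}\mu(y)\psi(-\alpha_\mu y)\,d^\times y.
\end{equation*}

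Next, set $c:=c(\mu)$ and consider the substitution $y\mapsto y(1+t)$ for $t\in\varpi^{\lceil c/2\rceil}\OF$, averaged over this subgroup. Invoking Lemma \ref{Lem:DualLiealgForChar} we have $\mu(1+t)=\psi(\alpha_\mu t)$, so the integrand gets multiplied by $\psi(\alpha_\mu(1-y)t)$. The additive average in $t$ over $\varpi^{\lceil c/2\rceil}\OF$ vanishes unless $v\bigl(\alpha_\mu(1-y)\bigr)+\lceil c/2\rceil\geq c(\psi)$, which, using $v(\alpha_\mu)=-c+c(\psi)$, translates to $v(1-y)\geq \lfloor c/2\rfloor$, that is $y\in U_F(\lfloor c/2\rfloor)$. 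The identical computation applies verbatim to the twisted integrand because $\chi(1+t)=1$ for $t\in\varpi^{\lceil c/2\rceil}\OF$ (here $\lceil c/2\rceil\geq c(\chi)$ from the hypothesis $c\geq 2c(\chi)$). Hence both integrals localize to $y\in U_F(\lfloor c/2\rfloor)$.

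On $U_F(\lfloor c/2\rfloor)$ the character $\chi$ is trivial: when $c=2c(\chi)$ then $c$ is even and $\lfloor c/2\rfloor=c(\chi)$; when $c>2c(\chi)$ we have $c\geq 2c(\chi)+1$ and so $\lfloor c/2\rfloor\geq c(\chi)$ as well. The two integrands therefore coincide on their common support, yielding the claim. There is no real obstacle; the only point requiring care is the bookkeeping of the three conductors $c(\psi)$, $c(\mu)$, $c(\chi)$ when applying Lemma \ref{Lem:DualLiealgForChar} and verifying $\lfloor c/2\rfloor\geq c(\chi)$ in both parity cases above.
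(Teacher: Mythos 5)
Your proof is correct and is precisely the "p-adic stationary phase analysis" that the paper alludes to without giving details: the change of variable $x=-\alpha_\mu y$ to normalize the domain, averaging over the stabilizer $1+\varpi^{\lceil c/2\rceil}\OF$ to localize the integral to $U_F(\lfloor c/2\rfloor)$, and the observation that $\chi$ is trivial there because $c(\mu)\geq 2c(\chi)$. The parity bookkeeping verifying $\lfloor c(\mu)/2\rfloor\geq c(\chi)$ is handled correctly in both cases.
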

This result follows immediately from the p-adic stationary phase analysis.

Let $K_1(\varpi^c)$ denote the compact subgroup of $\GL_2$ whose elements are congruent to $\zxz{*}{*}{0}{1}\text{mod}{(\varpi^c)}$.
Now we record some basic facts about integrals on $\GL_2(F)$. 
\begin{lem} \label{Iwasawadecomp}
For every positive integer $c$,
$$\GL_2(F)=\coprod\limits_{0\leq i\leq c} B\zxz{1}{0}{\varpi^i}{1}K_1(\varpi^c).$$
Here $B$ is the Borel subgroup of $\GL_2$. 
\end{lem}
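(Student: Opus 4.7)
My plan is to identify $B\bash\GL_2(F)$ with the projective line $\P^1(F)$ via the ``bottom row'' map $g\mapsto[c:d]$, and then to classify the orbits of $\P^1(F)$ under the right action of $K_1(\varpi^c)$; the orbit of $[\varpi^i:1]$ will correspond to the double coset $B\,\zxz{1}{0}{\varpi^i}{1}\,K_1(\varpi^c)$.

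First I would observe that for $b=\zxz{b_{11}}{b_{12}}{0}{b_{22}}\in B$, left multiplication of $g$ by $b$ scales the bottom row of $g$ by $b_{22}\in F^\times$, so the projective class of the bottom row is a complete invariant of the $B$-orbit; this identifies $B\bash\GL_2(F)\simeq\P^1(F)$. Consequently, the double cosets $B\bash\GL_2(F)/K_1(\varpi^c)$ are parametrized by the $K_1(\varpi^c)$-orbits on $\P^1(F)$, and it suffices to check that the $c+1$ points $[\varpi^i:1]$ for $0\leq i\leq c$ form a complete and irredundant set of orbit representatives.

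Next I would compute the orbits directly. Writing $k=\zxz{\alpha}{\beta}{\gamma}{\delta}\in K_1(\varpi^c)$, one has $\gamma\in\varpi^c\OF$, $\delta\in 1+\varpi^c\OF$, and $\alpha\in\OF^\times$ (forced by $\det k\in\OF^\times$), and $[\varpi^i:1]\cdot k=[\varpi^i\alpha+\gamma:\varpi^i\beta+\delta]$. For $0<i<c$ the first coordinate has valuation exactly $i$ while the second is a unit, so the orbit equals $\{[t:1]:v(t)=i\}$. For $i=c$ the first coordinate lies in $\varpi^c\OF$ and the second remains a unit, giving the orbit $\{[t:1]:v(t)\geq c\}$. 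For $i=0$ the first coordinate $\alpha+\gamma$ is a unit, but $\beta+\delta$ can take any value in $\OF$ as $\beta$ varies over $\OF$, so the orbit covers $\{[t:1]:v(t)\leq 0\}\cup\{\infty\}$. These three families partition $\P^1(F)$, which yields the claimed disjoint union.

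The main technical point will be to verify that the $i=0$ orbit indeed reaches the ``large'' sector of $\P^1(F)$ (points $[t:1]$ with $v(t)<0$ and the point $\infty=[1:0]$): this is where the congruence $\delta\in 1+\varpi^c\OF$ must be compensated by a suitable choice of $\beta\in\OF$, e.g.\ by taking $\beta\in-\delta+\varpi^N\OF$ for $N$ large so that $\beta+\delta$ has prescribed valuation. Once this is done, both surjectivity and disjointness follow from tracking the truncated valuation $\max(0,\min(v(t),c))$ of the first coordinate of the normalized bottom row, which is patently preserved by the left $B$ and right $K_1(\varpi^c)$ actions and takes exactly the $c+1$ values $0,1,\dots,c$.
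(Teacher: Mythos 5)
Your proof is correct. The paper states Lemma \ref{Iwasawadecomp} without proof (it is a standard fact), so there is no argument in the source to compare against; your route via the identification $B\backslash\GL_2(F)\simeq\P^1(F)$ by the bottom-row map, followed by a direct computation of the $K_1(\varpi^c)$-orbits $\{\infty\}\cup\{[t:1]:v(t)\leq 0\}$, $\{[t:1]:v(t)=i\}$ for $0<i<c$, and $\{[t:1]:v(t)\geq c\}$, is the standard one, and the details you supply (the unit $\alpha$ forced by the determinant, the free choice of $\beta$ to hit the $\infty$ end in the $i=0$ orbit) are exactly the points that need checking. One very small stylistic remark: the invariant $\max(0,\min(v(t),c))$ is only literally defined on the affine chart $[t:1]$; for $\infty=[1:0]$ one should either set it to $0$ by convention or phrase it as $\max(0,\min(v(a)-v(b),c))$ on $[a:b]$ with the usual conventions, but since you already verified directly that $\infty$ lies in the $i=0$ orbit this does not affect the correctness of the argument.
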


We normalize the Haar measure on $\GL_2(\F)$ such that its maximal compact open subgroup $K$ has volume 1. Then we have the following easy result (see, for example, \cite[Appendix A]{YH13}).
\begin{lem}\label{localintcoefficient}
Locally let $f$ be a $K_1(\varpi^c)-$invariant function, on which the center acts trivially. Then
 \begin{equation}
  \int\limits_{F ^\times\backslash\GL_2(\F )}f(g)dg=\sum\limits_{0\leq i\leq c}A_i\int\limits_{\F ^\times\backslash B(\F )}f\left(b\zxz{1}{0}{\varpi ^i}{1}\right)db.
 \end{equation}
 
Here $db$ is the left Haar measure on $\F ^\times\backslash B(\F )$, and
$$A_0=\frac{p}{p+1}\text{,\ \ \ }A_c=\frac{1}{(p+1)p^{c-1}}\text{,\ \ \ }A_i=\frac{p-1}{(p+1)p^i}\text{\ \ for\ }0<i<c.$$
\end{lem}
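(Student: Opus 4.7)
The plan is to deduce the formula from Lemma \ref{Iwasawadecomp} by an Iwasawa-type measure computation on each cell of the disjoint decomposition.

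Writing $w_i = \zxz{1}{0}{\varpi^i}{1}$, I would split
$$\int_{F^\times\backslash \GL_2(F)} f(g)\, dg = \sum_{i=0}^{c} \int_{F^\times\backslash B w_i K_1(\varpi^c)} f(g)\, dg.$$
On each cell, the surjection $(b,k)\mapsto bw_ik$ from $B\times K_1(\varpi^c)$ has fibers parameterized by the compact group $H_i := B \cap w_iK_1(\varpi^c)w_i^{-1}$. Using right $K_1(\varpi^c)$-invariance and left $F^\times$-invariance of $f$, the $k$-integration collapses to the constant factor $\vol(K_1(\varpi^c))$ and one obtains
$$\int_{F^\times\backslash B w_i K_1(\varpi^c)} f(g)\, dg = A_i \int_{F^\times\backslash B(F)} f(bw_i)\, db,\quad A_i = \frac{\vol(K_1(\varpi^c))}{\vol\bigl(F^\times\backslash F^\times H_i\bigr)},$$
with the normalization $\vol(K)=1$.

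To evaluate the $A_i$, compute $H_i$ by the conjugation identity
$$w_i^{-1}\zxz{a}{b}{0}{d}w_i = \zxz{a+b\varpi^i}{b}{\varpi^i(d-a-b\varpi^i)}{d-b\varpi^i}.$$
Imposing the defining conditions for $K_1(\varpi^c)$ then characterizes $\zxz{a}{b}{0}{d}\in H_i$ by $a,d\in O_F^\times$, $b\in O_F$, $d - b\varpi^i\in 1+\varpi^c O_F$, and the lower-left congruence $d - a - b\varpi^i\in \varpi^{c-i}O_F$. A direct volume count, with the central $1+\varpi^c O_F$ divided out, produces the stated values of $A_i$; the three regimes $i=0$, $0<i<c$, and $i=c$ arise according to whether the congruence on $d-a$ is vacuous, proper, or cuts down maximally.

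The main obstacle is the careful bookkeeping of normalizations: tracking the modular character of $B$ when passing between the left-invariant measure $db$ on $F^\times\backslash B$ and the Haar measure on $\GL_2(F)$, and quotienting $H_i$ consistently by the center in both numerator and denominator. A clean consistency check is the identity $\sum_{i=0}^c A_i = 1$, which is forced by $\vol(K)=1$ and can be verified directly from the stated coefficients by telescoping the geometric sum.
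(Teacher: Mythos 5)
The paper itself does not give a proof of this lemma; it cites \cite[Appendix A]{YH13} and treats the result as standard. Your approach -- decomposing the integral over the cells of Lemma~\ref{Iwasawadecomp}, pulling back along $(b,k)\mapsto bw_ik$ with $w_i=\zxz{1}{0}{\varpi^i}{1}$, and computing the volumes of the fiber groups $H_i=B\cap w_iK_1(\varpi^c)w_i^{-1}$ -- is the natural one and surely matches the cited reference. Your conjugation identity is correct, and the resulting congruence description of $H_i$ is right: for $0<i<c$ the two congruences force $a\in 1+\varpi^{c-i}\OF$ and determine $d$ modulo $\varpi^c$ in terms of $b$, giving $\vol(H_i)\asymp q^{i-2c}$; for $i=c$ the lower-left congruence is vacuous and $\vol(H_c)\asymp(1-q^{-1})q^{-c}$; for $i=0$ one gets $a\in 1+\varpi^{c}\OF$ and the requirement $d\in\OF^\times$ forces $1+b\in\OF^\times$, giving $\vol(H_0)\asymp(1-q^{-1})q^{-2c}$. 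These reproduce the three regimes and one checks directly that $\vol(H_j)/\vol(H_i)=A_i/A_j$ in all cases; moreover $Z\cap H_i=1+\varpi^c\OF$ is the same for every $i$, so passing to $F^\times\backslash F^\times H_i$ preserves these ratios.

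One caution: the identity $\sum_i A_i=1$ pins down the overall normalization constant between $db$ on $F^\times\backslash B$ and $dg$ on $\GL_2(F)$ with $\vol(K)=1$, and so is a useful substitute for tracking that constant explicitly; but it is only a consistency check on the total mass and does not by itself certify the ratios $A_i/A_j$. Those must come from the $\vol(H_i)$ computation sketched above. With that step carried out, the argument is complete.
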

\subsection{More preparations for  Whittaker model and matrix coefficient}
Here we briefly review known results on the 
Whittaker function and matrix coefficient for newforms. 
Let $\pi$ be an irreducible smooth representation of $\GL_2(F)$ with trivial central character. In particular $\pi$ is unitary. Any element $\varphi\in \pi$  can be associated to a Whittaker function $W_\varphi$ in the Whittaker model of $\pi$. For asymptotic purposes, we assume either $2\nmid q$, or $c(\pi)$ is large enough when $2|q$.

\begin{lem}\label{Lem:Pairing}
The unitary pairings on $\pi$ can be given in the Whittaker model as follows:
$$\peter{W_1,W_2}=\int\limits_{x\in \F^\times} W_1(a(x))\overline{W_2(a(x))}d^\times x.$$
\end{lem}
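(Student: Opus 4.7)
The plan is to verify that the right-hand side defines an absolutely convergent, $\GL_2(F)$-invariant positive hermitian pairing on $\pi$, and then appeal to uniqueness to identify it with the unitary pairing on the abstract side (up to a normalization that is fixed by convention).

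First I would address convergence. Using the Kirillov model realization of $\pi$, the map $\varphi\mapsto (x\mapsto W_\varphi(a(x)))$ sends $\pi$ into a space of functions on $F^\times$ that are compactly supported on $|x|$ large (since $W_\varphi$ has bounded support under the unipotent action) and whose behavior as $|x|\to 0$ is dictated by the exponents of $\pi$ (a finite sum of terms of the form $|x|^{1/2+s}$ times a finitely-ramified character, with $\Re s$ bounded by a Ramanujan-type exponent $<1/2$ since $\pi$ arises as a local component of a global automorphic representation, cf.\ the convention above). Hence $\int_{F^\times}|W_1(a(x))||W_2(a(x))|\,d^\times x$ converges absolutely.

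Next I would check invariance. Denote the candidate pairing by $\peter{W_1,W_2}'$. For $n=\left(\begin{smallmatrix}1&u\\0&1\end{smallmatrix}\right)\in N$ one has $a(x)n=\left(\begin{smallmatrix}1&xu\\0&1\end{smallmatrix}\right)a(x)$, so $W(a(x)n)=\psi(xu)W(a(x))$ and the two $\psi$-factors from $W_1$ and $\overline{W_2}$ cancel. For $a(y)\in A$ the identity $W(a(x)a(y))=W(a(xy))$ combined with the change of variable $x\mapsto x/y$ on $F^\times$ yields invariance. The central character being trivial takes care of $Z$. Thus the pairing is invariant under the Borel $B=ZAN$, equivalently under the mirabolic $P$. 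The only remaining issue is invariance under the Weyl element (equivalently, under $K$ in the Iwasawa decomposition); I expect this to be the main obstacle and would sidestep a direct computation by the following uniqueness argument.

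Because $\pi$ is irreducible and generic, its Kirillov model is an irreducible representation of the mirabolic $P$, and an application of Schur's lemma at the level of $P$ shows that the space of $P$-invariant hermitian forms on $\pi$ is at most one-dimensional. Since $\pi$ is unitarizable, the invariant hermitian form $\peter{\cdot,\cdot}$ must also be $P$-invariant, and by what we have checked so is $\peter{\cdot,\cdot}'$; both being nonzero, they are proportional. In particular $\peter{\cdot,\cdot}'$ is automatically $G$-invariant. Finally, the scalar of proportionality is fixed to be $1$ by the global normalization: restricting to vectors $\varphi$ for which the Whittaker expansion of $\varphi\in\pi$ on $\GA$ factorizes, one verifies on the unramified line that the ratio of the two sides at a single test vector equals $1$ (this is the usual Hecke-Rankin-Selberg unfolding computation at $s=1$ together with $L_v(\pi,\mathrm{Ad},1)/\zeta_{F_v}(2)$ factors that have been absorbed into the conventions of Section~\ref{secnotations}). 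This establishes the displayed identity.
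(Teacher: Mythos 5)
The paper does not actually prove this lemma: it is stated without proof because it simply recalls the convention for the local inner product laid down in Section~\ref{secnotations}, where
$\peter{\varphi_{1,v},\varphi_{2,v}}$ is \emph{defined} to be
$\int_{F_v^\times} W_{1,v}(a(x))\overline{W_{2,v}(a(x))}\,d^\times x$.
Your argument is therefore addressing a different (and more substantive) question than the paper does, namely that this formula really defines a $\GL_2(F)$-invariant positive hermitian form. That part of your proposal is correct and is the standard argument: convergence via the Kirillov model (compact support at $|x|$ large, exponents of absolute value $<1/2$ near $0$ for any unitary representation, with the stronger Ramanujan-type bound available here by the paper's Convention), invariance under the mirabolic $P=ZAN$ by the elementary computation, and then invariance under the full group by the one-dimensionality of $P$-invariant hermitian forms coming from irreducibility of the Kirillov model as a $P$-representation (Bernstein--Zelevinsky/Kirillov theory) together with Schur.

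The last paragraph, however, does not hold up and is not needed. There is no pre-existing abstract normalization of the local pairing against which you must match the scalar: the paper \emph{fixes} its normalization by declaring the local pairing to be this very integral. The sketch you give — unfolding a global Rankin--Selberg integral at $s=1$ and claiming the $L_v(\pi,\mathrm{Ad},1)/\zeta_{F_v}(2)$ ratios are ``absorbed into the conventions'' — is not accurate: those ratios appear explicitly in the Ichino triple product formula (Theorem~\ref{triplethm}) precisely because they are \emph{not} absorbed into the definition of $\peter{\cdot,\cdot}_v$. Using a global unfolding to calibrate a local scalar would at best re-derive a convention, and as stated it risks circularity. It would be cleaner to say: by Schur, any $G$-invariant positive pairing in the Kirillov model is a positive multiple of the displayed integral, and the paper's convention is to take that multiple to be $1$.
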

In the following we give explicit formulae for Whittaker functions.
\begin{defn}
For $W\in \pi$ a Whittaker function, denote
$$W^{(j)}(\alpha)=W\left(\zxz{\alpha}{}{}{1}\zxz{1}{}{\varpi^j}{1}\right).$$
\end{defn}
\begin{rem}
If $\varphi_0$ is the newform, and $W_{\varphi_0}$ is the associated Whittaker function, then by Lemma \ref{Iwasawadecomp} and the basic properties of Whittaker newforms, $W_{\varphi_0}$ is determined by $W_{\varphi_0}^{(j)}$ for $0\leq j\leq c(\pi)$.
\end{rem}
\subsubsection{Unramified representations and Special unramified representations}

\begin{lem}\label{Lem:WiUnramified}
Suppose $\mu_i$ are unramified (that is, $c(\mu_i)=0$) and $\pi=\pi(\mu_1,\mu_2)$. Let $\varphi_0\in \pi$ be a newform and $W_{\varphi_0}$ be its associated Whittaker function normalized so that $W_{\varphi_0}(1)=1$. Then $W_{\varphi_0}$ is invariant under the maximal compact open subgroup and
\begin{equation}
W_{\varphi_0}^{(0)}(\alpha)=\begin{cases}
|\alpha|^{1/2}\frac{\mu_1(\varpi\alpha)-\mu_2(\varpi\alpha)}{\mu_1(\varpi)-\mu_2(\varpi)},&\text{if }v(\alpha)\geq 0;\\0,&\text{otherwise}.
\end{cases}
\end{equation} 
\end{lem}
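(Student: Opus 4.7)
The plan is to exploit the fact that in the unramified case the newform coincides with the spherical vector, and then the statement reduces to the classical Casselman--Shalika formula for $\GL_2$. So my strategy has three steps.

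First I would observe that since $c(\pi)=0$ the newform $\varphi_0$ is the (unique up to scalar) $K$-fixed vector, and hence $W_{\varphi_0}$ is right $K$-invariant. Because $\begin{pmatrix}1&0\\1&1\end{pmatrix}\in K$, this immediately gives $W_{\varphi_0}^{(0)}(\alpha)=W_{\varphi_0}(a(\alpha))$, so it suffices to compute $W_{\varphi_0}(a(\alpha))$ for $\alpha\in F^\times$.

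Second, for $v(\alpha)<0$ I would run the standard Whittaker-vanishing argument: for $u\in O_F$ we have $n(u)\in K$, so the $K$-invariance and the Whittaker transformation law yield
\[
W_{\varphi_0}(a(\alpha))=W_{\varphi_0}(a(\alpha)n(u))=W_{\varphi_0}(n(\alpha u)a(\alpha))=\psi(\alpha u)W_{\varphi_0}(a(\alpha)).
\]
Since $c(\psi)=0$, the character $u\mapsto\psi(\alpha u)$ is non-trivial on $O_F$ once $v(\alpha)<0$, forcing $W_{\varphi_0}(a(\alpha))=0$.

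Third, for $v(\alpha)\geq 0$ I would derive the explicit formula. One way is to use the standard integral representation of the spherical Whittaker function realised inside the induced model (the Jacquet integral), evaluate it on $a(\varpi^n)$ by unfolding the unipotent integration over $n(x)w$ with $x\in F$, and break the integral into pieces $v(x)=-k$; the resulting geometric sum collapses to $\sum_{k=0}^n\mu_1(\varpi)^k\mu_2(\varpi)^{n-k}$, which equals $(\mu_1(\varpi)^{n+1}-\mu_2(\varpi)^{n+1})/(\mu_1(\varpi)-\mu_2(\varpi))$. After the $|\alpha|^{1/2}$ factor from the modular character and the normalisation $W_{\varphi_0}(1)=1$ (which fixes the overall constant), one rewrites $\mu_i(\varpi)^{n+1}=\mu_i(\varpi\alpha)$ using the unramifiedness of $\mu_i$ and $v(\alpha)=n$, obtaining the stated formula. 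Alternatively one can avoid the Jacquet integral entirely and use the local functional equation together with the action of the unramified Hecke algebra (the Casselman--Shalika recursion), which pins down $W_{\varphi_0}(a(\varpi^n))$ from $W_{\varphi_0}(1)$ by the same eigenvalue argument.

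The only non-routine step is the explicit evaluation at $a(\varpi^n)$ in the third step; I would rely on the well-known Casselman--Shalika computation rather than redo it, since the newform/spherical coincidence in the unramified case makes the result immediate.
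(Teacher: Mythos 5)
Your proof is correct and is the standard derivation of the unramified (Casselman--Shalika/Macdonald) Whittaker formula for $\GL_2$: identify the newform with the spherical vector, use right $K$-invariance to reduce $W^{(0)}$ to the diagonal, kill $v(\alpha)<0$ by the Whittaker transformation law with $c(\psi)=0$, and evaluate the Jacquet integral (or the Hecke recursion) to get the geometric sum. The paper states Lemma~\ref{Lem:WiUnramified} without proof, treating it as a well-known classical fact, so your argument matches the implicit standard reference.
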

\begin{rem}
Note that when $v(\alpha)\geq 0$, the numerator contains the denominator as a factor and can be canceled. In this sense the formula still holds when $\mu_1(\varpi)=\mu_2(\varpi)$. Also note that the above expression for $W_{\varphi_0}$ is not $L^2$-normalized, but differ only by a factor which can be controlled globally by $Q^\epsilon$.
\end{rem}
\begin{lem}\label{Lem:Wispecial}
Let $\pi= \sigma(\mu |\cdot|^{1/2}, \mu |\cdot|^{-1/2})$ be a special unramified representation, where $\mu$ is unramified with $\mu^2=1$.

The Whittaker function associated to the newform $\varphi_0\in \pi$ 
is given by
\begin{equation}\label{Eq:WSpecial}
W_{\varphi_0}^{(1)}\left(\alpha\right)=\begin{cases}
\mu(\alpha)|\alpha|, &\text{\ if $v(\alpha)\geq 0$;}\\
0, &\text{\ otherwise}.
\end{cases}
\end{equation}
\begin{equation}\label{Eq:WSpecial2}
W_{\varphi_0}^{(0)}\left(\alpha\right)=\begin{cases}
-q^{-1}\mu(\alpha)|\alpha|\psi(\alpha), &\text{\ if $v(\alpha)\geq -1$;}\\
0, &\text{\ otherwise}.
\end{cases}
\end{equation}

\end{lem}
Again the expressions are not $L^2$-normalized but differ only by a constant controlled by $Q^\epsilon$.
Lemma \ref{Lem:WiUnramified} and \ref{Lem:Wispecial} are well-known except \eqref{Eq:WSpecial2}, which can be obtained by reformulating \cite[Corollary 5.7]{YH13}. 
\subsubsection{Supercuspidal representation case}\label{Sec:SC}
Suppose that $p\neq 2$, or $c(\pi)$ large enough when $p=2$. Then a supercuspidal representation $\pi$ can be associated with a character $\theta$ over a quadratic field extension $E$ by  Local Langlands Correspondence, and are called dihedral.

The levels of dihedral supercuspidal representations can be associated to the levels of $\theta$ by the following relations.
\begin{enumerate}
	\item[Case 1.]$c \lb \pi \rb =2n+1$ corresponds to $e_E=2$ and $c \lb \theta \rb =2n$ .
	\item[Case 2.] $c \lb \pi \rb =4n$ corresponds to $e_E=1$ and $c \lb \theta \rb =2n$.
	\item[Case 3.] $c \lb \pi \rb =4n+2$ corresponds to $e_E=1$ and $c \lb \theta \rb =2n+1$ .
\end{enumerate}

Recall from \cite[Lemma 5.7]{HuSa:19} the following result, which is a reformulation of \cite[Lemma 3.1]{Assing} and holds actually for all dihedral supercuspidal representations.
\begin{lem}\label{Lem:WiSc}  
Let $\pi$ be a dihedral supercuspidal representation with trivial central character. Let $c=c(\pi)$.
 Denote
\begin{equation*}
C_0=\int\limits_{v_\E(u)=-c(\theta)-e_\E+1}\theta^{-1}(u)\psi_\E(u)d^\times u. 
\end{equation*}
As a function in $x$, $W^{(i)}(x)$ is supported on $v(x)=\min\{0,2i-c\}$, consisting only of level $c-i$ components (in the sense of Mellin transform), except when $i=c-1$ where it consists of level $\leq 1$ components.
In particular when $i\geq c/2$, $W^{(i)}(x)$ 
is supported on $v(x)=0$, and on the support,
\begin{equation*}
W^{(i)}(x)=C_0^{-1}\int\limits_{v_\E(u)=-c(\theta)-e_\E+1}\theta^{-1}(u) \psi\left(-\frac{1}{x}\varpi^{i}N_{\E/{F}}(u)\right)\psi_\E(u)d^\times u.
\end{equation*}

\end{lem}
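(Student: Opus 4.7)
The plan is to verify the formula by working with an explicit realization of $\pi$ and unfolding the Whittaker functional on the newform. By Kutzko's classification of supercuspidal representations of $\GL_2(F)$ in the tame case $p \neq 2$, we have $\pi \simeq c\text{-}\mathrm{Ind}_J^{\GL_2(F)} \Lambda$, where $J$ is a compact-mod-center open subgroup containing an embedded copy of $E^\times$ and $\Lambda$ is an extension of the character $\theta$ via the embedding $E^\times/F^\times \hookrightarrow \PGL_2(F)$. The three ramification cases listed in Section \ref{Sec:SC} correspond to different choices of $J$: an $E^\times$-normalized Iwahori in Case 1, and $E^\times$-normalized maximal compacts in Cases 2 and 3.

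First I would identify the newform $\varphi_0 \in \pi$ as the (essentially unique) $K_1(\varpi^c)$-invariant vector inside the compact induction. Writing its Whittaker function as
$$W_{\varphi_0}(g) = \int_{\rmN(F)} \varphi_0(w_0 \, n \, g) \, \psi^{-1}(n)\, dn$$
with $w_0$ a suitable Weyl element, I would evaluate at $g = a(x)\zxz{1}{0}{\varpi^i}{1}$ for $i \geq c/2$. Applying the Bruhat decomposition to the inner argument and exploiting the structure of $J$, the integration collapses onto $E^\times \subset J$, where $\Lambda$ restricts to $\theta$; after a change of variable matching $\psi_E$ with the pullback of $\psi$ through $N_{E/F}$, this produces the displayed integral. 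The support condition $v(x)=0$ for $i \geq c/2$ arises because only one orbit of $J$-cosets survives in this range, while $C_0^{-1}$ absorbs the Haar-measure normalization and the requirement that $W_{\varphi_0}$ be normalized as in the lemma.

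The general support pattern $v(x) = \min\{0, 2i-c\}$ for arbitrary $i$, together with the level-$(c-i)$ Mellin decomposition, would then follow from the Atkin-Lehner symmetry $\pi(w_c)\varphi_0 = \eps(\pi)\varphi_0$ for $w_c = \zxz{0}{1}{-\varpi^c}{0}$, which relates $W^{(i)}$ and $W^{(c-i)}$ and forces their supports and Mellin levels to transform compatibly. The boundary case $i = c-1$, where the Mellin level drops to $\leq 1$, reflects the interaction with the unramified component of the Iwahori sitting inside $J$.

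The main obstacle is the careful case-by-case bookkeeping of Haar measures, support sets, and the normalization $C_0$ across the three ramification regimes, particularly matching the convention in which the Gauss sum $C_0$ is taken over $v_E(u) = -c(\theta) - e_E + 1$ rather than some shifted range. Since this is precisely \cite[Lemma 5.7]{HuSa:19}, the cleanest route is to cite that lemma, using the sketch above as motivation for why unfolding the compact induction at $a(x)\zxz{1}{0}{\varpi^i}{1}$ yields exactly the displayed formula.
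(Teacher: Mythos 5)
Your proposal lands exactly where the paper does: the paper offers no internal proof and simply imports the statement with the sentence ``Recall from [HuSa:19, Lemma 5.7] the following result,'' which is the same citation you end with. The preliminary sketch via compact induction and unfolding is consistent with the method in that reference, but the operative step in both your proposal and the paper is the citation, so the approaches coincide.
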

The normalization by $C_0$ guarantees that $W^{(c)}(1)=1$. 
Note that it is also possible to give an expression in the case $i<c/2$ by using Atkin-Lehner symmetry. Though we do not need such explicit formula.
\begin{cor}\label{Cor:WiMellinSc}
Suppose $i\geq c/2$. Let $\chi$ be a character over $F^\times$ such that $\chi(\varpi)=1$,  $c(\chi)=c-i$ when $i\neq c-1$, or $c(\chi)\leq 1$ when $i=c-1$. Then
\begin{align*}
&\int\limits_{x\in \OF^\times}W^{(i)}(x)\chi(x)d^\times x\\
=& C_0^{-1}\int\limits_{x\in\OF^\times}\psi(-\varpi^{i-c}x)\chi^{-1}(x)d^\times x
\int\limits_{v_E(u)=-c(\theta)-e_\E+1}\theta^{-1}(u) \chi_E(u)\psi_\E(u)d^\times u.
\end{align*}
\end{cor}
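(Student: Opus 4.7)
The proof is a direct calculation based on the explicit formula for $W^{(i)}$ from Lemma \ref{Lem:WiSc}. Since $i\geq c/2$, that lemma tells us $W^{(i)}(x)$ is supported on $v(x)=0$, which matches the range of integration. So we plug in the explicit formula for $W^{(i)}(x)$ to obtain
$$\int_{x\in \OF^\times} W^{(i)}(x)\chi(x)\,d^\times x = C_0^{-1}\int_{x\in\OF^\times}\int_{v_E(u)=-c(\theta)-e_E+1}\theta^{-1}(u)\psi\!\left(-\tfrac{1}{x}\varpi^i N_{E/F}(u)\right)\psi_E(u)\chi(x)\,d^\times u\,d^\times x.$$
Both inner integrals are over compact subsets (the $x$-integral over $\OF^\times$; the $u$-integral over a single valuation ring coset), so Fubini applies without issue and we are free to swap the order of integration.

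With $u$ fixed in the outer integral, I then perform the multiplicative change of variable $y := \varpi^i N_{E/F}(u)/x$ in the $x$-integral. A short case check on $E/F$ using the convention $v_E|_F = e_E\cdot v_F$ shows
$$v_F(N_{E/F}(u)) = -c \quad\text{for } v_E(u)=-c(\theta)-e_E+1$$
in each of Cases 1, 2, 3 of \S\ref{Sec:SC}, so $y$ ranges precisely over elements of valuation $i-c$. Writing $y=\varpi^{i-c}y'$ with $y'\in \OF^\times$, the $d^\times x=d^\times y=d^\times y'$ invariance of the Haar measure and the hypothesis $\chi(\varpi)=1$ yield
$$\chi(x)=\chi\!\left(\tfrac{\varpi^i N_{E/F}(u)}{y}\right) = \chi(\varpi^{i-c})\,\chi_E(u)\,\chi^{-1}(y') = \chi_E(u)\,\chi^{-1}(y'),$$
since $\chi\circ N_{E/F}=\chi_E$ by definition.

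Substituting these simplifications back gives
$$\int_{x\in\OF^\times}W^{(i)}(x)\chi(x)\,d^\times x \;=\; C_0^{-1}\int_{y'\in\OF^\times}\!\int_{v_E(u)=-c(\theta)-e_E+1}\!\theta^{-1}(u)\chi_E(u)\psi_E(u)\,\psi(-\varpi^{i-c}y')\chi^{-1}(y')\,d^\times u\,d^\times y',$$
and the integrand factors as a product of a function of $y'$ and a function of $u$, so the double integral separates into the claimed product. The main (only) obstacle is the valuation bookkeeping $v_F(N_{E/F}(u))=-c$, which must be verified uniformly across the three cases; the hypothesis $c(\chi)=c-i$ (or $c(\chi)\leq 1$ when $i=c-1$) plays no role in establishing this identity itself and is only the natural setting for invoking Lemma \ref{lem:Gaussint} to evaluate the resulting $y'$-integral in subsequent applications.
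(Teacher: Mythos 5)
Your proof is correct and is exactly the computation the paper intends: the corollary is stated as an immediate consequence of Lemma~\ref{Lem:WiSc} with only the terse remark ``Here we note that $v(N_{E/F}(u))=-c$,'' which is precisely the valuation check you carry out. One small slip: after the substitution $y=\varpi^{i-c}y'$, the power of $\varpi$ appearing in $\chi(x)=\chi(\varpi^{i}N_{E/F}(u)/y)$ is $\chi(\varpi^{c})$ rather than $\chi(\varpi^{i-c})$, since $i-(i-c)=c$; this is harmless because the hypothesis $\chi(\varpi)=1$ kills any integer power of $\varpi$, so the final factorization is unchanged.
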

Here we note that $v(N_{E/{F}}(u))=-c$ when $v_E(u)=-c(\theta)-e_\E+1$. 

Note that by \cite[Chapter 11]{BH06} there are (exceptional) supercuspidal representations over $p=2$ which are not directly related to some $\theta$ over a quadratic extension $E$, but they have bounded conductors by \cite[Corollary 45.6]{BH06}.

For non-dihedral supercuspidal representations, we only need the following less precise result coming from \cite[Corollary 2.18]{hu_triple_2017}:
\begin{lem}\label{Lem:nondihedralsupercuspidalW}
Let $\pi$ be a supercuspidal representation with trivial central character. Then 
$$W^{(c)}=\Char({O_\F^\times}),$$
and $W^{(i)}(x)$ consists of level $c-i$ components (and level $0$ component when $i=c-1$).
\end{lem}
\subsubsection{Principal series representation case}
Let $\pi=\pi(\mu^{-1},\mu)$ be a principal series representation with  $c(\mu)=c_0=c/2$. In this case denote
\begin{equation*}
C=\int\limits_{u\in \varpi^{-c_0}\OF^\times} \mu( u)\psi(- u)du.
\end{equation*}
Using Lemma \ref{lem:Gaussint} we have that  $|C|\asymp q^{c_0/2}.$

By \cite[Lemma 2.12]{hu_triple_2017},  we have
\begin{lem}\label{Lem:Wiprincipal}
As a function in $x$, $W^{(i)}(x)$ is supported on $v(x)=\min\{0,2i-c\}$, except when $i=c/2$, where $W^{(i)}(x)$ can be supported on $v(x)\geq 0$. $W^{(i)}(x)$ consists only of level $c-i$ components, except when $i=c-1$ where it consists of level $\leq 1$ components. More explicitly,
\begin{enumerate}
\item When $c_0<i\leq c(\pi)=2c_0$, $W^{(i)}(x)$ is supported on $x \in \OF^\times$, where
\begin{equation*}
W^{(i)}( x )=C^{-1}\int\limits_{u\in \varpi^{-c_0}\OF^\times} \mu(1+u\varpi^{i})\mu( x  u)\psi(- x  u)du.
\end{equation*}
\item If $i=c_0$, $W^{(c_0)}(x)$ is supported on $x\in \OF$, where
\begin{align*}
&W^{(c_0)}(x)\\
=&C^{-1}\int\limits_{v(u)\leq -c_0,u\notin \varpi^{-c_0}(-1+\varpi \OF)}\mu(1+u\varpi^{c_0})\mu(x u)\left|\frac{1}{ u\varpi^{c_0}(1+u\varpi^{c_0})}\right|^{1/2}\psi(-x u)q^{-v(x)/2}du.
\end{align*}

\end{enumerate}
As a function in $x$, $W^{(i)}$ consists only of level $c-i$ components, except when $i=c-1$ when it consists of level $\leq 1$ components.
\end{lem}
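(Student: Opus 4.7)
The plan is to work in the induced model $\pi = \Ind_{B}^{\GL_2}(\mu^{-1} \otimes \mu)$ and construct the newform $\varphi_0$ explicitly using the Bruhat--Iwasawa decomposition $\GL_2(F) = \bigsqcup_{0 \leq j \leq c} B\, w(\varpi^j) K_1(\varpi^c)$ of Lemma \ref{Iwasawadecomp}, where $w(\varpi^j) = \zxz{1}{0}{\varpi^j}{1}$. Since $\varphi_0$ is the unique (up to scalar) $K_1(\varpi^c)$-invariant vector, its value $\varphi_0(b\, w(\varpi^j))$ on each cell is determined by applying the Iwasawa character $\delta_B^{1/2}(\mu^{-1} \otimes \mu)$ to $b$, scaled by the standard $j$-dependent newform coefficients. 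I would then apply the Jacquet--Whittaker functional
\begin{displaymath}
W_{\varphi_0}(g) = \int_F \varphi_0\bigl(w_0\, n(u)\, g\bigr)\, \psi(-u)\, du
\end{displaymath}
(interpreted via analytic continuation when needed) at $g = a(\alpha)\, w(\varpi^i)$, and perform the Iwasawa reduction of $w_0\, n(u)\, a(\alpha)\, w(\varpi^i)$ to express $W^{(i)}(\alpha)$ as a Gauss-type integral of $\mu$ against $\psi$.

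The support statement $v(\alpha) = \min\{0, 2i-c\}$ together with the level-$(c-i)$ Mellin-component statement is forced by the equivariance of $\varphi_0$ under the diagonal torus inside $K_1(\varpi^c)$. Concretely, for $a \in \OF^\times$ one has the identity
\begin{displaymath}
a(\alpha)\, w(\varpi^i)\, \zxz{a}{0}{0}{1} = \zxz{a}{0}{0}{1}\, a(\alpha/a)\, w(\varpi^i/a),
\end{displaymath}
which, combined with right $K_1(\varpi^c)$-invariance and the transformation rule $\varphi_0\bigl(\zxz{a}{0}{0}{1} g\bigr) = \mu^{-1}(a)\, |a|^{1/2}\, \varphi_0(g)$, produces a functional equation relating $W^{(i)}(\alpha)$ and $W^{(i)}(\alpha a^{-2})$ whose character on $\OF^\times$ has level exactly $c-i$ in the generic range, with the usual collapse to level $\leq 1$ at $i = c-1$ because only the quadratic contribution $\mu^2$ restricted to $1 + \varpi\OF$ survives. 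Coupling this Mellin-level constraint with the orthogonality of Lemma \ref{lem:Gaussint} pins down the $v(\alpha)$ on which $W^{(i)}$ can be nonzero.

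For the explicit formulas, in the range $c_0 < i \leq 2c_0$ the Iwasawa reduction of $w_0\, n(u)\, a(\alpha)\, w(\varpi^i)$ keeps the element in the open cell ($j = 0$) exactly when $v(u) = -c_0$, which is also the unique level on which the Gauss integral against $\psi$ does not vanish by Lemma \ref{lem:Gaussint}; after the substitution $u \mapsto u/\alpha$ and factoring out the inducing character, this yields the first displayed formula. In the boundary case $i = c_0$ the Bruhat type of $w_0\, n(u)\, a(\alpha)\, w(\varpi^{c_0})$ changes as $v(u)$ crosses the threshold $-c_0$, so the Jacquet integral receives additional contributions from higher cells; the excluded set $u \in \varpi^{-c_0}(-1 + \varpi\OF)$ is precisely where the Iwasawa decomposition becomes degenerate, and the prefactor $|u\varpi^{c_0}(1 + u\varpi^{c_0})|^{-1/2}$ is the modular character $\delta_B^{1/2}$ read off from the Iwasawa factor of the resulting Borel element.

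The main technical obstacle is the careful bookkeeping of which Bruhat--Iwasawa cell contains $w_0\, n(u)\, a(\alpha)\, w(\varpi^i)$ as $u$, $i$, $\alpha$ vary, since this controls simultaneously the support of $W^{(i)}$ and the scalar by which $\varphi_0$ is evaluated. Once this case analysis is performed the remaining work is a routine change of variables; as the assertion is essentially \cite[Lemma 2.12]{hu_triple_2017}, I would ultimately appeal to that reference to package the calculation rather than repeat every step.
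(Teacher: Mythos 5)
The paper gives no proof of this lemma; it is taken verbatim from \cite[Lemma 2.12]{hu_triple_2017}, so your closing appeal to that reference matches the paper. Your strategy (Jacquet integral in the induced model, Bruhat--Iwasawa case analysis of $w_0\,n(u)\,a(\alpha)\,w(\varpi^i)$) is the standard one and is what the cited reference carries out. The issues are with the intermediate Mellin-level argument you interpolate.

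The identity $a(\alpha)\,w(\varpi^i)\,\zxz{a}{0}{0}{1} = \zxz{a}{0}{0}{1}\,a(\alpha/a)\,w(\varpi^i/a)$ is false: expanding, the left side equals $\zxz{\alpha a}{0}{a\varpi^i}{1}$ while the right side equals $\zxz{\alpha}{0}{\varpi^i/a}{1}$. More fundamentally, the Whittaker function does \emph{not} transform by the inducing character under left multiplication by $\zxz{a}{0}{0}{1}$: commuting $\zxz{a}{0}{0}{1}$ past $n(u)$ inside the Jacquet integral rescales $u$, and that rescaling survives in $\psi(-u)$, so $W_{\varphi_0}\bigl(\zxz{a}{0}{0}{1}g\bigr)\neq\mu^{-1}(a)|a|^{1/2}\,W_{\varphi_0}(g)$. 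A correct version of your functional-equation idea uses instead $\gamma(a)=\zxz{1}{0}{0}{a}$ for $a\in 1+\varpi^{c-i}\OF$: this lies in $K_0(\varpi^c)$, and since $\pi(\mu^{-1},\mu)$ has trivial central character the newform is $K_0(\varpi^c)$-invariant. One checks $a(\alpha)\,w(\varpi^i)\,\gamma(a)=a\cdot a(\alpha/a)\,w(\varpi^i/a)$ with $a$ central, and for $a\in 1+\varpi^{c-i}\OF$ one has $w(\varpi^i/a)\in w(\varpi^i)K_1(\varpi^c)$, giving $W^{(i)}(\alpha)=W^{(i)}(\alpha/a)$ and hence Mellin level $\leq c-i$ directly. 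That the level is \emph{exactly} $c-i$ (and $\leq 1$ at $i=c-1$), the precise support statement, and the two displayed formulas are not forced by such soft considerations; they all require the explicit Gauss-integral evaluation of the Jacquet integral, which is precisely the computation done in the cited reference. So the gap is not in the final appeal to \cite{hu_triple_2017} but in the intermediate step you present as producing the level statement a priori.
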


Recall the convention for split quadratic algebra in Definition \ref{Def:splitconvention}. It allows us to reformulate the above result similarly as in the field extension case. Denote $\theta=\mu^{-1}\otimes \mu$ in this case. Then principal series representations can be parameterized by $\theta$ over split quadratic algebra. 
\begin{cor}\label{Cor:PrincipalWiAlt}
Denote  $C_0=\int\limits_{u\in\varpi^{-c_0}O_E^\times }\theta^{-1}(u)\psi_E(u)d^\times u$. Then we have
\begin{enumerate}
\item When $c_0<i\leq c(\pi)=2c_0$, $W^{(i)}(x)$ is supported on $x \in \OF^\times$, where
\begin{equation*}
W^{(i)}( x )=C_0^{-1}\int\limits_{u\in \varpi^{-c_0}O_E^\times} \theta^{-1}(u)\psi\lb-\frac{\varpi^i}{x}N_{E/F}(u)\rb\psi_E(u)d^\times u.
\end{equation*}
\item Denote $\varpi^{i,j}=(\varpi^i,\varpi^j)$ as an element in $E$. For fixed $k=v(x)$, we have
\begin{align*}
&W^{(c_0)}( x )\\
=&\sum\limits_{0\leq j\leq k}\frac{q^{-k/2}\mu(\varpi^{k-2j})}{
C_0
}
\iint\limits_{(u,v)\in \varpi^{-c_0}O_E^\times} \theta^{-1}((u,v))\psi\lb -\frac{\varpi^{c_0+k}uv}{x}\rb\psi_E((\varpi^{k-j}   u, \varpi^j v))d^\times ud^\times v\\
=&\sum\limits_{0\leq j\leq k}\frac{q^{-k/2}\mu(\varpi^{k-2j})}{
C_0
}
\iint\limits_{u\in \varpi^{-c_0}O_E^\times} \theta^{-1}(u)\psi\lb -\frac{\varpi^{c_0+k}N_{E/F}(u)}{x}\rb\psi_E(\varpi^{k-j,j}   u)d^\times u.\notag
\end{align*}
\end{enumerate}
\end{cor}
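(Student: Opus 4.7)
\medskip

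The plan is to deduce both parts directly from Lemma \ref{Lem:Wiprincipal} by unpacking the convention of Definition \ref{Def:splitconvention} for the split algebra $E=F\times F$, writing every integral over $\varpi^{-c_0}O_E^\times$ as a double integral over $\varpi^{-c_0}O_F^\times\times\varpi^{-c_0}O_F^\times$, and matching the two expressions term by term.

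For part~(1), I will write $u=(u_1,u_2)$ in $E$ with $\theta^{-1}(u)=\mu(u_1)\mu^{-1}(u_2)$, $N_{E/F}(u)=u_1u_2$ and $\psi_E(u)=\psi(u_1+u_2)$. The right-hand side in the corollary becomes
\[
C_0^{-1}\iint\mu(u_1)\mu^{-1}(u_2)\,\psi\!\left(u_1\Bigl(1-\tfrac{\varpi^i u_2}{x}\Bigr)+u_2\right) d^\times u_1\, d^\times u_2,
\]
and I will carry out the inner integral in $u_1$ as a Gauss sum: by Lemma~\ref{lem:Gaussint}, it is supported on those $u_2$ with $v\bigl(1-\varpi^i u_2/x\bigr)=-c_0$, and a linear change of variables $u_1\mapsto u_1/(1-\varpi^iu_2/x)$ (the relevant valuations are preserved when $v(x)=0$ and $i>c_0$) reduces it to a multiple of a single Gauss sum in $u_1$. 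After dividing by $C_0$, which factors as the product of the corresponding Gauss sums in the two coordinates, the result collapses to the integral of Lemma~\ref{Lem:Wiprincipal}(1) after the substitution $u_2\mapsto -xu/\varpi^i$. The measure-theoretic bookkeeping (the passage between $du$ and $d^\times u$ on the annulus $v=-c_0$) is clean because every integral is supported on a single value of $v(u_i)$.

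For part~(2), the same expansion produces a double integral, but now both $v(u_1)$ and $v(u_2)$ may vary independently. I will stratify the support by $j=v(u_2)+c_0\in\{0,1,\dots,k\}$, so that $u_2\in\varpi^{j-c_0}O_F^\times$ and $u_1\in\varpi^{-j-c_0+k}O_F^\times$ are forced by the Gauss-sum condition coming from the $\psi\bigl(-\varpi^{c_0+k}u_1u_2/x\bigr)$ factor (once one fixes $v(x)=k$). Within each stratum, a rescaling $u_1\mapsto\varpi^{k-j}u_1'$, $u_2\mapsto\varpi^{j}u_2'$ normalizes both coordinates to $\varpi^{-c_0}O_F^\times$; the character $\theta^{-1}$ picks up $\mu(\varpi^{k-2j})$, and the measure contributes the advertised $q^{-k/2}$ factor (together with contributions from the difference between $du$ and $d^\times u$ at the two different valuations). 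Combining the strata and re-expressing the factor $\psi_E$ on the stratum as $\psi_E(\varpi^{k-j,j}u)$ yields the two equivalent forms in the corollary.

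The main obstacle will be part~(2): keeping track of the support conditions and the measure normalizations as the two coordinates of $u\in E$ range over different annuli, and verifying that the stratification by $j$ exactly reproduces the single-variable support $v(u)\leq -c_0$, $u\notin\varpi^{-c_0}(-1+\varpi O_F)$ appearing in Lemma~\ref{Lem:Wiprincipal}(2) after the substitution $u\mapsto -xu_1u_2/\varpi^{c_0+k}$. The excluded set $\varpi^{-c_0}(-1+\varpi O_F)$ should correspond to the configurations where $v(1+u\varpi^{c_0})$ is strictly positive, which after unpacking is precisely what is removed by the Gauss-sum evaluation in the stratified picture. Once this matching of supports is verified the remainder is routine.
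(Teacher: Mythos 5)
Your proposal is the same Gauss-sum calculation the paper performs, only run in reverse: the paper \emph{derives} the two-variable (over $E=F\times F$) form by inserting a Gauss-integral representation $\mu(1-u\varpi^i)=\bigl(\int\mu^{-1}\psi\bigr)^{-1}\int_{v}\mu^{-1}(v)\psi((1-u\varpi^i)v)\,dv$ into the one-variable integral of Lemma~\ref{Lem:Wiprincipal}, while you propose to \emph{verify} the Corollary by executing the $u_1$-Gauss sum in the split double integral and collapsing back to the lemma's one-variable form. The two directions are equivalent, and the measure bookkeeping observation (everything is supported on a single valuation shell) is the right one.

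Two things to tidy up. First, a small but real slip in part~(1): by Lemma~\ref{lem:Gaussint} the $u_1$-integral (over $v(u_1)=-c_0$ against a character of level $c_0$) is nonvanishing exactly when $v(1-\varpi^iu_2/x)=0$, not $-c_0$. For $i>c_0$ with $v(x)=0$ and $v(u_2)=-c_0$ this is automatic (so part~(1) goes through), but the misstated criterion would send you astray in part~(2), where $i=c_0$ and valuations genuinely shift. Second, you have correctly identified the crux of part~(2) — verifying that the $j$-stratification and the rescalings $u_1\mapsto\varpi^{k-j}u_1'$, $u_2\mapsto\varpi^j u_2'$ reproduce the support $v(u)\leq -c_0$, $u\notin\varpi^{-c_0}(-1+\varpi O_F)$ of Lemma~\ref{Lem:Wiprincipal}(2) — but you have not actually carried it out. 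The paper settles this by observing that the excluded set only matters on the $j=0$ stratum, where it is automatically annihilated by the Gauss sum in the auxiliary variable, so the restriction may be dropped; your write-up should include that argument rather than deferring it as ``routine,'' since that is where the content of part~(2) lives.
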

\begin{proof}
Note that by Lemma \ref{Lem:Wiprincipal}, we have for $c_0<i\leq 2c_0$,
\begin{align*}
W^{(i)}( x )&=C^{-1}\int\limits_{u\in \varpi^{-c_0}\OF^\times} \mu(1-u\varpi^{i})\mu(- x  u)\psi( x  u)du\\
&=\frac{1}{C\int\limits_{v\in\varpi^{-c_0}\OF^\times }\mu^{-1}(v)\psi(v)dv}\iint\limits_{u,v\in \varpi^{-c_0}\OF^\times} \mu^{-1}(v)\psi((1-u\varpi^{i})v)\mu(- x  u)\psi( x  u)dudv\\
&=\frac{1}{\int\mu\psi du\int\mu^{-1}\psi dv}\iint\limits_{u,v\in \varpi^{-c_0}\OF^\times}\mu^{-1}(v)\mu(u)\psi\lb-\frac{\varpi^i uv}{x}\rb\psi(u+v)dudv.
\end{align*}
We can then change the Haar measures in the numerator and the denominator simultaneously.
Part (1) is indeed as claimed. Part (2) can be proven similarly, with extra care about valuations.
Note that for fixed $k=v(x)$, the domain for $u$ need to satisfy $v(u)\geq -c_0-k$ to have nonzero integrals. Denote $v(u)=-c_0-j$ for $0\leq j\leq k$. Then we have
\begin{align*}
&W^{(c_0)}( x )\\
=&\frac{1}{\int\mu\psi du}\int\limits_{v(u)\leq -c_0,u\notin \varpi^{-c_0}(1+\varpi \OF)} \mu(1-u\varpi^{c_0})\mu( x  u)\psi( x  u)q^{v(u)+c_0-k/2}du\\
=&\sum\limits_{0\leq j\leq k}\frac{q^{-j-k/2}\mu(\varpi^{-j})}{
\int\mu\psi du\int\mu^{-1}\psi dv
}\\
&\ \ 
\int\limits_{v\in \varpi^{-c_0}\OF^\times}\int\limits_{v(u)=-c_0-j,u\notin \varpi^{-c_0}(1+\varpi \OF)} \mu^{-1}(v)\psi((1-u\varpi^{c_0})\varpi^jv)\mu( x  u)\psi( x  u)dudv\\
=&\sum\limits_{0\leq j\leq k}\frac{q^{-j-k/2}\mu(\varpi^{k-j})}{
\int\mu\psi du\int\mu^{-1}\psi dv
}\\
&\ \ \int\limits_{v\in \varpi^{-c_0}\OF^\times}\int\limits_{v(u)=-c_0-j,u\notin x\varpi^{-c_0-k}(1+\varpi \OF)} \mu^{-1}(v)\psi\lb\lb 1-\frac{u\varpi^{c_0+k}}{x}\rb\varpi^jv\rb\mu(   u)\psi(\varpi^k   u)dudv\\
=&\sum\limits_{0\leq j\leq k}\frac{q^{-k/2}\mu(\varpi^{k-2j})}{
\int\mu\psi du\int\mu^{-1}\psi dv
}
\iint\limits_{(u,v)\in \varpi^{-c_0}O_E^\times} \theta^{-1}((u,v))\psi\lb -\frac{\varpi^{c_0+k}uv}{x}\rb\psi_E((\varpi^{k-j}   u,\varpi^j v))dudv.
\end{align*}
In the fourth equality we removed the restriction on $u$, as it only affects the case $j=0$, where if $u\in x\varpi^{-c_0}(1+\varpi \OF)$, the integral in $v$ will be zero. 
\end{proof}
Parallel to Corollary \ref{Cor:WiMellinSc}, we have
\begin{cor}\label{Cor:WiMellinPrin}
Suppose $i\geq c/2$. Let $\chi$ be a character over $F^\times$ such that $\chi(\varpi)=1$,  $c(\chi)=c-i$ when $i\neq c-1$ or $c(\chi)\leq 1$ when $i=c-1$. Then if $i>c/2$,
\begin{align*}
&\int\limits_{x\in \OF^\times}W^{(i)}(x)\chi(x)d^\times x\\
=&C_0^{-1}\int\limits_{x\in\OF^\times}\psi(-\varpi^{i-c}x)\chi^{-1}(x)d^\times x
\int\limits_{v(v)=-c(\theta)}\mu^{-1}(v) \chi(v)\psi(v)d^\times v\int\limits_{v(u)=-c(\theta)}\mu(u) \chi(u)\psi(u)d^\times u\\
=&C_0^{-1}\int\limits_{x\in\OF^\times}\psi(-\varpi^{i-c}x)\chi^{-1}(x)d^\times x\int\limits_{u\in \varpi^{-c_0}O_E^\times} \theta^{-1}(u)\chi_E(u)\psi_E(u)d^\times u.
\end{align*}
If $i=c/2$, $k\geq0$ then
\begin{align*}
&\int\limits_{x\in \OF^\times}W^{(c/2)}(\varpi^k x)\chi(x)d^\times x\\ =&\sum\limits_{0\leq j\leq k}\frac{q^{-k/2}\mu(\varpi^{k-2j})}{
C_0
}\int\limits_{x\in\OF^\times}\psi(-\varpi^{-c_0}x)\chi^{-1}(x)d^\times x
\int\limits_{v(v)=-c_0}\mu^{-1}(v) \chi(v)\psi(\varpi^j v)d^\times v \\
&\int\limits_{v(u)=-c_0}\mu(u) \chi(u)\psi(\varpi^{k-j}u)d^\times u.\end{align*}
Furthermore if $\mu$ is not an unramified twist of a quadratic character (or equivalently, $c(\mu^2)\neq 0$), we have
\begin{align}\label{Eq:Wmellintricky}
&\int\limits_{x\in \OF^\times}W^{(c/2)}(\varpi^k x)\chi(x)d^\times x\\
=&\sum\limits_{j=0,k}\frac{q^{-k/2}\mu(\varpi^{k-2j})}{
C_0
}\int\limits_{x\in\OF^\times}\psi(-\varpi^{-c_0}x)\chi^{-1}(x)d^\times x
\int\limits_{u\in \varpi_E^{-c_0}O_E^\times}\theta^{-1}(u)\chi_E(u)\psi_E(\varpi^{k-j,j}u)d^\times u.\notag
\end{align}

\end{cor}
\begin{proof}
The only tricky part is the last statement. When the condition for $\mu$ is satisfied and  $2\nmid q$, we have used that for any $\chi$ with $c(\chi)=c(\mu)$, at least one of $c(\mu^{-1} \chi)$, $c(\mu \chi)$ is $c(\mu)$. So $j=0$ or $k$ for the Gauss integrals to be nonzero.
\end{proof}

\begin{cor}\label{Cor:PrincipalL2}
Suppose $p\neq 2$, $\theta=\mu^{-1}\otimes \mu$ with $c(\mu^2)\geq 1$. Then
\begin{align*}
L^2(W,k):=\int\limits_{x\in \OF^\times}\left|W^{(c/2)}(\varpi^k x)\right|d^\times x=\begin{cases}
\frac{q-3}{q-1}, \text{\ if $k=0$};\\
\frac{2}{q^k}, \text{\ if $k\geq 1$}.
\end{cases}
\end{align*}
\end{cor}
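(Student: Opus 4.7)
The plan is to perform a Plancherel analysis on $\OF^\times$. Interpreting the notation $L^2(W,k)$ as $\int_{\OF^\times}|W^{(c/2)}(\varpi^k x)|^2\,d^\times x$ and writing $f_k(x):=W^{(c/2)}(\varpi^k x)$, Plancherel yields
$$L^2(W,k)=\sum_{\chi}|\widehat f_k(\chi)|^2,\qquad \widehat f_k(\chi):=\int_{\OF^\times}f_k(x)\chi(x)\,d^\times x,$$
with $\chi$ ranging over characters of $\OF^\times$ (extended to $F^\times$ by $\chi(\varpi)=1$). I evaluate the right-hand side by combining the explicit Mellin formula of Corollary \ref{Cor:WiMellinPrin} at $i=c_0:=c/2$ with the Gauss integral moduli provided by Lemma \ref{lem:Gaussint}.

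From Corollary \ref{Cor:WiMellinPrin}(2), after the substitution $u_i=\varpi^{-c_0}v_i$, the Mellin transform factorizes as
$$\widehat f_k(\chi)=\sum_{j=0}^{k}\frac{q^{-k/2}\mu(\varpi^{k-2j})}{C_0}\,A(\chi)\,G_1^{(j)}(\chi)\,G_2^{(j)}(\chi),$$
where $A(\chi)$ is a Gauss integral of $\chi^{-1}$ at frequency $\varpi^{-c_0}$, and $G_1^{(j)}(\chi)$, $G_2^{(j)}(\chi)$ are one-variable Gauss integrals of $\mu\chi$, $\mu^{-1}\chi$ at frequencies $\varpi^{k-j-c_0}$, $\varpi^{j-c_0}$. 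Applying Lemma \ref{lem:Gaussint} to each factor identifies the surviving characters: $A(\chi)\neq 0$ forces $c(\chi)=c_0$ generically, with the extra possibility $c(\chi)\leq 1$ in the boundary case $c_0=1$ (the $i=c-1$ branch of Corollary \ref{Cor:WiMellinPrin}); meanwhile $G_1^{(j)}$ and $G_2^{(j)}$ impose conductor relations on $\mu\chi$ and $\mu^{-1}\chi$ tied to $k-j$ and $j$.

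For $k=0$ only $j=0$ is present, and $G_1^{(0)}G_2^{(0)}\neq 0$ forces $c(\mu\chi)=c(\mu^{-1}\chi)=c_0$. Under $p\neq 2$, a filtration analysis on $U_F(c_0-1)/U_F(c_0)$ upgrades the hypothesis $c(\mu^2)\geq 1$ to $c(\mu^2)=c_0$, so the two exceptional cosets $\{\chi:\chi|_{U_F(c_0-1)/U_F(c_0)}=\mu^{\mp 1}|_{U_F(c_0-1)/U_F(c_0)}\}$ inside $\widehat{\OF^\times/U_F(c_0)}$ are disjoint; inclusion-exclusion then leaves $(q-1)(q-3)q^{c_0-2}$ surviving characters for $c_0\geq 2$, each contributing $|A|^2|G_1G_2|^2/|C_0|^2=\zeta_F(1)^2 q^{-c_0}$ and summing to $(q-3)/(q-1)$. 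In the boundary case $c_0=1$, three additional characters $\chi\in\{1,\mu,\mu^{-1}\}$ become admissible; each contributes $1/(q-1)^2$ (with one of the Gauss integrals replaced by the Ramanujan-like sum $\int_{\OF^\times}\psi(\varpi^{-1}v)\,d^\times v=-1/(q-1)$), and combined with $(q-4)\cdot q/(q-1)^2$ from the $q-4$ generic characters of conductor exactly $1$, the total is again $(q^2-4q+3)/(q-1)^2=(q-3)/(q-1)$. For $k\geq 1$, only the endpoints $j=0$ and $j=k$ can contribute, since any $0<j<k$ would force $c(\mu\chi)<c_0$ and $c(\mu^{-1}\chi)<c_0$ simultaneously, hence $c(\mu^2)<c_0$, contradicting $c(\mu^2)=c_0$; these endpoints select $\chi=\mu^{-1}$ (from $j=0$) and $\chi=\mu$ (from $j=k$), and a direct modulus computation shows each contributes exactly $q^{-k}$ (the $|C_0|^2$ denominator exactly cancels the product of Gauss-integral moduli together with the $q^{-k}$ prefactor), so $L^2(W,k)=2/q^k$.

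The main technical obstacle is the uniform bookkeeping of Gauss integral moduli and character counts across the two regimes $c_0=1$ and $c_0\geq 2$. In particular, one must observe that for $p\neq 2$ the hypothesis $c(\mu^2)\geq 1$ actually forces $c(\mu^2)=c_0$, so that the two exceptional cosets are genuinely disjoint and the inclusion-exclusion produces exactly the clean counts needed to recover the stated closed-form answers.
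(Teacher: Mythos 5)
Your overall strategy is the same as the paper's: interpret $L^2(W,k)$ as the $L^2$ mass of $W^{(c/2)}(\varpi^k\cdot)$ on $\OF^\times$, expand by Plancherel, invoke Corollary~\ref{Cor:WiMellinPrin} at $i=c_0=c/2$, and then evaluate moduli via Lemma~\ref{lem:Gaussint}. Your treatment of $k=0$ (including the boundary case $c_0=1$ and the observation that $p\neq 2$ forces $c(\mu^2)=c_0$ whenever $c(\mu)=c_0\geq 2$) matches the paper and is correct.

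The $k\geq 1$ analysis, however, contains a genuine gap. You assert that the surviving endpoints $j=0,k$ ``select $\chi=\mu^{-1}$ (from $j=0$) and $\chi=\mu$ (from $j=k$),'' with each contributing exactly $q^{-k}$. This is only the picture for $k\geq c_0$. For $1\leq k<c_0-1$, the nonvanishing condition coming from the $j=0$ branch is $c(\mu\chi)=c_0-k>0$ together with $c(\mu^{-1}\chi)=c(\chi)=c_0$, which is satisfied by the entire coset $\chi=\mu^{-1}\nu$ with $c(\nu)=c_0-k$ — that is $(q-1)^2q^{c_0-k-2}$ characters, not just $\chi=\mu^{-1}$; the $j=k$ branch likewise contributes the coset $\mu\nu$. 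Each such $\chi$ contributes $|\widehat f_k(\chi)|^2=\zeta_F(1)^2 q^{-c_0}$ (not $q^{-k}$), and only after summing over the full family of $2(q-1)^2 q^{c_0-k-2}$ characters does one recover $2/q^k$. Your per-character modulus computation ``the $|C_0|^2$ denominator exactly cancels\dots giving $q^{-k}$'' is valid only once $G_1$ degenerates to $\pm 1$, i.e.\ for $k>c_0$. So the final answer happens to come out right, but the identification of the contributing characters — and hence the actual bookkeeping — is wrong in the range $1\leq k<c_0-1$, and the boundary $k=c_0-1$ (where $\mu^{\pm 1}$ contribute via Ramanujan-type sums alongside $q-2$ level-one shifts) needs its own treatment, as the paper carries out. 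You would need to stratify by the value of $\min\{c(\mu\chi),c(\mu^{-1}\chi)\}$ exactly as the paper does, counting characters at each stratum, rather than collapsing all of $k\geq 1$ to the two characters $\mu^{\pm 1}$.
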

\begin{proof}
 Using the spectral decomposition on $O_F^\times$, we have for $c\geq 4$,
\begin{align*}
L^2(W,k) =&\sum\limits_{c(\chi)=c/2}\left|\int\limits_{x\in \OF^\times}W^{(i)}(\varpi^k x)\chi(x)d^\times x\right|^2\\
=&\sum\limits_{c(\chi)=c/2}\sum\limits_{i,j= 0,k}\frac{\mu(\varpi^{k-2i})\overline{\mu(\varpi^{k-2j})}q^{2-k}}{|C_0|^2(q-1)^2q^{c/2}}\int\limits_{u\in \varpi_E^{-c_0}O_E^\times}\theta^{-1}(u)\chi_E(u)\psi_E(\varpi^{k-i,i}u)d^\times u \notag\\
&\overline{\int\limits_{w\in \varpi_E^{-c_0}O_E^\times}\theta^{-1}(w)\chi_E(w)\psi_E(\varpi^{k-j,j}w)d^\times w}.
\end{align*}


When $k=0$ and $c\geq 4$, the Gauss integral $\int\limits_{u\in \varpi_E^{-c_0}O_E^\times}\theta^{-1}(u)\chi_E(u)\psi_E(u)d^\times u$ is non-vanishing if and only if $c(\chi\mu_1)=c(\chi\mu_1^{-1})=c/2$. The number of such characters is $(q-3)(q-1)q^{c/2-2}$.
Then we have by Lemma \ref{lem:Gaussint}
$$L^2(W,0)=\frac{q-3}{q-1}.$$

When $0<k<c/2-1$, $\int\limits_{u\in \varpi_E^{-c_0}O_E^\times}\theta^{-1}(u)\chi_E(u)\psi_E(\varpi^{k-i,i}u)d^\times u$ is non-vanishing if and only if $c(\mu^{-1}\chi)=c/2-k$ and $i=k$, or $c(\mu\chi)=c/2-k$ and $i=0$. Thus for fixed $\chi$, only $i=j=0$ or $i=j=k$ contributes. The number of possible $\chi$ is $2(q-1)^2q^{c/2-k-2}$, and
$$L^2(W,k)=\frac{2}{q^{k}}.$$

When $k=c/2-1$, the discussion is similar, except that unramified $\chi$ also contributes. The number of level $1$ characters is $q-2$. Thus
$$L^2(W,c-1)= \frac{2}{q^{c-1}}\left[(q-2)\frac{q}{(q-1)^2}+\left(-\frac{1}{q-1}\right)^2\right]=\frac{2}{q^{c-1}}.$$

When $k\geq c/2$, $\int\limits_{u\in \varpi_E^{-c_0}O_E^\times}\theta^{-1}(u)\chi_E(u)\psi_E(u)d^\times u$ is non-vanishing if and only if $c(\mu^{-1}\chi)=0$ and $i=k$, or $c(\mu\chi)=0$ and $i=0$. Then we have
$$L^2(W,k)=\frac{2}{q^k}.$$
The total $L^2$ mass $\sum\limits_{k\geq 0}L^2(W,k)=1$, which is expected.

Consider now the case $c=2$, and $c(\mu^2)=1$. When $k\geq 1$, the contribution comes from $c(\mu^{-1}\chi)=0$ or $c(\mu\chi)=0$, and
$$L^2(W,k)=\frac{2}{q^k}.$$
When $k=0$, the contribution comes from all $\chi$ with $c(\chi)\leq 1$, and
$$L^2(W,0)=\left[(q-4)\frac{q}{(q-1)^2}+\frac{1}{(q-1)^2}+2\frac{1}{(q-1)^2}\right]=\frac{q-3}{q-1}.$$
Here the first term comes from $\chi\neq \mu,\mu^{-1},1$, the middle term comes from $\chi=1$, and the last term comes from $\chi=\mu,\mu^{-1}$.

%
%
%

\end{proof}

\subsection{List of levels}
Recall that when $\pi$ is a special  unramified representation, $c(\pi)=1$.
For the remaining cases, $\pi$ corresponds to a character $\theta$ over \'{e}tale quadratic algebra $E$, and we write $\pi=\pi_\theta$ in that case. One can uniformly describe its level by the formula
$$c(\pi_\theta)= \frac{2c(\theta)}{e_E}+e_E-1.$$
In the  Rankin--Selberg/triple product case, one can use the local Langlands correspondence to get the following:
\begin{lem}
Suppose that the central characters of $\pi_i$ are trivial.
\begin{enumerate}
\item If $c(\pi_i)=1$, then $c(\pi_1\times \pi_2)=2$;
\item If $c(\pi_1)\neq c(\pi_2)$, or $\pi_i$ are associated to different \'{e}tale quadratic algebras, then $$c(\pi_1\times\pi_2)=\max\{c(\pi_1)^2, c(\pi_2)^2\};$$
\item If $\pi_i$ are associated to $\theta_i$ over the same \'{e}tale quadratic algebra $E$, then
$$c(\pi_{\theta_1}\times\pi_{\theta_2})=c(\pi_{\theta_1\theta_2})+c(\pi_{\theta_1\overline{\theta_2}});$$
\item If $c(\pi_3)=0$, then $$c(\pi_1\times\pi_2\times\pi_3)=c(\pi_1\times \pi_2)^2.$$
\end{enumerate}
\end{lem}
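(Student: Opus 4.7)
The plan is to pass to the Galois side via the local Langlands correspondence. Associate to each $\pi_i$ with trivial central character its $2$-dimensional Frobenius-semisimple Weil--Deligne representation $(\sigma_i,N_i)$; the assumption on central characters says $\det\sigma_i$ is trivial. Then $c(\pi_i)=a(\sigma_i)$, $c(\pi_1\times\pi_2)=a(\sigma_1\otimes\sigma_2)$ and $c(\pi_1\times\pi_2\times\pi_3)=a(\sigma_1\otimes\sigma_2\otimes\sigma_3)$, where $a(\cdot)$ is the Artin conductor exponent. Throughout I use additivity $a(\tau\oplus\tau')=a(\tau)+a(\tau')$ and the standard formula
\[
a(\tau)=a(\tau^{\mathrm{ss}})+\dim\tau-\dim\tau^{I,N=0}.
\]

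Item (4) is immediate: $c(\pi_3)=0$ forces $\sigma_3=\chi\oplus\chi^{-1}$ with $\chi$ unramified, so $\sigma_1\otimes\sigma_2\otimes\sigma_3$ is the direct sum of two unramified twists of $\sigma_1\otimes\sigma_2$, each with the same Artin conductor as $\sigma_1\otimes\sigma_2$. For item (1), a representation of conductor exponent $1$ with trivial central character is Steinberg, so each $\sigma_i$ has unramified semisimplification and a rank-one monodromy $N_i$. A direct inspection of $N=N_1\otimes 1+1\otimes N_2$ on a basis $\{e_a\otimes e_b\}$ of the $4$-dimensional tensor product shows $\dim\ker N=2$, whence $a(\sigma_1\otimes\sigma_2)=0+4-2=2$.

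For item (3), both $\sigma_i=\Ind_{W_E}^{W_F}\theta_i$ are induced from characters of the Weil group of the same étale quadratic $E/F$. Mackey's formula $\Res_{W_E}\Ind_{W_E}^{W_F}\theta_2=\theta_2\oplus\overline{\theta_2}$ together with the projection formula yields
\[
\sigma_1\otimes\sigma_2\simeq \Ind_{W_E}^{W_F}(\theta_1\theta_2)\oplus\Ind_{W_E}^{W_F}(\theta_1\overline{\theta_2}),
\]
where $\overline{\theta}$ denotes the Galois conjugate (which equals $\theta^{-1}$ when the central character is trivial). Taking Artin conductors on both sides and unwinding $\pi_\theta\leftrightarrow\Ind_{W_E}^{W_F}\theta$ reads off the stated identity. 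The projection formula needs a tiny sanity check when one of the induced summands itself reduces (i.e.\ when $\theta_1\theta_2^{\pm 1}$ is Galois-fixed), but this does not affect the conductor computation.

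The main obstacle is item (2). I would split according to the types of the $\sigma_i$ (each is induced from a character of an étale quadratic $E_i$ that is either split, unramified, or ramified as classified in \S\ref{Sec:SC}), and in each configuration write $\sigma_1\otimes\sigma_2$ as a direct sum of at most four induced characters. The task is to verify that under the hypotheses of (2) no conductor dropping occurs in any summand. When $E_1=E_2$ but $c(\pi_1)\neq c(\pi_2)$, a triangle inequality on conductor exponents of $\theta_1\theta_2^{\pm 1}$ forces $c(\theta_1\theta_2^{\pm1})=\max(c(\theta_1),c(\theta_2))$, and the claim follows by summing. When $E_1\neq E_2$, Lemma~\ref{Lem:differentE} is designed precisely to rule out the potential cancellation of leading-order $\alpha$-terms (in the notation of Lemma~\ref{Lem:DualLiealgForChar}) between characters coming from two different algebras, so again every summand contributes the maximum. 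Adding the four contributions gives $a(\sigma_1\otimes\sigma_2)=2\max(a(\sigma_1),a(\sigma_2))$, which matches the claim under the convention that $c(\pi_i)^2$ denotes the conductor as a number, $q^{2a(\pi_i)}$.
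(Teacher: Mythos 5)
Your overall strategy — passing to Weil--Deligne parameters and using additivity of Artin conductors together with Mackey/projection-formula decompositions — is the natural route and exactly what the paper alludes to (the lemma is stated without proof, prefaced only by ``one can use the local Langlands correspondence to get the following''). Items (1), (3), and (4) are handled correctly. For (1), note that your displayed formula for the conductor of a Weil--Deligne pair should read $a(\tau,N)=a(\tau^{\mathrm{ss}})+\dim\tau^{I}-\dim\tau^{I,N=0}$ (with $\dim\tau^{I}$, not $\dim\tau$); in the Steinberg$\otimes$Steinberg case $\tau^{I}=\tau$ so your number comes out right, but the formula as stated is wrong for ramified inputs. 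Your reading of the notation $c(\pi)^{2}$ as ``square of the conductor viewed as an integer, i.e.\ double the exponent'' is the only sensible one and you are right to flag it.

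The genuine gap is item (2). Your plan (decompose $\sigma_1\otimes\sigma_2$ into induced characters and show no summand drops) is the right one, but the two cases you mention are not actually settled by what you write. First, the argument does not address the case where one of the $\pi_{i}$ is an unramified twist of Steinberg: such a representation is not parametrized by a character $\theta_i$ of a quadratic algebra, so neither the triangle inequality on $c(\theta_1\theta_2^{\pm1})$ nor the biquadratic decomposition applies. One has to treat $\sigma_1=\mathrm{(unram.)}\oplus\mathrm{(unram.)}$ with nonzero $N$ separately, observing that for $\sigma_2$ ramified one has $(\sigma_1\otimes\sigma_2)^{I}=0$ so the monodromy does not affect the conductor. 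Second, and more substantively, when $E_1\neq E_2$ are both field extensions with $c(\pi_1)=c(\pi_2)$, the relevant claim is that $\theta_1|_{L}\cdot\theta_2|_{L}$ (on the biquadratic $L=E_1E_2$) does not drop conductor, which amounts to showing that the associated trace-zero elements $\alpha_1\in E_1$ and $\alpha_2\in E_2$ satisfy $v_L(\alpha_1+\alpha_2)=v_L(\alpha_i)$. This needs a genuine argument (for instance by applying the Galois element fixing $E_1$ to $\alpha_1+\alpha_2$ and comparing with $2\alpha_1$, using $p\neq 2$). Lemma~\ref{Lem:differentE} is not the right tool here: it concerns the ratio $N_{E/F}(\alpha_1+\alpha_0)/N_{E'/F}(\alpha_2+\alpha_0)$ with a common shift $\alpha_0\in F$, which is the quantity appearing in the Gauss-sum cancellation of \S\ref{Sec:OrthgonalWhittaker}, not the valuation-stability statement you need for the conductor. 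The conclusion you want is true, but as written the $E_1\neq E_2$ branch of (2) is not proved.
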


\section{Partial orthogonality for Whittaker functions}\label{Sec:OrthgonalWhittaker}
Let $c(\Pi)$ be the exponent of finite conductor $C(\Pi)$. In this section we shall focus on the case $c(\pi_1)=c(\pi_2)$, $c(\pi_3)=0$.  
\begin{defn}\label{Defn:Pi}
For two Whittaker functions $W_i\in \pi_i$, normalized so that $W_i(1)=1$, we denote 
\begin{equation*}
P_i(W_1,W_2,\gamma)=\int\limits_{x\in \F^\times}W_1^{(i)}(x)\overline{W_2^{(i)}(x)}|x|^{-1/2+\gamma}d^\times x
\end{equation*}
where $\gamma$ is a parameter such that $|\gamma|<7/64$. In the following we skip the parameter $\gamma$ from $P_i$ without confusion. 
\end{defn}
As we shall see in the next section, this quantity is vital in evaluating the local period integrals. So our goal in this section is to evaluate or give proper bounds for it. The simplest case is where $c(\pi_i)=1$.

\begin{lem}\label{Lem:Wipairingspecial}
Suppose that $\pi_i=\sigma(\mu_i |\cdot|^{1/2}, \mu_i |\cdot|^{-1/2})$ are special unramified representations for $c(\mu_i)=0$ and $\mu_i^2=1$.
Then for $W_i\in \pi_i$ the Whittaker functions associated to newforms,
$$P_1(W_1,W_2)=\frac{1}{1-\mu_1\mu_2(\varpi)q^{-3/2-\gamma}}
. $$
\end{lem}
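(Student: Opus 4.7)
By Lemma \ref{Lem:Wispecial}, the newform Whittaker functions satisfy
$$W_i^{(1)}(\alpha) = \mu_i(\alpha)|\alpha| \cdot \charf_{v(\alpha) \geq 0},$$
so the support condition immediately reduces $P_1(W_1,W_2)$ to an integral over $\OF \cap F^\times$. First I would substitute these formulas into the definition of $P_1$, giving
$$P_1(W_1,W_2) = \int_{v(x) \geq 0} \mu_1(x) \overline{\mu_2(x)} |x|^{2} \cdot |x|^{-1/2+\gamma} d^\times x.$$

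Next I would use the hypothesis $\mu_2^2 = 1$ together with $\mu_2$ being unitary to rewrite $\overline{\mu_2(x)} = \mu_2^{-1}(x) = \mu_2(x)$, so that the integrand becomes $(\mu_1\mu_2)(x) |x|^{3/2+\gamma}$. Since $\mu_1, \mu_2$ are unramified, the product $\mu_1\mu_2$ is unramified, so the integral over $\OF^\times$ in the spectral decomposition is trivial and only the $\varpi$-adic piece contributes. Decomposing $\OF \cap F^\times = \bigsqcup_{k \geq 0} \varpi^k \OF^\times$ yields a geometric series
$$P_1(W_1,W_2) = \sum_{k \geq 0} (\mu_1\mu_2)(\varpi)^k q^{-k(3/2+\gamma)} = \frac{1}{1 - \mu_1\mu_2(\varpi) q^{-3/2-\gamma}},$$
where convergence is ensured by $|\gamma| < 7/64$.

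There is no real obstacle: the computation is a direct geometric summation once the explicit Whittaker formula is substituted. The only subtlety is the complex conjugation step, which uses that $\mu_2$ is both unitary and order-dividing-two; without the assumption $\mu_2^2 = 1$, one would instead obtain $\mu_1\mu_2^{-1}(\varpi)$ in the denominator, but in the present setup the two expressions coincide.
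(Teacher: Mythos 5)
Your proof is correct and is precisely what the paper intends: the paper simply remarks that the lemma "follows immediately from the definition of $P_i$ and Lemma \ref{Lem:Wispecial}," and your substitution of the explicit Steinberg Whittaker values followed by the geometric series over $\varpi^k\OF^\times$ (with $\vol(\OF^\times)=1$) is exactly that computation. The only point worth double-checking, which you handle correctly, is the complex conjugation $\overline{\mu_2}=\mu_2^{-1}=\mu_2$; since $\gamma$ itself is not conjugated in the definition of $P_i$, no further issue arises.
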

This follows directly from the definition of $P_i$ and Lemma \ref{Lem:Wispecial}.

We introduce the following technical assumption which we shall circumvent later on.
\begin{assumption}\label{Assump:notquadratictwist}
If $c(\pi_1)=c(\pi_2)>0$, then $\pi_i,i=1,2$ are not twists of unramified representations by quadratic characters. 
\end{assumption}
The main result of this section is the following.
\begin{prop}\label{Prop:Wipairing}
Suppose that $p\neq 2$, $c(\pi_1)=c(\pi_2)=c\geq 2$ satisfying Assumption \ref{Assump:notquadratictwist}, so that $c\leq c(\pi_1\times \pi_2)\leq 2c$.  
Let $W_i$ be the Whittaker functions associated to newforms  $\varphi_i^\circ\in\pi_i$. 
Then for $p\neq 2$, $i\geq c/2>1$, we have 
\begin{equation}\label{Eq:Wipairingsummary1}
\begin{cases}
P_i(W_1, W_2)=1 ,&\text{\ if $i\geq c(\pi_1\times \pi_2)/2$, except when $i=c/2$ and } \\
 & \text{\ $\pi_i$ are principal series;}\\
\left|P_i(W_1, W_2)- \frac{q-3}{q-1}\right|\leq \frac{2}{q^{1/2+\Re(\gamma)}-1}, &\text{\ if $i=c(\pi_1\times \pi_2)/2=c/2$, $\pi_i$ are principal series;}\\
P_i(W_1, W_2)=-\frac{1}{q-1}, &\text{\ if }
i=c(\pi_1\times \pi_2)/2-1>c/2;\\
|P_i(W_1, W_2)|\leq \frac{2}{q-1}, & \text{\ if } i=c(\pi_1\times \pi_2)/2-1=c/2; \\
P_i(W_1, W_2)=0, &\text{\ otherwise.}

\end{cases}\end{equation}
When $p$ is large enough and $c=2$, the first two lines of \eqref{Eq:Wipairingsummary1} still holds, while in the case $c(\pi_1\times\pi_2)=4$ and $i=1$, we have
\begin{equation*}
P_1(W_1,W_2)\ll\frac{1}{q}.
\end{equation*}
\end{prop}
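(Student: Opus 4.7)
The plan is to compute $P_i(W_1, W_2)$ via Mellin analysis on $\OF^\times$. By Lemmas \ref{Lem:WiSc} and \ref{Lem:Wiprincipal}, for $i > c/2$ (and for $i = c/2$ in the supercuspidal case) the function $W^{(i)}(x)$ is supported on $x \in \OF^\times$ and consists only of multiplicative characters $\chi$ of level $c-i$ (with $c(\chi) \leq 1$ allowed when $i = c-1$). Plancherel on $\OF^\times$ then gives
\[
P_i(W_1, W_2) \;=\; \sum_{\chi} \hat{W}_1^{(i)}(\chi)\,\overline{\hat{W}_2^{(i)}(\chi)},
\]
and Corollaries \ref{Cor:WiMellinSc}, \ref{Cor:WiMellinPrin} factor each Mellin coefficient as $(C_0^{(j)})^{-1}$ times a Gauss integral $G_F(\chi) = \int_{\OF^\times}\psi(-\varpi^{i-c}x)\chi^{-1}(x)d^\times x$ on $F^\times$ and an $E_j$-integral $A_j(\chi) = \int\theta_j^{-1}(u)\chi_{E_j}(u)\psi_{E_j}(u)d^\times u$. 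Lemma \ref{lem:Gaussint} pins $|G_F(\chi)|$ to a definite power of $q$ precisely when $c(\chi) = c - i$, killing all other terms.

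The next step is to evaluate each $A_j(\chi)$. When $c(\chi_{E_j}) \leq c(\theta_j)/2$, Lemma \ref{Lem:GaussintTwist} applied over $E_j$ gives the closed form $A_j(\chi) = \chi_{E_j}(-\alpha_{\theta_j^{-1}})\,C_0^{(j)}$ with $\alpha_{\theta_j^{-1}}$ from Lemma \ref{Lem:DualLiealgForChar} and Corollary \ref{Cor:DualLiealgForBasechangeChar}; outside this range a direct $p$-adic stationary-phase computation produces an analogous closed form. Substituting, the cross-product $A_1(\chi)\overline{A_2(\chi)}$ equals $C_0^{(1)}\overline{C_0^{(2)}}$ times a composite phase $\chi_{E_1}(-\alpha_{\theta_1^{-1}})\overline{\chi_{E_2}(-\alpha_{\theta_2^{-1}})}$. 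Viewed as a character $\chi \mapsto \chi(\eta)$ of $\OF^\times$, its conductor is governed by $c(\pi_1 \times \pi_2)$ via the local-Langlands formula recalled before Section \ref{Sec:OrthgonalWhittaker}; an elementary count then shows that this phase is trivial on the $\chi$'s we sum over precisely when $i \geq c(\pi_1\times\pi_2)/2$.

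Assembling the pieces, for $i \geq c(\pi_1\times\pi_2)/2$ the sum consolidates and, after the $|C_0^{(j)}|^2$-normalization and the count of contributing $\chi$, collapses to $P_i(W_1, W_2) = 1$; for $c/2 \leq i < c(\pi_1\times\pi_2)/2 - 1$ character orthogonality forces the sum to vanish; and the boundary case $i = c(\pi_1\times\pi_2)/2 - 1$ produces $-1/(q-1)$ from a single surviving low-level character. When $E_1 \neq E_2$, Lemma \ref{Lem:differentE} rules out accidental phase cancellations and forces $c(\pi_1\times\pi_2) = 2c$, leaving only $i = c$ non-vanishing. The principal series case $i = c/2$ is treated separately because $W^{(c/2)}$ is supported on all of $v(x) \geq 0$: I would incorporate the Bruhat-cell contributions from Corollary \ref{Cor:PrincipalWiAlt}(2) and reconcile them via the $L^2$-mass bookkeeping of Corollary \ref{Cor:PrincipalL2}, which yields the $(q-3)/(q-1)$ main term with error $2/(q^{1/2+\Re\gamma}-1)$.

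The main obstacle will be the confluent boundary $i = c(\pi_1\times\pi_2)/2 - 1 = c/2$, where two thresholds coincide and the dual-Lie-algebra framework (valid for $c(\theta_j) \geq 2$) degenerates. Here I would invoke Lemma \ref{Lem:level1cancellation} directly to estimate the level-$1$ character contributions and obtain the absolute bound $|P_i| \leq 2/(q-1)$. The $c = 2$ corner case (where $c(\theta_j) = 1$) similarly falls outside the framework and requires a separate direct evaluation for $p$ large, again exploiting the $1/q$ saving provided by Lemma \ref{Lem:level1cancellation}.
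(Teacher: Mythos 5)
Your overall strategy coincides with the paper's: reduce $P_i(W_1,W_2)$ to a character sum via the Mellin expansion on $\OF^\times$, factor the Mellin coefficients through the Gauss integrals over $E,E'$ using Corollaries~\ref{Cor:WiMellinSc},~\ref{Cor:WiMellinPrin}, localize via the dual-Lie-algebra element (Lemmas~\ref{Lem:DualLiealgForChar},~\ref{Lem:GaussintTwist}), invoke Lemma~\ref{Lem:differentE} when $E\neq E'$ and Lemma~\ref{Lem:level1cancellation} at the boundary, and treat $c=2$ separately. You also correctly isolate the split $i=c/2$ subtlety (support on all of $v(x)\ge 0$, resolved via Corollary~\ref{Cor:PrincipalWiAlt}(2) and Corollary~\ref{Cor:PrincipalL2}).

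There is, however, a gap in the crucial middle step. You propose to compute each factor $A_j(\chi)$ separately, obtain a closed form $A_j(\chi)=\chi_{E_j}(-\alpha_{\theta_j^{-1}})\,C_0^{(j)}$, and then multiply. Lemma~\ref{Lem:GaussintTwist} yields this only when $c(\theta_j)\ge 2\,c(\chi_{E_j})$; translating via $c(\chi_{E})=e_E(c-i)-e_E+1$ and $c(\theta_j)=e_E(c-e_E+1)/2$, this requires $i$ on the order of $3c/4$, strictly stronger than the hypothesis $i\ge c/2$. In the intermediate range $c(\theta_j)/2 < c(\chi_{E_j})\le c(\theta_j)$ the Gauss integral $A_j(\chi)$ localizes around a stationary point that depends on \emph{both} $\theta_j$ and $\chi_E$, and its phase is not simply $\chi_{E_j}(-\alpha_{\theta_j^{-1}})$; a quadratic Gauss-sum/$\varepsilon$-factor also appears. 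Your ``analogous closed form'' is asserted but not derived, and the claim that the cross-product $A_1(\chi)\overline{A_2(\chi)}$ always equals $C_0^{(1)}\overline{C_0^{(2)}}$ times the stated phase does not follow from what you have written.

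The paper avoids this by never evaluating $A_j(\chi)$ individually: it writes $P_i(\theta_1,\theta_2)$ as the double integral in $(u,w)$, substitutes $u=wx$, and observes that the $w$-integral forces $x\in 1+\varpi_E^{c(\theta_1)-l}O_E^\times$ (with $l=c(\theta_1\theta_2^{-1})$). After the further substitution $x=1+\varpi_E^{c(\theta_1)-l}y$, the $\chi_E$-dependence is pushed onto $\chi_E(1+\varpi_E^{c(\theta_1)-l}y)$, which is trivial exactly when $e_E(c-i)-e_E+1\le c(\theta_1)-l$, i.e.\ $i\ge c(\pi_1\times\pi_2)/2$. It is this change of variables inside the \emph{product} that produces the clean conductor threshold; for $c/2\le i<c(\pi_1\times\pi_2)/2$ the vanishing then comes from a further localization of the $y$-integral to a single residue class combined with Lemma~\ref{Lem:level1cancellation}-type orthogonality, not from naive character orthogonality applied to a pre-computed phase. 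You should incorporate the $u\mapsto wx$ step (and its consequence that the $\chi$-dependence sits in a single factor $\chi_E(1+\cdots)$) before asserting the dichotomy; as written, the core of your argument depends on an unjustified factorization.
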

\begin{rem}
 When $i=c(\pi_1\times \pi_2)/2=c/2$ and $\pi_i$ are principal series, the reason for difference is that some of the $L^2$-mass of $W_i$ goes to $v(x)> 0$.

Note that Assumption \ref{Assump:notquadratictwist} also allows us to apply \eqref{Eq:Wmellintricky} and Corollary \ref{Cor:PrincipalL2}. 
\end{rem}

The main ingredient for proving the above proposition is the following result on the relation between Gauss sums/integrals, which may have independent interest.
\begin{prop}\label{Prop:GaussintRelation}
Suppose that $p\neq 2$, $c(\pi_{\theta_1})=c(\pi_{\theta_2})=c\geq 2$, where $\pi_i=\pi_{\theta_i}$ satisfy Assumption \ref{Assump:notquadratictwist}.
Denote
\begin{align}\label{Eq:Pitheta}
&P_i(\theta_1,\theta_2)\\
=&\notag \sum\limits_{c(\chi)=c-i}\int\limits_{v_E(u)=-c(\theta)-e_\E+1}\theta_1^{-1}(u)\chi_E(u) \psi_E(u)d^\times u \overline{
\int\limits_{v_{E'}(w)=-c(\theta)-e_{E'}+1}\theta_2^{-1}(w) \chi_{E'}(w)\psi_{E'}(w)d^\times w},
\end{align}
and denote
\begin{align*}
&P_i^\circ(\theta_1,\theta_2)\\
=&\frac{P_i(\theta_1,\theta_2)}{\sharp\{c(\chi)=c-i\}\int\limits_{v_E(u)=-c(\theta)-e_\E+1}\theta_1^{-1}(u) \psi_E(u)d^\times u\overline{
\int\limits_{v_{E'}(w)=-c(\theta)-e_{E'}+1}\theta_2^{-1}(w) \psi_{E'}(w)d^\times w}}.
\end{align*}
Then for $p\neq 2$ and $i\geq c/2>1$,
\begin{equation}\label{Eq:thetaipairingsummary1}
\begin{cases}
P_i^\circ(\theta_1,\theta_2)= 1, & \text{\ if $i\geq c(\pi_1\times \pi_2)/2$, except when $i=c/2$ and }\\
& \text{\ \ $\pi_i$ are principal series;}\\
P_i^\circ(\theta_1, \theta_2)= \frac{q-3}{q-1}, &\text{\ if $i=c(\pi_1\times \pi_2)/2=c/2$ and $\pi_i$ are principal series;}\\
P_i^\circ(\theta_1,\theta_2)=-\frac{1}{q-1}, &\text{\ if $i=c(\pi_1\times \pi_2)/2-1>c/2$};\\
|P_i^\circ(\theta_1, \theta_2)|\leq \frac{2}{q-1}, & \text{\ if } i=c(\pi_1\times \pi_2)/2-1=c/2; \\
P_i^\circ(\theta_1,\theta_2)=0, &\text{\ otherwise.}
\end{cases}
\end{equation}
When $p$ is large enough and $c=2$, the first two lines of \eqref{Eq:thetaipairingsummary1} still holds, while in the case $c(\pi_1\times\pi_2)=4$ and $i=1$, we have
\begin{equation}
P_i^\circ(\theta_1,\theta_2)\ll\frac{1}{q}.
\end{equation}
\end{prop}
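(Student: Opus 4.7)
The proof combines $p$-adic stationary phase for the inner Gauss integrals with character orthogonality on $O_F^\times/U_F(c-i)$. Applying Lemma~\ref{Lem:GaussintTwist} on $E$ (valid via Corollary~\ref{Cor:DualLiealgForBasechangeChar} whenever $c(\chi_E)$ is small relative to $c(\theta_j)$), one reduces each inner integral to
\[
\int_{v_E(u)=-c(\theta_j)-e_E+1}\theta_j^{-1}(u)\chi_E(u)\psi_E(u)\,d^\times u
\;=\;\chi_E(-\alpha_{\theta_j^{-1}})\,G_E(\theta_j),
\]
where $\alpha_{\theta_j^{-1}}\in E$ is given by Lemma~\ref{Lem:DualLiealgForChar} and $G_E(\theta_j)$ denotes the $\chi$-free Gauss integral. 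Since $\chi_E(x)=\chi(N_{E/F}(x))$, for such $\chi$ the integrand of $P_i(\theta_1,\theta_2)$ collapses to $\chi(r)\cdot G_E(\theta_1)\overline{G_{E'}(\theta_2)}$ with
\[
r := \frac{N_{E/F}(-\alpha_{\theta_1^{-1}})}{N_{E'/F}(-\alpha_{\theta_2^{-1}})}\in F^\times,
\]
so that $P_i^\circ(\theta_1,\theta_2)$ is, up to boundary corrections, the averaged character sum $\#\{c(\chi)=c-i\}^{-1}\sum_{c(\chi)=c-i}\chi(r)$.

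Standard orthogonality evaluates this average to $1$ when $r\in U_F(c-i)$, to $-1/(q-1)$ when $v_F(r-1)=c-i-1$ and $c-i\ge 2$, and to $0$ otherwise. The remaining arithmetic is to identify $v_F(r-1)$ in terms of $c(\pi_1\times\pi_2)$. In the same-algebra case $E=E'$, fix a trace-zero generator $\delta$ with $v_E(\delta)=0$ (using Definition~\ref{Def:splitconvention} in the split case) and write $\alpha_{\theta_j^{-1}}=\beta_j\delta$ with $\beta_j\in F^\times$. From $\alpha_{\theta^{-1}}=-\alpha_\theta$ and additivity $\alpha_{\theta_1\theta_2^{\pm 1}}=\alpha_{\theta_1}\pm\alpha_{\theta_2}$, we extract $v_F(\beta_1\mp\beta_2)=-c(\theta_1\theta_2^{\pm 1})$; together with $v_F(\beta_j)=-c(\theta_j)$ this yields
\[
v_F(r-1)\;=\;v_F(\beta_1-\beta_2)+v_F(\beta_1+\beta_2)-2v_F(\beta_2)\;=\;c-\tfrac12 c(\pi_1\times\pi_2),
\]
after invoking $c(\pi_1\times\pi_2)=c(\pi_{\theta_1\theta_2})+c(\pi_{\theta_1\bar\theta_2})$ from Section~4 (noting $\bar\theta_j=\theta_j^{-1}$ under trivial central character). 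This places the orthogonality threshold exactly at $i=c(\pi_1\times\pi_2)/2$, reproducing cases~(i), (iii), and~(v) of the proposition. The different-algebra case $E\neq E'$ is handled by Lemma~\ref{Lem:differentE}, which forces $r\not\equiv 1\pmod{\varpi}$; combined with $c(\pi_1\times\pi_2)=2c$ in that range, the statement follows.

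The delicate portion is the boundary regime where the stationary-phase hypothesis $c(\theta_j)\ge 2c(\chi_E)$ fails. In case~(ii), $i=c/2$ with $\pi_1,\pi_2$ principal series and $c(\pi_1\times\pi_2)=c$, the split-form factorization $G_E(\theta_j,\chi)=G(\mu_j\chi)G(\mu_j^{-1}\chi)$ allows a direct analysis: $G_E(\theta_j,\chi)$ vanishes precisely for the $2(q-1)q^{c/2-2}$ characters $\chi$ with $c(\mu_j^{\pm 1}\chi)<c/2$, and the arithmetic constraint $c(\pi_1\times\pi_2)=c$ forces the bad sets for $j=1,2$ to coincide. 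Subtracting these from the full $(q-1)^2q^{c/2-2}$ characters of level $c/2$ and dividing yields the claimed $1-2/(q-1)=(q-3)/(q-1)$. Case~(iv) is treated analogously, with Lemma~\ref{Lem:level1cancellation} providing the residual level-one cancellation to give $|P_i^\circ|\le 2/(q-1)$. The small-conductor exception $c=2$, $c(\pi_1\times\pi_2)=4$, $i=1$ must be done by hand: here $c(\theta_j)=1$ and no dual Lie algebra is available, but direct substitution followed by Lemma~\ref{Lem:level1cancellation} yields $P_i^\circ\ll 1/q$.

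The main obstacle is precisely this boundary case analysis: one must identify the ``bad'' $\chi$ for which the individual Gauss integrals vanish, count them carefully while tracking coincidences between the $\theta_1$ and $\theta_2$ bad sets, and verify that the stationary-phase formula extends across the intermediate range $c(\theta_j)/2<c(\chi_E)\le c(\theta_j)$ outside the degenerate cases listed above. The various combinations of split, unramified, and ramified behavior for $E$ and $E'$ compound this bookkeeping.
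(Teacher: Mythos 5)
Your overall strategy (reduce the inner Gauss integrals by $p$-adic stationary phase, then apply character orthogonality on $O_F^\times/U_F(c-i)$) matches the spirit of the paper's Section 5, and your formula for the threshold $v_F(r-1)=c-\tfrac12 c(\pi_1\times\pi_2)$ does reproduce the right answers. However, there is a genuine gap in the reduction step.

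The identity $\int \theta_j^{-1}\chi_E\psi_E = \chi_E(-\alpha_{\theta_j^{-1}})\,G_E(\theta_j)$ requires the stationary-phase hypothesis $c(\theta_j)\ge 2c(\chi_E)$ from Lemma~\ref{Lem:GaussintTwist}. With $c(\chi)=c-i$ this translates (for $e_E=1$, $c(\theta_j)=c/2$) to $i\ge 3c/4$, and similarly for $e_E=2$. Thus your collapse of $P_i$ to $\#^{-1}\sum_{\chi}\chi(r)$ with a \emph{fixed} $r$ is justified only on roughly the upper quarter of the range $[c/2,c]$, not on all of it. This is not a ``boundary regime'': it covers most of cases (i), (iii), and (v) whenever $c(\pi_1\times\pi_2)$ is not close to $2c$ (e.g.\ $l$ small), and you only patch the degenerate cases (ii), (iv) and $c=2$. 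Similarly, the additivity $\alpha_{\theta_1\theta_2^{\pm1}}=\alpha_{\theta_1}\pm\alpha_{\theta_2}$ that you use to compute $v_F(\beta_1\mp\beta_2)$ is only pinned down modulo $\varpi_E^{-\lceil c(\theta)/2\rceil}O_E$, which fails to determine the valuation when $l\lesssim c(\theta)/2$ — precisely the regime in question.

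The paper handles the full range $c/2\le i<c(\pi_1\times\pi_2)/2$ by a different maneuver you do not use: the multiplicative substitution $u=xw$ in the product of the two Gauss integrals (Eq.~\eqref{Eq:PisameE}). This converts $P_i(\theta_1,\theta_2)$ into a single $\chi$-sum of one-dimensional $y$-integrals $\int_{O_E^\times}\theta_1^{-1}\chi_E(1+\varpi_E^{c(\theta_1)-l}y)\,\theta_1\theta_2^{-1}(y)\,d^\times y$, from which $\chi_E$ drops out outright when $i\ge c(\pi_1\times\pi_2)/2$, giving the clean value $1$ with no stationary-phase hypothesis needed. In the intermediate range it then carries out a refined stationary-phase analysis in $y$ (writing $y=y_0(1+y_1)$ and solving the stationary-point congruence in $y_0$), yielding an effective phase $N_{E/F}(1+2\alpha_{\theta_1\theta_2^{-1}}/\alpha_{\theta_1})$ whose distance from $1$ happens to have the same valuation as your $r-1$, but is derived correctly without the $c(\theta_j)\ge 2c(\chi_E)$ restriction. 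Without this change of variables (or an equivalent device), your $\sum_\chi\chi(r)$ reduction does not follow, even though the final numerics coincide.
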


In the following, suppose that $\pi_1$ is associated to a character $\theta_1$ over quadratic algebra $E$, and $\pi_2$ is associated to $\theta_2$ over $E'$. 
We break the proof into two main parts. The first  part, consisting of Section \ref{Sec:EnotE'}, \ref{Sec:sameEfield}, \ref{Sec:SameEsplit}, assume that $c\geq 3$ and discuss according to whether $E\neq E'$, $E=E'$ are fields, or $E=E'$ split. One of the basic tools for these cases is the p-adic stationary phase analysis relying on Lemma \ref{Lem:DualLiealgForChar}.

The second part in Section \ref{Sec:c=2} covers the case $c=2$, where we can not apply Lemma \ref{Lem:DualLiealgForChar} and need different but elementary approaches. It may also be possible to approach this case using methods from algebraic geometry.

In Section \ref{Sec:p=2} we give partial results in the $p=2$ case when $c(\pi_i)$ are large enough, which is however sufficient for later purposes.

\subsection{Case $E\neq E'$, $c\geq 3$.}\label{Sec:EnotE'}
 Let $C_0$ $C_0'$ be the corresponding constant for $\pi_1$, $\pi_2$ as in Lemma \ref{Lem:WiSc} or Corollary \ref{Cor:PrincipalWiAlt}.

In this case, according to the beginning of Section \ref{Sec:SC},  either  $c(\pi_i)$ is even, where one of $E$, $E'$ is inert field extension while the other one splits; or $c(\pi_i)$ is odd and $E$ $E'$ are different ramified field extensions. Also by the condition $c\geq 3$, we have $c(\theta_i)\geq 2$.

In either cases, by Lemma \ref{Lem:WiSc}, the support of the integral for $P_i(W_1,W_2)$ is $v(x)=0$. Using the spectral decomposition on $\OF^\times$, we get that
\begin{equation}\label{Eq:PiSpectralD}
P_i(W_1,W_2)=\sum\limits_{\chi}\int\limits_{x\in \OF^\times} W_1^{(i)}(x)\chi(x)d^\times x\overline{\int\limits_{y\in \OF^\times}W_2^{(i)}(y)\chi(y)d^\times y}.
\end{equation}

By Lemma \ref{Lem:WiSc}, the sum in $\chi$ is over those with $c(\chi)=c-i$ when $i\neq c-1$, and $c(\chi)\leq 1$ when $i=c-1$.

\subsubsection{Case $i=c$}\label{Sec:i=c}
In this case, it is well known that $W_j^{(c)}$ are both characteristic functions on $\OF^\times$, thus $P_c(W_1,W_2)=1$.

\subsubsection{Case $i=c-1$}\label{Sec:EE'i=c-1}
Note that the contributions come from $c(\chi)\leq 1$.
Using Lemma \ref{lem:Gaussint}, Corollary \ref{Cor:WiMellinSc}, \ref{Cor:WiMellinPrin}, we have
\begin{align*}
P_{c-1}(W_1,W_2)=&\frac{1}{C_0\overline{C_0'}}\left[\frac{1}{(q-1)^2} 
P_c(\theta_1,\theta_2)+\frac{q}{(q-1)^2}P_{c-1}(\theta_1,\theta_2)\right].
\end{align*}
Let $\alpha_i$ be associated to $\theta_i$ using Lemma \ref{Lem:DualLiealgForChar}. Using the definition for $C_0, C_0'$, formula \eqref{Eq:Pitheta} and that $c(\theta_i)\geq 2 c(\chi_E)$, we have by Lemma \ref{Lem:GaussintTwist},
\begin{align*}
P_{c-1}(W_1,W_2)=\frac{1}{(q-1)^2} +\frac{q}{(q-1)^2} \sum\limits_{c(\chi)=1}\chi\lb\frac{N_{E/F}(\alpha_1)}{N_{E'/F}(\alpha_2)}\rb=-\frac{1}{q-1}.
\end{align*}
Here we have used that $\frac{N_{E/F}(\alpha_1)}{N_{E'/F}(\alpha_2)}\not\equiv 1$ by Lemma \ref{Lem:differentE}. 

\subsubsection{Case $c/2\leq i<c-1$, $c\geq 3$.}\label{Sec:EE'smalli}
Recall the notation that
$\chi_1\sim_j \chi_2$ if and only if $c(\chi_1\chi_2^{-1})\leq j$.

Consider first the case $c/2<i<c-1$.
Using Lemma \ref{lem:Gaussint}, Corollary \ref{Cor:WiMellinSc}, \ref{Cor:WiMellinPrin}, we have
\begin{align*}
P_i(W_1,W_2)
=\frac{1}{C_0\overline{C_0'}}\left|\int\psi\chi\right|^2P_i(\theta_1,\theta_2)=\frac{1}{C_0\overline{C_0'}(q-1)^2q^{c-i-2}}P_i(\theta_1,\theta_2).
\end{align*}
We break the sum over $\chi$ in $P_i(\theta_1,\theta_2)$ into a double sum of $\chi_0$ over the set of characters of level $c-i$ modulo $\sim_1$, and the sum of characters $\eta $ of level $\leq 1$. 
To each $\chi_0$, we associate $\alpha_0$ as in Lemma \ref{Lem:DualLiealgForChar}.
Then using Lemma \ref{Lem:GaussintTwist}, $P_i(\theta_1,\theta_2)$ can be rewritten as
\begin{align*}
\sum\limits_{\chi_0/\sim_1}\sum\limits_{c(\eta)\leq 1}\eta\lb \frac{N_{E/F}(\alpha_1+\alpha_0)}{N_{E'/F}(\alpha_2+\alpha_0)}\rb&\int\limits_{v_E(u)=-c(\theta)-e_\E+1}\theta_1^{-1}(u)\chi_{0,E}(u) \psi_{\E}(u)d^\times u \\
\times &\overline{
\int\limits_{v_{\E'}(u)=-c(\theta)-e_{\E'}+1}\theta_2^{-1}(u) \chi_{0,\E'}(u)\psi_{\E'}(u)d^\times u}.
\end{align*}
Using Lemma \ref{Lem:differentE}, the inner sum in $\eta$ is vanishing, thus
$P_i(W_1,W_2)=0$ in this case.

Now consider the case $i=c/2$.
In this case, we suppose without loss of generality that $E$ is split and $E'$ is inert. The expression for $\int\limits_{x\in \OF^\times} W_1^{(i)}(x)\chi(x)d^\times x$ is a little more complicated according to Corollary \ref{Cor:WiMellinPrin}, but since $W_2$ is supported only at $v(x)=0$, we only need to consider the case $k=j=0$ in Corollary \ref{Cor:WiMellinPrin}. The computation is then similar to the $c/2<i<c-1$ case, and we have again $P_i(W_1,W_2)=0$.


%
\subsection{Case $E=E'$ is a field extension, $c\geq 3$.}\label{Sec:sameEfield}
Let $\pi_j$ be associated to $\theta_j$ over the same 
quadratic field extension $E$, with $c(\theta_1)=c(\theta_2)$. Note that $\overline{\theta(x)}=\theta^{-1}(x)$ by the condition that $\theta|_{F^\times}=1$.
Without loss of generality, we assume that $$l:=c(\theta_2\theta_1^{-1})\leq c(\theta_1\theta_2).$$ 
\subsubsection{Case $i\geq c(\pi_{1}\times\pi_{2})/2$}\label{Sec:samefieldElargei}
If $l=0$, we have
\begin{align*}
&P_i(\theta_1,\theta_2)\\=&\sum\limits_{c(\chi)=c-i}\int\limits_{v_E(u)=-c(\theta)-e_\E+1}\theta_1^{-1}(u)\chi_E(u) \psi_E(u)d^\times u \overline{
\int\limits_{v_{E'}(w)=-c(\theta)-e_{E'}+1}\theta_2^{-1}(w) \chi_{E'}(w)\psi_{E'}(w)d^\times w}\\
=&\sum\limits_{c(\chi)=c-i}\theta_2\theta_1^{-1}(\varpi_E)^{-c(\theta)-e_E+1}\left|\ \int\limits_{v_E(u)=-c(\theta)-e_\E+1}\theta_1^{-1}(u)\chi_E(u) \psi_E(u)d^\times u\right|^2.\notag
\end{align*}
There is no cancellation and one can easily get that
$$P_i(W_1,W_2)=1.$$
Consider the case $l>0$ now.
We can rewrite $P_i(\theta_1,\theta_2)$ as
\begin{align}\label{Eq:PisameE}
&P_i(\theta_1,\theta_2)\\
=&\notag \sum\limits_{c(\chi)=c-i}\iint\limits_{v_E(u),v_E(w)=-c(\theta_1)-e_\E+1}\theta_1^{-1}(u)\theta_2(w)\chi_E\lb\frac{u}{w}\rb \psi_E(u-w)d^\times u d^\times w \\
=&\sum\limits_{c(\chi)=c-i}\iint\limits_{x\in O_E^\times,v_E(w)=-c(\theta_1)-e_\E+1}\theta_1^{-1}(x)\theta_2\theta_1^{-1}(w)\chi_E\lb x\rb \psi_E((x-1)w) d^\times wd^\times x.\notag
\end{align}

As $c(\theta_2\theta_1^{-1})=l$, the inner integral in $w$ is non-vanishing if and only if $x\in 1+\varpi_E^{c(\theta_1)-l}O_E^\times$. Writing $x=1+\varpi_E^{c(\theta_1)-l}y$ for $y\in O_E^\times$, $q_E=|\varpi_E|_E^{-1}$,
\begin{align}\label{Eq:relatedPi}
&P_i(\theta_1,\theta_2)\\
=&\notag\frac{\theta_1\theta_2^{-1}(\varpi_E^{c(\theta_1)-l})}{q_E^{c(\theta_1)-l}} \int\limits_{v_E(w)=-l-e_E+1}\theta_2\theta_1^{-1}(w)\psi_E(w)d^\times w \\
&\notag\sum\limits_{c(\chi)=c-i}\int\limits_{y\in O_E^\times}\theta_1^{-1}\chi_E(1+\varpi_E^{c(\theta_1)-l}y)\theta_1\theta_2^{-1}(y) d^\times y.
\end{align}

Note that $c(\chi_{E})=e_Ec(\chi)-e_E+1$, so when 
\begin{equation}\label{Eq:icondition}
e_E(c-i)-e_E+1\leq c(\theta_1)-l,
\end{equation}
we have 
\begin{align*}
P_i(\theta_1,\theta_2)=&\frac{\theta_1\theta_2^{-1}(\varpi_E^{c(\theta_1)-l})}{q_E^{c(\theta_1)-l}}\int\limits_{v_E(w)=-l-e_E+1}\theta_2\theta_1^{-1}(w)\psi_E(w)d^\times w\\
&  \sum\limits_{c(\chi)=c-i}\int\limits_{y\in O_E^\times}\theta_1^{-1}(1+\varpi_E^{c(\theta_1)-l}y)\theta_1\theta_2^{-1}(y) d^\times y\\
=&C_0\overline{C_0'}(q-1)^2q^{c-i-2}. \notag
\end{align*}
Here we arrived at the second line by working backward to \eqref{Eq:PisameE} without the twists by $\chi$.
The condition \eqref{Eq:icondition} translates into the following: when $e_E=1$, we have $i\geq c(\theta_1)+l$; when $e_E=2$, we have $i\geq \frac{c(\theta_1)+l+2}{2}$ as both $c(\theta_1)$ and $l$ must be even. In both cases, we can rewrite the condition as $$i\geq \frac{c(\pi_{1}\times\pi_{2})}{2}.$$

We can then proceed as in $E\neq E'$ case to get
$$P_i(W_1,W_2)=1$$
when $i\geq \frac{c(\pi_{\theta_1}\times\pi_{\theta_2})}{2}$.


\subsubsection{Case $c/2\leq i< c(\pi_{1}\times\pi_{2})/2$}\label{Sec:sameEfieldgeneric}
There are many possible situations in this case. We shall discuss case by case according to $l=c(\theta_2\theta_1^{-1})\leq c(\theta_1\theta_2)$. 

Note that $l=0$ does not happen  as there will not be an integer $i$ with $c/2\leq i< c(\pi_{1}\times\pi_{2})/2$ in this case. 
\begin{enumerate}
\item
Consider first the case  where $l=c(\theta_1)$. By the assumption $c(\theta_2\theta_1^{-1})\leq c(\theta_1\theta_2)$, this implies that $\alpha_1\not\equiv \pm\alpha_2$. As a result, we have
$\frac{N_{E/F}(\alpha_1)}{N_{E'/F}(\alpha_2)}\not\equiv 1$ and $\frac{N_{E/F}(\alpha_1+\alpha_0)}{N_{E'/F}(\alpha_2+\alpha_0)}\not\equiv 1$, and the result follows similarly as in Section \ref{Sec:EE'i=c-1}, \ref{Sec:EE'smalli}.
\item
Suppose now $l=1$, then automatically $e_E=1$, and $i=c/2$ in this case. 
From \eqref{Eq:relatedPi}, we break the sum over $\chi$  into a double sum of $\chi_0$ modulo $\sim_{c/2-1}$, and the sum of characters $\eta $ of level $\leq c/2-1$, with $\chi=\chi_0\eta$. For each such $\chi_0$, we can associated $\alpha_0$ by Lemma \ref{Lem:DualLiealgForChar}, then the sum in $\chi_0$ corresponds to a sum in $\alpha_0\in \varpi^{-c/2}\OF^\times \mod \varpi^{-c/2+1}\OF$.
The last line of \eqref{Eq:relatedPi} becomes
\begin{align*}
&\sum\limits_{c(\chi)=c-i}\int\limits_{y\in O_E^\times}\theta_1^{-1}\chi_E(1+\varpi_E^{c(\theta_1)-l}y)\theta_1\theta_2^{-1}(y) d^\times y\\&=(q-1)q^{c/2-2}\sum\limits_{\chi_0}\int\limits_{y\in O_E^\times}\psi_E((-\alpha_1+\alpha_0)\varpi_E^{c(\theta_1)-l}y)\theta_1\theta_2^{-1}(y) d^\times y \notag\\
&=(q-1)q^{c/2-2}\int\limits_{y\in \varpi_E^{-1} O_E^\times}\psi_E(y)\theta_1\theta_2^{-1}(y) d^\times y \sum\limits_{a_0}\theta_2\theta_1^{-1}\lb (-\alpha_1+\alpha_0)\varpi_E^{c(\theta_1)-1}\rb. \notag
\end{align*}
Here in the second equality we have used Corollary \ref{Cor:DualLiealgForBasechangeChar}, and that the integral is independent of $\eta$.
Using Lemma \ref{lem:Gaussint}, \ref{Lem:level1cancellation}, we have
$$\left|\sum\limits_{c(\chi)=c-i}\int\limits_{y\in O_E^\times}\theta_1^{-1}\chi_E(1+\varpi_E^{c(\theta_1)-l}y)\theta_1\theta_2^{-1}(y) d^\times y\right|\leq \frac{2q^{c/2-1}}{q+1},$$
$$\left|P_{c/2}(\theta_1,\theta_2)\right|\leq \frac{2q^2}{(q+1)(q^2-1)q^{c/2}},
$$
\begin{align*}
\left|P_{c/2}(W_1,W_2)\right|&=
\left|\frac{1}{C_0\overline{C_0'}(q-1)^2q^{c-i-2}}P_{c/2}(\theta_1,\theta_2)\right|\\
&\leq \frac{(1-q^{-2})^2q^c}{(q-1)^2q^{c/2-2}}\frac{2q^2}{(q+1)(q^2-1)q^{c/2}}=\frac{2}{q-1}.
\end{align*}
\item
If $1<l<c(\theta_1)$, we write $y=y_0(1+y_1)$ for $y_0\in O_E^\times/1+\varpi_E^{l-1}O_E$,  $y_1\in \varpi_E^{l-1}O_E$. Let $\alpha_3$ be the constant associated to $\theta_1\theta_2^{-1}$. Then
\begin{align*}
&\int\limits_{y\in O_E^\times}\theta_1^{-1}\chi_E(1+\varpi_E^{c(\theta_1)-l}y)\theta_1\theta_2^{-1}(y) d^\times y\\
=&\frac{1}{1-q_E^{-1}}\sum\limits_{y_0\in O_E^\times/1+\varpi_E^{l-1}O_E}\theta_1^{-1}\chi_E(1+\varpi_E^{c(\theta_1)-l}y_0)\theta_1\theta_2^{-1}(y_0)\\
&\int\limits_{y_1\in \varpi_E^{l-1}O_E}\psi_E\lb (-\alpha_1+\alpha_0)\frac{\varpi_E^{c(\theta_1)-l}y_0y_1}{1+\varpi_E^{c(\theta_1)-l}y_0}+\alpha_3y_1 \rb d y_1 .\notag
\end{align*}
The integral in $y_1$ is nonvanishing only if $y_0$ satisfies
$$(-\alpha_1+\alpha_0)\frac{\varpi_E^{c(\theta_1)-l}y_0}{1+\varpi_E^{c(\theta_1)-l}y_0}+\alpha_3\equiv 0 \mod\varpi_E^{-l-e_E+2}.$$
Note that this is only possible if $c(\chi_E)\leq c(\theta_1)$. Then $v_E(-\alpha_1+\alpha_0)=-c(\theta_1)-e_E+1,$ $v_E(\alpha_3)=-l-e_E+1>-c(\theta_1)-e_E+1$. As $c(\theta_1)>l$ we get 
$$y_0
\equiv \frac{\alpha_3}{(\alpha_1-\alpha_0)\varpi_E^{c(\theta_1)-l}}\mod \varpi_E.
$$
As $\alpha_1,\alpha_3\in E$ are trace 0 elements and $\alpha_0\in F$, we have
\begin{equation}\label{Eq:5221}
N_{E/F}(1+\varpi_E^{c(\theta_1)-l}y_0)\equiv 1+\frac{2\alpha_3}{\alpha_1} \not\equiv 1\mod \varpi^{(c(\theta_1)-l)/e_E+1}.
\end{equation}

We can then rewrite \eqref{Eq:relatedPi} as follows by imposing the condition on $y_0$
\begin{align*}
P_i(\theta_1,\theta_2)=&\frac{\theta_1\theta_2^{-1}(\varpi_E^{c(\theta_1)-l})}{q_E^{c(\theta_1)-l}} \int\limits_{w}\theta_2\theta_1^{-1}\psi_E \sum\limits_{c(\chi)=c-i}\\
&\int\limits_{y\equiv\frac{\alpha_3}{(\alpha_1-\alpha_0)\varpi_E^{c(\theta_1)-l}}\mod \varpi_E }\theta_1^{-1}\chi_E(1+\varpi_E^{c(\theta_1)-l}y)\theta_1\theta_2^{-1}(y) d^\times y.
\end{align*}
\begin{enumerate}
\item[(3i)]
If $c-i>(c(\theta_1)-l)/e_E+1$, which is equivalent to $i<c(\pi_{1}\times\pi_{2})/2-1$, we can break the sum of $\chi=\chi_0\eta$ into the sum of $\chi_0$ modulo $\sim_{(c(\theta_1)-l)/e_E+1}$, and the sum of $\eta$ with $c(\eta)\leq (c(\theta_1)-l)/e_E+1$. Note that the domain $y\equiv\frac{\alpha_3}{(\alpha_1-\alpha_0)\varpi_E^{c(\theta_1)-l}}\mod \varpi_E$ is independent of $\eta$. Then the sum in $\eta$ first is vanishing for any $y\equiv\frac{\alpha_3}{(\alpha_1-\alpha_0)\varpi_E^{c(\theta_1)-l}}\mod \varpi_E$ using \eqref{Eq:5221}, and
$$P_i(W_1,W_2)=P_i(\theta_1,\theta_2)=0.$$
\item[(3ii)]
If $c-i=(c(\theta_1)-l)/e_E+1$, which is equivalent to that $i=c(\pi_{1}\times\pi_{2})/2-1$, 
then the domain of integral is actually $y\equiv\frac{\alpha_3}{\alpha_1\varpi_E^{c(\theta_1)-l}}\mod \varpi_E$. We take the sum in $\chi$ first.
Using that
$$\sum\limits_{c(\chi)\leq c-i}\chi_E\left(1+\frac{\alpha_3}{\alpha_1}\right)=0$$
in this case,
and $\chi_E(1+\varpi_E^{c(\theta_1)-l}y)=1$ when $c(\chi)<c-i$,
 we obtain that
\begin{align*}
&P_i(\theta_1,\theta_2)\\
=&-\sum\limits_{c(\chi)<c-i}\frac{\theta_1\theta_2^{-1}(\varpi_E^{c(\theta_1)-l})}{q^{c(\theta_1)-l}} \int\limits_{w}\theta_2\theta_1^{-1}\psi_E \int\limits_{y\equiv\frac{\alpha_3}{\alpha_1\varpi_E^{c(\theta_1)-l}}\mod \varpi_E }\theta_1^{-1}(1+\varpi_E^{c(\theta_1)-l}y)\theta_1\theta_2^{-1}(y) d^\times y.
\end{align*}
By working backwards without $\chi$ twists, we get
$$
 P_i(W_1,W_2)=-\frac{1}{q-1}.
$$

\end{enumerate}
\end{enumerate}
\subsection{Case $E=E'$ splits}\label{Sec:SameEsplit}
Most of the discussions in Section \ref{Sec:sameEfield} hold for the split case. The only difference is the case $i=c/2$ due to the slightly more complicated expression in Corollary \ref{Cor:WiMellinPrin}.
In that case, we have

\begin{align*}
&P_{c/2}(W_1,W_2)\\
=&\sum\limits_{k\geq 0}q^{(1/2-\gamma)k}\sum\limits_{c(\chi)=c/2}\int\limits_{x\in \OF^\times} W_1^{(c/2)}(\varpi^k x)\chi(x)d^\times x\overline{\int\limits_{y\in \OF^\times}W_2^{(c/2)}(\varpi^k y)\chi(y)d^\times y}\\
=&\sum\limits_{k\geq 0}q^{(1/2-\gamma)k}\frac{
q^{-k}}{C_0\overline{C_0'}(q-1)^2q^{c/2-2}}\sum\limits_{c(\chi)=c/2}\sum\limits_{i,j= 0,k}\mu_1(\varpi^{k-2i})\overline{\mu_2(\varpi^{k-2j})}\notag\\
&\int\limits_{u\in \varpi_E^{-c_0}O_E^\times}\theta_1^{-1}(u)\chi_E(u)\psi_E(\varpi^{k-i,i}u)d^\times u\overline{\int\limits_{w\in \varpi_E^{-c_0}O_E^\times}\theta_2^{-1}(w)\chi_E(w)\psi_E(\varpi^{k-j,j}w)d^\times w}.
\notag
\end{align*}
Here $\theta_1=(\mu_1^{-1},\mu_1)$, $\theta_2=(\mu_2^{-1},\mu_2)$. Denote $\varpi_E=(\varpi,\varpi)$.
Denote  by $P_{c/2,k}(W_1,W_2)$ the corresponding summand in $P_{c/2}(W_1,W_2)$ for any fixed $k\geq 0$.
\subsubsection{Case $l=0$}\label{Sec:sameEsplitl0}
If $l=0$, then $C_0=C_0'$,
\begin{align*}
&\int\limits_{u\in \varpi_E^{-c_0}O_E^\times}\theta_2^{-1}(u)\chi_E(u)\psi_E(\varpi^{k-i,i}u)d^\times u\\
=&\theta_2\theta_1^{-1}(\varpi_E)^{c_0}\int\limits_{w\in \varpi_E^{-c_0}O_E^\times}\theta_1^{-1}(w)\chi_E(w)\psi_E(\varpi^{k-i,i}w)d^\times w.
\end{align*} 
There is no cancellation in the summation over $\chi$ for $P_{c/2,k}(\theta_1,\theta_2)$.
  Thus when $k=0$, we have by Corollary \ref{Cor:PrincipalL2},
$$P_{c/2,0}(W_1,W_2)=L^2(W_1,0)=\frac{q-3}{q-1},$$ 
When $k>0$, we use Corollary \ref{Cor:PrincipalL2} and Cauchy--Schwarz inequality to get
$$\left|P_{c/2,k}(W_1,W_2)\right|\leq \frac{2}{q^{1/2k}}q^{-\Re(\gamma) k}.$$
Adding up all $k\geq 0$, we get
$$\left| P_{c/2}(W_1,W_2)-\frac{q-3}{q-1} \right|\leq \frac{2}{q^{1/2+\Re(\gamma)}-1}.$$
In particular when $q$ is large enough, we have
$$P_{c/2}(W_1,W_2)\gg  1.$$

\subsubsection{Case $l>1$.} As $\pi(\mu,\mu^{-1})\simeq \pi(\mu^{-1},\mu)$, we may assume without loss of generality that 
$c(\mu_1\mu_2^{-1})=l$.
\begin{enumerate}
\item One can similarly relate $P_{c/2,0}(W_1,W_2)$ with $P_{c/2}(\theta_1,\theta_2)$ as in Section \ref{Sec:sameEfieldgeneric}, with the difference that the contribution to $P_{c/2,0}(W_1,W_2)$ comes from $\chi$ with $c(\chi\mu_1)=c(\chi\mu_1^{-1})=c(\chi\mu_2)=c(\chi\mu_2^{-1})=c/2$. Since $c/2>1$ by our condition, the method for the case (1) and (3i) in Section \ref{Sec:sameEfieldgeneric} applies here and
$$P_{c/2,0}(W_1,W_2)= 0.$$

\item When $0<k\leq c/2-l$,  we use similar strategy as in Section \ref{Sec:sameEfieldgeneric} with slight modifications. 
 Note that the case $l=c/2$ is excluded here.
By the proof of Corollary \ref{Cor:PrincipalL2}, we can separate the contribution of $\chi$ with $c(\mu_i^{-1}\chi)=c/2-k<c/2$, from that of $\chi$ with $c(\mu_i\chi)<c/2$.
Since these two cases are parallel,  we focus on the case $c(\mu_i^{-1}\chi)<c/2$, and consider
\begin{align}\label{Eq:Pc/2k}
&P_{c/2,k}(\theta_1,\theta_2)\\
:=&\sum\limits_{\chi} \int\limits_{u\in \varpi_E^{-c_0}O_E^\times}\theta_1^{-1}(u)\chi_E(u)\psi_E(\varpi^{0,k}u)d^\times u\overline{\int\limits_{w\in \varpi_E^{-c_0}O_E^\times}\theta_2^{-1}(w)\chi_E(w)\psi_E(\varpi^{0,k}w)d^\times w}. \notag\\
=&\sum\limits_{\chi}\iint\limits_{x\in O_E^\times,w\in \varpi_E^{-c_0}O_E^\times}\theta_1^{-1}(x)\theta_2\theta_1^{-1}(w)\chi_E\lb x\rb \psi_E(\varpi^{0,k}(x-1)w) d^\times wd^\times x.\notag
\end{align}

Here the sum in $\chi$ is for those with $\chi(\varpi)=1$, $c(\chi)=c/2$ and $c(\mu_i^{-1}\chi)=c/2-k$.


For the integral in $w$ to be nonvanishing, we get that $x\in 1+\varpi^{c/2-l, c/2-k-l}O_E^\times$. Then as in Section \ref{Sec:sameEfieldgeneric}, we write $x=1+\varpi^{c/2-l, c/2-k-l}y$ with $y\in O_E^\times$ when $k<c/2-l$, or $y=(y_1,y_2)$ for $y_i\in \OF^\times$, $y_2\not\equiv -1\mod\varpi$ when $k=c/2-l$. We can uniformly describe the domain for $y$ as $y=(y_1,y_2)\in O_E^\times, y_2\not\equiv -\varpi^{c/2-k-l}$.  Then 
\begin{align}\label{Eq:Pc/2ktheta}
P_{c/2,k}(\theta_1,\theta_2)
=&
\frac{\theta_1\theta_2^{-1}(\varpi_E^{c/2-l})}{q^{c-2l-k}} \int\limits_{w\in \varpi_E^{-l}O_E^\times}\theta_2\theta_1^{-1}(w)\psi_E(w)d^\times w \sum\limits_{c(\mu_i^{-1}\chi)=c/2-k}\\
&\int\limits_{y=(y_1,y_2)\in O_E^\times, y_2\not\equiv -\varpi^{c/2-k-l}}\theta_1^{-1}\chi_E(1+\varpi^{c/2-l,c/2-k-l}y)\theta_1\theta_2^{-1}(y) d^\times y. \notag
\end{align}

%
%


\yhn{Unlike case (3) in Section \ref{Sec:sameEfieldgeneric}, we do not need stationary phase analysis in this case to see cancellations. Indeed one can take the sum in $\chi$ first}
and break the sum of $\chi=\chi_0\chi_1$ into a sum over $\chi_0$ up to $\sim_{c/2-k-l+1}$, and a sum over $\chi_1$ with $c(\chi_1)\leq c/2-k-l+1$. The sum in $\chi_1$ first gives 
$$\sum\limits_{\chi_1}\chi_{1,E}(1+\varpi^{c/2-l,c/2-k-l}y)=0 , \forall y\in O_E^\times. $$
 Note that the first component is always trivial as $c/2-l\geq c/2-k-l+1\geq c(\chi_1)$. The cancellation comes from the second component.
Thus when $0<k\leq c/2-l$ we get 
$$P_{c/2,k}(W_1,W_2)= 0.$$
\item If $k>c/2-l $ and $c(\mu_1\mu_2^{-1})=l$, we can not simultaneously have $c(\mu_1\chi)$ and $c(\mu_2\chi)\leq c/2-k$, or $c(\mu_1^{-1}\chi)$ and  $c(\mu_2^{-1}\chi)\leq c/2-k$.  Thus
$$P_{c/2,k}(W_1,W_2)= 0.$$
\end{enumerate}
Adding up all pieces, we get that when $l>1$,
$$P_{c/2}(W_1,W_2)=P_{c/2,0}(W_1,W_2)+2\sum\limits_{k\geq 0}P_{c/2,k}(W_1,W_2)=0.$$
Here the multiple $2$ comes from the dichotomy that either $c(\mu_i^{-1}\chi)=c/2-k$ or $c(\mu_i\chi)=c/2-k$.
\subsubsection{The case $l=1$.} Here we can apply the method in Section \ref{Sec:sameEfieldgeneric} case (2).
\begin{enumerate}
\item If $k<c/2-l$, we keep working with the case $c(\mu_i^{-1}\chi)=c/2-k$ and can still start with the last integral in \eqref{Eq:Pc/2ktheta}. Using Lemma \ref{Lem:DualLiealgForChar}, we have
\begin{align}\label{Eq:5331}
&\int\limits_{y\in O_E^\times}\theta_1^{-1}\chi_E(1+\varpi^{c/2-l,c/2-k-l}y)\theta_1\theta_2^{-1}(y) d^\times y\\
=&
\int\limits_{y_1\in \OF^\times} \mu_1\chi(1+\varpi^{c/2-1}y_1)\mu_1^{-1}\mu_2(y_1)d^\times y_1\int\limits_{y_2\in \OF^\times} \mu_1^{-1}\chi(1+\varpi^{c/2-k-1}y_2)\mu_1\mu_2^{-1}(y_2)d^\times y_2\notag\\
=&
\int\limits_{y_1\in \OF^\times} \psi(b\varpi^{c/2-1}y_1)\mu_1^{-1}\mu_2(y_1)d^\times y_1\int\limits_{y_2\in \OF^\times} \psi(a\varpi^{c/2-k-1}y_2)\mu_1\mu_2^{-1}(y_2)d^\times y_2\notag\\
=&\mu_1^{-1}\mu_2\lb\frac{a}{b}\varpi^{-k}\rb\int\limits_{y_1\in \OF^\times} \psi(\varpi^{-1}y_1)\mu_1^{-1}\mu_2(y_1)d^\times y_1
\int\limits_{y_1\in \OF^\times} \psi(\varpi^{-1}y_2)\mu_1\mu_2^{-1}(y_2)d^\times y_2.\notag
\end{align}
Here $a$ is associated to $\mu_1^{-1}\chi$ with $v(a)=-c/2+k$, and $b$ is associated to $\mu_1\chi$ with $v(b)=-c/2$. The expression depends only on $a\mod \varpi^{-c/2+k+1}$ and $b\mod \varpi^{-c/2+1}$.

If $k>0$, when we take the sum over $\chi$ with $c(\mu_1^{-1}\chi)=c/2-k$, the congruence class for $b$ is not affected, while the congruence class for $a$ runs over $\varpi^{-c/2+k}\OF^\times\mod  \varpi^{-c/2+k+1}$. As $c(\mu_1^{-1}\mu_2)=l=1$, the sum in $\chi$  with $c(\mu_1^{-1}\chi)=c/2-k$ for $\mu_1^{-1}\mu_2\lb\frac{a}{b}\varpi^{-k}\rb$ is thus vanishing, and
$$P_{c/2,k}(W_1,W_2)=0$$
in this case.

If $k=0$, we take the sum in $\chi$ with $c(\chi)=c(\mu_i\chi)=c(\mu_i^{-1}\chi)=c/2$. We break the sum in $\chi=\chi_0\eta$ into a sum of $\chi_0\sim_{c/2-1}$ and a sum of $\eta$ with $c(\eta)\leq c/2-1$. The sum in $\eta$ gives a counting constant as $\eta_E(1+\varpi^{c/2-1,c/2-1}y)=1$ in \eqref{Eq:5331}.
Let $\alpha_0$ be associated to $\chi_0$ while $\alpha_i$ be associated to $\mu_i$ for $i=1,2$, then we have $a\equiv \alpha_0-\alpha_1$, $b\equiv \alpha_0+\alpha_1$. The sum of $\chi_0$ is then equivalent to a sum of $$t=\frac{a}{b}\equiv \frac{\alpha_0-\alpha_1}{\alpha_0+\alpha_1}$$ running through all $\OF^\times\mod\varpi \OF$ except $\pm 1$. As
$$\sum\limits_{t\in \OF^\times}\mu_1^{-1}\mu_2(t)=0,$$
we have
\begin{equation}\label{Eq:Random}
\sum\limits_{\chi}\int\limits_{y\in O_E^\times}\theta_1^{-1}\chi_E(1+\varpi_E^{c/2-l}y)\theta_1\theta_2^{-1}(y) d^\times y=(q-1)q^{c/2-2}(-1-\mu_1^{-1}\mu_2(-1)) \frac{q}{(q-1)^2}.
\end{equation}
Recall that 
\begin{align*}
&P_{c/2,0}(\theta_1,\theta_2)\\
=&
\frac{\theta_1\theta_2^{-1}(\varpi_E^{c/2-l})}{q^{c-2l}} \int\limits_{v_E(w)=-l}\theta_2\theta_1^{-1}(w)\psi_E(w)d^\times w\sum\limits_{\chi}\int\limits_{y\in O_E^\times}\theta_1^{-1}\chi_E(1+\varpi_E^{c/2-l}y)\theta_1\theta_2^{-1}(y) d^\times y .
\end{align*}
Thus by \eqref{Eq:Random} and Lemma \ref{lem:Gaussint},
$$ 
\left|P_{c/2,0}(\theta_1,\theta_2)\right|\leq \frac{2}{(q-1)^3q^{c/2-2}},
$$
$$\left|P_{c/2,0}(W_1,W_2)\right|=\left|\frac{\left|\int\psi\chi\right|^2}{C_0\overline{C_0'}}P_{c/2,0}(\theta_1,\theta_2) \right|\leq \frac{2}{q-1}.$$
\item If $k=c/2-1$, we again focus on the sum in $\chi$ with $c(\mu_i^{-1}\chi)\leq 1 $. Recall that $\theta_i=(\mu_i^{-1},\mu_i)$. Then the integral $$\sum\limits_{c(\mu_i^{-1}\chi)\leq 1}\int\limits_{y=(y_1,y_2)\in O_E^\times, y_2\not\equiv -\varpi^{c/2-k-l}}\theta_1^{-1}\chi_E(1+\varpi^{c/2-l,c/2-k-l}y)\theta_1\theta_2^{-1}(y) d^\times y$$
as in \eqref{Eq:Pc/2ktheta} can be explicitly written as

\begin{align*}
&\sum\limits_{c(\mu_i^{-1}\chi)\leq 1}
\int\limits_{y_1\in \OF^\times} \mu_1\chi(1+\varpi^{c/2-1}y_1)\mu_1^{-1}\mu_2(y_1)d^\times y_1
\int\limits_{y_2\in \OF^\times, y_2\not\equiv -1} \mu_1^{-1}\chi(1+y_2)\mu_1\mu_2^{-1}(y_2)d^\times y_2
\\
=&\mu_1\mu_2^{-1}(b\varpi^{c/2})\int\limits_{y_1\in \OF^\times} \psi(\varpi^{-1}y_1)\mu_1^{-1}\mu_2(y_1)d^\times y_1
\int\limits_{y_2\in \OF^\times, y_2\not\equiv -1} \sum\limits_{c(\mu_i^{-1}\chi)\leq 1}\mu_1^{-1}\chi(1+y_2)\mu_1\mu_2^{-1}(y_2)d^\times y_2 \notag\\
=&0.\notag 
\end{align*}
Here $b\in \varpi^{-c/2}\OF^\times$ is the constant associated to $\mu_1\chi$ and is independent from $\chi$ with $c(\mu_i^{-1}\chi)\leq 1 $.
Thus again 
$$P_{c/2,k}(W_1,W_2)= 0.$$
\item If $k>c/2-l$, similar to $l>1$ case we also have 
$$P_{c/2,k}(W_1,W_2)= 0.$$

Adding up all pieces, we get that when $l=1$,
$$\left|P_{c/2}(W_1,W_2)\right|\leq \frac{2}{q-1}.$$
\end{enumerate}
\subsection{$c(\pi_i)=2$ case}\label{Sec:c=2}
Note that in this case $\pi_i$ is  either  a supercuspidal representation constructed from an inert field extension, or a principal series representation from a split extension. In this case the associated characters satisfy $c(\theta_i)=1$, so we can no longer apply Lemma \ref{Lem:DualLiealgForChar} and p-adic stationary phase analysis for the local integrals.

Consider first the computation of $P_2(W_1,W_2)$, or that of $P_1(W_1,W_2)$ when $E=E'$ and $c(\theta_1\theta_2^{-1})=0$. The discussions in Section \ref{Sec:i=c}, \ref{Sec:samefieldElargei} and  \ref{Sec:sameEsplitl0} still hold, as we didn't use Lemma \ref{Lem:DualLiealgForChar} there.
Thus we always have
$$P_2(W_1,W_2)=1.$$
When $E=E'$ are fields, we have
$$P_1(W_1,W_2)=1.$$
 When $E=E'$ split, we have
$$ \left|P_1(W_1,W_2)-\frac{q-3}{q-1}\right|\leq \frac{2}{q^{1/2+\Re(\gamma)}-1}.
$$
For the remaining computations, we first prove the following lemma:
\begin{lem}\label{Lem:residuecancellation}
Let $\theta$ be a nontrivial charcter on $k_E^\times$ with $\theta|_{k_F^\times}=1$, where $k_E$ is a quadratic field extension over $k_F$. Without loss of generality we assume that $k_E=k_F[\sqrt{D}]$.
Then 
$$\left|\sum\limits_{a\in k_F}\theta(1+a\sqrt{D})\right|=1.$$
\end{lem}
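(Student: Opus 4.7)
The plan is to evaluate the sum $S := \sum_{a \in k_F} \theta(1+a\sqrt{D})$ \emph{exactly} rather than just bounding it, by exploiting the character sum identity $\sum_{x \in k_E^\times} \theta(x) = 0$, which holds since $\theta$ is nontrivial on $k_E^\times$.

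First, I would parametrize $k_E^\times$ using the $k_F$-basis $\{1,\sqrt{D}\}$: every element of $k_E^\times$ has a unique form $b + a\sqrt{D}$ with $(a,b) \in k_F^2 \setminus \{(0,0)\}$. Splitting the orthogonality sum according to whether $b = 0$ or $b \in k_F^\times$ gives
\begin{equation*}
0 = \sum_{x \in k_E^\times} \theta(x) = \sum_{a \in k_F^\times} \theta(a\sqrt{D}) + \sum_{b \in k_F^\times} \sum_{a \in k_F} \theta(b + a\sqrt{D}).
\end{equation*}

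Next I would use the assumption $\theta|_{k_F^\times} = 1$ to simplify both pieces. The first sum equals $(q-1)\theta(\sqrt{D})$ since $\theta(a\sqrt{D}) = \theta(a)\theta(\sqrt{D}) = \theta(\sqrt{D})$ for every $a \in k_F^\times$. For the second piece, factor out $b$ and substitute $a' = a/b$: for fixed $b \in k_F^\times$,
\begin{equation*}
\sum_{a \in k_F} \theta(b + a\sqrt{D}) = \theta(b) \sum_{a \in k_F} \theta(1 + (a/b)\sqrt{D}) = \sum_{a' \in k_F} \theta(1 + a'\sqrt{D}) = S,
\end{equation*}
so the double sum becomes $(q-1)S$. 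Combining yields $(q-1)\theta(\sqrt{D}) + (q-1)S = 0$, hence $S = -\theta(\sqrt{D})$, and taking absolute values gives $|S| = 1$.

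There is no real obstacle here: the only subtlety is noting that $\theta$ being nontrivial on $k_E^\times$ is automatic, since $\theta$ is nontrivial (given) and trivial on $k_F^\times$ cannot exhaust its kernel unless $\theta$ were trivial. The argument is purely formal and does not depend on whether the characteristic of $k_F$ is $2$, so it applies uniformly.
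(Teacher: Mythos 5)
Your proof is correct and follows essentially the same approach as the paper: both start from the orthogonality relation $\sum_{x\in k_E^\times}\theta(x)=0$, decompose the sum over the $k_F$-basis $\{1,\sqrt D\}$, and use $\theta|_{k_F^\times}=1$ to collapse the pieces into $(q-1)S+(q-1)\theta(\sqrt D)=0$, giving $S=-\theta(\sqrt D)$. The only (harmless) extra is your closing remark, which is a tautology since $\theta$ is a character on $k_E^\times$ and nontrivial by hypothesis.
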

\begin{proof}
Indeed as $\theta$ is nontrivial, we have
$$\sum\limits_{(a,b)\in k_F^2, (a,b)\neq 0}\theta(a+b\sqrt{D})=0.$$
Using that $\theta|_{k_F^\times}=1$, we get that
$$\sum\limits_{(a,b)\in k_E^2, (a,b)\neq 0}\theta(a+b\sqrt{D})=(q-1)\sum\limits_{a\in k_F}\theta(1+a\sqrt{D})+(q-1)\theta(\sqrt{D}).$$
The lemma follows.
\end{proof}
We summarize the remaining computations in the following.
\begin{lem}
Suppose that $p$ is large enough.
Suppose that $\theta_i$ are defined over \'{e}tale quadratic algebras $E$ and $E'$ with $c(\theta_i)=1$, such that either $E\neq E'$, or $E=E'$ and $c(\theta_1\theta_2)=c(\theta_1\theta_2^{-1})=1$. Then we have
\begin{align}\label{Eq:level1cancellation}
&P_1(\theta_1,\theta_2)\\
=&\notag\sum\limits_{c(\chi)\leq 1}\int\limits_{v_E(u)=-1}\theta_1^{-1}(u)\chi_E(u) \psi_E(u)d^\times u \overline{
\int\limits_{v_{E'}(w)=-1}\theta_2^{-1}(w) \chi_{E'}(w)\psi_{E'}(w)d^\times w}\ll \frac{1}{q^2} ,\\
&P_1(W_1,W_2)\ll \frac{1}{q}.\notag
\end{align}

\end{lem}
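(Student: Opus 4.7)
The plan is to prove $P_1(\theta_1,\theta_2)\ll q^{-2}$ by swapping the sum and integral, reducing to a character sum on residue-field data, and then extracting cancellation via Hilbert 90 together with the identity $\theta^\sigma=\theta^{-1}$ for characters of $k_E^\times$ trivial on $k_F^\times$. The bound on $P_1(W_1,W_2)$ then follows from the Mellin--Plancherel decomposition derived in Section \ref{Sec:EE'i=c-1}.

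First I would interchange summation and integration in the definition:
$$P_1(\theta_1,\theta_2) = \iint_{\substack{v_E(u)=-1\\ v_{E'}(w)=-1}}\theta_1^{-1}(u)\theta_2(w)\psi_E(u)\overline{\psi_{E'}(w)}\Bigl[\sum_{c(\chi)\leq 1}\chi\bigl(N_{E/F}(u)/N_{E'/F}(w)\bigr)\Bigr]d^\times u\,d^\times w.$$
Every $\chi$ in the inner sum satisfies $\chi(\varpi)=1$ and factors through $k_F^\times$, so orthogonality collapses the bracket to $(q-1)$ on the single residue condition that the leading coefficient of $N_{E/F}(u)/N_{E'/F}(w)$ is $\equiv 1\bmod\varpi$, and to $0$ off this locus.

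Next I would parametrize the constrained set. In the representative case $E=E'$ a field, substitute $w=u\xi$ with $\xi\in O_E^\times$: the condition becomes $N_{E/F}(\bar\xi)=1$ in $k_E^\times$, and the $u$-integration collapses to a Gauss sum $\tilde G$ on $k_E$ of magnitude $\ll q_E^{1/2}\asymp q$, leaving
$$P_1(\theta_1,\theta_2)\;\asymp\;\frac{(q-1)\,\tilde G}{(q_E-1)^2}\sum_{\substack{\bar\xi\in k_E^\times\\ N_{E/F}(\bar\xi)=1,\ \bar\xi\neq 1}}\theta_2(\bar\xi)\,\theta_1\theta_2^{-1}(1-\bar\xi).$$
Analogous parametrizations in the split case and in the case $E\neq E'$ produce character sums of the same shape.

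The decisive step is to show that this residual character sum is $O(1)$ rather than the trivial $O(q)$. The algebraic input is that a character $\theta$ of $k_E^\times$ trivial on $k_F^\times$ satisfies $\theta^\sigma=\theta^{-1}$, since $\theta\cdot\theta^\sigma=\theta\circ N_{E/F}$ is then trivial. Writing $\bar\xi=\eta/\eta^\sigma$ via Hilbert 90 and simplifying, the summand becomes a constant of modulus $1$ times $(\theta_1\theta_2)(\eta)$; orthogonality on $k_E^\times$ then gives $O(1)$, using that the hypothesis $c(\theta_1\theta_2)=c(\theta_1\theta_2^{-1})=1$ makes both $\theta_1\theta_2$ and $\theta_1\theta_2^{-1}$ nontrivial. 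In the cases $E\neq E'$ and $E=E'$ split, the corresponding reduction uses Lemma \ref{Lem:residuecancellation} to bound sums $\sum_{a\in k_F}\theta(1+a\sqrt D)$ by $1$. Combining the factor $(q-1)/(q_E-1)^2\asymp q^{-3}$ with the Gauss weight $O(q)$ and the character sum $O(1)$ yields $P_1(\theta_1,\theta_2)\ll q^{-2}$. The bound $P_1(W_1,W_2)\ll q^{-1}$ then follows from the Mellin--Plancherel decomposition of $P_1(W_1,W_2)$ into level-$0$ and level-$1$ $\chi$-contributions (as in the formula opening Section \ref{Sec:EE'i=c-1}), together with $|C_0|,|C_0'|\asymp q^{-1}$ and the Gauss-integral bound on the trivial-character contribution.

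The main obstacle is handling the several subcases---$E=E'$ inert, $E=E'$ split, $E=E'$ ramified, and the mixed-ramification configurations with $E\neq E'$---in a uniform manner, since the structure of $N_{E/F}$ on residues, the conductor of $\psi_E$, and the shape of the residue condition all vary. The $E=E'$ ramified case is particularly delicate because $\psi_E(\varpi_E^{-1}a)=1$ for $a\in O_E$, so the additive character drops out of the $u$-integral and the cancellation must be extracted entirely from the multiplicative characters; the assumption that $p$ be large enough is used here to ensure that certain Jacobi-type sums on residue fields do not degenerate.
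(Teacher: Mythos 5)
Your argument for the case $E=E'$ is essentially the one in the paper: after summing over $\chi$ first to impose the residue condition, the substitution $w=u\xi$ (equivalently the paper's $u=wx$ followed by $v=(x-1)w$) reduces to a single Gauss integral times a residual character sum over $\{\bar\xi: N(\bar\xi)=1,\ \bar\xi\neq 1\}$, and the Hilbert~90 substitution $\bar\xi=\eta/\eta^\sigma$ together with $\theta^\sigma=\theta^{-1}$ (a consequence of $\theta|_{F^\times}=1$) collapses the summand to $(\theta_1\theta_2)(\eta)$ times a constant (the factor $\theta_1\theta_2^{-1}(\eta^\sigma-\eta)$ is constant because trace-zero residues form a single $k_F^\times$-coset). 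This is exactly the content of the paper's Lemma~\ref{Lem:residuecancellation} and the direct computation in the split case, so that part of your argument is correct and matches the paper.

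The genuine gap is the mixed case $E\neq E'$ (one inert, one split). Your sentence ``analogous parametrizations \ldots produce character sums of the same shape'' is where the argument breaks: the substitution $w=u\xi$ requires $u$ and $w$ to lie in the same quadratic algebra, and Hilbert~90 plus $\theta^\sigma=\theta^{-1}$ are statements about a single $E$. When $u\in E^\times$ and $w\in E'^\times$ with $E\not\cong E'$, imposing $\overline{N_{E/F}(u)}=\overline{N_{E'/F}(w)}$ does not produce the coupled character sum $\sum\theta_2(\bar\xi)\theta_1\theta_2^{-1}(1-\bar\xi)$ you wrote down. The paper handles this case by a different device: write $w=(y_1,y_2)$, sum over $\chi$ to get $y_2\equiv N_{E/F}(u)/y_1$, then substitute $u=y_1v$ with $v\in O_E^\times$ (exploiting $\theta_1|_{F^\times}=1$), which turns the remaining $y_1$-integral into $\int\psi(-N_{E/F}(v-1)y_1)d^\times y_1$ and separates the $v$-integral into a small-volume term and a vanishing Gauss integral. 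You need some such coupling step here; the Hilbert~90 route does not carry over. Separately, your concern about an ``$E=E'$ ramified'' subcase is moot: for $c(\theta_i)=1$ the constructions in Section~\ref{Sec:SC} give only $e_E=1$ (inert or split), so a ramified $E$ does not arise.
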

Note that if we directly apply the bound for each Gauss integrals, we will get
$$P_1(\theta_1,\theta_2)\ll \frac{1}{q},$$
which is also sharp if $E=E'$ and $\theta_1=\theta_2^{\pm 1}$. The lemma claims additional power saving when $\theta_i$ are not related.

Note that as $p$ is large enough, we can freely add or subtract the term $\chi=1$ in the definition of $P_1(\theta_1,\theta_2)$ without affecting the final asymptotic estimate, as
 $$\left|\int\limits_{v_E(u)=-1}\theta_1^{-1}(u) \psi_E(u)d^\times u \right|\ll \frac{1}{q}.$$
\begin{proof}

Here we take a case by case approach. There might be more uniform way to obtain the power saving.

Suppose first that $E=E'$. By a change of variable $u=wx$, we have
\begin{align*}
P_1(\theta_1,\theta_2)&=\sum\limits_{c(\chi)\leq 1}\int\limits_{v_E(w)=-1}\int\limits_{v_E(x)=0}\theta_1^{-1}\theta_2(w) \theta_1^{-1}(x)\chi_E(x)\psi_E((x-1)w) d^\times x d^\times w\\
&=\int\limits_{v_E(w)=-1}\int\limits_{v_E(x)=0}\theta_1^{-1}\theta_2(w) \theta_1^{-1}(x)\sum\limits_{c(\chi)\leq 1}\chi_E(x)\psi_E((x-1)w) d^\times x d^\times w\notag\\
&=(q-1)\int\limits_{v_E(w)=-1}\int\limits_{Nm(x)\equiv 1\mod p}\theta_1^{-1}\theta_2(w) \theta_1^{-1}(x)\psi_E((x-1)w) d^\times x d^\times w.\notag
\end{align*}
Here $q-1$ comes from the number of $\chi$. 
The integral in $w$ is nonvanishing if and only if $x-1\in O_E^\times$, in which case we can apply a change of variable $v=(x-1)w$ and get

\begin{align*}
&P_1(\theta_1,\theta_2)\\=&(q-1)\int\limits_{v_E(v)=-1}\theta_1^{-1}\theta_2(v)\psi_E(v)d^\times v\int\limits_{Nm(x)\equiv 1\mod p, x-1\in O_E^\times} \theta_1\theta_2^{-1}(x-1)\theta_1^{-1}(x) d^\times x.  \notag
\end{align*}
Note that 
$$\int\limits_{v_E(v)=-1}\theta_1^{-1}\theta_2(v)\psi_E(v)d^\times v\ll \frac{1}{q}.
$$
Thus to verify the lemma in this case, it suffices to prove that
$$\int\limits_{Nm(x)\equiv 1\mod p,x-1\in O_E^\times}\theta_1\theta_2^{-1}(x-1)\theta_1^{-1}(x) d^\times x\ll\frac{1}{q^2}.$$
Using Hilbert 90, we write $x=y/\overline{y}$ for $y\in k_F^\times\backslash k_E^\times$. Using that $c(\theta_i)=1$, we have
\begin{align*}
&\int\limits_{Nm(x)\equiv 1\mod p,x-1\in O_E^\times}\theta_1\theta_2^{-1}(x-1)\theta_1^{-1}(x) d^\times x\\=&
\Vol(1+\varpi_E O_E, d^\times x)
\sum\limits_{y\in k_F^\times\backslash k_E^\times, y-\overline{y}\in k_E^\times}\theta_1\theta_2^{-1}(y-\overline{y})\theta_1^{-1}(y)\theta_2(\overline{y}),
\end{align*}
where $$\Vol(1+\varpi_E O_E, d^\times x)\asymp \frac{1}{q^2}.$$
Now if $k_E=k_F[\sqrt{D}]$ is a field,
for the sum in $y$ with $ y\not\equiv \overline{y}$,
we can choose a special set of representatives $y=1+a\sqrt{D}$ or $\sqrt{D}$. $y=\sqrt{D}$ contributes a term of absolute value $1$, while the sum 
\begin{align*}
\sum\limits_{a\in k_F} \theta_1\theta_2^{-1}(y-\overline{y})\theta_1^{-1}(y)\theta_2(\overline{y})&=\sum\limits_{a\in k_F} \theta_1\theta_2^{-1}(2a\sqrt{D})\theta_1^{-1}\theta_2^{-1}(1+a\sqrt{D})\\
&=\theta_1\theta_2^{-1}(\sqrt{D})\sum\limits_{a\in k_F} \theta_1^{-1}\theta_2^{-1}(1+a\sqrt{D})\notag\\
&\ll 1.\notag
\end{align*}
Here in the first two equalities we have used that $\theta_i|_{F^\times}=1$. In the last line we applied Lemma \ref{Lem:residuecancellation}. The result follows in this case.

On the other hand if $k_E=k_F\times k_F$, 
for the sum in $y$ with $y-\overline{y}\in k_E^\times$, we can choose a set of representatives $y=(a,1)$ with $a\in k_F^\times$, $a\neq 1$. Then
\begin{align*}
\sum\limits_{y\in k_F^\times\backslash k_E^\times, y-\overline{y}\in k_E^\times}\theta_1\theta_2^{-1}(y-\overline{y})\theta_1^{-1}(y)\theta_2(\overline{y})&=\theta_1\theta_2^{-1}(1,-1)\sum\limits_{a\in k_F^\times, a\neq 1}\mu_1\mu_2(a)\\
&=-\theta_1\theta_2^{-1}(1,-1).\notag
\end{align*}
Here in the second equality we have used that $c(\mu_1\mu_2)=1$ and thus 
$$\sum\limits_{a\in k_F^\times}\mu_1\mu_2(a)=0.$$
Again the result follows.

Suppose without loss of generality now that $E$ is a field extension while $E'=F\times F$. By writing $w=(y_1,y_2)\in \varpi^{-1}O_E^\times$ in \eqref{Eq:level1cancellation} and sum in $\chi$ first, we get that the sum in $\chi$ is vanishing unless $y_2\equiv \frac{N_{E/F}(u)}{y_1}\mod \varpi \OF$. Thus
\begin{align*}
&P_1(\theta_1,\theta_2)\\=&\int\limits_{v_E(u)=-1}\theta_1^{-1}(u) \psi_E(u) \overline{
\int\limits_{v_{F}(y_1)=-1}\mu_2(y_1)\mu_2^{-1}\lb\frac{N_{E/F}(u)}{y_1}\rb \psi\lb y_1+\frac{N_{E/F}(u)}{y_1}\rb d^\times y_1  }d^\times u.
\end{align*}
Here the number of $\chi$ cancels with the volume of $y_2$ with the above mentioned congruence requirement. Make a change of variable $u=y_1 v$, we get
\begin{align*}
P_1(\theta_1,\theta_2)&=\int\limits_{v_E(v)=0}
\int\limits_{v_{F}(y_1)=-1}\theta_1^{-1}(v) \overline{\mu_2^{-1}\lb N_{E/F}(v)\rb} \psi\lb(Tr_{E/F}(v)-1-N_{E/F}(v))y_1\rb d^\times y_1 d^\times v .\\
&=\int\limits_{v_E(v)=0}
\int\limits_{v_{F}(y_1)=-1}\theta_1^{-1}(v) \overline{\mu_2^{-1}\lb N_{E/F}(v)\rb} \psi\lb-N_{E/F}(v-1)y_1\rb d^\times y_1 d^\times v. \notag
\end{align*}

Here in the first equality we have used that $\theta_1|_{\F^\times}=1$. 
Note that as $v\in O_E^\times$, $v(N_{E/F}(v-1))=2v_E(v-1)$.
We can then integrate in $y_1$ first and break the integral in $v$ according to whether 
$v\equiv 1\mod \varpi O_E$, 
\begin{align*}
P_1(\theta_1,\theta_2)&=\int\limits_{v_E(v)=0, v\equiv 1}\theta_1^{-1}(v) \overline{\mu_{2,E}^{-1}(v)} d^\times v -\frac{1}{q-1}\int\limits_{v_E(v)=0, v\not\equiv 1}
\theta_1^{-1}(v) \overline{\mu_{2,E}^{-1}(v)} d^\times v\\
&=\frac{q}{q-1}\int\limits_{v_E(v)=0, v\equiv 1}\theta_1^{-1}(v) \overline{\mu_{2,E}^{-1}(v)} d^\times v -\frac{1}{q-1}\int\limits_{v_E(v)=0}
\theta_1^{-1}(v) \overline{\mu_{2,E}^{-1}(v)} d^\times v\notag.
\end{align*}
Note that $\Vol(1+\varpi_E O_E,d^\times v)=\frac{1}{q^2-1}$, so the first term can be directly controlled by $\frac{1}{q^2}$. On the other hand $\theta_1^{-1} \overline{\mu_{2,E}^{-1}}$ is a nontrivial character on $O_E^\times$, so the second term is vanishing. Thus we get in the last case
$$P_1(\theta_1,\theta_2)\ll \frac{1}{q^2}.$$
\end{proof}
\subsection{$p=2$ case}\label{Sec:p=2}
In addition to possible non-dihedral supercuspidal representations, there are several other potential issues in this case. For example, \eqref{Eq:5221} may not be true. $c(\theta_1^{-1}\theta_2)$ and $c(\theta_1\theta_2)$ may both be smaller than $c(\theta_i)$ when $E=E'$, though we have following slightly weaker result.
\begin{lem}\label{Lem:levelp=2}
Suppose that $c(\theta_i)$ are large enough, and $l= c(\theta_1^{-1}\theta_2)\leq c(\theta_1\theta_2)$. Then
$$c(\theta_1\theta_2)=c(\theta_i)+O(1).$$
\end{lem}
\begin{proof}
By the assumption $l\leq c(\theta_1\theta_2)$, we only have to consider the case where $l$ is sufficiently smaller than $c(\theta_i)$. Let $\alpha_i$ be associated to $\theta_i$ by Corollary \ref{Cor:DualLiealgForBasechangeChar}. Then by $c(\theta_1^{-1}\theta_2)=l$,
$$v_E(\alpha_2-\alpha_1)\geq -l+c(\psi_E). $$
$\theta_1\theta_2$ is then associated to $\alpha_1+\alpha_2$ with
$$v_E(\alpha_1+\alpha_2)=v_E(\alpha_1-\alpha_2+2\alpha_2)= v_E(\alpha_2)+v_E(2), $$
from which the lemma follows.
\end{proof}
%

For the partial pairing $P_i(W_1,W_2)$, the following weaker result  suffices for our purposes while avoiding many technique issues.
\begin{lem}\label{Lem:Wipairingp=2}
Suppose that $p=2$ and $c(\pi_1)=c(\pi_2)$ is large enough.
Let $W_i$ be the Whittaker functions  associated to newforms in $\pi_i$.
 Then there exists an absolutely bounded positive integer $a$ such that
$$P_i(W_1, W_2)=1$$
for any $i\geq  c(\pi_1\times\pi_2)/2+a$.
\end{lem}
Here we allow $i\geq c(\pi_{1})$, in which case one directly have $P_i(W_1,W_2)=1$ from the newform theory.

The assumption that $c(\pi_1)=c(\pi_2)$ large enough excludes the case where $\pi_i$ could be non-dihedral supercuspidal representations. 
This result can be then proven similarly as in Section \ref{Sec:i=c}, \ref{Sec:samefieldElargei}. Picking sufficiently large but bounded $a$ also avoids the case $i=c/2$ for principal series representations according to Lemma \ref{Lem:levelp=2}.

\section{Bounds for local period integrals}\label{Sec:localbounds}

For $\eta$  a character of $\F^\times$, Recall that an element $\varphi_3
\in \pi_3=\pi(\eta,\eta^{-1})$ satisfies \begin{equation}\label{Equation:LocalInducedElement}
\varphi_3 \left(\zxz{a_1}{n}{0}{a_2} g\right)=\eta(a_1)\eta^{-1}(a_2)\left|\frac{a_1}{a_2}\right|^{1/2}\varphi(g).
\end{equation}

Denote by
\begin{equation}\label{Equation:LocalRS}
I^{\RS}\left(\varphi_1,\varphi_2,\varphi_3\right)=\int\limits_{{Z(\F) N}\backslash\GL_2{(\F)}}W_{\varphi_1}\left(g\right)\overline{W_{\varphi_2}\left(g\right)}\varphi_3\left(g\right)dg
\end{equation}
 the local integral for the Rankin--Selberg integral. Here $W_{\varphi}$ is the Whittaker function associated to $\varphi$ with respect to the fixed additive character $\psi$. We also note that $\overline{W_{\varphi}}=W^-_{\varphi}$ where $W^-_{\varphi}$ is for $\psi^-(x)=\psi(-x)$. 

In general for $\varphi_i\in \pi_i$, denote by
\begin{equation}\label{Equation:LocalT}
I^{\T}\left(\varphi_1,\varphi_2,\varphi_3\right)=\int\limits_{\F^\times\backslash \GL_2{(\F)}}\prod\limits_{i=1}^{3}\Phi_{\varphi_i}\left(g\right)dg
\end{equation}
the local integral for the triple product formula, where $\Phi_{\varphi_i}$ is the matrix coefficient associated to $\varphi_i$.

This section focuses on giving lower and upper bound for $I^\RS$ and $I^T$ with proper choice of $L^2$-normalized test vectors. For conciseness we omit the terms $Q^{\epsilon}$ from the computations.

\subsection{Set up and preparations}
By the setting in Theorem \ref{Theo:conjecturecase}, we assume without loss of generality that $\eta$  and $\pi_3$ are unramified. We shall 
search for test vectors of the following shape: 
$\varphi_i=\varphi_i^\circ$ are normalized newforms for $i=1,2$, and $\varphi_3=\pi_3\lb\zxz{\varpi^{-n}}{}{}{1}\rb \varphi_3^\circ$ for $c/2\leq n\leq c$ where $c=\max\{c(\pi_1), c(\pi_2)\}$.

Suppose that $\eta= |\cdot|^\gamma$ with $|\Re(\gamma)|<7/64$.

The main strategy for the computations follows that of \cite{YH20}, which we briefly recall here. 
As all integrand are invariant by $K_0(\varpi^{c})$ by our setting, we have
\begin{equation*}
I^{\RS}\left(\varphi_1,\varphi_2,\varphi_3\right)=\sum\limits_{i=0}^{c}A_i\int\limits_{a\in \F^\times}W_{\varphi_1}^{(i)}\overline{W_{\varphi_2}^{(i)}}(a) \varphi_3^\circ\left(\zxz{a\varpi^{-n}}{0}{0}{1}\zxz{1}{0}{\varpi^{i-n}}{1}\right)|a|^{-1} d^\times a
\end{equation*}
for the constants $A_i$ as in Lemma \ref{localintcoefficient}.

The Iwasawa decomposition (in the sense of Lemma \ref{Iwasawadecomp}) for $\zxz{a\varpi^{-n}}{0}{0}{1}\zxz{1}{0}{\varpi^{i-n}}{1}$ can be done as follows: When $i\geq n$, {it is} already in the standard form. When $i<n$, we have
\begin{equation}\label{Eq:Iwa1}
 \zxz{a\varpi^{-n}}{0}{0}{1}\zxz{1}{0}{\varpi^{i-n}}{1}=\zxz{a\varpi^{-i}}{a\varpi^{-n}}{0}{\varpi^{i-n}}\zxz{0}{-1}{1}{\varpi^{n-i}}. 
\end{equation}
Thus by the property of $\varphi_3^\circ$ with normalization $\varphi_3^\circ(1)=1$, we get
\begin{align}\label{Eq:RSintdecomp}
&I^{\RS}\left(\varphi_1,\varphi_2,\varphi_3\right)\\
=&\notag\sum\limits_{i=n}^{c}A_iq^{n(1/2+\gamma)}\int\limits_{a\in \F^\times}W_{\varphi_1}^{(i)}\overline{W_{\varphi_2}^{(i)}}(a) |a|^{\gamma-1/2} d^\times a+\sum\limits_{i=0}^{n-1}A_iq^{(2i-n)(1/2+\gamma)}\int\limits_{a\in \F^\times}W_{\varphi_1}^{(i)}\overline{W_{\varphi_2}^{(i)}}(a) |a|^{\gamma-1/2} d^\times a\\
=&\sum\limits_{i=n}^{c}A_iq^{n(1/2+\gamma)}P_i(W_{\varphi_1},W_{\varphi_2})+\sum\limits_{i=0}^{n-1}A_iq^{(2i-n)(1/2+\gamma)}P_i(W_{\varphi_1},W_{\varphi_2}) \notag.
\end{align}

The following lemma relates $P_i(W_{\varphi_1},W_{\varphi_2})$ with $i<c/2$ to those with $i>c/2$  using the Atkin-Lehner operator when $c(\pi_1)=c(\pi_2)$.
\begin{lem}\label{Lem:AtkinLehnerSym}
Suppose that $c=c(\pi_1)=c(\pi_2)$, and $i<c/2$. Then
$$\left|P_i(W_{\varphi_1},W_{\varphi_2})\right|=q^{(c-2i)(\Re(\gamma)-1/2)}\left|P_{c-i}(W_{\varphi_1},W_{\varphi_2})\right|.$$
\end{lem}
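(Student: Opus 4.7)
The plan is to establish this identity via the Atkin--Lehner functional equation for Whittaker newforms with trivial central character. Since $c(\pi_1)=c(\pi_2)=c$, both newforms $\varphi_1^\circ,\varphi_2^\circ$ are fixed by $K_0(\varpi^c)$, and for each $j=1,2$ the Atkin--Lehner involution $w_c=\zxz{0}{1}{-\varpi^c}{0}$ acts on $\varphi_j^\circ$ by a pseudo-eigenvalue $\lambda_{\pi_j}$ of absolute value $1$, so that $W_{\varphi_j}(gw_c)=\lambda_{\pi_j}W_{\varphi_j}(g)$.

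First I would perform the explicit matrix decomposition
$$a(x)\zxz{1}{0}{\varpi^i}{1}w_c=\zxz{0}{x}{-\varpi^c}{\varpi^i}=\zxz{\varpi^i}{0}{0}{\varpi^i}\cdot a(x\varpi^{c-2i})\cdot\zxz{1}{\varpi^{-(c-i)}}{0}{1}\cdot\zxz{1}{0}{-\varpi^{c-i}}{1},$$
which can be verified by direct multiplication. The central factor acts trivially by triviality of $\omega_\pi$; commuting the diagonal past the upper unipotent converts the latter into $\zxz{1}{x\varpi^{-i}}{0}{1}$, which produces the factor $\psi(x\varpi^{-i})$ via the Whittaker transformation law. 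Since
$$\zxz{1}{0}{-\varpi^{c-i}}{1}=a(-1)\zxz{1}{0}{\varpi^{c-i}}{1}a(-1)^{-1}$$
and $a(-1)^{\pm 1}\in K_0(\varpi^c)$ acts trivially on the newform, one arrives at the symmetry
$$W_{\varphi_j}^{(i)}(x)=\lambda_{\pi_j}^{-1}\,\psi(x\varpi^{-i})\,W_{\varphi_j}^{(c-i)}(-x\varpi^{c-2i}).$$

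Substituting this for both $j=1,2$ into the definition of $P_i(W_{\varphi_1},W_{\varphi_2})$ from Definition \ref{Defn:Pi}, the two character factors $\psi(x\varpi^{-i})\overline{\psi(x\varpi^{-i})}$ cancel. The change of variables $y=-x\varpi^{c-2i}$ gives $|x|^{\gamma-1/2}=|y|^{\gamma-1/2}q^{(c-2i)(\gamma-1/2)}$ and $d^\times x=d^\times y$, so that
$$P_i(W_{\varphi_1},W_{\varphi_2})=\lambda_{\pi_1}^{-1}\overline{\lambda_{\pi_2}^{-1}}\,q^{(c-2i)(\gamma-1/2)}\,P_{c-i}(W_{\varphi_1},W_{\varphi_2}).$$
Taking absolute values and using $|\lambda_{\pi_j}|=1$ delivers the claimed equality.

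The only mildly delicate point is checking that the Atkin--Lehner pseudo-eigenvalue exists and is unimodular, and that $a(-1)$ acts trivially on the newform under the trivial central character assumption; both are standard consequences of the one-dimensionality of the newform space and the $K_0(\varpi^c)$-transformation law $\pi(k)\varphi^\circ=\omega_\pi(d)\varphi^\circ$ specialized to $\omega_\pi\equiv 1$. The remainder of the argument is a direct matrix computation together with a change of variable, so I do not anticipate any serious obstacle.
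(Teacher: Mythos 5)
Your proof is correct and follows essentially the same route as the paper: both derive the identity $W_\varphi^{(i)}(x)=\lambda_\pi^{-1}\psi(x\varpi^{-i})W_\varphi^{(c-i)}(-x\varpi^{c-2i})$ from the Atkin--Lehner symmetry via an explicit Iwasawa-type matrix decomposition, then substitute into $P_i$, cancel the conjugate additive characters, and change variables. The only cosmetic differences are the particular decomposition used (yours pulls out the central factor $\varpi^iI$ and conjugates by $a(-1)$ at the end, while the paper's puts the sign-change matrix on the right directly) and your noting $|\lambda_{\pi_j}|=1$ rather than the paper's $a_j=\pm1$; both are harmless since one takes absolute values at the end.
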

\begin{proof}
 Recall that for $\omega_c=\zxz{0}{1}{-\varpi^c}{0}$ which stabilises the congruence subgroup $K_0\left(\varpi^c\right)$, we have by the uniqueness of the newform,
\begin{equation*}
\pi_i\left(\omega_c\right)\varphi_i^\circ=a_i\varphi_i^\circ
\end{equation*}
for $i=1,2$ and some constants $a_i=\pm 1$. 
It is also straightforward to verify that
$$\zxz{1}{}{\varpi^i}{1}\zxz{0}{1}{-\varpi^c}{0}= \zxz{-\varpi^{c-i}}{1}{}{\varpi^i}\zxz{1}{}{\varpi^{c-i}}{1}\zxz{-1}{}{}{1}$$
As a result for $i<c/2$ and newforms $\varphi\in \pi_1$ or $\pi_2$,
$$
W_{\varphi}^{(i)}(a)=a_i W_{\varphi}^{(c-i)}(-a\varpi^{c-2i})\psi(\varpi^{-i}a).
$$
By Lemma \ref{Lem:WiSc}, \ref{Lem:Wiprincipal},
$W_{\varphi}^{(c-i)}$ is supported only at $v(a)=0$, thus
 $W_{\varphi}^{(i)}(a)$ is supported only at $v(a)=2i-c$. Thus
\begin{align*}
P_i(W_{\varphi_1},W_{\varphi_2})&=\int\limits_{a\in \F^\times}W_{\varphi_1}^{(i)}\overline{W_{\varphi_2}^{(i)}}(a) |a|^{\gamma-1/2} d^\times a\\
&=a_1\overline{a_2}q^{(c-2i)(\gamma-1/2)}\int\limits_{a\in \F^\times}W_{\varphi_1}^{(c-i)}\overline{W_{\varphi_2}^{(c-i)}}(a) |a|^{\gamma-1/2} d^\times a
\end{align*}
The lemma now follows by taking absolute values on both sides.
\end{proof}
The following result relates the local triple product integral with the local Rankin--Selberg integral, which is originally due to \cite{MVIHES} and is later extended in other works in for example \cite[Proposition 5.1]{HS19}.
\begin{lem}\label{Lem:T-RS}
Suppose that $\pi_3$
is a parabolically-induced representation, and $\pi_i$ satisfies the bound $\theta{<1/6}$ towards the Ramanujan conjecture. 
Let $\tilde{\pi_i}$ be the contragredient representation of $\pi_i$, $\varphi_i\in \pi_i$ and $\tilde{\varphi_i}\in \tilde{\pi_i}$. Let $(\cdot, \cdot)$ be the natural $\GL_2-$invariant pairing between $\pi_i$ and $\tilde{\pi_i}$. 
Suppose furthermore that $\varphi_3$ belongs to the model $\pi(\eta,\eta^{-1})$ and $\tilde{\varphi_3}$ belongs to the model $\pi(\eta^{-1},\eta)$.
Then
\begin{align}\label{Eq:Triple-RS}
&\int\limits_{Z\backslash \GL_2(\F)}\prod_i (\pi_i(g)\varphi_i,\tilde{\varphi_i})dg\\
=&\zeta_\F(1)\int\limits_{{Z(\F) N}\backslash\GL_2{(\F)}}W_{\varphi_1}\left(g\right)W_{\varphi_2}\left(Jg\right)\varphi_3\left(g\right)dg
\int\limits_{{Z(\F) N}\backslash\GL_2{(\F)}}W_{\tilde{\varphi_1}}\left(Jg\right)W_{\tilde{\varphi_2}}\left(g\right)\tilde{\varphi_3}\left(g\right)dg.\notag
\end{align}
Here $J=\zxz{-1}{0}{0}{1}$ and $W_\varphi(Jg)$ is the Whittaker function associated to $\varphi$ with respect to $\psi^-$.
\end{lem}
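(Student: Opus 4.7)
The proof will proceed by unfolding both sides into explicit multiple integrals in Whittaker coordinates and checking that they match. The key inputs are a standard integral representation of the matrix coefficient of a generic unitary $\GL_2$-representation via its Whittaker model, and the induced model description of $\pi_3$.

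First I would invoke the classical identity (a form of the Kirillov-model formula), valid for each generic unitary constituent $\pi_i$ with $i=1,2$,
$$(\pi_i(g)\varphi_i,\tilde\varphi_i) \;=\; c_i\int_{\F^\times} W_{\varphi_i}(a(t)g)\, W_{\tilde\varphi_i}(Ja(t))\, d^\times t,$$
with an explicit constant $c_i$ determined by the Haar measure and additive-character normalizations. The operator $J$ appears because the $\psi$-Whittaker model of $\tilde\pi_i$ coincides with the $\psi^{-1}$-Whittaker model of $\pi_i$, related by left translation by $J$. Substituting these expressions into the LHS of \eqref{Eq:Triple-RS} and applying Fubini, whose validity rests on the bound $\theta<1/6$ toward Ramanujan (so that the triple product of matrix coefficients has integrable decay on the Cartan torus of $Z\bash\GL_2$), produces a quadruple integral over $(t_1,t_2,g)$ ranging in $\F^\times\times\F^\times\times Z\bash\GL_2(\F)$.

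Next I would perform the change of variables $g\mapsto a(t_1)^{-1}g$, which turns $W_{\varphi_1}(a(t_1)g)$ into $W_{\varphi_1}(g)$, turns $W_{\varphi_2}(a(t_2)g)$ into $W_{\varphi_2}(a(t_2/t_1)g)$, and translates the $\pi_3$-matrix coefficient; using the $\GL_2$-invariance of the pairing together with the transformation law $\tilde\varphi_3(a(y)h)=\eta^{-1}(y)|y|^{1/2}\tilde\varphi_3(h)$ in the induced model, this last translation produces a factor of the form $\eta(t_1)|t_1|^{1/2}$ and re-expresses the matrix coefficient as an integral of $\tilde\varphi_3$ against $\varphi_3$ on $K$ (or equivalently on the Iwasawa cell). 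Setting $s=t_2/t_1$ decouples the integrals: the $t_1$-piece collects with $W_{\tilde\varphi_1}(Ja(t_1))$ and $\tilde\varphi_3$ to form the local Rankin-Selberg integral $\int W_{\tilde\varphi_1}(Jg)W_{\varphi_2}(g)\tilde\varphi_3(g)\,dg$, while the $(s,g)$-piece collects with $W_{\varphi_1}$, $W_{\varphi_2}$ and $\varphi_3$ to form the companion Rankin-Selberg integral $\int W_{\varphi_1}(g)W_{\varphi_2}(Jg)\varphi_3(g)\,dg$. A careful audit of the constants from the Whittaker identity and from the measure conversions produces the overall factor $\zeta_\F(1)$ on the RHS.

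The main obstacle is the bookkeeping of normalizations: the two $J$-insertions, the modular characters arising from the Borel transformation law of $\varphi_3,\tilde\varphi_3$, and the constants $c_1,c_2$ must all fit together to produce exactly $\zeta_\F(1)$. A secondary point requiring care is justifying absolute convergence throughout the rearrangement; for this the hypothesis $\theta<1/6$ is used because a triple of matrix coefficients decays like $\|g\|^{-3(1-2\theta)}$ on $Z\bash\GL_2$, which must beat the polynomial growth of the Haar measure on the Cartan decomposition, and the Whittaker functions provide the additional decay needed to control the inner $t$-integrals uniformly.
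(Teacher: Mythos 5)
Note first that the paper does not prove this lemma but cites it, attributing it to Michel--Venkatesh and to Hsieh \cite[Prop.~5.1]{HS19}, so your argument has to stand on its own. Your overall plan---express the $\pi_1,\pi_2$ matrix coefficients via their Whittaker models, isolate the $\pi_3$ matrix coefficient, apply Fubini, and change variables to decouple---is the standard shape of the argument, and your observation that $\theta<1/6$ gives absolute convergence of the triple of matrix coefficients on $Z\backslash\GL_2(\F)$ is sound. However, the crucial decoupling step is wrong as stated: the change of variable $g\mapsto a(t_1)^{-1}g$ turns $(\pi_3(g)\varphi_3,\tilde\varphi_3)$ into $(\pi_3(a(t_1)^{-1}g)\varphi_3,\tilde\varphi_3)=(\pi_3(g)\varphi_3,\tilde\pi_3(a(t_1))\tilde\varphi_3)$, i.e.\ it replaces $\tilde\varphi_3$ by its \emph{right} translate $h\mapsto\tilde\varphi_3(h\,a(t_1))$. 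The transformation law $\tilde\varphi_3(a(y)h)=\eta^{-1}(y)|y|^{1/2}\tilde\varphi_3(h)$ governs only \emph{left} translates and therefore does not produce a scalar factor $\eta(t_1)|t_1|^{1/2}$. The inner integral over $Z\backslash\GL_2(\F)$ thus retains a genuine $t_1$-dependence beyond the parameter $s=t_2/t_1$, and the claimed split into two Rankin--Selberg integrals does not follow by this route.

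The decoupling, and the factor $\zeta_\F(1)$ which your sketch never actually produces, arise from a different sequence of moves. Write $(\pi_3(g)\varphi_3,\tilde\varphi_3)=\int_K\varphi_3(k_2g)\tilde\varphi_3(k_2)\,dk_2$, move this $K$-integral outside, replace $g$ by $k_2^{-1}g$, and pass to Iwasawa coordinates $g=n(x)a(t)k_1$ for the $Z\backslash\GL_2(\F)$-integral; it is only here that the Borel transformation law of $\varphi_3$ applies legitimately, yielding $\varphi_3(n(x)a(t)k_1)=\eta(t)|t|^{1/2}\varphi_3(k_1)$. Expanding the $\pi_1,\pi_2$ coefficients via their Whittaker pairings then produces a factor $\psi((s_1+s_2)x)$ in the unipotent variable $x$; the $x$-integral against $d^\times s_2$ collapses to the locus $s_2=-s_1$ with a Jacobian $\zeta_\F(1)/|s_1|$, the $\zeta_\F(1)$ arising precisely from $d^\times s=\zeta_\F(1)\,ds/|s|$. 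A final multiplicative substitution $u=s_1t$ separates the $\eta(t)|t|^{\pm1/2}$ weights and leaves a product of two integrals over $\F^\times\times K$, which one identifies with the two Rankin--Selberg integrals of the statement upon reading off the Iwasawa decomposition of $Z(\F)N\backslash\GL_2(\F)$. (A small typo: your second Rankin--Selberg integral should involve $W_{\tilde\varphi_2}$, not $W_{\varphi_2}$.)
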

In our case, we have $\tilde{\pi_i}\simeq \pi_i$ as the central characters are trivial, and $(\cdot,\cdot)$ is the usual unitary pairing. We can take $\tilde{\varphi_i}=\varphi_i$, but note that $\varphi_3$ and $\tilde{\varphi_3}$ are in different models.

\subsection{Upper bounds}
Recall the notation $\lleq_{v,\epsilon}$ from Section \ref{Section:Inequalityconvention}. 
We need a special case of the upper bounds obtained in \cite{YH20} here.
Note however that \cite[Corollary 3.17]{YH20} contains an error  in the non-tempered case, leading to a weaker upper bound (while the main conclusions there should still hold). This is  corrected  in the following result:

\begin{prop}\label{prop:upboundforRS&T}
Suppose that $\pi_3$ is unramified, and $c=c(\pi_1)=c(\pi_2)$. Denote $c'=c(\pi_2\times\pi_2)$.
Suppose that $\pi_i$ satisfies the bound $\theta<7/64$ towards the Ramanujan conjecture, for $i=1,2,3$. Let $\varphi_i^\circ\in \pi_i$ be normalized newforms. Suppose that $n\geq c/2$, and $ \pi_3= \pi(\eta,\eta^{-1})$ with $\eta=|\cdot|^\gamma$ in the case of local Rankin--Selberg integral.

When $\Re(\gamma)\geq 0$,
\begin{equation}\label{Eq:upperbd1}
\left|I^{\RS}\left(\varphi_1^\circ,\varphi_2^\circ,\pi_3\left(\zxz{\varpi^{-n}}{0}{0}{1}\right)\varphi_3^\circ\right)\right|
\lleq_{v,\epsilon} \frac{1}{q^{\left(1/2-|\Re(\gamma)|-\epsilon\right)n}}.
\end{equation}
When $\Re(\gamma)\leq 0$,
\begin{equation}\label{Eq:upperbd2}
\left|I^{\RS}\left(\varphi_1^\circ,\varphi_2^\circ,\pi_3\left(\zxz{\varpi^{-n}}{0}{0}{1}\right)\varphi_3^\circ\right)\right|
\lleq_{v,\epsilon} \frac{1}{q^{c'/2}}\frac{1}{q^{\left(1/2-|\Re(\gamma)|-\epsilon\right)(n-c')}}.
\end{equation}
For general $\gamma$,
\begin{equation}\label{Eq:upperbd3}
\left|I^{T}\left(\varphi_1^\circ,\varphi_2^\circ,\pi_3\left(\zxz{\varpi^{-n}}{0}{0}{1}\right)\varphi_3^\circ\right)\right|
\lleq_{v,\epsilon} \frac{1}{q^{c'/2}}\frac{1}{q^{\left(1/2-|\Re(\gamma)|-\epsilon\right)(2n-c')}}   .
\end{equation}
\end{prop}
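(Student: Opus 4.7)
The starting point is the Iwasawa-type decomposition \eqref{Eq:RSintdecomp}, which expresses the local Rankin--Selberg integral as
$$\sum_{i=n}^{c} A_i q^{n(1/2+\gamma)} P_i(W_1,W_2) + \sum_{i=0}^{n-1} A_i q^{(2i-n)(1/2+\gamma)} P_i(W_1,W_2).$$
Using $A_i \asymp q^{-i}$ for $i\geq 1$ (Lemma \ref{localintcoefficient}), the $i$-th summand in the first sum has magnitude $\asymp q^{n(1/2+\Re\gamma)-i}\,|P_i|$ and in the second sum $\asymp q^{-n/2 + (2i-n)\Re\gamma}\,|P_i|$. The decisive input is Proposition \ref{Prop:Wipairing}: $|P_i|=O(1)$ for $i \geq c'/2$, $P_i$ essentially vanishes in the ``middle'' range $c/2 \leq i < c'/2-1$ (with small edge contributions at $i=c'/2-1$ and the principal-series case $i=c/2$). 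For $i<c/2$ we apply Lemma \ref{Lem:AtkinLehnerSym} to reflect to $c-i>c/2$, at the cost of an additional decay factor $q^{(c-2i)(\Re\gamma-1/2)}$.

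\textbf{Case $\Re\gamma \geq 0$.} Since $n \geq c/2 \geq c'/2$, every index in the first sum has $|P_i|=O(1)$. The magnitude $q^{n(1/2+\Re\gamma)-i}$ is maximized at $i=n$, yielding the main term $q^{n(\Re\gamma-1/2)} = q^{-(1/2-|\Re\gamma|)n}$ of \eqref{Eq:upperbd1}. In the second sum the surviving summands with $c'/2\leq i<n$ yield magnitude at most $q^{-n/2+(n-2)\Re\gamma}$ (peaked at $i=n-1$), and the Atkin--Lehner--reflected range $i\leq c-c'/2$ gives $q^{(i-(c+n)/2)+(c-n)\Re\gamma}$, peaked at $i=c-c'/2$. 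Both are dominated by the main term, using $c\leq c'$ and $n\geq c/2$, up to a $q^\epsilon$ loss from summing $O(c)$ indices.

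\textbf{Case $\Re\gamma \leq 0$.} The first-sum peak $q^{-n/2+n\Re\gamma}$ is already smaller than the target bound. The dominant contribution now comes from the second sum at the smallest surviving index $i=c'/2$, producing
$$q^{-n/2 + (c'-n)\Re\gamma} = q^{-n/2 + (n-c')|\Re\gamma|} = q^{-c'/2}\,q^{-(1/2-|\Re\gamma|)(n-c')},$$
matching \eqref{Eq:upperbd2}. The Atkin--Lehner range incurs an additional factor $q^{(c-c')(1/2+\Re\gamma)} \leq 1$ (using $c\leq c'$ and $|\Re\gamma|<1/2$), so remains subordinate.

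\textbf{Triple product bound via Lemma \ref{Lem:T-RS}.} Under the hypothesis that $\pi_3$ is parabolically induced, Lemma \ref{Lem:T-RS} factors $I^T$ as the product of two local Rankin--Selberg integrals, one in the model $\pi(\eta,\eta^{-1})$ (parameter $\gamma$) and one in $\pi(\eta^{-1},\eta)$ (parameter $-\gamma$). Exactly one of $\pm\gamma$ has nonnegative real part; applying \eqref{Eq:upperbd1} to that factor and \eqref{Eq:upperbd2} to the other yields
$$|I^T| \lleq_{v,\epsilon} q^{-(1/2-|\Re\gamma|)n} \cdot q^{-c'/2}\,q^{-(1/2-|\Re\gamma|)(n-c')} = q^{-c'/2}\,q^{-(1/2-|\Re\gamma|)(2n-c')},$$
which is \eqref{Eq:upperbd3}. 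The main obstacle is the careful bookkeeping of the many sub-cases in Proposition \ref{Prop:Wipairing}, namely the principal-series anomaly at $i=c/2$, the edge case $i=c'/2-1$ where $|P_i|\sim 1/(q-1)$, and the interaction between the direct range $i\geq c'/2$ and the Atkin--Lehner reflected range $i\leq c-c'/2$; the conceptual input producing the conductor-dropping savings (exponent $c'/2$ rather than $c/2$) is precisely the sharp vanishing of $P_i$ in the middle range guaranteed by our partial orthogonality Proposition \ref{Prop:Wipairing}.
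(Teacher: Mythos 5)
Your proposal follows the same route as the paper: start from the Iwasawa--Atkin--Lehner decomposition \eqref{Eq:RSintdecomp}, feed in the size/vanishing estimates on $P_i$ from Lemma~\ref{Lem:Wipairingspecial} and Proposition~\ref{Prop:Wipairing} for $i\geq c/2$, reflect the range $i<c/2$ via Lemma~\ref{Lem:AtkinLehnerSym}, identify the dominant index ($i=n$ for $\Re\gamma\geq 0$, $i=c'/2$ for $\Re\gamma\leq 0$), and factor $I^T$ through Lemma~\ref{Lem:T-RS}. The one slip worth correcting: in the case $\Re\gamma\geq 0$ you justify $|P_i|=O(1)$ on the first sum by ``$n\geq c/2\geq c'/2$'', but Proposition~\ref{Prop:Wipairing} gives $c\leq c(\pi_1\times\pi_2)\leq 2c$, so the inequality runs the other way ($c'/2\geq c/2$); the correct justification is simply that every index of the first sum satisfies $i\geq n\geq c/2$, to which the first line of Proposition~\ref{Prop:Wipairing} (together with Lemma~\ref{Lem:Wipairingspecial}) applies. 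You also identify $c'=c(\pi_2\times\pi_2)$ with the quantity $c(\pi_1\times\pi_2)$ appearing in Proposition~\ref{Prop:Wipairing}; this is fine in the intended application, where the proposition is invoked with $\pi_1=\pi_2$, but it is worth flagging as an implicit assumption since the statement is phrased for general $\pi_1,\pi_2$ of equal conductor.
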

\begin{proof}
Note that \eqref{Eq:upperbd3} follows from \eqref{Eq:upperbd1}, \eqref{Eq:upperbd2} and Lemma \ref{Lem:T-RS}, as $\eta^{-1}=|\cdot|^{-\gamma}$.

On the other hand, \eqref{Eq:upperbd1} and \eqref{Eq:upperbd2} follows from   \eqref{Eq:RSintdecomp}, and that $P_i(W_1,W_2)\ll 1$  for $i\geq c/2$ according to Lemma \ref{Lem:Wipairingspecial} Proposition \ref{Prop:Wipairing}, and Lemma \ref{Lem:AtkinLehnerSym} for $i<c/2$ with a direct computation. When $|\Re(\gamma)|<7/64$, the main contributions come from $i=n$ when $\Re(\gamma)>0$,  $ c/2\leq i\leq n$ when $\Re(\gamma)=0$, and $i=\lceil c/2\rceil=c'/2$ when $\Re(\gamma)<0$.
Note that the conclusions hold even when $n>c$.
\end{proof}

\subsection{Lower bounds}\label{Sec:lowerbound} 
Our main ingredient for the lower bound of period integrals is Proposition \ref{Prop:Wipairing}, thus we assume for now Assumption \ref{Assump:notquadratictwist}. In that case we choose the following  test vectors: 
\begin{choiceoftest}
In all cases, we take $\varphi_i=\varphi_i^\circ$ to be newforms for $i=1,2$. Take $\varphi_3=\pi_3\lb  \zxz{\varpi^{-n}}{}{}{1} \rb \varphi_3^\circ$ with $n=c(\pi_1\times \pi_2)/2$  when $p$ is large enough, and $n= c(\pi_1\times \pi_2)/2+a$ for some non-negative absolutely bounded integer $a$ when $p$ is bounded.
\end{choiceoftest}

\begin{lem}\label{Lem:WpairingCaseunbalanced}
Suppose that $c(\pi_1)<c(\pi_2)=c$ and $\varphi_i\in \pi_i$ are $L^2$-normalized newforms for $i=1,2$. Suppose that either $p$ or $c$ is large enough. Then
$$P_c(W_{\varphi_1},W_{\varphi_2})\asymp 1, \left|P_{c-1}(W_{\varphi_1},W_{\varphi_2})\right|\leq \frac{1}{q-1}\left|P_c(W_{\varphi_1},W_{\varphi_2})\right|$$
with implied constants controlled by $Q^\epsilon$.
\end{lem}
\begin{proof}
It suffices to do  case-by-case computations. For $\varphi_1$, we only need its information $W^{(c(\pi_1))}_{\varphi_1}$ to compute $P_c$ and $P_{c-1}$ as $c(\pi_1)<c$. This  is provided by Lemma \ref{Lem:WiUnramified}, \ref{Lem:Wispecial}, \ref{Lem:nondihedralsupercuspidalW}, and $W^{(c(\pi_1))}_{\varphi_1}=\Char (O_F^\times)$ whenever $c(\pi_1)\geq 2$ (this is true even when $\pi_1$ is a non-dihedral supercuspidal representation). 
For $\varphi_2$, we apply Lemma \ref{Lem:Wispecial} when $c=1$, and Lemma \ref{Lem:WiSc}, Corollary \ref{Cor:WiMellinSc}/Corollary \ref{Cor:PrincipalWiAlt},  \ref{Cor:WiMellinPrin} when $c\geq 2$, as we avoid the non-dihedral supercuspidal representations by the assumption $p$ or $c$ large enough.

Here we only verify the  case where $c(\pi_1)=1$, $c\geq 2$, and left the remaining cases to readers.
In this case, as $W^{(c)}_{\varphi_2}$ is again $\Char (O_F^\times)$,
\begin{align*}
P_2(W_{\varphi_1},W_{\varphi_2})&=\int\limits_{x\in \F^\times}W_{\varphi_1}^{(2)}(x)\overline{W_{\varphi_2}^{(2)}(x)}|x|^{-1/2+\gamma}d^\times x=\int\limits_{x\in O_F^\times}W_{\varphi_1}^{(c(\pi_1))}(x)d^\times x=1
\end{align*}
by Lemma \ref{Lem:Wispecial}. As remarked before the formula in Lemma \ref{Lem:Wispecial} is not $L^2$-normalized but the resulting difference is controlled by $Q^\epsilon$.

On the other hand by Lemma \ref{Lem:Wispecial}, we have
\begin{align*}
P_1(W_{\varphi_1},W_{\varphi_2})&=\sum\limits_{j\geq 0}q^{(1/2-\gamma)j}\int\limits_{v(x)=j}W_{\varphi_1}^{(1)}(x)\overline{W_{\varphi_2}^{(2)}(x)}d^\times x\\
&=\sum\limits_{j\geq 0}\mu(\varpi)^jq^{(-1/2-\gamma)j}\overline{\int\limits_{v(x)=j}W_{\varphi_2}^{(2)}(x)d^\times x}.
\end{align*}
By taking $\chi=1$ in Corollary \ref{Cor:WiMellinSc}/  \ref{Cor:WiMellinPrin}, we have
$$
\int\limits_{v(x)=j}W_{\varphi_2}^{(2)}(x)d^\times x=\begin{cases}
-\frac{1}{q-1}, \text{\ if $j=0$},\\
0, \text{\ otherwise}.
\end{cases}
$$
Thus $P_1(W_{\varphi_1},W_{\varphi_2})=-\frac{1}{q-1}$ in this case.
\end{proof}

\begin{prop}\label{Prop:lowerbdperiod}
Suppose that 
$\pi_1$ $\pi_2$ satisfy Assumption \ref{Assump:notquadratictwist}. Suppose that either $p$ is large enough 
 or $c(\pi_1\times \pi_2)$ is large enough. 
Then there exists a choice of test vectors as specified above, such that $I^{\RS}\left(\varphi_1,\varphi_2,\varphi_3\right)$ and $I^{T}\left(\varphi_1,\varphi_2,\varphi_3\right)$ are non-zero and satisfy the lower bounds
\begin{equation}\label{Eq:RSlowerbound}
\left|I^{\RS}\left(\varphi_1,\varphi_2,\varphi_3\right)\right|\ggeq_{v,\epsilon}  \frac{1}{q^{(1/2-\Re(\gamma))c(\pi_1\times \pi_2)/2}},
\end{equation}
\begin{equation}\label{Eq:triplelowerbound}
\left|I^{T}\left(\varphi_1,\varphi_2,\varphi_3\right)\right|\ggeq_{v,\epsilon} \frac{1}{q^{c(\pi_1\times \pi_2)/2}}.
\end{equation}
\end{prop}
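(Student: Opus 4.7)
The plan is to use the Iwasawa decomposition identity \eqref{Eq:RSintdecomp} with the specific choice $n=c(\pi_1\ootimes\pi_2)/2$, and then exploit the partial-orthogonality result of Proposition \ref{Prop:Wipairing} (supplemented by Lemma \ref{Lem:Wipairingspecial} in the special unramified case and by Lemma \ref{Lem:AtkinLehnerSym} to pull indices $i<c/2$ up to $c-i\geq c/2$). The point of this choice of $n$ is that it sits exactly at the threshold of nonvanishing for the partial pairings $P_i(W_1,W_2)$: for $i\geq n$ one has $P_i\approx 1$, while for $c/2\leq i<n$ most $P_i$ vanish. In particular the first sum in \eqref{Eq:RSintdecomp} is the main contribution.

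Plugging $A_i\asymp q^{-i}$ for $0<i<c$ from Lemma \ref{localintcoefficient} into this main sum, the geometric series is dominated by its $i=n$ term, giving
\[\sum_{i=n}^{c}A_iq^{n(1/2+\gamma)}P_i(W_1,W_2)\;\asymp\;A_n q^{n(1/2+\gamma)}\;\asymp\;q^{-n(1/2-\Re(\gamma))}\;=\;q^{-(1/2-\Re(\gamma))c(\pi_1\ootimes\pi_2)/2},\]
which is exactly the desired size in \eqref{Eq:RSlowerbound}. It then remains to show that the secondary contributions do not cancel this. There are three types: (a) the index $i=n-1$, where Proposition \ref{Prop:Wipairing} yields $P_{n-1}=-1/(q-1)$ and the contribution is $\asymp q^{-1}$ times the main term; (b) indices $i<c/2$ with $c-i\geq n$, where Lemma \ref{Lem:AtkinLehnerSym} gives $|P_i|\asymp q^{(c-2i)(\Re(\gamma)-1/2)}$ and a direct coefficient computation shows these are subdominant; and (c) all remaining indices, where $P_i=0$. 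These bounds are uniform once $q$ is not too small, which is exactly where the hypothesis $p\geq 23$ enters. In the residual small-$p$ regime one shifts to $n=c(\pi_1\ootimes\pi_2)/2+1$, which moves us into the strict regime of Proposition \ref{Prop:Wipairing} where $P_i=1$ exactly for $i\geq n$; the resulting loss is at most $q^{-(1/2-\Re(\gamma))}$, which is a bounded factor absorbed into the implied constants under the $\ggeq_{v,\eps}$ convention of Section \ref{Section:Inequalityconvention}.

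For the triple product bound \eqref{Eq:triplelowerbound}, the strategy is to apply Lemma \ref{Lem:T-RS}, which factorizes $I^T(\varphi_1,\varphi_2,\varphi_3)$ into a product of two Rankin--Selberg integrals, one in the model $\pi(\eta,\eta^{-1})$ and one in $\pi(\eta^{-1},\eta)$, with parameters $\gamma$ and $-\gamma$ respectively. Applying the already-established \eqref{Eq:RSlowerbound} to each factor yields
\[|I^T|\;\ggeq\; q^{-(1/2-\Re(\gamma))c(\pi_1\ootimes\pi_2)/2}\cdot q^{-(1/2+\Re(\gamma))c(\pi_1\ootimes\pi_2)/2}\;=\;q^{-c(\pi_1\ootimes\pi_2)/2},\]
matching \eqref{Eq:triplelowerbound}. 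The main obstacle in this plan will be the two boundary cases: first, handling the special unramified and small-$c$ configurations of Section \ref{Sec:c=2} where Proposition \ref{Prop:Wipairing} is not uniform and one must fall back on Lemma \ref{Lem:Wipairingspecial} or explicit residue-level computations; second, ensuring that the Atkin--Lehner-mirrored terms from $i<c/2$ combined with the $P_{n-1}=-1/(q-1)$ term do not accidentally produce a cancellation of precisely the size of the main term, which is avoided by the quantitative comparison of $A_iq^{(2i-n)(1/2+\gamma)}$ to $A_nq^{n(1/2+\gamma)}$ at the boundary.
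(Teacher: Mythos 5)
Your overall plan matches the paper's strategy for large $q$: decompose $I^{\RS}$ via \eqref{Eq:RSintdecomp} with $n=c(\pi_1\times\pi_2)/2$, isolate the range $i\geq n$ where $P_i=1$ by Proposition \ref{Prop:Wipairing}, estimate the secondary ranges $i=n-1$ and $i<c/2$ (via Atkin--Lehner reflection, Lemma \ref{Lem:AtkinLehnerSym}), and for the triple product bound factor through Lemma \ref{Lem:T-RS} with parameters $\pm\gamma$. This is essentially the paper's proof in the cases where $p$ is large or $c(\pi_1)\neq c(\pi_2)$.

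However, your handling of the small-$p$ regime has a genuine gap. You assert that ``shifting to $n=c(\pi_1\times\pi_2)/2+1$ moves us into the strict regime of Proposition \ref{Prop:Wipairing} where $P_i=1$ exactly for $i\geq n$'' and that ``the resulting loss is at most $q^{-(1/2-\Re\gamma)}$.'' This is not what shifting $n$ does: the partial pairings $P_i(W_{\varphi_1},W_{\varphi_2})$ depend only on the two Whittaker functions, not on $n$, so their values are unchanged. Changing $n$ only reweights the terms $A_iq^{n(1/2+\gamma)}$ vs.\ $A_iq^{(2i-n)(1/2+\gamma)}$. In particular the contribution from $i=n_0:=c(\pi_1\times\pi_2)/2$ (where $P_{n_0}$ can equal $1$ or $(q-3)/(q-1)$ or have magnitude up to $2/(q-1)$, depending on subcase) does not disappear; for small $q$ its magnitude is comparable to the supposed main term $i=n_0+1$, and there is no a priori reason the two cannot cancel to a quantity much smaller than $q^{-(1/2-\Re\gamma)n_0}$. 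The ``$\ggeq_{v,\eps}$'' convention does not rescue this: an implied constant cannot save a lower bound if the quantity is actually zero or anomalously small.

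The paper resolves this by a linear-combination argument that your proposal does not contain. Writing $I_{n_0}$ and $I_{n_0+1}$ for the two integrals with $n=n_0$ and $n=n_0+1$, one computes (cf.\ \eqref{Eq:localRSTest1}, \eqref{Eq:localRSTest2})
$$I_{n_0}-q^{1/2+\gamma}I_{n_0+1}=\bigl(1-q^{1+2\gamma}\bigr)\sum_{i=n_0+1}^{c}A_i\,q^{n_0(1/2+\gamma)}P_i(W_{\varphi_1},W_{\varphi_2}),$$
because the $i\leq n_0$ terms cancel identically. The right-hand side is manifestly $\gg q^{n_0(-1/2+\Re\gamma)}$ since $P_i=1$ for $i>n_0$ and $|1-q^{1+2\gamma}|$ is bounded away from zero. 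Hence at least one of $|I_{n_0}|$, $|I_{n_0+1}|$ must be $\gg q^{n_0(-1/2+\Re\gamma)}$; this is exactly why the Choice of test vectors in the paper allows $n$ to be either $c(\pi_1\times\pi_2)/2$ or $c(\pi_1\times\pi_2)/2+1$ when $p<23$. You should replace your small-$p$ paragraph with this argument, or find another proof that the shift to $n_0+1$ cannot produce cancellation.
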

Note that when $p$ and $c(\pi_1\times\pi_2)$ are both bounded, the size of local integral does not affect the asymptotic analysis.

We shall see below that the ambiguity for the choice of $n$ when $p$ is small is due to the fact that the supposed main term in the computation could be smaller than the supposed error terms. But at least one of the choices for $n$ will give the right size of the local integrals.
\begin{proof}
\eqref{Eq:triplelowerbound} follows from Lemma \ref{Lem:T-RS} and applying \eqref{Eq:RSlowerbound} for $\gamma$ and $-\gamma$. In the following we focus on proving \eqref{Eq:RSlowerbound}.

If $c(\pi_1)\neq c(\pi_2)$, we can assume without loss of generality that $c(\pi_1)<c(\pi_2)=c$. In this case $c(\pi_1\times \pi_2)=2c$, and we take $$\varphi_3=\pi_3\lb \zxz{\varpi^{-c}}{}{}{1}\rb\varphi_3^\circ.$$
We  start with \eqref{Eq:RSintdecomp}.
By Lemma \ref{Lem:WiUnramified}, \ref{Lem:Wispecial}, \ref{Lem:WiSc}, \ref{Lem:nondihedralsupercuspidalW} and \ref{Lem:Wiprincipal}, $P_i(W_{\varphi_1},W_{\varphi_2})=0$ for any $i<c-1$ due to the difference of levels when $c\geq 2$, or empty set when $c=1$. Thus when $p$ is large enough or $c$ is large enough, we have by Lemma \ref{Lem:WpairingCaseunbalanced}
\begin{align*}
I^\RS(\varphi_1,\varphi_2,\varphi_3)&\gg_{v,\epsilon} A_cq^{c(1/2+\gamma)}-A_{c-1}q^{(c-2)(1/2+\gamma)}\frac{1}{q-1}\\
=&\frac{1}{(q+1)q^{c-1}}q^{c(1/2+\gamma)}\lb 1- \frac{1}{q^{1+2\gamma}}\rb\gg \frac{1}{q^{(1/2-\Re(\gamma))c}}.
\end{align*}

If $c(\pi_1)=c(\pi_2)$, we again start with \eqref{Eq:RSintdecomp}. There are more cases to consider here.
\begin{enumerate}
\item  Case $c(\pi_i)=1$ with $p$ large enough. Suppose that $\pi_i=\sigma(\mu_i |\cdot|^{1/2}, \mu_i |\cdot|^{-1/2})$ for $c(\mu_i)=0$ and $\mu_i^2=1$.
Using Lemma \ref{Lem:Wispecial}, \ref{Lem:Wipairingspecial} and \ref{Lem:AtkinLehnerSym} we have for $n=1$,
$$I^\RS= A_1 q^{1/2+\gamma} P_1(W_1,W_2) +A_0q^{-1/2-\gamma}P_0(W_1,W_2)\gg q^{-1/2+\Re(\gamma)}.$$
Note that $c(\pi_1\times\pi_2)=2$ in this case. Thus \eqref{Eq:RSlowerbound} is true.
\item Case $c(\pi_1\times\pi_2)/2\geq c/2+1$, $p$ large enough. Using Proposition \ref{Prop:Wipairing} and Lemma \ref{localintcoefficient}, we have for $n= c(\pi_1\times\pi_2)/2$,
\begin{equation*}
I_1:=\left|\sum\limits_{i=n}^{c}A_i q^{n(1/2+\gamma)}P_i(W_{\varphi_1},W_{\varphi_2}) \right|=\frac{q^{-n/2+1}q^{n\Re(\gamma)}}{q+1},
\end{equation*}
as $P_i(W_{\varphi_1},W_{\varphi_2})=1$ for each $i$ in the range. This part is expected to be the main term. On the other hand using Lemma \ref{Lem:AtkinLehnerSym}, 
\begin{align*}
&I_2:=\left|\sum\limits_{i=0}^{c-n}A_i q^{(2i-n)(1/2+\gamma)}P_i(W_{\varphi_1},W_{\varphi_2}) \right|\\
\leq & \frac{q^{c-n+1}}{q+1}q^{-n(1/2+\Re(\gamma))}q^{-c(1/2-\Re(\gamma))}=\frac{q^{c/2-3n/2+1} q^{(c-n)\Re(\gamma)}}{q+1}.
\end{align*}

There will also be contributions coming from $i=n-1$ and $i=c-n+1$ in this case, which are
\begin{align*}
I_3:&=\left|A_{n-1} 	q^{(n-2)(1/2+\gamma)}P_{n-1}(W_{\varphi_1},W_{\varphi_2})+A_{c-n+1} 	q^{(2c-3n+2)(1/2+\gamma)}P_{c-n+1}(W_{\varphi_1},W_{\varphi_2})\right|\\
&\leq \frac{q^{-n/2}q^{(n-2)\Re(\gamma)}}{q+1}+  \frac{q^{c/2-3n/2+1}q^{(c-n)\Re(\gamma)}}{q+1}\notag
\end{align*}
Note that this upper bound is still correct when $c(\pi_1\times\pi_2)/2=c/2+1$.
The remaining $i$'s (if exist) have zero contribution by Proposition \ref{Prop:Wipairing}.
Then we have
\begin{align*}
\left|I^\RS\right|\geq I_1-I_2-I_3=\frac{q^{-n/2}q^{n\Re(\gamma)}}{q+1}\lb q- q^{-2\Re(\gamma)}-2q^{c/2-n+1}q^{(c-2n)\Re(\gamma)} \rb.
\end{align*}

When $n\geq c/2+1$, and $p\geq 5$, one can easily verify that 
$$\left|I^\RS\right|\gg q^{-n/2}q^{n\Re(\gamma)}.$$

On the other hand, if $p=3$ and 
$n\geq \frac{c+3}{2}$, then one can also directly verify that the same bound holds.

\item Case $c$ is odd, $c(\pi_1\times\pi_2)/2=\frac{c+1}{2}>1$ and $p$ large enough. This case happens when both $\pi_i$ are supercuspidal representations associated to characters over ramified field extensions.
 We directly have
for $n=c(\pi_1\times\pi_2)/2$ that 
$$\left|I^\RS\right|\geq I_1-I_2
\geq \frac{q^{-n/2}q^{n\Re(\gamma)}}{q+1}\lb q- q^{1/2-\Re(\gamma)}\rb\gg q^{-n/2+n\Re(\gamma)}$$
when $2\nmid p$. Here $I_1$, $I_2$ are defined similarly as in the previous case.


\item Case $c$ is even, $c(\pi_1\times\pi_2)/2=c/2$ and $p$ large enough. If $\pi_i$ are supercuspidal representations, we have by Proposition \ref{Prop:Wipairing} that $P_i(W_{\varphi_1},W_{\varphi_2})=1$ for $i\geq c/2$. So for $n=c/2$, we have
\begin{equation*}
I_1:=\sum\limits_{i=c/2+1}^{c}A_i q^{n(1/2+\Re(\gamma))}P_i(W_{\varphi_1},W_{\varphi_2}) =\frac{q^{-c/4}q^{c\Re(\gamma)/2}}{q+1},
\end{equation*}
\begin{equation*}
I_2:=\left|\sum\limits_{i=0}^{c/2-1}A_i q^{(2i-n)(1/2+\gamma)}P_i(W_{\varphi_1},W_{\varphi_2}) \right|\leq \frac{q^{-c/4} q^{c\Re(\gamma)/2}}{q+1},
\end{equation*}
\begin{equation*}
I_3:= A_{c/2}q^{c/2(1/2+\Re(\gamma))}P_{c/2}(W_{\varphi_1},W_{\varphi_2})=\frac{(q-1)q^{-c/4}q^{c\Re(\gamma)/2}}{q+1}.
\end{equation*}
Thus for any $2\nmid p$, $I_3$ is the main term and we have
\begin{equation*}
\left|I^\RS\right|\geq I_1+I_3-I_2\gg q^{-c/4}q^{c\Re(\gamma)/2}.
\end{equation*}
When $\pi_i$ are principal series, $I_1,$ $I_2$ can be controlled similarly, while $I_3$ is slightly more complicated.
By Proposition \ref{Prop:Wipairing}, we have
\begin{equation*}
\left|I_3\right|\geq \frac{(q-1)q^{-c/4}q^{c\Re(\gamma)/2}}{q+1}\lb \frac{q-3}{q-1}-\frac{2}{q^{1/2+\Re(\gamma)}-1}\rb.
\end{equation*}
One can check that when, for example, $p\geq 23$, we have
$$|I_3|\gg q^{-c/4}q^{c\Re(\gamma)/2}, \ \left|I^\RS\right|\gg I_1+I_3-I_2\gg q^{-c/4}q^{c\Re(\gamma)/2}.$$

\item 
Consider now the case $p$ is bounded ($p\leq 23$ for example, including the case $p=2$), and $c$ is large enough.
Denote $n_0=c(\pi_1\times\pi_2)/2+a$, where $a$ is as in Lemma \ref{Lem:Wipairingp=2} when $p=2$, or $a=1$ when $p>2$. Thus $P_i(W_1,W_2)=1$ whenever $i\geq n_0$ by Lemma  \ref{Lem:Wipairingp=2} or Proposition \ref{Prop:Wipairing}.

Let $c'=\max\{c, n_0+1\}$. Using Lemma \ref{localintcoefficient} for $c'$, we compare the calculation of \eqref{Eq:RSintdecomp} for $n=n_0$ and $n=n_0+1$. 
\begin{align*}
I_{n_0}:&=I^\RS\lb \varphi_1,\varphi_2,\pi_3\lb\zxz{\varpi^{-n_0}}{}{}{1}\rb \varphi_3^\circ\rb
\\
&=\sum\limits_{i=n_0}^{c'}A_iq^{n_0(1/2+\gamma)}P_i(W_{\varphi_1},W_{\varphi_2})+\sum\limits_{i=0}^{n_0-1}A_iq^{(2i-n_0)(1/2+\gamma)}P_i(W_{\varphi_1},W_{\varphi_2})\\
&=\sum\limits_{i=n_0+1}^{c'}A_iq^{n_0(1/2+\gamma)}P_i(W_{\varphi_1},W_{\varphi_2})+\sum\limits_{i=0}^{n_0}A_iq^{(2i-n_0)(1/2+\gamma)}P_i(W_{\varphi_1},W_{\varphi_2}),\notag
\end{align*}
\begin{align*}
I_{n_0+1}:&=I^\RS\lb \varphi_1,\varphi_2,\pi_3\lb\zxz{\varpi^{-n_0-1}}{}{}{1}\rb \varphi_3^\circ\rb\\
&=\sum\limits_{i=n_0+1}^{c'}A_iq^{(n_0+1)(1/2+\gamma)}P_i(W_{\varphi_1},W_{\varphi_2})+\sum\limits_{i=0}^{n_0}A_iq^{(2i-n_0-1)(1/2+\gamma)}P_i(W_{\varphi_1},W_{\varphi_2}).
\end{align*}
We can do a linear combination of two equations to cancel the terms with $i\leq n_0$. In particular we have
\begin{equation*}
I_{n_0}-q^{1/2+\gamma} I_{n_0+1}=\sum\limits_{i=n_0+1}^{c'}A_iq^{n_0(1/2+\gamma)}(1-q^{1+\gamma})P_i(W_{\varphi_1},W_{\varphi_2}).
\end{equation*}

Using that $P_i(W_{\varphi_1},W_{\varphi_2})=1$ for $i\geq n_0$, and that $q$ is bounded in this case, we get that 
\begin{equation*}
\left|I_{n_0}-q^{1/2+\gamma} I_{n_0+1}\right|\gg q^{n_0(-1/2+\Re(\gamma))}.
\end{equation*}
Then we either have 
\begin{equation*}
\left|I_{n_0}\right|\gg q^{n_0(-1/2+\Re(\gamma))},
\end{equation*}
or
\begin{equation*}
\left|I_{n_0+1}\right|\gg q^{n_0(-1/2+\Re(\gamma))}.
\end{equation*}
\end{enumerate}

\end{proof}

\subsection{Proof of Theorem \ref{Theo:conjecturecase}}
We can now prove Theorem \ref{Theo:conjecturecase}, that is, verify Conjecture \ref{Conj:localresults} under the conditions that $\pi_i$ have trivial central characters and bounded Archimedean components, and $M_{\fin}=1$ at finite places. 
\yhn{Recall that $I=I^T$ or $\left|I ^\RS\right|^2$ depending on whether  $\varphi_3$ or $\psi$ comes from an Eisenstein series.}

Note that the formulations in Conjecture \ref{Conj:localresults} are symmetric in index $i=2,3$ and are  local, we may assume without loss of generality that $C(\pi_{3})=1$. 

We first address the issue about potential conflicts with Assumption \ref{Assump:notquadratictwist}. Suppose now without loss of generality that $c(\pi_1)=c(\pi_2)=2$, $\pi_1=\pi_1'\otimes \chi$ for a quadratic character $\chi$ and an unramified representation $\pi_1'$, not satisfying Assumption \ref{Assump:notquadratictwist}. Let $\pi_2'=\pi_2\otimes \chi$. Then $\pi_i'$ still have trivial central characters, and 
$$C(\pi_i\times \pi_j)=C(\pi_i'\times \pi_j')$$
for $i,j=1,2$. One can now reduce the test vector problem to that of the triple $\pi_1', \pi_2',\pi_3$, which satisfies Assumption \ref{Assump:notquadratictwist} now.

With Assumption \ref{Assump:notquadratictwist} we take now $\varphi_{i}=\pi_i\left(\zxz{\varpi^n}{}{}{1}\right)\varphi_{i }^\circ$ for $i=1,2$, and $\varphi_{3 }=\varphi_{3 }^\circ$, where $n$ is as in 
Proposition \ref{Prop:lowerbdperiod}.
Thus $\varphi_3$ satisfies item (0) of Conjecture \ref{Conj:localresults}.


Up to a uniform translates of test vectors which does not change the period integrals, the required lower bound in item (1) of Conjecture \ref{Conj:localresults} is provided by Proposition \ref{Prop:lowerbdperiod}. 
 Note that in the case of Rankin--Selberg L-function, we can take $\Re(\gamma)=0$ for \eqref{Eq:RSlowerbound} when  we assume $\pi_3$ to be tempered.

Now in the spectral expansion in Lemma \ref{Lem:shortgenericlength}, $\pi' $ must be unramified and $\psi $ must be spherical for $v\in 
\ram(\vPhi_{23}) $ as $M_\fin =1$. The required upper bound in item (2) of Conjecture \ref{Conj:localresults} follows from Proposition \ref{prop:upboundforRS&T}.
 Indeed, in the triple product case and $\psi =\psi ^\circ$, we have


\yhn{
\begin{align*}
I (\varphi_{2 },\varphi_{2 },\psi)&= I^T \left(\pi_{2 }\left(\zxz{\varpi^n}{}{}{1}\right)\varphi_{2 }^\circ,\pi_{2 }\left(\zxz{\varpi^n}{}{}{1}\right)\varphi_{2 }^\circ , \psi ^\circ \right)\\
&= I^T \left(\varphi_{2 }^\circ,\varphi_{2 }^\circ , \pi' \left(\zxz{\varpi^{-n}}{}{}{1}\right)\psi ^\circ \right)\notag\\
&\leq_\epsilon \frac{Q ^\epsilon}{q^{\theta c(\pi_{2 }\times\pi_{2 })}} \frac{1}{Q ^{1/2(1/2-\theta)}}\leq \frac{Q ^\epsilon}{\max\limits_{i=2,3}\{C (\pi_i\times\pi_i)\}^{1/2}}\frac{1}{P ^{1/2-\theta}}. \notag
\end{align*}

}

Here in the third line
 we have applied \eqref{Eq:upperbd3} and that $q^{2n}\asymp_{\epsilon} Q ^{1/2+\epsilon}$ for the choice of $n$ above.

The Rankin--Selberg case can be verified similarly using \eqref{Eq:upperbd1} from Proposition \ref{prop:upboundforRS&T} and taking $\Re(\gamma)=0$.
\begin{bibdiv}
\begin{biblist}
\bib{Assing}{article}{
author={Assing, Edgar},
title={On the size of p-adic Whittaker functions},
journal={Trans. Amer. Math. Soc.},
volume={372},
pages={5287-5340},
date={2019},
}
\bib{BlBr}{article}{
   author={Blomer, Valentin},
   author={Brumley, Farrell},
   title={On the Ramanujan conjecture over number fields},
   journal={Ann. of Math. (2)},
   volume={174},
   date={2011},
   number={1},
   pages={581--605},
   issn={0003-486X},
   review={\MR{2811610}},
   doi={10.4007/annals.2011.174.1.18},
}

\bib{BHM}{article}{
   author={Blomer, V.},
   author={Harcos, G.},
   author={Michel, Ph.},
   title={Bounds for modular $L$-functions in the level aspect},
   journal={Ann. Sci. \'Ecole Norm. Sup. (4)},
   volume={40},
   date={2007},
   number={5},
   pages={697--740},
   issn={0012-9593},
}

\bib{Bu}{article}{
   author={Burgess, D. A.},
   title={On character sums and $L$-series. II},
   journal={Proc. London Math. Soc. (3)},
   volume={13},
   date={1963},
   pages={524--536},
   issn={0024-6115},
}

\bib{BuHe}{article}{
   author={Bushnell, C. J.},
   author={Henniart, G.},
   title={An upper bound on conductors for pairs},
   journal={J. Number Theory},
   volume={65},
   date={1997},
   number={2},
   pages={183--196},
   issn={0022-314X},
}

\bib{BH06}{book}{
   author={Bushnell, C. J.},
   author={Henniart, G.},
   title={The {Local} {Langlands} {Conjecture} for $\rm{GL}(2)$},
   year={2006},
   publisher={Springer-Verlag, Berlin},
}

\bib{BH17}{article}{
   author={Bushnell, C. J.},
   author={Henniart, G.},
   title={Higher ramification and the local Langlands correspondence},
   journal={Ann. of Math.}
   volume={185},
   date={2017},
   pages={919-955},
}

\bib{DFI1}{article}{
   author={Duke, W.},
   author={Friedlander, J.},
   author={Iwaniec, H.},
   title={Bounds for automorphic $L$-functions},
   journal={Invent. Math.},
   volume={112},
   date={1993},
   number={1},
   pages={1--8},
   issn={0020-9910},
}

\bib{DFI2}{article}{
author={Duke, W.},
author={Friedlander, J.},
author={Iwaniec, H.},
title={Bounds for automorphic $L$-functions. II},
journal={Invent. Math.},
volume={115},
date={1994},
number={2},
pages={219\ndash 239},
}

\bib{DFI3}{article}{
    author={Duke, W.},
    author={Friedlander, J. B.},
    author={Iwaniec, H.},
     title={The subconvexity problem for Artin $L$-functions},
   journal={Invent. Math.},
    volume={149},
      date={2002},
    number={3},
     pages={489\ndash 577},
}

\bib{Garrett}{article}{
   author={Garrett, Paul B.},
   title={Decomposition of Eisenstein series: Rankin triple products},
   journal={Ann. of Math. (2)},
   volume={125},
   date={1987},
   number={2},
   pages={209--235},
   issn={0003-486X},
   review={\MR{881269 (88m:11033)}},
   doi={10.2307/1971310},
}

\bib{Good}{article}{
   author={Good, Anton},
   title={The square mean of Dirichlet series associated with cusp forms},
   journal={Mathematika},
   volume={29},
   date={1982},
   number={2},
   pages={278--295 (1983)},
   issn={0025-5793},
}

\bib{HaM}{article}{
   author={Harcos, Gergely},
   author={Michel, Philippe},
   title={The subconvexity problem for Rankin-Selberg $L$-functions and
   equidistribution of Heegner points. II},
   journal={Invent. Math.},
   volume={163},
   date={2006},
   number={3},
   pages={581--655},
   issn={0020-9910},
   review={\MR{2207235 (2007j:11063)}},
   doi={10.1007/s00222-005-0468-6},
}

\bib{HK}{article}{
   author={Harris, Michael},
   author={Kudla, Stephen S.},
   title={The central critical value of a triple product $L$-function},
   journal={Ann. of Math. (2)},
   volume={133},
   date={1991},
   number={3},
   pages={605--672},
   issn={0003-486X},
   review={\MR{1109355 (93a:11043)}},
   doi={10.2307/2944321},
}

\bib{HB}{article}{
   author={Heath-Brown, D. R.},
   title={Hybrid bounds for Dirichlet $L$-functions},
   journal={Invent. Math.},
   volume={47},
   date={1978},
   number={2},
   pages={149--170},
   issn={0020-9910},
   review={\MR{0485727 (58 \#5549)}},
}

\bib{GHL}{article}{
    author={Hoffstein, J.},
    author={Lockhart, P.},
     title={Coefficients of Maass forms and the Siegel zero},
      note={With an appendix by Dorian Goldfeld, Hoffstein and Daniel
            Lieman},
   journal={Ann. of Math. (2)},
    volume={140},
      date={1994},
    number={1},
     pages={161\ndash 181},
      issn={0003-486X},
}

\bib{HolTem}{article}{
   author={Holowinsky, Roman},
   author={Templier, Nicolas},
   title={First moment of Rankin-Selberg central L-values and subconvexity in the level aspect},
   journal={Ramanujan J.},
   volume={33},
   number={1},
   pages={131-155},
   date={2014},
  }

\bib{HoM}{article}{
   author={Holowinsky, Roman},
   author={Munshi, Ritabrata},
   title={Level aspect subconvexity for Rankin-Selberg $L$-functions},
   conference={
      title={Automorphic representations and $L$-functions},
   },
   book={
      series={Tata Inst. Fundam. Res. Stud. Math.},
      volume={22},
      publisher={Tata Inst. Fund. Res., Mumbai},
   },
   date={2013},
   pages={311--334},
   review={\MR{3156856}},
}

\bib{HolLin}{article}{
   author={Holowinsky, Roman},
   author={Soundararajan, Kannan},
   title={Mass equidistribution for Hecke eigenforms},
   journal={Ann. of Math. (2)},
   volume={172},
   date={2010},
   number={2},
   pages={1517--1528},
   issn={0003-486X},
   review={\MR{2680499 (2011i:11061)}},
}

\bib{HS19}{article}{
author={Hsieh, M.},
title={Hida families and p-adic triple product L-functions},
journal={American Journal of Mathematics},
volume={143},
date={2021},
number={2},
pages={ 411--532},
issn={},
doi={},
}

\bib{YH13}{article}{
	author={Hu, Yueke},
	title={Cupidal part of an Eisenstein series restricted to an index 2 subfield},
	journal={Research in Number Theory},
	volume={2},
	date={2016},
	number={33},
	pages={},
	issn={},
	doi={https://doi.org/10.1007/s40993-016-0061-7},
}

\bib{hu_triple_2017}{article}{
	author={Hu, Yueke},
	title={Triple product formula and the subconvexity bound of triple product l-function in level aspect},
	journal={American Journal of Mathematics},
	volume={139},
	date={2017},
	number={1},
	pages={215--259},
	issn={1080-6377},
	doi={},
}
\bib{YH18}{article}{
author={Hu, Yueke},
title={Triple Product Formula and Mass Equidistribution on Modular Curves of Level N},
journal={International Mathematics Research Notices},
volume={2018},
date={2018},
number={9},
pages={2899--2943},
issn={},
doi={},
}

\bib{YH20}{article}{
author={Hu, Yueke},
title={Mass equidistribution on the torus in the depth aspect},
journal={Algebra Number Theory},
volume={14},
date={2020},
number={4},
pages={927--946},
issn={},
doi={DOI: 10.2140/ant.2020.14.927},
}

\bib{HN18}{article}{
author={Hu, Yueke},
author={Nelson, Paul},
title={New test vector for Waldspurger's period integral, relative trace formula, and hybrid subconvexity bounds},
journal={arXiv:1810.11564},
volume={},
date={},
number={},
pages={},
issn={},
doi={},
}

\bib{HuSa:19}{article}{
	author={Hu, Yueke},
	author={Saha, Abhishek},
	title={Sup-norms of eigenfunctions in the level aspect for compact arithmetic surfaces, II},
	journal={Compositio Mathematica},
	volume={11},
	date={2020},
	number={156},
	pages={2368--2398},
	issn={},
	doi={https://doi.org/10.1112/S0010437X20007460},
}

\bib{HNS}{article}{
	author={Hu, Yueke},
    author={Nelson, Paul},
	author={Saha, Abhishek},
	title={Some analytic aspects of automorphic forms on GL(2) of minimal type},
	journal={Comment. Math. Helv.},
	volume={94},
	date={2019},
	number={4},
	pages={767--801.},
	issn={},
	doi={},
}
\bib{Ichino}{article}{
   author={Ichino, Atsushi},
   title={Trilinear forms and the central values of triple product
   $L$-functions},
   journal={Duke Math. J.},
   volume={145},
   date={2008},
   number={2},
   pages={281--307},
   issn={0012-7094},
   review={\MR{2449948 (2009i:11066)}},
   doi={10.1215/00127094-2008-052},
}

\bib{JL2}{book}{
   author={Jacquet, Herv{\'e}},
   title={Automorphic forms on ${\rm GL}(2)$. Part II},
   series={Lecture Notes in Mathematics, Vol. 278},
   publisher={Springer-Verlag},
   place={Berlin},
   date={1972},
   pages={xiii+142},
   review={\MR{0562503 (58 \#27778)}},
}

\bib{KiSa}{article}{
   author={Kim, Henry H.},
   title={Functoriality for the exterior square of ${\rm GL}_4$ and the
   symmetric fourth of ${\rm GL}_2$},
   note={With appendix 1 by Dinakar Ramakrishnan and appendix 2 by Kim and
   Peter Sarnak},
   journal={J. Amer. Math. Soc.},
   volume={16},
   date={2003},
   number={1},
   pages={139--183 (electronic)},

}

	\bib{KMV}{article}{
   author={Kowalski, E.},
   author={Michel, P.},
   author={VanderKam, J.},
   title={Rankin-Selberg $L$-functions in the level aspect},
   journal={Duke Math. J.},
   volume={114},
   date={2002},
   number={1},
   pages={123--191},
   issn={0012-7094},
}

	\bib{KMS}{article}{
   author={Kumar, S},
   author={Mallesham, K.},
   author={Singh, S.K.},
   title={Sub-convexity bound for $\GL(3)\times\GL(2)$ L-functions: $\GL(3)$-spectral aspect},
   journal={arXiv:2006.07819},
}
\bib{Li11}{article}{
   author={Li, X}
   title={Bounds for $\GL(3)\times \GL(2)$ L-functions and $\GL(3)$ L-functions
   }
   journal={Ann. of Math.(2)}
   volume={173},
   number={1},
   date={2011},
   pages={301-336},
}

\bib{Lin}{article}{
   author={Lindenstrauss, Elon},
   title={Invariant measures and arithmetic quantum unique ergodicity},
   journal={Ann. of Math. (2)},
   volume={163},
   date={2006},
   number={1},
   pages={165--219},
   issn={0003-486X},
   doi={10.4007/annals.2006.163.165},
}

\bib{LMY}{article}{
   author={Liu, Sheng-Chi},
   author={Masri, Riad},
   author={Young, Matthew P.},
   title={Subconvexity and equidistribution of Heegner points in the level
   aspect},
   journal={Compos. Math.},
   volume={149},
   date={2013},
   number={7},
   pages={1150--1174},
   issn={0010-437X},
   review={\MR{3078642}},
   doi={10.1112/S0010437X13007033},
}

\bib{LuoSar}{article}{
   author={Luo, Wen Zhi},
   author={Sarnak, Peter},
   title={Quantum ergodicity of eigenfunctions on ${\rm PSL}_2(\bold
   Z)\backslash \bold H^2$},
   journal={Inst. Hautes \'Etudes Sci. Publ. Math.},
   number={81},
   date={1995},
   pages={207--237},
   issn={0073-8301},
   }

\bib{Miannals}{article}{
   author={Michel, P.},
   title={The subconvexity problem for Rankin-Selberg $L$-functions and
   equidistribution of Heegner points},
   journal={Ann. of Math. (2)},
   volume={160},
   date={2004},
   number={1},
   pages={185--236},
   issn={0003-486X},
   review={\MR{2119720 (2006d:11048)}},
   doi={10.4007/annals.2004.160.185},
}

\bib{MVIHES}{article}{
   author={Michel, Philippe},
   author={Venkatesh, Akshay},
   title={The subconvexity problem for ${\rm GL}_2$},
   journal={Publ. Math. Inst. Hautes \'Etudes Sci.},
   number={111},
   date={2010},
   pages={171--271},
   issn={0073-8301},
   doi={10.1007/s10240-010-0025-8},
}

\bib{Mol}{article}{
   author={Molteni, Giuseppe},
   title={Upper and lower bounds at $s=1$ for certain Dirichlet series with
   Euler product},
   journal={Duke Math. J.},
   volume={111},
   date={2002},
   number={1},
   pages={133--158},
   issn={0012-7094},
   review={\MR{1876443 (2002m:11084)}},
   doi={10.1215/S0012-7094-02-11114-4},
}

\bib{Mu15}{article}{
   author={Munshi, R.},
   title={The circle method and bounds for L-functions—III: t-aspect subconvexity for $\GL(3)$ L-functions},
   journal={J. Amer. Math. Soc.},
   volume={28},
   number={4},
   date={2015},
   pages={913-938},
}
\bib{Ne18}{article}{
author={Nelson, Paul},
title={Microlocal lifts and quantum unique ergodicity on $\GL_2(\Q_p)$},
journal={Algebra \& number theory},
volume={12},
date={2018},
number={9},
pages={2033–2064},
issn={},
doi={},
}
\bib{Ne19}{article}{
author={Nelson, Paul},
title={Eisenstein series and the cubic moment for $\PGL(2)$},
journal={arXiv:1911.06310},
volume={},
date={},
number={},
pages={},
issn={},
doi={},
}

\bib{Ne21}{article}{
author={Nelson, Paul},
title={Bounds for standard $L$-functions},
journal={arXiv:2109.15230},
volume={},
date={},
number={},
pages={},
issn={},
doi={},
}

\bib{ANP}{article}{
   author={Nelson, Paul D.},
   author={Pitale, Ameya},
   author={Saha, Abhishek},
   title={Bounds for Rankin-Selberg integrals and quantum unique ergodicity
   for powerful levels},
   journal={J. Amer. Math. Soc.},
   volume={27},
   date={2014},
   number={1},
   pages={147--191},
   issn={0894-0347},
   review={\MR{3110797}},
   doi={10.1090/S0894-0347-2013-00779-1},
}
 
\bib{NV}{article}{
   author={Nelson, Paul},
   author={Venkatesh, Akshay},
   title={The orbit method and analysis of automorphic forms},
   journal={Acta Math.},
   volume={226},
   date={2021},
   number={1},
   pages={1--209},
   issn={},
   review={},
   doi={},
}
 \bib{PSR}{article}{
   author={Piatetski-Shapiro, I.},
   author={Rallis, Stephen},
   title={Rankin triple $L$ functions},
   journal={Compositio Math.},
   volume={64},
   date={1987},
   number={1},
   pages={31--115},
   issn={0010-437X},
   review={\MR{911357 (89k:11037)}},
}
 
 \bib{Pra}{article}{
   author={Prasad, Dipendra},
   title={Trilinear forms for representations of ${\rm GL}(2)$ and local
   $\epsilon$-factors},
   journal={Compositio Math.},
   volume={75},
   date={1990},
   number={1},
   pages={1--46},
   issn={0010-437X},
   review={\MR{1059954 (91i:22023)}},
}

\bib{Ram}{article}{
   author={Ramakrishnan, Dinakar},
   title={Modularity of the Rankin-Selberg $L$-series, and multiplicity one
   for ${\rm SL}(2)$},
   journal={Ann. of Math. (2)},
   volume={152},
   date={2000},
   number={1},
   pages={45--111},
 
}

\bib{Ram2}{article}{
   author={Ramakrishnan, Dinakar},
   title={Modularity of solvable Artin representations of ${\rm
   GO}(4)$-type},
   journal={Int. Math. Res. Not.},
   date={2002},
   number={1},
   pages={1--54},
   issn={1073-7928},
   review={\MR{1874921 (2003b:11049)}},
   doi={10.1155/S1073792802000016},
}

\bib{RS}{article}{
   author={Rudnick, Ze{\'e}v},
   author={Sarnak, Peter},
   title={The behaviour of eigenstates of arithmetic hyperbolic manifolds},
   journal={Comm. Math. Phys.},
   volume={161},
   date={1994},
   number={1},
   pages={195--213},
   issn={0010-3616},
   review={\MR{1266075 (95m:11052)}},
}

\bib{Sarnak}{article}{
   author={Sarnak, Peter},
   title={Estimates for Rankin-Selberg $L$-functions and quantum unique
   ergodicity},
   journal={J. Funct. Anal.},
   volume={184},
   date={2001},
   number={2},
   pages={419--453},
   issn={0022-1236},
   review={\MR{1851004 (2003c:11050)}},
   doi={10.1006/jfan.2001.3783},
}

\bib{Ven}{article}{
   author={Venkatesh, Akshay},
   title={Sparse equidistribution problems, period bounds and subconvexity},
   journal={Ann. of Math. (2)},
   volume={172},
   date={2010},
   number={2},
   pages={989--1094},
   issn={0003-486X},
   doi={10.4007/annals.2010.172.989},
}

\bib{Wald}{article}{
   author={Waldspurger, J.-L.},
   title={Sur les valeurs de certaines fonctions $L$ automorphes en leur
   centre de sym\'etrie},
   journal={Compositio Math.},
   volume={54},
   date={1985},
   number={2},
   pages={173--242},
   issn={0010-437X},
}

 \bib{Watson}{article}{
  author={Watson, T.C.},
  title={Rankin triple products and quantum chaos},
journal={Ph.D. dissertation, Princeton University},
   date={2002},
 %
%
 }
  
  \bib{Weyl}{article}{
  author={Weyl, H.},
  title={Zur absch\"atzung von $\zeta(1+it)$},
journal={Math. Zeit.},
   volume={10},
   date={1921},
   pages={88-101},
 }
 
    \bib{Wood}{article}{
  author={Woodbury, Michael}
  title={Trilinear forms and subconvexity of the triple product $L$-function}
  note={Submitted}
  }
 
 \bib{HWu}{article}{
   author={Wu, Han},
   title={Burgess-like subconvex bounds for $\text{GL}_2\times\text{GL}_1$},
   journal={Geom. Funct. Anal.},
   volume={24},
   date={2014},
   number={3},
   pages={968--1036},
   issn={1016-443X},
   review={\MR{3213837}},
   doi={10.1007/s00039-014-0277-4},
}

  \bib{ZYe}{article}{
  author={Ye, Zhilin}
  title={The Second Moment of Rankin-Selberg L-function and Hybrid Subconvexity Bound}
  note={\tt http://arxiv.org/abs/1404.2336}
  }

  \bib{Zagier}{article}{
   author={Zagier, D.},
   title={The Rankin-Selberg method for automorphic functions which are not
   of rapid decay},
   journal={J. Fac. Sci. Univ. Tokyo Sect. IA Math.},
   volume={28},
   date={1981},
   number={3},
   pages={415--437 (1982)},
   issn={0040-8980},
}

\end{biblist}
\end{bibdiv}

\end{document}